\documentclass[10pt]{article}
\usepackage{amssymb,amsmath,textcomp}
\usepackage{enumerate}
\usepackage{hyperref}
\usepackage{mathrsfs}
\usepackage{amsthm}
\usepackage{xcolor}
\title{The mean-field limit of non-exchangeable particle systems with non-conservative dynamics and adaptive weights}

\author{Katarzyna Ryszewska\footnote{Department of Mathematics and Information Sciences,
Warsaw University of Technology,
Koszykowa 75, 00-662 Warsaw, Poland \\
 Interdisciplinary Centre for Mathematical and Computational Modelling, 
University of Warsaw, 
Tyniecka 15/17, 02-630 Warsaw, Poland \\
Katarzyna.Ryszewska@pw.edu.pl \\
The author was partially supported by National Science Center, Poland through 
UMO-2024/54/A/ST1/00159 Grant.
}}

\newtheorem{theo}{Theorem}
\newtheorem{defi}{Definition}
\newtheorem{prop}{Proposition}
\newtheorem{lemma}{Lemma}
\newtheorem{coro}{Corollary}
\newtheorem{remark}{Remark}

\def\divv{\operatorname {div}}

\DeclareMathOperator*{\esssup}{ess~sup}

\newcommand{\eqq}[2]{\begin{equation}  #1  \label{#2}\end{equation}    }
\newcommand{\hd}{\hspace{0.2cm}}

\newcommand{\m}[1]{\mbox{#1}}
\newcommand*{\norm}[1]{\left\Vert{#1}\right\Vert}
\newcommand*{\abs}[1]{\left\vert{#1}\right\vert}
\newcommand{\vf}{\varphi}
\newcommand{\ve}{\varepsilon}
\newcommand{\R}{\mathbb{R}}
\newcommand{\nic}[1]{ }

\newcommand{\izj}{\int_{0}^{1}}
\newcommand{\ird}{\int_{\mathbb{R}^d}}
\newcommand{\al}{\alpha}

\newcommand{\sn}{\frac{1}{N}\sum_{i=1}^N}
\newcommand{\xb}{\bar{X}}

\newcommand{\p}{\mathbb{P}}
\newcommand{\E}{\mathbb{E}}
\newcommand{\Tree}{\mathcal{T}}
\newcommand{\wij}{w_{ij}}
\newcommand{\wijb}{\bar{w}_{ij}}
\newcommand{\wji}{w_{ji}}
\newcommand{\wjib}{\bar{w}_{ji}}
\newcommand{\fji}{f^j_i}
\newcommand{\fik}{f^i_k}
\newcommand{\W}{\mathcal{W}}
\newcommand{\mI}{\mathbb{I}}

\begin{document}
\date{}
\maketitle
\begin{abstract}
In the paper we introduce vector-valued dynamic extended graphons in order to obtain the mean-field limit for certain class of problems with adaptive weights. We impose fairly general assumptions on the matrix of initial connections, which allows us to cover the case of sparse graphs.  
\end{abstract}
\vspace{0.7em}
\begin{center}
{\bf AMS subject classification:} 35Q83, 46G10, 35Q70, 35R02, 35Q49, 35R06
\end{center}

\noindent{\bf Keywords:} mean-field limit, adaptive weights, vector-valued measures, interacting particles, non-exchangeable systems, extended graphons, non-conservative problems
\section{Introduction and main result}

The aim of this paper is to extend the ideas introduced in \cite{JPS2025} to obtain the mean-field limit to the non-conservative, non-exchangeable multi-agent system with adaptive weights for possibly sparse graphs of initial connections. To begin with, it is worth noting that establishing mean-field limits in the presence of adaptive weights remains largely underdeveloped, given the severe mathematical challenges posed by the co-evolution of particle states and their interaction intensities. On the other hand, such dynamic frameworks have found widespread applications across diverse fields (such as neuroscience and sociology) where interaction rules must inherently adapt to the state of the system. We refer to \cite{fizyka} for an extensive survey of such adaptive systems and their practical implementations.

Up to now, existing results regarded adaptive networks have been largely restricted to two specific settings. The first one is the opinion formation model, where each agent, additionally to its opinion, is characterized by its time evolving impact factor, introduced in  \cite{MPD2019}, and the Kuramoto model. In both regimes the first results focused on continuum limit \cite{AD2021},   \cite{GKX2023}, \cite{Throm}. For a more comprehensive treatment regarding the continuum limit we refer to the overviews \cite{Aover} and \cite{ADover}. The results addressing mean-field limit for adaptive networks appeared only recently \cite{A2026}, \cite{AD2021}, \cite{BCG2024}, \cite{Poyato2026}, \cite{D2022}, \cite{GKX2025},  \cite{Throm2},  \cite{Zhou}. To the best of our knowledge, only the contribution \cite{GKX2025} deals with possibly non-dense graphs. In \cite{GKX2025}  the mean-field limit for a co-evolutionary Kuramoto model with the evolution of connections given in the form $\partial_t w_{ij} = -\ve(w_{ij} + h(\phi_j - \phi_i))$, where $\phi_i$ is a phase of oscillator and $h,\ve$ are given, is established. Thanks to this decoupling between the weights and phases, the authors are able to   
employ theory of digraph measures and the machinery developed in \cite{Kuehn} to obtain the result imposing relatively mild conditions on initial network structure.

Building upon the framework of \cite{JPS2025}, we operate under fairly mild structural conditions on the initial interaction weights $w_{ij}^0$, which are flexible enough to accommodate sparse graphs. On the canonical probabilistic space $(\R^d,\mathcal{B}(\R^d),\p)$ we consider the problem

\begin{equation}\label{part}
\begin{array}{lll}
(i) & \partial_t X_i(t) = \sum_{k=1}^N w_{ik}(t) K_{1}(X_i(t)-X_k(t)) & t \in (0,t_*),\\
(ii) & \partial_t \wji(t) = \wji(t)[A-\sum_{k=1}^N w_{ik}(t)K_{2}(X_i(t)-X_k(t))] & t \in  (0,t_*),\\
(iii) & X_i(0) = X_{i}^0, \hd  \wij(0) = \wij^0,
\end{array}
\end{equation}
for $i,j=1,\dots, N$, $N \in \mathbb{N}$, $t_* > 0$,  where $X_i^0:\mathbb{R}^d \rightarrow \mathbb{R}^d $ and $\wij^0:\mathbb{R}^d\rightarrow \mathbb{R}_{+}$ are given random variables and $A>0,K_1,K_2$ are given deterministic data. Note that, here and henceforth $\R_{+}:=[0,\infty)$ and, for clarity, we suppress the index $N$ in the notation of random variables. \\

Model (\ref{part}) is the extension of the opinion dynamics model discussed in \cite{nasza}. The adaptation of the methodology from \cite{nasza} to the dynamic weights framework requires significant revisions to the theoretical foundations established in \cite{JPS2025}. While the general flow of the proof follows the methodology of \cite{JPS2025}, introducing extended graphons with values in Banach spaces leads to new complexities. As a result, the mathematical machinery must be carefully reformulated to address the case of measures with values in Banach spaces. It should be emphasized that the resulting mean-field limit  takes a fundamentally different form compared to prior works. Motivated by \cite{nasza} we discuss generalized weighted empirical measure $\sn \sum_{j=1}^N \wji(t,x)\delta_{X_i(t,x)}$ and prove its convergence to  $\izj \izj f(t,x,\xi,d\eta)d\xi$, where $f$ is a weak solution to the Vlasov-type equation (\ref{meanmean}). 
 The four variable object $f(t,x,\xi,\eta)$ can be interpreted as a state-dependent dynamic extended graphon. Specifically, it represents the joint density of the state distribution of node $\eta$ weighted by its instantaneous interaction strength toward node $\xi$ at time $t$. If we integrate $f$ w.r.t. label variables $\xi$ and $\eta$, the resulting function provides a global description of the system by removing individual network identities. While a classical mean-field limit yields a standard probability density that merely tracks the physical fraction of nodes in given state, $\izj \izj f(t,x,\xi,d\eta)d\xi$ captures a network-weighted state profile. Instead of just measuring population sizes, it tracks the global distribution of network activity. To ground this in an application, consider a model of opinion dynamics where $X_i(t)$ represents vector of opinions of $i$-th agent at time $t$  and the dynamic weights $w_{ji}(t)$ represent the influence which $i$-th agent has on $j$-th agent in time $t$. In a scenario where a vast majority of the population holds a baseline opinion but shares very weak mutual ties, while a tiny minority holds a distinct opinion but is locked in a hyper-connected cluster with dense, powerful connections, a classical mean-field model would overlook the minority's impact. Conversely, $\izj \izj f(t,x,\xi,d\eta)d\xi$ correctly identifies this small cluster as a major driver of the system's overall activity, because their small population size is multiplied by large structural weights. Hence, our result seems to be useful  for modeling phenomena driven by highly active subgroups. \\
Although, the evolution equation for the dynamic weights (\ref{part})$_{(ii)}$ is highly specific, our work yields the first mean-field limit for adaptive weights under the fairly mild assumptions (\ref{M})–(\ref{Mb}) on the initial $w_{ij}^0$ inspired by \cite{JPS2025}. The potential extension to more comprehensive frameworks, as well as an application to real life problems is  an exciting direction for future research.\\
The present work focuses on the development of mathematical tools. The main novelty is the extension of the theory from \cite{JPS2025} to cover the framework of vector valued measures. We establish well-posedness of the limiting problem (\ref{meanmean}) in weak$^*$
 Bochner space $\W((W^{1,1}\cap W^{1,\infty})(\R^d))$.  Furthermore, inspired by \cite{JPS2025}, we introduce new observables indexed by trees. In our setting the weights $w_{ij}(t)$ do not appear explicitly but are already incorporated in state-dependent dynamic extended graphon $f(t,x,\xi,\eta)$. Consequently, contrary to the previous paper \cite{nasza}, we cannot directly apply the refined theory developed in \cite{JPS2025} but we carefully adjust it to a new setting. Importantly, since our setup is not symmetric in $\xi$ and $\eta$, some parts of the reasoning deviates significantly from the approach in \cite{JPS2025}. Another key contribution of this work lies in the introduction of the generalized weighted empirical measure $\sn \sum_{j=1}^N \wji(t,x)\delta_{X_i(t,x)}$. The introduction of the auxiliary measures $\xb_i(t,x)_{\#}[\wjib(t,x)d\p]$, matching the structure of the empirical measure, facilitates the propagation of independence, which is a key ingredient in the proof and a major obstacle in establishing the mean-field limit under the adaptive weights regime
(c.f. \cite{A2026}, \cite{Zhou}). Regarding the Sobolev regularity in the $x$ variable, the framework from \cite{JPS2025} is adapted herein (similarly to \cite{nasza}), without focusing on relaxing the spatial regularity assumptions.\\
Finally, we point out that although  the problem studied in \cite{nasza} seems to be a special case of (\ref{part}) with $w_{ij}(t) = M_j(t)w_{ij}$, the result in \cite{nasza} differs from the one established here. Taking formally  $w_{ij}(t) = M_j(t)w^0_{ij}$, we note that in our new empirical measure the weights depend on the initial matrix of connections, which is not the case in \cite{nasza}. Consequently, the limiting $f$ encapsulates the initial interactions.  

The rest of the paper is organized as follows. 
Below we collect the assumptions that we impose on deterministic data and random initial conditions as well as our main results. The proof of Theorem \ref{maintheorem} is divided in two steps which are presented in the next two chapters. The propagation of independence result is established in Chapter 2. In Chapter 3 we introduce the vector-valued graphons and carefully adjust the theory developed in \cite{JPS2025} to cover the adaptive framework. We end the Chapter 3 with a proof of our main result (Theorem \ref{maintheorem}). In order to streamline the exposition, we move certain technical proofs to the Appendix in Chapter~4.   

 First of all, we assume that the familly 
\eqq{ (X_{i}^0,w_{1i}^0,\dots,w_{Ni}^0)_{i=1}^N \hd \m{ is jointly independent. }}{ind}
 
We impose the following assumptions on the given deterministic kernels 
\eqq{K_1 \in W^{1,\infty}(\R^d;\R^d) \cap W^{1,1}(\R^d;\R^d) , \hd \hd K_2 \in W^{1,\infty}(\R^d;\R) \cap L^{1}(\R^d), \hd \hd  K_2 \geq 0.}{asK}
Furthermore, we assume that there exist $m,M > 0$ such that
\eqq{
\sup_{N \in \mathbb{N}}\max_{1\leq i \leq N}\sup_{x\in \R^d}\sum_{j=1}^N\wij^0(x) \leq M, \hd \hd \sup_{N \in \mathbb{N}}\max_{1\leq j \leq N}\sup_{x\in \R^d}\sum_{i=1}^N\wij^0(x) \leq M,
}{M}
\eqq{\frac{1}{N}\sum_{i=1}^N \E \max_{1\leq j \leq N} \wij^0 \rightarrow 0 \m{ as } N\rightarrow \infty \m{ and } \wij^0 \geq 0 \hd \p \hd a.e.  \m{ for every } i,j = 1,\dots, N,}{aswl}
\eqq{
 \inf_{N \in \mathbb{N}}\min_{1\leq j \leq N}\E \sum_{i=1}^N \wij^0 \geq m > 0.
}{Mb}

Finally, for every $i=1,\dots, N$, let the density $f_{X_i^0}$ of $X^0_i$ exist, belong to $(W^{1,1}\cap W^{1,\infty})(\mathbb{R}^d)$, and satisfy the following:
\eqq{
  \sup_{N\in \mathbb{N}}\max_{1\leq i \leq N} \sum_{j=1}^N\norm{g_{ij}}_{W^{1,1}(\R^d)\cap W^{1,\infty}(\R^d)} < \infty, \hd 
  \sup_{N\in \mathbb{N}}\max_{1\leq j \leq N} \sum_{i=1}^N\norm{g_{ij}}_{W^{1,1}(\R^d)\cap W^{1,\infty}(\R^d)} < \infty,
}{asf}
where   
\[
g_{ij}(x):= f_{X^0_i}(x)\E[w_{ji}^0|X_i^0=x].
\]
\begin{remark}
    The assumption (\ref{asf}) simplifies significantly if we assume that for every $i,j \in \{1,\dots, N\}$, $N\in \mathbb{N}$, the random variables $ X^0_i$ and $\wji^0$ are independent. Then,  we arrive at
    \[
    g_{ij}(x) =  f_{X^0_i}(x)  \mathbb{E} \wji^0
    \]
    and due to 
(\ref{M}), the assumption (\ref{asf}) reduces to 
    \[
    \sup_{N\in \mathbb{N}}\max_{1\leq i \leq N} \norm{f_{X^0_i}}_{W^{1,1}(\R^d)\cap W^{1,\infty}(\R^d)} < \infty,
    \]
    which coincides with the assumption for conservative problem imposed in \cite{JPS2025}, (see also \cite{nasza}).
\end{remark}

\begin{remark}
Let us provide a simple example of sparse random graph with matrix of connections $w_{ij}^0$ satisfying (\ref{ind}),(\ref{M})-(\ref{asf}). We fix a sequence  $k_N \rightarrow \infty$ as $N\rightarrow \infty$ and $\frac{k_N}{N} \rightarrow 0$ as $N\rightarrow \infty$ ( for example $k_N=\lfloor N^{1-\alpha}\rfloor, \alpha \in (0,1)$). 
Let $G_N$ be a sequence of symmetric $k_N$ - regular graphs and we denote $A_{ij}=1$ if $(i,j) \in G_N$ and zero otherwise. We fix a positive $m_0$ and nonnegative $c<
M$ satisfying $m_0(c+M) = 2m$ and   introduce the deterministic bounded functions $0<m_0\leq \theta_{ij}\leq 1$ and  random variables $E_i$, for $i=1,\dots,N$, where $E_i \sim \mathcal{U}([\frac{c}{k_N},\frac{M}{k_N}]).$ We set
\[
w_{ij}^0(x) = A_{ij}E_j \theta_{ij}(x).
\]
 Let $X_i^0 : \R^d \to \R^d$ be independent random variables (independent also from $E_i$) with density $f_{X^0_i}$ satisfying uniform bound in $(W^{1,1}\cap W^{1,\infty})(\R^d)$. Then clearly independence assumption (\ref{ind}) is satisfied. Furthermore, we easily see that
 \[
 \max_{1\leq j\leq N} \sup_{x \in \R^d} \sum_{i=1}^N A_{ij}E_j \theta_{ij}(x) \leq   \max_{1\leq j\leq N}  E_j\sum_{i=1}^N A_{ij} =\frac{M}{k_N} k_N= M.
 \]
 \[
 \max_{1\leq i\leq N} \sup_{x \in \R^d} \sum_{j=1}^N A_{ij}E_j \theta_{ij}(x) \leq   \max_{1\leq i\leq N}  \sum_{j=1}^N A_{ij}E_j \leq \frac{M}{k_N} \max_{1\leq i\leq N}  \sum_{j=1}^N A_{ij} = M
 \]
 and the assumption (\ref{M}) is satisfied. As to the assumption (\ref{aswl}) we note that $\max_j w_{ij}^0 \leq \max_{j}E_j$, hence
\[
\frac{1}{N}\sum_{i=1}^N \E \max_{1\leq j\leq N} w_{ij}^0 \leq \E \max_{1\leq j\leq N}E_j \leq \frac{M}{k_N}\rightarrow 0 \m{ as } N\rightarrow \infty.
\]
To show that the assumption (\ref{Mb}) is satisfied we calculate as follows
\[
\E \sum_{i=1}^N w_{ij}^0 \geq m_0 \E E_j \sum_{i=1}^NA_{ij}\geq m_0 k_N\frac{c+M}{2k_N} = m.
\]
Finally, (\ref{asf}) follows from independence $w_{ji}^0$ and $X^0_i$, the assumption on densities of $X^0_i$ and (\ref{M}).
\end{remark}

Before we formulate main result, we denote by $d_F$ the flat metric on the space of positive Radon measures defined on $\R^d$, i.e.
\[
d_F(\mu,\nu) = \sup \left\{\abs{\ird \phi(x)d(\mu(x)-\nu(x))}: \phi \in W^{1,\infty}(\R^d): \norm{\phi}_{L^{\infty}(\R^d)} \leq 1, \norm{\nabla\phi}_{L^{\infty}(\R^d)}\leq 1\right\}.
\]

In the whole paper the time derivative is always denoted by $\partial_t$ and all other differential operators refer to the space variable, thus we omit the subscript $x$ and denote simply $\nabla,\divv$ etc. Furthermore, while integrating with respect to measure, we use the short notation   $f(dx)$ which we understand as $df(x)$. This is particularly useful if we deal with multi variable objects.
Below we state the main result of this paper. The underlying space $\W((W^{1,1}\cap W^{1,\infty})(\R^d))$ is defined in Definition \ref{space}.

  \begin{theo} \label{maintheorem}
Assume (\ref{ind}) - (\ref{asf}) and 
\eqq{
\sup_{N\in \mathbb{N}} \sup_{1\leq i\leq N}\mathbb{E}[\vert X_i^0\vert^2]<\infty.
}{secmo}
Finally, let $(X_i(t), \wij(t))_{i=1}^N$ be a solution to (\ref{part}) given by Lemma \ref{exi}. 
Then, there exists nonnegative $f\in L^\infty((0, t_*);\W((W^{1,1}\cap W^{1,\infty})(\R^d)))$,  such that $f$ is a weak solution to
\eqq{
\partial_t f(t,x,\xi,\eta) + \divv ( f(t,x,\xi,\eta) V_1[f](t,x,\eta)) =  f(t,x,\xi,\eta) (A -V_2[f](t,x,\eta)), 
}{meanmean}
where for $k=1,2$
\[
V_k[f](t,x,\eta) =  \ird  K_{k}(x-y)\izj f(t,y,\eta,d\zeta)dy.
\]
 and, up to the extraction of a subsequence,
\eqq{
\esssup_{0\leq t\leq t_*}\,\mathbb{E}\,d_{F}\left(\int_0^1 \izj f(t,\cdot,\xi,d\eta)d\xi, \sn \sum_{j=1}^N \wji(t,x)\delta_{X_i(t,x)}(\cdot)\right)\to 0, \m{ as } N\rightarrow \infty.
}{finalfinal}
 \end{theo}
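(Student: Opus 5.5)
The proof splits into two steps, as announced in the introduction: propagation of independence, then a graphon-type limit via tree observables following \cite{JPS2025}.

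\textbf{Step 1 (propagation of independence).} I will introduce the auxiliary system $(\xb_i(t),\wijb(t))$. In it, the interaction sums in (\ref{part}) are replaced by expectations against the laws of the other particles. Concretely, $\sum_k w_{ik}K_1(X_i-X_k)$ becomes $\sum_k \E'[\bar w_{ik}K_1(x-\xb_k)]|_{x=\xb_i}$, where the expectation is taken over an independent copy, and similarly for $K_2$. In this system each vector $(\xb_i,\bar w_{1i},\dots,\bar w_{Ni})$ evolves as a deterministic functional of its own initial data $(X_i^0,w_{1i}^0,\dots,w_{Ni}^0)$, so (\ref{ind}) propagates in time. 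I will then prove
\[
\sup_{t\le t_*}\E\, d_F\Big(\sn\sum_{j} \wji(t)\delta_{X_i(t)},\ \sn\sum_j \wjib(t)\delta_{\xb_i(t)}\Big)\to0 .
\]
The route is a Gronwall estimate on $\sn\sum_j\E|\wji-\wjib|+\sn\sum_j \E\,\wjib|X_i-\xb_i|$. The bounds (\ref{M}) are preserved up to $e^{At_*}$, because $K_2\ge0$ makes the growth rate at most $A$. The fluctuation terms are sums of independent centred variables weighted by $w_{ik}$, so their variance is bounded by $\sum_k w_{ik}^2\le M\max_k w_{ik}$. This vanishes on average by (\ref{aswl}), and that is precisely where sparsity is allowed. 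Next I will show that $\sn\sum_j \wjib\delta_{\xb_i}$ is close in $\E d_F$ to its mean $\sn\sum_j\xb_i(t)_{\#}[\wjib(t)d\p]$, by the same variance argument using independence across $i$ and (\ref{M}).

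\textbf{Step 2 (limit object).} I will define $f_N(t,x,\xi,\eta)=\sum_{i,j}N\,\mathbb{1}_{I_j}(\xi)\mathbb{1}_{I_i}(\eta)\,\fji(t,x)$, where $\fji(t,\cdot)$ is the density of $\xb_i(t)_{\#}[\wjib(t)d\p]$ and the $I_i$ are the intervals of length $1/N$. Each $\fji$ solves the linear transport–reaction equation with velocity $\sum_k\ird K_1(x-y)f^i_k(y)dy$ and growth rate $A-\sum_k K_2*f^i_k$. Hence $f_N$ solves (\ref{meanmean}) exactly, and $\izj\izj f_N\,d\eta\,d\xi$ equals the averaged measure of Step 1. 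From (\ref{asf}), (\ref{M}) and transport estimates I expect uniform bounds on $f_N$ in $L^\infty(0,t_*;\W((W^{1,1}\cap W^{1,\infty})(\R^d)))$, with row and column sums controlled in both label variables, together with a time-derivative bound.

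\textbf{Step 3 (compactness and passage to the limit).} Following \cite{JPS2025}, I will encode $f_N$ through tree-indexed observables $\tau(f_N)(t,x_1,\dots)$. These are integrals over labels of products of $f_N$ along the edges of a tree $\Tree$. Because $f$ is not symmetric in $(\xi,\eta)$, the trees must be oriented, with each edge going from $\xi$ to $\eta$. I will show the observables satisfy a closed hierarchy of equations, are uniformly bounded, and are equicontinuous in time. A diagonal argument then extracts a subsequence along which all of them converge. The abstract representation theorem of \cite{JPS2025}, adapted to vector-valued measures, yields a limit extended graphon $f$ solving (\ref{meanmean}) weakly; nonnegativity passes to the limit.

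The weak-in-label convergence suffices for (\ref{finalfinal}) because the quantity $\izj\izj f\,d\eta\,d\xi$ is itself the observable of the single-vertex tree. Its convergence is in $d_F$, using the $W^{1,1}$ bounds and the second-moment condition (\ref{secmo}) for tightness in $x$. Combining this with Steps 1 and 2 via the triangle inequality gives (\ref{finalfinal}).

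\textbf{Main obstacle.} I expect the hardest part to be Step 3: showing that the limit of the observables is indeed generated by a single vector-valued extended graphon, and that the nonlinear terms $V_k[f]$ pass to the limit. The difficulty is that $V_k$ involves an integral of $f$ in its second label evaluated at the first label $\eta$, which is the asymmetric structure. My first attempt will be to prove stability of the hierarchy in the tree-norm of \cite{JPS2025}, and then identify the limit through uniqueness of (\ref{meanmean}) in $\W$.
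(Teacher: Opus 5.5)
\textbf{Verdict: Steps 1 and 2 are fine; Step 3 has a genuine gap.} Your Steps 1 and 2 essentially coincide with the paper: the auxiliary system driven by the deterministic measures $\fji$, propagation of independence through the flow map, the variance bound via $\sum_k\bar w_{ik}^2\le Me^{2At_*}\max_k w^0_{ik}$ and (\ref{aswl}), and the exact solution $f_N$ of (\ref{meanmean}). The gap is in Step 3, exactly where you locate the main obstacle. Neither your main line nor your fallback closes it.

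\textbf{Why Step 3 does not work as stated.} The representation theorem (Theorem \ref{lg1}) is a fixed-time statement. The graphon it produces is a weak-$*$ limit of relabelled objects $\Phi_N^*f_N$, and the measure-preserving $\Phi_N$ is built from the particular sequence. Applying it time by time therefore gives $t$-dependent relabellings, and there is no reason for $t\mapsto f(t)$ to solve (\ref{meanmean}). Convergence of the observables cannot compensate, because they integrate out all labels. The weak formulation, by contrast, pairs $\nabla\varphi(t,x,\xi,\eta)\cdot V_1[f_N](t,x,\eta)$ with $f_N(t,x,\xi,d\eta)$, so it needs strong convergence in the label $\eta$ of $\izj f_N(t,\cdot,\eta,d\zeta)$. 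Knowing that the limits solve the linear hierarchy (\ref{treeeq}) does not recover the nonlinear equation either: only the implication from (\ref{meanmean}) to (\ref{treeeq}) is available (Proposition \ref{hier}). Your fallback also fails. Uniqueness for (\ref{meanmean}) presupposes that the limit solves (\ref{meanmean}), which is the point in question. The hierarchy stability estimate available, Proposition \ref{hlambdabound}, requires $\nu>0$ and degenerates as $\nu\to0$ through the factor $\norm{K_1}^2_{L^2(\R^d)}/(2\nu)$, so it cannot identify limits of the inviscid observables.

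\textbf{The missing idea.} Extract compactness only at $t=0$ and never identify a limit of $f_N(t)$ for $t>0$. By (\ref{asf}) the data $f_N^0$ are bounded in $\W((W^{1,1}\cap W^{1,\infty})(\R^d))$, so Theorem \ref{lg1} gives $f^0$ with $\tau(T,f_N^0)\to\tau(T,f^0)$. Then $f$ is \emph{defined} as the solution of (\ref{meanmean}) issued from $f^0$ (Proposition \ref{existenceprop}). The two genuine solutions $f_N$ and $f$ are compared by Proposition \ref{mainstab}, which interpolates through viscous solutions in three steps:
\begin{enumerate}
\item an energy estimate $\norm{f-f^\nu}_{\W(L^2(\R^d))}\leq C\sqrt{\nu}$, with constants depending only on initial norms (Proposition \ref{linftyestf});
\item hierarchy stability between $f^\nu$ and $\tilde f^\nu$ at $\nu>0$;
\item optimization in $\nu$.
\end{enumerate}
This also makes your uniform-in-$N$ bounds on $f_N(t)$ and your time-derivative bounds unnecessary.

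\textbf{What this route costs.} The input $\norm{\tau(\cdot,f_N^0)-\tau(\cdot,f^0)}_\lambda\to0$ requires convergence on all of $\R^{dn}$ for every tree, hence tightness of every observable. Since (\ref{M}) and (\ref{secmo}) bound second moments only in $L^\infty_\eta\mathcal{M}_\xi$, this needs the asymmetric tail estimate of Proposition \ref{asym}, which your plan does not anticipate.

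\textbf{A possible repair of your own route.} You could instead apply Lemma \ref{lg} once to the countable family $F_k(f_N(t_q))(x^l,\cdot)$ over all rational times $t_q$. Then a single $\Phi_N$ serves all times, and $F_0(f_N(t))\circ\Phi_N$ converges strongly in the label variable, which is what passing to the limit in $V_k$ requires. This might work, but you would then have to actually prove the all-time uniform bounds and strong time-equicontinuity of the transforms, and the argument yields no rate.

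\textbf{Two smaller points.} First, $\izj\izj f\,d\eta\,d\xi$ is the observable of the two-vertex tree $T_2$; single-vertex trees are excluded from $\Tree$. Second, closeness of $\sn\sum_j\wjib\delta_{\xb_i}$ to its mean in $\E\,d_F$ does not follow from a variance bound for a single test function, since $d_F$ is a supremum over a Lipschitz class. The paper uses the Glivenko--Cantelli type Lemma \ref{GC}, which relies on (\ref{secmo}) and (\ref{Mb}).
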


\section{Propagation of independence}

This chapter is devoted to the proof of Theorem \ref{mainprob} stated below. We follow the approach from \cite[Chapter 2]{nasza}  (see also \cite{JPS2025}) with necessary modifications, which allow us to handle time dependent matrix $w_{ij}(t)$.
Let us discuss the following auxiliary problem
\begin{equation}\label{part2}
\begin{array}{lll}
(i) & \partial_t \xb_i(t) = \sum_{k=1}^N  \ird K_{1}(\xb_i(t)-y)\fik(t,dy) & t \in (0,t_*),\\
(ii) & \partial_t \wjib(t) = \wjib(t)[A-\sum_{k=1}^N \ird K_{2}(\xb_i(t)-y)\fik(t,dy)] & t \in  (0,t_*),\\
(iii) & \xb_i(0) = X_{i}^0, \hd \wijb(0) = \wij^0,
\end{array}
\end{equation}
for $i,j=1,\dots, N$, where 
\eqq{
\fji(t,x) = \xb_i(t,x)_{\#}[\wjib(t,x)d\mathbb{P}(x)].
}{defft}
The notation  $\xb_i(t,x)_{\#}$ refers to the push forward operator, i.e. for any measurable map $T:\mathcal{A}_1\rightarrow \mathcal{A}_2$ and a measure $\mu$ on $\mathcal{A}_1$ we define $(T_{\#}\mu)(C):=\mu(T^{-1}(C))$ for any measurable $C \subset \mathcal{A}_2$.

\begin{lemma}\label{exi}
Assuming (\ref{asK})-(\ref{M}), the nonnegativity part from the assumption (\ref{aswl}) and 
\[
 \max_{1\leq i\leq N}\mathbb{E}|X^0_i| < \infty,
\]
for every $N \in \mathbb{N}$ the problems (\ref{part}) and (\ref{part2}) admit the unique solutions $(X_i(t), \wij(t))_{i=1}^N$ and \\$(\xb_i(t),\wijb(t))_{i=1}^N \in C^{1}((0,t_*);L^{1}(\R^{d},\p))$, respectively. 
Moreover, for each $t\in (0,t_{*})$ the random variables $\wij(t), \wijb(t)$ are nonnegative $\p$ a.e. for every $i,j \in \{1,\dots,N\}$ for every $N \geq 1$ and
\[
\sup_{N\in \mathbb{N}}\max_{1\leq j\leq N}\sup_{t\in (0,t_*)} \sup_{x \in \R^d} \sum_{i=1}^N \wij(t,x) \leq Me^{At_*},\hd \sup_{N\in \mathbb{N}}\max_{1\leq i\leq N}\sup_{t\in (0,t_*)} \sup_{x \in \R^d} \sum_{j=1}^N \wij(t) \leq Me^{At_*}  \hd \p \hd  a.e., 
\]
\eqq{
\sup_{N\in \mathbb{N}}\max_{1\leq j\leq N}\sup_{t\in (0,t_*)}\sup_{x \in \R^d}  \sum_{i=1}^N \wijb(t) \leq Me^{At_*},\hd \sup_{N\in \mathbb{N}}\max_{1\leq i\leq N}\sup_{t\in (0,t_*)} \sup_{x \in \R^d} \sum_{j=1}^N \wijb(t) \leq Me^{At_*}  \hd \p \hd  a.e. 
}{Mbound}
Furthermore, if the assumption (\ref{ind}) is satisfied, then also for every $t \in (0,t_{*})$ the family \\
$(\xb_{i}(t),\bar{w}_{1i}(t),\dots,\bar{w}_{Ni}(t))_{i=1}^N$  is jointly independent. 
\end{lemma}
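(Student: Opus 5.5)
Note first that the right-hand side of (\ref{part})$_{(ii)}$ depends only on the row index $i$ of $\wji$. Hence, for fixed $x$, every weight in column $i$ satisfies $\wji(t)=\wji^0\exp\big(\int_0^t a_i(s)ds\big)$ with $a_i(s)=A-\sum_k w_{ik}(s)K_2(X_i(s)-X_k(s))$. I will use this exponential representation throughout, together with the analogous one for (\ref{part2}).

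\textbf{Step 1: problem (\ref{part}) for fixed $x$.} The system is a finite-dimensional ODE in $(X_i,\wij)$ with locally Lipschitz right-hand side, since $K_1,K_2\in W^{1,\infty}$. Picard--Lindel\"of gives a local solution. By the exponential representation, $\wij(t)\ge0$ whenever $\wij^0\ge0$. Because $K_2\ge0$ and $w_{ik}\ge0$, we have $a_i\le A$, so $0\le \wij(t)\le \wij^0e^{At}$. Summing over $i$ or over $j$ and using (\ref{M}) gives both bounds with $Me^{At_*}$. Then $|\partial_tX_i|\le Me^{At_*}\|K_1\|_\infty$, so no blow-up occurs and the solution is global on $(0,t_*)$. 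Measurability in $x$ follows from continuous dependence on data, and the linear-growth bound $|X_i(t)|\le|X_i^0|+Ct$ with $\E|X_i^0|<\infty$ places the solution in $C^1((0,t_*);L^1(\R^d,\p))$.

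\textbf{Step 2: problem (\ref{part2}).} This is a McKean--Vlasov-type system, because $\fik$ depends on the laws of $(\xb_k,\bar w_{ik})$. I would run a Banach fixed point argument on a short time interval in the space $Z=\big(C([0,T];L^1(\p))\big)^{N+N^2}$, restricted to the closed set of nonnegative weights with column and row sums bounded by $Me^{AT}$ $\p$ a.e.
\begin{itemize}
\item Given $(\xb,\bar w)$ in this set, define the measures $\fik$ by (\ref{defft}).
\item Solve the resulting decoupled ODEs pathwise. Since $\ird K(z-y)\fik(t,dy)=\E[\bar w_{ik}K(z-\xb_k)]$, the fields are bounded and Lipschitz in $z$ with constants controlled by $\|K\|_{W^{1,\infty}}\,\E\sum_k\bar w_{ik}\le CMe^{AT}$.
\item As in Step 1, nonnegativity and the sum bounds are preserved, because the exponent is again at most $A$.
\end{itemize}
The differences of the fields are estimated by $\|\nabla K\|_\infty\E[\bar w_{ik}|\xb_k-\xb_k'|]+\|K\|_\infty\E|\bar w_{ik}-\bar w_{ik}'|$. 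This is the main obstacle: the first term requires a bound on $\bar w_{ik}$ that is uniform in $x$, which the a.e.\ bound $Me^{AT}$ supplies. The map is therefore a contraction for small $T$. Since $T$ depends only on $M$, $A$ and $\|K_\ell\|_{W^{1,\infty}}$, the argument iterates up to $t_*$, restarting from the data at time $T$, which still obey the a priori bounds. Uniqueness for both problems follows from the same Gronwall estimates.

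\textbf{Step 3: independence.} Given the solution of (\ref{part2}), the fields $F_i(t,z)=\sum_k\ird K_1(z-y)\fik(t,dy)$ and $G_i(t,z)$ are deterministic. Hence $\xb_i(t)$ solves $\partial_t\xb_i=F_i(t,\xb_i)$ with data $X_i^0$, so $\xb_i(t)=\Phi_i(t,X_i^0)$ for a measurable flow $\Phi_i$. Likewise $\bar w_{ji}(t)=\bar w_{ji}^0\exp\big(\int_0^t(A-G_i(s,\xb_i(s)))ds\big)$. Thus $(\xb_i(t),\bar w_{1i}(t),\dots,\bar w_{Ni}(t))$ is a deterministic measurable function $\Psi_i$ of the vector $(X_i^0,w_{1i}^0,\dots,w_{Ni}^0)$ alone. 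Joint independence of the family then follows from (\ref{ind}), since measurable functions of independent vectors, each using only its own vector, are independent.
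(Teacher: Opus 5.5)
Your proof is correct and follows essentially the same route as the paper. Existence comes from ODE theory and a short-time fixed point that is then iterated. Nonnegativity and the $Me^{At_*}$ bounds come from the exponential representation of each column of weights, whose rate is at most $A$ because $K_2\ge 0$. Joint independence comes from writing $(\xb_i(t),\bar w_{1i}(t),\dots,\bar w_{Ni}(t))$ as a deterministic measurable function of $(X_i^0,w_{1i}^0,\dots,w_{Ni}^0)$ through the flow of the deterministic fields. Your fixed-point step for (\ref{part2}) is spelled out in more detail than in the paper, which only defers it to an analogous argument in \cite{nasza}.
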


We postpone the proof to the Appendix.

\begin{theo}\label{mainprob}
Let us assume that the assumptions (\ref{ind}) -(\ref{Mb}) are satisfied and there exists $C_1 > 0$ such 
\eqq{
\sup_{N \in \mathbb{N}}\max_{1\leq i \leq N}\E|X_i^0|^2 \leq C_1.
}{ex}
Let $(X_i(t), \wij(t))_{i=1}^N$ and $(\xb_i(t),\wijb(t))_{i=1}^N$ be a unique solution to (\ref{part}) and (\ref{part2}) in $C^{1}((0,t_*);L^{1}(\R^{d},\p))$, respectively.  
Then, there exist  positive constants $C = C(C_1,t_*,M,A,\norm{K_1}_{L^{\infty} (\R^d)})$ and \\ $N_0 = N_0(d,m,t_*,A,M,\norm{K_2}_{L^{\infty} (\R^d)})$, such that  for any $N \geq N_0$ we have
\eqq{
\E d_{F}\left(\sn \sum_{j=1}^N\wjib(t,x)\delta_{\xb_i(t,x)}(\cdot),\frac{1}{N}\sum_{i=1}^N\sum_{j=1}^N \fji (t,\cdot)\right) \leq C N^{-\frac{1}{2+3d/2}}.
}{zGC}
Furthermore, there exists a positive constant $C$ dependent only on $M,A,t_*,\norm{K_{1}}_{W^{1,\infty}(\R^d)},\norm{K_{2}}_{W^{1,\infty}(\R^d)}$ such that
\eqq{
\E d_{F}\left(\sn \sum_{j=1}^N \wjib(t,x)\delta_{\xb_i(t,x)}(\cdot),\sn \sum_{j=1}^N \wji(t,x)\delta_{X_i(t,x)}(\cdot)\right)
\leq C \sqrt{\frac{1}{N}\sum_{i=1}^N\E \max_{1\leq j \leq N}w^0_{ij}}.
}{zEX}
Finally,
\eqq{
\E d_{F}\left(\sn \sum_{j=1}^N \wji(t,x)\delta_{X_i(t,x)}(\cdot),\frac{1}{N}\sum_{i=1}^N\sum_{j=1}^N \fji (t,\cdot)\right) \rightarrow 0 \m{ as } N\rightarrow \infty
}{finalp}
and for $i,j \in \{1,\dots,N\}$ the measure $\fji$ defined in (\ref{defft}) is a measure solution to
\eqq{
\partial_t \fji(t,x) + \divv\left(\fji(t,x)\ird K_{1}(x-y)\sum_{k=1}^N\fik(t,dy)\right) = \fji(t,x)\left[A - \ird K_{2}(x-y)\sum_{k=1}^N\fik(t,dy)\right].
}{semic}
\end{theo}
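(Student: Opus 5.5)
The proof splits into four parts matching the four assertions of Theorem \ref{mainprob}: the law of large numbers estimate (\ref{zGC}), the stability estimate (\ref{zEX}), their combination (\ref{finalp}), and the identification of $\fji$ as a solution of (\ref{semic}).

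\emph{Step 1: the equation (\ref{semic}).} Fix a test function $\phi\in C_c^1(\R^d)$. By (\ref{defft}),
\[
\ird \phi\, \fji(t,dx)=\E[\phi(\xb_i(t))\wjib(t)].
\]
Differentiating in time and using (\ref{part2}) gives
\[
\E\Big[\nabla\phi(\xb_i)\cdot \sum_k\ird K_1(\xb_i-y)\fik(dy)\,\wjib\Big] + \E\Big[\phi(\xb_i)\wjib\Big(A-\sum_k\ird K_2(\xb_i-y)\fik(dy)\Big)\Big].
\]
Both expectations can be rewritten as integrals against $\fji$. This is exactly the weak form of (\ref{semic}), since the velocity and the rate are deterministic functions of $\xb_i$. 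Regularity in time follows from Lemma \ref{exi}.

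\emph{Step 2: estimate (\ref{zGC}).} I would follow \cite[Chapter 2]{nasza} and \cite{JPS2025}.

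Set $Y_i:=\sum_j\wjib(t)$. By Lemma \ref{exi}, the pairs $(\xb_i,Y_i)$ are independent across $i$ and satisfy $0\le Y_i\le Me^{At_*}$. The quantity to estimate is
\[
\E d_F\Big(\sn Y_i\delta_{\xb_i},\ \E\,\sn Y_i\delta_{\xb_i}\Big),
\]
because $\sum_j\fji=\E[Y_i\delta_{\xb_i}]$.

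I would use a dyadic-cube or mollification argument:
\begin{itemize}
\item Truncate to a ball of radius $R$. The mass outside is controlled through the second moments (\ref{ex}), propagated in time via the boundedness of $K_1$ and $Y_i$.
\item Mollify test functions at scale $\ve$, or partition the ball into $\sim (R/\ve)^d$ cubes. For each cube, the variance of the sum of independent bounded variables is $O(1/N)$.
\item Sum over cubes and optimize over $R$ and $\ve$. This should give the rate $N^{-1/(2+3d/2)}$.
\end{itemize}
The constant $N_0$ and the lower bound (\ref{Mb}) are presumably needed to normalize the total mass, which is bounded below by $m e^{-\|K_2\|_\infty Me^{At_*}t_*}$. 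I would use them only if a relative normalization appears.

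\emph{Step 3: estimate (\ref{zEX}), the main obstacle.} For $\phi$ admissible in $d_F$, the difference is bounded by
\[
\sn\sum_j\Big(|\wji-\wjib|+\wjib\,|X_i-\xb_i|\Big).
\]
So I would control the quantities
\[
\delta_i(t)=|X_i-\xb_i|,\qquad \rho_i(t)=\sum_j|\wji-\wjib|.
\]
Differentiating,
\[
\sum_k w_{ik}K_1(X_i-X_k)-\sum_k\ird K_1(\xb_i-y)\fik(dy)
\]
splits into three pieces:
\begin{itemize}
\item a Lipschitz part, $\sum_k w_{ik}(\delta_i+\delta_k)$ plus $\sum_k|w_{ik}-\bar w_{ik}|$;
\item a fluctuation part
\[
Z_i=\sum_k\Big(\bar w_{ik}K_1(\xb_i-\xb_k)-\ird K_1(\xb_i-y)\fik(dy)\Big).
\]
\end{itemize}
The fluctuation $Z_i$ is a sum of independent centered terms, by Lemma \ref{exi}. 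Conditioning on $\xb_i$ (the term $k=i$ is handled separately and is bounded by $\max_k\bar w_{ik}$), its variance is at most
\[
\sum_k\E\bar w_{ik}^2\le C\,\E\max_k \bar w_{ik}\sum_k\bar w_{ik}\le C\,\E\max_k w^0_{ik}.
\]
Here I use $\bar w_{ik}(t)\le e^{At}w^0_{ik}$. Hence $\E|Z_i|\le C(\E\max_kw^0_{ik})^{1/2}$, and after averaging and applying Jensen this gives the square root.

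The weight equation is handled identically, using $\wji\le e^{At}w^0_{ji}$ and linearity in $\wji$. Averaging $\sn\E(\delta_i+\rho_i)$ and using the column and row bounds (\ref{Mbound}) to swap $\sum_i$ and $\sum_k$ closes a Gronwall inequality. The delicate point is that the indices in $\rho$ must be summed along the right direction: the equation for $\wji$ involves the row of $i$. This is where the two bounds in (\ref{M}) are both used.

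\emph{Step 4: (\ref{finalp}).} This follows from the triangle inequality combined with (\ref{zGC}), (\ref{zEX}) and (\ref{aswl}).
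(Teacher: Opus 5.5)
Your proposal is correct. Steps 1, 3 and 4 coincide with the paper's argument:
\begin{itemize}
\item Step 1 obtains the weak form of (\ref{semic}) by differentiating $\E[\phi(\xb_i(t))\wjib(t)]$.
\item Step 3 reproduces the Gr\"onwall argument of Lemma \ref{exest}. It uses the same Lipschitz / weight-difference / fluctuation split, conditional independence given $\xb_i(t)$, the bound $\E\sum_k\bar w_{ik}^2\le Me^{2At_*}\E\max_k w^0_{ik}$, and the row/column bounds (\ref{Mbound}).
\item Step 4 is the triangle inequality combined with (\ref{aswl}).
\end{itemize}

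Step 2 is a genuinely different route. The paper does not prove (\ref{zGC}) by hand. It applies the weighted Glivenko--Cantelli Lemma \ref{GC}, imported from \cite{nasza}, with $X_i=\xb_i$ and $M_i=\sum_j\wjib$. It then checks the hypotheses: independence, the second moment bound (\ref{secmobound}), the upper bound (\ref{Mbound}), and the lower bound (\ref{Mesti}) derived from (\ref{wt}) and (\ref{Mb}). Hypothesis (\ref{Mb}) and the threshold $N_0$ enter only because that lemma is stated with $\inf_i\E M_i\ge\bar m$.

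Your truncation/cube argument is self-contained and needs neither (\ref{Mb}) nor $N_0$, so it gives (\ref{zGC}) for all $N$. To reach the exponent $\frac{1}{2+3d/2}$, however, you must use a mass-weighted bound on each cube. Set $\mu_N=\frac1N\sum_iY_i\delta_{\xb_i}$ and $\nu_N=\E\mu_N$. Independence across $i$ together with $0\le Y_i\le Me^{At_*}$ gives
\[
\E\abs{\mu_N(Q)-\nu_N(Q)}\le\Big(\tfrac{Me^{At_*}}{N}\,\nu_N(Q)\Big)^{1/2}.
\]
Cauchy--Schwarz over the $\sim(R/\ve)^d$ cubes then gives $CN^{-1/2}(R/\ve)^{d/2}$. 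Add the Lipschitz error $C\ve$ and the tail $CR^{-2}$; all three bounds are uniform in the test function since $\abs{\phi}\le1$. The choice $\ve=R^{-2}$, $R=N^{1/(4+3d)}$ then yields exactly $N^{-1/(2+3d/2)}$.

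By contrast, a uniform $O(1/N)$ variance per cube, as you literally wrote, summed over the cubes only gives the weaker rate $N^{-1/(2+3d)}$.

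A minor point: the second moment of $\xb_i(t)$ is propagated using the row bound $\sum_k\fik(t,\R^d)=\E\sum_k\bar w_{ik}(t)\le Me^{At_*}$. This is what controls the speed of $\xb_i$; the bound on $Y_i$ does not.
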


Before we prove Theorem \ref{mainprob} we establish a couple of auxiliary results.

\begin{prop}
Under the assumptions (\ref{asK}), (\ref{M}) and the second part of the assumption (\ref{aswl}), the solutions to (\ref{part2}) satisfy
\eqq{
\wji^0 \exp\left(-t_*\norm{K_2}_{L^{\infty}(\R^d)}  e^{At_*}M\right) \leq \wjib(t,x) \leq \wji^0 e^{At_*}  \m{ for every } i,j \in \{1,\dots, N\}.
}{wt}
\end{prop}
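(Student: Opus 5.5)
The plan is to solve the linear ODE (\ref{part2})$_{(ii)}$ explicitly. For fixed $i,j$ and fixed $x$, the weight $\wjib(t,x)$ satisfies $\partial_t \wjib = \wjib\, a_i(t,x)$ with
\[
a_i(t,x) := A-\sum_{k=1}^N \ird K_{2}(\xb_i(t,x)-y)\fik(t,dy).
\]
Lemma \ref{exi} gives $\wjib \in C^1((0,t_*);L^1(\R^d,\p))$, so the solution can be written as
\[
\wjib(t,x)=\wji^0(x)\exp\left(\int_0^t a_i(s,x)\,ds\right).
\]
It is then enough to bound $a_i$ from above and from below, uniformly in $s\in(0,t_*)$ and $x$.

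\textbf{Upper bound.} Since $K_2\geq 0$ by (\ref{asK}), and each $\fik(s)$ is a nonnegative measure (it is the push-forward of $\bar w_{ik}(s)\,d\p\geq 0$, using the nonnegativity from Lemma \ref{exi}), the integral term is nonnegative. Hence $a_i\le A$. Because $\wji^0\ge 0$ by (\ref{aswl}), we get $\wjib(t,x)\le \wji^0 e^{At}\le \wji^0 e^{At_*}$.

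\textbf{Lower bound.} We estimate
\[
\ird K_2(\xb_i-y)\fik(s,dy)\le \norm{K_2}_{L^\infty(\R^d)}\,\fik(s,\R^d)=\norm{K_2}_{L^\infty(\R^d)}\E\, \bar w_{ik}(s).
\]
Summing over $k$ and using (\ref{Mbound}) in the form $\sum_{k=1}^N \bar w_{ik}(s,x)\le Me^{At_*}$ $\p$-a.e., we obtain $\sum_k \E\bar w_{ik}(s)\le Me^{At_*}$. Therefore
\[
a_i(s,x)\ge A-\norm{K_2}_{L^\infty(\R^d)}Me^{At_*}\ge -\norm{K_2}_{L^\infty(\R^d)}Me^{At_*},
\]
using $A>0$. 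Integrating over $(0,t)\subset(0,t_*)$ and again using $\wji^0\geq0$ gives the claimed lower bound
\[
\wjib\geq \wji^0\exp\left(-t_*\norm{K_2}_{L^\infty(\R^d)}e^{At_*}M\right).
\]

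\textbf{Main obstacle.} The argument is circular unless the a priori bound (\ref{Mbound}) is available independently of the present estimate. Here I take it from Lemma \ref{exi}, which the proposition may assume. If one wanted a self-contained argument, one would derive (\ref{Mbound}) from the upper bound alone: the upper bound uses only nonnegativity and $K_2\ge0$, and summing $\wjib\le \wji^0e^{At_*}$ over $i$ or $j$ together with (\ref{M}) yields (\ref{Mbound}). So the order is: nonnegativity, then the upper bound, then (\ref{Mbound}), then the lower bound. The only remaining technical point is justifying the explicit exponential formula pointwise in $x$ for the $L^1$-valued $C^1$ solution. This follows by taking the solution from the fixed-point construction $x$-wise, or by the uniqueness statement of Lemma \ref{exi}.
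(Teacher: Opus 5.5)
Your proposal is correct and is essentially the paper's proof. The paper also writes $\wjib$ via the explicit exponential formula (\ref{intM}), obtains the upper bound from $K_2\geq 0$, and bounds the interaction term by $\norm{K_2}_{L^{\infty}(\R^d)}\E\sum_{k}\bar{w}_{ik}(\tau)\leq \norm{K_2}_{L^{\infty}(\R^d)}e^{At_*}M$, deriving that last bound from the upper estimate (\ref{mab}) together with (\ref{M}), which is exactly your non-circular ordering.
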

\begin{proof}
The estimate from above is already established in Lemma \ref{exi}. Indeed, we note that for each $i,j=1,\dots, N$ the random variables $\wjib(t)$ satisfy
\eqq{
\wjib(t) = \wji^0 \exp\left(\int_0^t \left[A-\ird K_{2}(\xb_i(\tau)-y)\sum_{k=1}^N\fik(\tau,dy) \right] \right).
}{intM}
Hence, for each $i,j \in \{1,\dots N\}$ the family of random variables $\wjib(t)$ is nonnegative $\p$ a.e. for every $t \in (0,t_{*})$.
Thus, since $K_2$ is also nonnegative 
\eqq{
\wjib(t) \leq \wjib^0 e^{At}.
}{mab}
In order to obtain the estimate from below we note that  using (\ref{asK}), (\ref{aswl}),  (\ref{mab}) and (\ref{M}) we may estimate as follows
\[
\ird K_{2}(\xb_i(\tau)-y)\sum_{k=1}^N\fik(\tau,dy)\leq \norm{K_2}_{L^{\infty}(\R^d)} \E \sum_{k=1}^N  \bar{w}_{ik}(\tau) \leq \norm{K_2}_{L^{\infty}(\R^d)} e^{At_*}M.
\]

Inserting it in (\ref{intM}) we arrive at
\[
\wjib(t) \geq \wji^0 \exp(At)\exp\left(-t\norm{K_2}_{L^{\infty}(\R^d)}  e^{At_*}M\right) \geq \wji^0 \exp\left(-t_*\norm{K_2}_{L^{\infty}(\R^d)}  e^{At_*}M\right).
\]
\end{proof}

In the proof of next lemma we follow the approach from the proof of \cite[Proposition 3.2.]{JPS2025} (see also \cite[Lemma 4]{nasza}) to show that $(\xb_i,(\wijb)_{j=1}^N)_{i=1}^N$ well approximate $(X_i,(\wij)_{j=1}^N)_{i=1}^N$.

\begin{lemma}\label{exest}
Assuming  (\ref{ind}) -(\ref{M}), (\ref{ex}) and nonnegativitity of initial connectivities $\wij^0$, there exists a positive constant $C$ dependent only on $M,A,t_*,\norm{K_{1}}_{W^{1,\infty}(\R^d)},\norm{K_{2}}_{W^{1,\infty}(\R^d)}$, such that 
\[
\frac{1}{N}\sum_{ i =1 }^N \E|X_i(t) - \xb_i(t)|  + \frac{1}{N} \sum_{i,j=1}^N \E\abs{\wji(t) - \wjib(t)} \leq C \sqrt{\frac{1}{N}\sum_{i=1}^N\E \max_{1\leq j \leq N}w^0_{ij}}.
\]
\end{lemma}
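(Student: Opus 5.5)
We will run a Gronwall argument on
\[
D(t):=\frac{1}{N}\sum_{i=1}^N \E|X_i(t)-\xb_i(t)| + \frac{1}{N}\sum_{i,j=1}^N \E|\wji(t)-\wjib(t)| .
\]
At $t=0$ both systems start from the same data, so $D(0)=0$. We will show that $\partial_t D \le C D + C\,\mathcal{E}_N$, where $\mathcal{E}_N$ is the fluctuation error controlled by $\sqrt{\frac1N\sum_i \E\max_j w^0_{ij}}$.

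\textbf{Position equation.} For the positions we subtract (\ref{part2})$_{(i)}$ from (\ref{part})$_{(i)}$ and split
\[
\sum_k w_{ik}K_1(X_i-X_k)-\sum_k\ird K_1(\xb_i-y)\fik(dy)=I_1+I_2+I_3 ,
\]
where
\[
I_1=\sum_k (w_{ik}-\bar w_{ik})K_1(X_i-X_k),\qquad
I_2=\sum_k \bar w_{ik}\bigl[K_1(X_i-X_k)-K_1(\xb_i-\xb_k)\bigr],
\]
\[
I_3=\sum_k\Bigl[\bar w_{ik}K_1(\xb_i-\xb_k)-\ird K_1(\xb_i-y)\fik(dy)\Bigr].
\]
\begin{itemize}
\item $I_1$: averaging over $i$, it is bounded by $\|K_1\|_\infty\frac1N\sum_{i,k}|w_{ik}-\bar w_{ik}|$, which is part of $D$ after relabelling indices.
\item $I_2$: using Lipschitz continuity of $K_1$ and the row and column bounds (\ref{Mbound}), it is bounded by $\|\nabla K_1\|_\infty Me^{At_*}\bigl(|X_i-\xb_i|+\sum_k\bar w_{ik}|X_k-\xb_k|\bigr)$. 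To handle the second term after averaging and taking expectation, bound $\bar w_{ik}$ by the deterministic sup bound in $x$ from (\ref{Mbound}). That is, use $\sum_i\bar w_{ik}\le Me^{At_*}$ pointwise, then swap the sums. This gives $C\,\frac1N\sum_k\E|X_k-\xb_k|$.
\end{itemize}
The weight equations are handled analogously. Write $w_{ji}(t)=w^0_{ji}e^{\int_0^t(A-S_i)}$ and $\bar w_{ji}(t)=w^0_{ji}e^{\int_0^t(A-\bar S_i)}$, where $S_i$ and $\bar S_i$ are the $K_2$-interaction terms. Since both exponents are bounded, $|w_{ji}-\bar w_{ji}|\le w^0_{ji}e^{At_*}\int_0^t|S_i-\bar S_i|$, and summing over $j$ gives a factor $M$ by (\ref{M}). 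Alternatively, differentiate $|w_{ji}-\bar w_{ji}|$ and use $w_{ji}|S_i-\bar S_i|+|w_{ji}-\bar w_{ji}|\,|\bar S_i|$. Then $|S_i-\bar S_i|$ splits exactly like $I_1,I_2,I_3$ with $K_2$ in place of $K_1$.

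\textbf{Fluctuation term (main obstacle).} The key step is bounding $\E|I_3|$ and its $K_2$ analogue. Condition on $(\xb_i,\bar w_{\cdot i})$. By the independence statement of Lemma \ref{exi}, for $k\ne i$ the pairs $(\xb_k,\bar w_{ik})$ are independent of each other and of $\xb_i$. Moreover $\E[\bar w_{ik}K_1(z-\xb_k)]=\ird K_1(z-y)\fik(dy)$ by the definition (\ref{defft}) of $\fik$. So the summands in $I_3$ are centred independent variables. By Cauchy--Schwarz and variance additivity,
\[
\E|I_3|\le\Bigl(\sum_k\E\,\bar w_{ik}^2\|K_1\|_\infty^2\Bigr)^{1/2}+\text{diagonal }(k=i)\text{ term}.
\]
Next, $\sum_k\bar w_{ik}^2\le \max_k\bar w_{ik}\sum_k\bar w_{ik}\le e^{2At_*}M\max_k w^0_{ik}$ by (\ref{wt}) and (\ref{M}). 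Hence $\E|I_3|\le C\sqrt{\E\max_k w^0_{ik}}$, and the diagonal term is similarly bounded by $C\,\E\max_k w^0_{ik}$. Averaging over $i$ and applying Jensen to the square root gives $C\sqrt{\frac1N\sum_i\E\max_k w^0_{ik}}$. If this quantity is at least $1$, the diagonal term is in any case bounded by a constant, so the claimed bound holds trivially.

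\textbf{Conclusion.} Collecting the estimates gives $D(t)\le C\int_0^t D+Ct\sqrt{\cdot}$, and Gronwall yields the claim. The care needed lies in handling the index orientation ($w_{ik}$ versus $\wji$) consistently, so that every sum of weights is controlled by the appropriate row or column bound in (\ref{Mbound}).
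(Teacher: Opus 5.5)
Your proposal is correct and follows essentially the same route as the paper: a Gr\"onwall argument on the combined position/weight discrepancy, a three-term splitting of the interaction difference, and a conditional-independence variance bound for the fluctuation term via $\sum_k \bar w_{ik}^2 \le \max_k \bar w_{ik}\sum_k \bar w_{ik}$, followed by Jensen. The differences are only cosmetic: you attach $\bar w_{ik}$ rather than $w_{ik}$ to the Lipschitz term, you use the explicit exponential formula for the weights instead of differentiating $|\wji-\wjib|$, and you treat the diagonal term $k=i$ in $L^1$ (there $\E\max_k w^0_{ik}\le M$ already gives the $C\sqrt{\cdot}$ bound directly, without your case split).
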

\begin{proof}
Let us subtract the equations (\ref{part2})$_{(i)}$ from (\ref{part})$_{(i)}$. Then we obtain for every $i \in \{1,\dots,N\}$
\eqq{
\partial_t(X_i(t) - \xb_i(t)) = \sum_{k=1}^N \left(w_{ik}(t)K_{1}(X_i(t)-X_k(t)) -  \ird K_{1}(\xb_i(t)-y)\fik(t,dy)\right).}{nax}

We integrate the equation with respect to $\p$ and make use of the triangle inequality
\begin{align*}
  \partial_t\E (X_i(t) - \xb_i(t))   &= \sum_{k=1}^N \E \left( w_{ik}(t)[K_{1}(X_i(t)-X_k(t))-K_1(\xb_i(t)-\xb_k(t))]\right)  \\
  &+ \sum_{k=1}^N \E \left([w_{ik}(t)-\bar{w}_{ik}(t)]K_1(\xb_i(t)-\xb_k(t)) \right)\\
  &+ \sum_{k=1}^N \E \left( \bar{w}_{ik}(t)K_1(\xb_i(t)-\xb_k(t))-  \ird K_{1}(\xb_i(t)-y)\fik(t,dy)\right).
\end{align*}
Hence,
\begin{align*}
&\partial_t \frac{1}{N}\sum_{i=1}^N\E \abs{X_i(t) - \xb_i(t)} \leq \frac{1}{N}\sum_{i=1}^N
\sum_{k=1}^N \E w_{ik}(t)\abs{K_{1}(X_i(t)-X_k(t))-K_1(\xb_i(t)-\xb_k(t))}  \\ 
& +\frac{1}{N}\sum_{i=1}^N \sum_{k=1}^N \E \abs{w_{ik}(t)-\bar{w}_{ik}(t)}\abs{K_1(\xb_i(t)-\xb_k(t))} \\
& + \frac{1}{N}\sum_{i=1}^N\E\abs{\sum_{k=1}^N  \left( \bar{w}_{ik}(t)K_1(\xb_i(t)-\xb_k(t))-  \ird K_{1}(\xb_i(t)-y)\fik(t,dy)\right)} =: I_1 + I_2+I_3.
\end{align*}
We estimate the first expression using (\ref{asK}) and (\ref{Mbound}) to the result
\begin{align}\label{I1}
&I_1 \leq \norm{K_{1}}_{W^{1,\infty}(\R^d)}\frac{1}{N}\left(\sum_{i=1}^N \E \sum_{k=1}^N w_{ik}(t)|X_i(t)-\xb_i(t)| + \sum_{k=1}^N \E \sum_{i=1}^N w_{ik}(t)|X_k(t)-\xb_k(t)|\right) \\ \nonumber  
& \leq \norm{K_{1}}_{W^{1,\infty}(\R^d)}\frac{1}{N}\left(\max_{1\leq i \leq N}\sup_{x\in \R^d}\sum_{k=1}^N w_{ik}(t) \sum_{i=1}^N \E |X_i(t)-\xb_i(t)| + \max_{1\leq k \leq N}\sup_{x\in \R^d}\sum_{i=1}^N w_{ik}(t) \sum_{k=1}^N \E |X_k(t)-\xb_k(t)| \right) \\ \nonumber  
& \leq 2Me^{At_*}\norm{K_{1}}_{W^{1,\infty}(\R^d)}  \frac{1}{N}\sum_{k=1}^N \E |X_k(t)-\xb_k(t)|.
\end{align}
Moreover, again by (\ref{asK}) 
\eqq{
I_2 \leq \norm{K_1}_{L^{\infty}(\R^d)}  \frac{1}{N}\E \sum_{i=1}^N\sum_{k=1}^N|w_{ik}(t) - \bar{w}_{ik}(t)|.
}{I2}

In order to estimate $I_3$, we observe that by Lemma \ref{exi} the random variable $(\xb_k(t),\bar{w}_{ik}(t))$ is independent on $\xb_i(t)$ for each $t \in [0,t_*)$ if $i\neq k$. Hence, for $i\neq k$
\[
\E\left[\bar{w}_{ik}(t)K_1(\xb_i(t)-\xb_k(t))| \xb_i(t)\right] = \ird  K_{1}(\xb_i(t)-y)\fik(t,dy).
\]
Let us denote
\[
G_{ik}(t) = \bar{w}_{ik}(t)K_1(\xb_i(t)-\xb_k(t))-  \ird K_{1}(\xb_i(t)-y)\fik(t,dy)
\]
and estimate the second moment
\[
  \E \abs{\sum_{k=1}^N G_{ik}(t)}^2 \leq 2 \E \abs{\sum_{k\neq i}^N \sum_{l\neq i}^N G_{ik}(t)G_{il}(t)}
  + 2 \E \abs{ G_{ii}(t)}^2.
\]
We will show that for $k\neq l \neq i$
\[
\E [G_{ik}(t)G_{il}(t)] = 0 \hd \m{ for every }\hd t \in (0,t_{*}).
\]
To that end, note that for $k\neq l\neq i$ random variables $G_{il}(t)$ and $G_{ik}(t)$ are conditionally independent under the condition $\xb_i(t)$. Furthermore,  
since for any random variables $X,Y$ we have $\E(\E(X|Y)) = \E X$  and if $X$ and $Y$ are conditionally independent under condition $Z$ then $\E(XY|Z) = \E(X|Z)\E(Y|Z)$, we may write
\[
\E [G_{ik}(t)G_{il}(t)] = \E \left[\E\left[G_{ik}(t)G_{il}(t)|\xb_{i}(t)\right]\right] = \E \left[\E\left[G_{ik}(t)|\xb_{i}(t)\right]\E\left[G_{il}(t)|\xb_{i}(t)\right]\right] = 0,
\]
where the last equality follows from $\E\left[G_{ik}(t)|\xb_{i}(t)\right] = 0$, since for every two random variables $X,Y$ there holds $\E(X-\E(X|Y)|Y) = \E(X|Y) - \E(X|Y) = 0.$ All in all, we obtain 
\begin{align*}
&\frac{1}{N} \E \sum_{i=1}^N \abs{\sum_{k=1}^N G_{ik}}^2\leq  \frac{2}{N} \E \sum_{i,k=1}^N G^2_{ik} = \frac{2}{N}\sum_{i,k=1}^N \E \left( \bar{w}_{ik}(t)K_1(\xb_i(t)-\xb_k(t))-  \ird K_{1}(\xb_i(t)-y)\fik(t,dy)\right)^2\\
 & \leq \frac{8}{N} \norm{K_{1}}_{L^{
\infty}(\R^d)}^2 \E \sum_{k=1}^N \sum_{i=1}^N \bar{w}_{ik}^2(t) \leq  \frac{8}{N} \norm{K_{1}}_{L^{
\infty}(\R^d)}^2  \E \sum_{i=1}^N \left(\max_{1\leq k\leq N}\bar{w}_{ik}(t)\sum_{k=1}^N \bar{w}_{ik}(t)\right)\\
& \leq \frac{8}{N} \norm{K_{1}}_{L^{
\infty}(\R^d)}^2 \E \max_{1\leq i\leq N}\sum_{k=1}^N \bar{w}_{ik}(t) \sum_{i=1}^N\max_{1\leq k\leq N}\bar{w}_{ik}(t) \\
&\leq 
\frac{8}{N} \norm{K_{1}}_{L^{
\infty}(\R^d)}^2 \max_{1\leq i\leq N}\sup_{x\in \R^d}\sum_{k=1}^N \bar{w}_{ik}(t)\cdot\sum_{i=1}^N \E \max_{1\leq k\leq N}\bar{w}_{ik}(t)\leq 8 \norm{K_{1}}_{L^{
\infty}(\R^d)}^2 M e^{2A t_{*}} \frac{1}{N}\sum_{i=1}^N \E \max_{1\leq k\leq N}\bar{w}_{ik}^0, 
\end{align*}
where in the last estimate we applied (\ref{Mbound}) and (\ref{wt}).
Hence, by Jensen inequality 
\eqq{
I_3 \leq \sqrt{8M} e^{At_*}\norm{K_{1}}_{L^{
\infty}(\R^d)} \sqrt{\frac{1}{N}\sum_{i=1}^N \E \max_{1\leq k\leq N}\bar{w}_{ik}^0}.
}{I3}

Combining (\ref{I1}), (\ref{I2}) and (\ref{I3}) we obtain for each $1\leq i \leq N$
\eqq{
\partial_t \frac{1}{N}\sum_{i=1}^N\E \abs{X_i(t) - \xb_i(t)} \leq \frac{C}{N} \left(\sum_{i=1}^N\E \abs{X_i(t) - \xb_i(t)} +  \E \sum_{i,k=1}^N\abs {w_{ik}(t) - \bar{w}_{ik}(t)}\right) + C \sqrt{\frac{1}{N}\sum_{i=1}^N \E \max_{1\leq k\leq N}\bar{w}_{ik}^0},
}{xdif}
where $C$ is a positive constant dependent only on $M,A,t_*,\norm{K_{1}}_{W^{1,\infty}(\R^d)}$. 
Similarly we subtract  (\ref{part2})$_{(ii)}$ from (\ref{part})$_{(ii)}$ and integrate with respect to $\p$. Then, we arrive at
\begin{align}\label{mdifp}
   &\partial_t \E |\wji(t)-\wjib(t)| \leq \\ \nonumber
   & A\E |\wji(t)-\wjib(t)| + \E\wjib(t)\abs{\sum_{k=1}^N \left(w_{ik}(t)K_{2}(X_i(t)-X_k(t)) -  \ird K_{2}(\xb_i(t)-y)\fik(t,dy)\right)}.
\end{align}
Summing over $i$ and $j$ gives
\[
\partial_t\frac{1}{N}  \sum_{i,j=1}^N \E|\wji(t)-\wjib(t)| \leq \frac{A}{N} \sum_{i,j=1}^N \E|\wji(t)-\wjib(t)| 
\]
\[
+\max_{1\leq i \leq N} \sup_{x\in \R^d}\sum_{j=1}^N \wjib(t) \frac{1}{N}\sum_{i=1}^N\E \abs{\sum_{k=1}^N \left(w_{ik}(t)K_{2}(X_i(t)-X_k(t)) -  \ird K_{2}(\xb_i(t)-y)\fik(t,dy)\right)}.
\]
Note that the last term under expectation differs from the right hand side of $(\ref{nax})$ only by the change of kernel $K_1$ into $K_2$. Hence, repeating the arguments leading to estimates (\ref{I1}),(\ref{I2}), (\ref{I3}) we obtain
\begin{align}\label{mdifpp}
    &\frac{1}{N}\sum_{i=1}^N\E\abs{\sum_{k=1}^N \left(w_{ik}(t)K_{2}(X_i(t)-X_k(t)) -  \ird K_{2}(\xb_i(t)-y)\fik(t,dy)\right)} \\ \nonumber
   & \leq \frac{C}{N} \left(\sum_{i=1}^N\E \abs{X_i(t) - \xb_i(t)} +   \sum_{i,k=1}^N \E\abs {w_{ik}(t) - \bar{w}_{ik}(t)}\right) + C \sqrt{\frac{1}{N}\sum_{i=1}^N \E \max_{1\leq k\leq N}\bar{w}_{ik}^0}.
\end{align}
Here $C$ depends only on $M,A,t_*,\norm{K_{2}}_{W^{1,\infty}(\R^d)}$ and may change from line to line.
Using (\ref{Mbound}) we get
\eqq{
\partial_t \frac{1}{N} \sum_{i,j=1}^N \E|\wji(t)-\wjib(t)|  \leq C\left( \frac{1}{N}\sum_{i,j=1}^N \E |w_{ji}(t)-\bar{w}_{ji}(t)| + \frac{1}{N} \sum_{i=1}^N\E \abs{X_i(t) - \xb_i(t)} + \sqrt{\frac{1}{N}\sum_{i=1}^N \E \max_{1\leq k\leq N}\bar{w}_{ik}^0}\right),
}{mdif}
where $C$  depends only on $M,A,t_*,\norm{K_{2}}_{W^{1,\infty}(\R^d)}$. We integrate  (\ref{mdif}) and  (\ref{xdif}) with respect to time and add the resulting estimates. Then we arrive at
\begin{align*}
 &\frac{1}{N}\sum_{i=1}^N \E |X_i(t)-\xb_i(t)|+\frac{1}{N}\sum_{i,j=1}^N \E |\wji(t)-\wjib(t)| \leq Ct\sqrt{\frac{1}{N}\sum_{i=1}^N \E \max_{1\leq k\leq N}\bar{w}_{ik}^0}\\
&+\frac{C}{N}\int_0^t\ \left(\sum_{i=1}^N \E |X_i(\tau)-\xb_i(\tau)|+ \sum_{i,j=1}^N \E |\wji(\tau)-\wjib(\tau)|\right) d\tau,
\end{align*}
and we finish the proof applying Gr\"onwall inequality. 

\end{proof}

Now, we are ready to prove Theorem \ref{mainprob}.

\begin{proof}[Proof of Theorem \ref{mainprob}]
In order to establish  (\ref{zGC}) we apply Lemma \ref{GC} with $X_i=\xb_i$ and $M_i(t)=\sum_{j=1}^N\wjib(t)$. Clearly, the assumption (\ref{ind}) together with Lemma \ref{exi} implies that $(\xb_i(t),\sum_{j=1}^N\wjib(t))$
and $(\xb_k(t),\sum_{j=1}^N \bar{w}_{jk}(t))$ are independent for $i\neq k$. Secondly, note that integrating (\ref{part2})$_{(i)}$ w.r.t. time and using  (\ref{asK}), and (\ref{Mbound}) we obtain
\eqq{
\E \abs{\xb_i(t)}^2 \leq \E\left(\abs{X^0_i} + t\norm{K_1}_{L^{\infty}(\R^d)}M e^{At_*}\right)^2 \leq 2 \E\abs{X^0_i}^2 + 2\left(t_*\norm{K_1}_{L^{\infty}(\R^d)}M e^{At_*}\right)^2.
}{secmobound}
Hence, due to (\ref{ex}), there exists $C = C(C_1,t_*,M,A,\norm{K_1}_{L^{\infty}(\R^d)})$ such that $\E \abs{\xb_i(t)}^2 \leq C$.

Moreover, the estimate (\ref{wt}) together with the assumption (\ref{Mb}) gives
\eqq{
  \min_{1\leq i \leq N}\inf_{t\in (0,t_*)}\E \sum_{j=1}^N\wjib(t)\geq 
  \exp\left(-t_*\norm{K_2}_{L^{\infty}(\R^d)} e^{At_*}M\right) \min_{1\leq i \leq N}\E \sum_{j=1}^N \wji^0 \geq m  \exp\left(-t_*\norm{K_2}_{L^{\infty}(\R^d)} e^{At_*}M\right).
}{Mesti}
Finally, taking into account estimate (\ref{Mbound}) we obtain that all the assumptions of Lemma \ref{GC} are satisfied and since $\sum_{j=1}^Nf^j_i(t)=\xb_i(t)_{\#}[\sum_{j=1}^N\wjib(t)d\p]$, estimate (\ref{zGC}) follows. 

To show (\ref{zEX}) we perform the following estimate
\begin{align*}
&\E \sup_{\norm{\Phi}_{L^\infty}\leq 1, \norm{\nabla \Phi}_{L^\infty}\leq 1 }\abs{\sn \left[\sum_{j=1}^N \wji(t)\Phi(X_i(t)) - \sum_{j=1}^N \wjib(t)\Phi(\xb_i(t))\right]}\\
&\leq \sn \left[\E\abs{\sum_{j=1}^N (\wji(t)- \wjib(t))} + \E\abs{\sum_{j=1}^N \wjib(t)}\abs{\xb_i(t)-X_i(t)}\right]
\\
&\leq (1+Me^{At_{*}})\left(\frac{1}{N} \E\sum_{i,j=1}^N \abs{\wji(t)-\wjib(t)} +\frac{1}{N}\sum_{i=1}^N\E\abs{\xb_i(t)-X_i(t)}\right) 
\\
&\leq (1+Me^{At_{*}}) C\sqrt{\frac{1}{N}\sum_{i=1}^N\E \max_{1\leq j \leq N}w^0_{ij}}.
\end{align*}
Note that in the last estimate we applied (\ref{Mbound}) together with  Lemma~\ref{exest} and $C$ is the constant from Lemma~\ref{exest}. Thus, (\ref{zEX}) is proven. The convergence (\ref{finalp}) is a direct consequence of (\ref{zGC}), (\ref{zEX}) and the assumption (\ref{aswl}). It remains to show that $\fji$ is a measure solution to (\ref{semic}). To this end, note that by the definition of $\fji$  for any $\vf \in C_{c}^1(\R^d)$ and every $i,j \in \{1,\dots,N\}$ we have
\begin{align*}
 &\partial_t \ird \vf(x)\fji(t,dx) = \partial_t \ird \vf(\xb_i(t,x))\wjib(t,x)d\p(x)  \\
 &=\ird \nabla \vf(\xb_i(t,x)) \partial_t \xb_i(t,x) \wjib(t,x)d\p(x) + \ird  \vf(\xb_i(t,x)) \partial_t \wjib(t,x)d\p(x)\\
 &=\ird \nabla \vf(\xb_i(t,x)) \wjib(t,x)\sum_{k=1}^N  \ird K_{1}(\xb_i(t,x)-y)\fik(t,dy) d\p(x)\\
 &+  \ird  \vf(\xb_i(t,x)) \wjib(t,x)\left[A-\sum_{k=1}^N \ird K_{2}(\xb_i(t,x)-y)\fik(t,dy)\right]d\p(x)\\
 & = \ird \nabla \vf(x) \sum_{k=1}^N  \ird K_{1}(x-y)\fik(t,dy) \fji(t,dx)\\
 &+ \ird  \vf(x) \left[A-\sum_{k=1}^N  \ird K_{2}(x-y)\fik(t,dy)\right]\fji(t,dx).
\end{align*}
Hence, indeed $\fji$ is a measure solution to (\ref{semic}), which finishes the proof of the theorem.
\end{proof}

\section{Vector-valued dynamic extended graphons and  the mean-field limit}

In view of Theorem \ref{mainprob}, in order to prove Theorem \ref{maintheorem} we ought to show the convergence of $\frac{1}{N}\sum_{i=1}^N\sum_{j=1}^N\fji(t)$ to $\izj\izj f(t,\cdot,\xi,d\eta)d\xi$, where $f$ is a solution to (\ref{meanmean}). 
We begin with the crucial definition of the vector-valued extended graphon space $\W(X)$. Then, the second section of this chapter contains the solvability result for (\ref{meanmean}) and the verification that
\[
f^N(t,x,\xi,\eta):= N\sum_{i=1}^N \sum_{j=1}^N \fji(t,x)\mathbb{I}_{[\frac{j-1}{N},\frac{j}{N})}(\xi)\mathbb{I}_{[\frac{i-1}{N},\frac{i}{N})}(\eta)
\]
is a weak solution to (\ref{meanmean}) with $\izj \izj f^N(t,x,\xi,d\eta)d\xi =\frac{1}{N}\sum_{i=1}^N\sum_{j=1}^N\fji(t)$. 
In the third section, inspired by \cite{JPS2025},  we introduce new tree-based observables and establish their crucial properties. In subsequent section we utilize the newly defined observables to establish the stability result for solutions to (\ref{meanmean}). The final section contains the proof of the main result.
\subsection{Introduction to function spaces }

As our framework involves functions of four variables, we naturally introduce vector-valued measures. For a comprehensive treatment of vector-valued measures, we refer to the classic monograph \cite{DU1977}.

\begin{defi}\label{measurex}
Let $X$ be a Banach space. By $\mathcal{M}([0,1];X)$ we denote the space of regular Borel $X$-valued measures of bounded variations, i.e. $\mu \in \mathcal{M}([0,1];X)$ if
\begin{itemize}
\item $\mu:\mathcal{B}([0,1])\rightarrow X$,
\item for every sequence of pairwise disjoint $E_n \in \mathcal{B}([0,1])$ there holds $\mu\left(\cup_{n=1}^{\infty}E_n\right) = \sum_{n=1}^{\infty}\mu(E_n)$ in the norm topology of $X$,
\item its total variation $\abs{\mu}([0,1])$ is finite, where the variation of $\mu$ on $E \in \mathcal{B}([0,1])$ is defined as follows $\abs{\mu}(E):=\sup\sum_{i=1}^n\norm{\mu(E_i)}_{X}$ and the supremum is taken over all finite partitions $\{E_i\}$ of $A$ into disjoint Borel sets.
\item For every $E \in \mathcal{B}([0,1])$ and $\ve > 0$ there exists a compact set $K\subset [0,1]$ and open set $O\subset [0,1]$ such that $K\subset E\subset O$ and $\abs{\mu}(O\setminus K) < \ve$.
\end{itemize}
\end{defi}

\begin{remark}
By the extension of classical Riesz representation lemma (Singer representation lemma)\cite{Singer} the space $\mathcal{M}([0,1];X^*)$ may be identified with the space of linear functionals of $C([0,1];X)$ for any Banach space $X$. Furthermore, the identification holds by the following relation: for any $\vf \in (C([0,1];X))^{*}$ there exists $\mu \in \mathcal{M}([0,1];X^*)$ such that for any $g \in C([0,1];X)$ 
\eqq{
\vf(g) = \int_0^1\langle g(t),\mu_t\rangle \hd \m{ and } \hd \norm{\vf}=\abs{\mu}([0,1]).
}{intmu}
On the other hand, for any $\mu \in \mathcal{M}([0,1];X^{*})$ the integral above defines the linear functional on $C([0,1];X)$ with its norm equal to the total variation norm of $\mu$.
The integral in (\ref{intmu}) is defined as follows. For any simple function $g=\sum_{i=1}^n x_i \chi_{E_i}(t)$, $x_i \in X$, $E_i \in \mathcal{B}([0,1])$ we define
\[
\int_0^1\langle g(t),\mu_t\rangle = \sum_{i=1}^n \langle x_i,\mu(E_i)\rangle.
\]
Since  any $g \in C([0,1];X)$ is a uniform limit of simple functions $g^n$ and $\mu$ is of bounded variation we may define
\[
\int_0^1\langle g(t),\mu_t\rangle = \lim_{n\rightarrow \infty} \int_0^1\langle g^n(t),\mu_t\rangle
\]
where the limit does not depend on the approximating sequence.
\end{remark}

Below we extend the definition of extended graphon $\W:=L^{\infty}_\xi\mathcal{M}_\eta \cap L^{\infty}_\eta\mathcal{M}_\xi$ introduced in \cite[Definition~4.5 and Definition~4.6]{JPS2025} into vector-valued framework.

\begin{defi}\label{space}
Let $X$ be a Banach space. We denote by  $L^\infty_\xi(\mathcal{M}_\eta(X^*))$ the weak$^*$ Bochner space which is dual  to $L^1_\xi((0,1); C_\eta([0,1];X))$, i.e., $f\in L^\infty_\xi(\mathcal{M}_\eta(X^*))$  if the map  $\xi\in [0,\ 1]\mapsto f(\xi,\cdot,\cdot)\in \mathcal{M}([0,\ 1];X^*)$ is  weak$^*$ measurable (for every $\phi \in C([0,1];X)$ the map: $ 
\xi\mapsto \izj \langle\phi(\eta,\cdot)f(\xi,d\eta,\cdot)\rangle$ is measurable), essentially bounded and
\[
\Vert f\Vert_{L^\infty_\xi(\mathcal{M}_\eta(X^{*}))}:=\esssup_{\xi\in (0, 1)}\sup_{\Vert \phi\Vert_{C([0, 1];X)}\leq 1}\abs{\izj \langle\phi(\eta,\cdot)f(\xi,d\eta,\cdot)\rangle} < \infty.
\]
We define analogously $L^\infty_\eta(\mathcal{M}_\xi(X^*))$, introduce 
\begin{align*}
   \W(X^*): &= L^\infty_\xi(\mathcal{M}_\eta(X^*))\cap L^\infty_\eta(\mathcal{M}_\xi(X^*))\\
   &=(L^1_\xi((0,1); C_\eta([0,1];X)) + L^1_\eta((0,1); C_\xi([0,1];X)))^{*} = ((L^1_\xi C_\eta + L^1_\eta C_\xi)(X))^{*}
\end{align*}
and set 
\[
\norm{f}_{\W(X^{*})} :=\max\{\Vert f\Vert_{L^\infty_\xi(\mathcal{M}_\eta(X^{*}))}, \Vert f\Vert_{L^\infty_\eta(\mathcal{M}_\xi(X^{*}))}\}.
\]
By $\W$ we denote the scalar-valued space $L^{\infty}_\xi\mathcal{M}_\eta  \cap L^{\infty}_\eta\mathcal{M}_\xi$, (i.e. $X^*=\R$).

Note that the space $L^{1}((0,1)^2;X)$ is the ambient space for $(L^1_\xi C_\eta + L^1_\eta C_\xi)(X)$ and we set 
\begin{align*}
&\norm{u}_{(L^1_\xi C_\eta + L^1_\eta C_\xi)(X)}\\
&:= \inf\{\norm{u_1}_{L^1_\xi C_\eta(X)} + \norm{u_2}_{L^1_\eta C_\xi(X)}: u = u_1+u_2 \textrm{ in }L^{1}((0,1)^2;X), u_1 \in L^1_\xi C_\eta(X), u_2 \in L^1_\eta C_\xi(X) \}.
\end{align*}
\end{defi}

\begin{remark}(see \cite[page 683]{JPS2025})
For $\mu \in \mathcal{M}_\eta(X^*)$ denote by $\norm{\mu(\cdot)}_{X^*}(d\eta)$ the variation measure defined on Borel subsets of $[0,1]$ as in Definition \ref{measurex}. Then $\abs{\mu}([0,1]) = \izj \norm{\mu(\cdot)}_{X^*}(d\eta)$. In order to distinguish between the variables $\xi$ and $\eta$ for functions belonging to  $L^{\infty}_{\xi}(\mathcal{M}_\eta(X^*))$, we introduce the notation
\[
\norm{\mu}_{\mathcal{M}_{\eta}([0,1];X^*)}:= \abs{\mu}([0,1]).
\]
Let $X$ be separable Banach space. Then, also $C([0,1];X)$ is separable.  
Thus for $f \in L^{\infty}_{\xi}(\mathcal{M}_\eta(X^*))$the map: $
\xi \mapsto \norm{f(\xi,\cdot,\cdot)}_{\mathcal{M}_{\eta}([0,1];X^{*})}$ is measurable, essentially bounded and 
\[
\norm{f}_{L^{\infty}_{\xi}(\mathcal{M}_\eta(X^{*}))} = \esssup_{\xi\in (0,1)}\norm{f(\xi,\cdot,\cdot)}_{\mathcal{M}_\eta([0,1];X^{*})} = \esssup_{\xi\in (0,1)}\izj \norm{f(\xi,\cdot,\cdot)}_{X^{*}}(d\eta).
\]
\end{remark}

Finally, we incorporate also bounded dependence w.r.t. time.
Note that in our application we consider $f(t,x,\xi,\eta)$ where $(t,x)$ represent time and space (of states) and $(\xi,\eta)$ represent the extended graphon structure (labels). Hence, for notational purposes from now on, we will stick to the variable order $f(t,x,\xi,\eta)$, rather then quite unnatural  $f(t,\xi,\eta,x)$,  even though we consider $f \in L^{\infty}((0,t_{*});\W(X^{*}))$.

\begin{defi}\label{weakspacedef}
For a Banach space $X$ we define  
  $ L^{\infty}((0,t_{*});\W(X^{*}))$ as the weak$^*$ Bochner space which is dual to $L^{1}((0,t_{*});(L^1_\xi C_\eta + L^1_\eta C_\xi)(X))$, i.e. $f \in L^{\infty}((0,t_{*});\W(X^{*}))$ if for almost all $t\in (0,t_{*})$, $f(t) \in \W(X^{*})$, for any $\Phi \in ((L^1_\xi C_\eta + L^1_\eta C_\xi)(X))$ the mapping $t\mapsto \langle f(t), \Phi \rangle$ is measurable and
\[
\norm{f}_{L^{\infty}((0,t_{*});\W(X^{*}))}:=\esssup_{t\in (0,t_{*})}\sup_{\norm{\Phi}_{((L^1_\xi C_\eta + L^1_\eta C_\xi)(X))}\leq 1}\abs{\langle \Phi, f(t)  \rangle} < \infty.
\]
If $X$ is separable then  the norm above may be represented in the form 
\[
\norm{f}_{L^{\infty}((0,t_{*});\W(X^{*}))}=\esssup_{t \in (0,t_{*})} \hd\max\left\{\esssup_{\xi\in (0,1)}\izj \norm{f(t,\cdot,\xi,\cdot)}_{X^{*}}(d\eta), \esssup_{\eta\in (0,1)}\izj \norm{f(t,\cdot,\cdot,\eta)}_{X^{*}}(d\xi) \right\}.
\]

Analogously we define the spaces $L^{\infty}((0,t_{*});L^{\infty}_\xi(M_\eta(X^{*})))$ and $L^{\infty}((0,t_{*});L^{\infty}_\eta(M_\xi(X^{*})))$.
\end{defi}

One of the primary contributions in \cite{JPS2025} is extending the operator $\Phi\mapsto \int_0^1\Phi(\zeta)w(\cdot,d\zeta)$ for $w \in \W$ from continuous functions $\Phi$ to the one acting on merely essentially bounded functions. The result reads as follows.

\begin{lemma}\cite[Lemma 4.7]{JPS2025}\label{impoest}
The bilinear,  bounded operator
\[
\begin{array}{ccl}
\W \times C_\zeta([0,1]) & \longrightarrow & L^\infty_\xi((0,1)),\\
(w,\phi) & \longmapsto & \int_0^1\phi(\zeta)\,w(\cdot,d\zeta)
\end{array}
\]
 extends to a bounded operator from $\W \times L^\infty_\zeta((0,1))$ to $L^\infty_\xi((0,1))$. 
Furthermore, for any $w\in \W$ and $\phi\in L^\infty((0,1))$
\[
\left\|\int_0^1 \phi(\zeta)\,w(\cdot,d\zeta)\right\|_{L^1((0,1))}\leq \|w\|_{L^\infty_\zeta \mathcal{M}_\xi}\,\|\phi\|_{L^1((0,1))}, \hd 
\left\|\int_0^1 \phi(\zeta)\,w(\cdot,d\zeta)\right\|_{L^\infty((0,1))}\leq \|w\|_{L^\infty_\xi \mathcal{M}_\zeta}\,\|\phi\|_{L^\infty((0,1))}.
\]
In addition, for any uniformly bounded sequence $(\phi_n)_{n\in \mathbb{N}} \in L^{\infty}((0,1))$ such that $\phi_n\rightarrow \phi$ in $L^1((0,1))$ and a sequence   $w_n\overset{*}{\rightharpoonup} w$  in $\W$ the following convergence holds in weak-star topology on $L^{\infty}((0,1))$
  \[
\int_0^1\phi_n(\zeta)\,w_n(\cdot,d\zeta)\overset{*}{\rightharpoonup} \int_0^1 \phi(\zeta)w(\cdot,d\zeta).
\]
\end{lemma}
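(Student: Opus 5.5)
The plan is a density argument, built on the two norm estimates, which we prove first for continuous $\phi$ and then transfer by approximation; the weak-star statement follows at the end by duality.

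\emph{Step 1: the estimates for continuous $\phi$.} Fix $w\in\W$ and $\phi\in C([0,1])$.
\begin{itemize}
\item The $L^\infty$ bound is immediate from the definition of $L^\infty_\xi\mathcal{M}_\zeta$:
\[
\abs{\int_0^1\phi(\zeta)\,w(\xi,d\zeta)}\leq \norm{\phi}_{L^\infty}\,\abs{w(\xi,\cdot)}([0,1]) \quad\text{for a.e. } \xi .
\]
\item For the $L^1$ bound, test against $\psi\in L^\infty_\xi$ with $\norm{\psi}_{L^\infty}\le 1$. Read $\int\!\!\int \psi(\xi)\phi(\zeta)\,w(d\xi,d\zeta)$ through the $L^\infty_\zeta\mathcal{M}_\xi$ structure. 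The identification of the two disintegrations is justified because $w$ is a single functional on $L^1_\xi C_\zeta+L^1_\zeta C_\xi$.
\item The function $\psi(\xi)\phi(\zeta)$ belongs to $L^1_\xi C_\zeta$ only when $\psi$ is continuous. So we first take $\psi$ continuous, for which $\psi\otimes\phi$ lies in both spaces and the pairing is unambiguous. Estimating through the $\zeta$-disintegration gives at most $\norm{w}_{L^\infty_\zeta\mathcal{M}_\xi}\norm{\phi}_{L^1}$.
\item Then take the supremum over continuous $\psi$ with $\abs{\psi}\le1$. This recovers the $L^1$ norm, since continuous functions are weak-star dense in the unit ball of $L^\infty$ when tested against $L^1$ functions.
\end{itemize}

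\emph{Step 2: the extension.} Given $\phi\in L^\infty$, choose continuous $\phi_n\to\phi$ in $L^1$ with $\norm{\phi_n}_\infty\le\norm{\phi}_\infty$, for instance by mollification.
\begin{itemize}
\item By Step 1, $\int_0^1\phi_n\,w(\cdot,d\zeta)$ is Cauchy in $L^1_\xi$. Define the extension as its $L^1$ limit.
\item The limit does not depend on the approximating sequence, by the $L^1$ estimate applied to differences.
\item The $L^\infty$ bound passes to the limit along an a.e.-convergent subsequence. So does the $L^1$ bound.
\item Bilinearity is preserved, so the extended map is bounded from $\W\times L^\infty_\zeta$ into $L^\infty_\xi$ with both estimates intact.
\end{itemize}

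\emph{Step 3: the weak-star convergence.} Let $w_n\overset{*}{\rightharpoonup}w$ in $\W$ and let $\phi_n$ be uniformly bounded with $\phi_n\to\phi$ in $L^1$.
\begin{itemize}
\item The sequence $\int_0^1\phi_n\,w_n(\cdot,d\zeta)$ is bounded in $L^\infty$, because $\sup_n\norm{w_n}_\W<\infty$ by uniform boundedness. It therefore suffices to test against a dense class, say $\psi\in C([0,1])$.
\item Approximate $\phi$ in $L^1$ by a continuous $\tilde\phi$ and split
\[
\phi_n\, w_n-\phi\, w=(\phi_n-\phi)\,w_n+(\phi-\tilde\phi)\,w_n+\tilde\phi\,(w_n-w)+(\tilde\phi-\phi)\,w .
\]
\item The first, second and fourth pieces are small after pairing with $\psi$. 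This uses the $L^1$ estimate together with $\sup_n\norm{w_n}_{L^\infty_\zeta\mathcal{M}_\xi}<\infty$: it gives $\norm{\psi}_\infty$ times $\norm{\phi_n-\phi}_{L^1}$ or $\norm{\phi-\tilde\phi}_{L^1}$.
\item The third piece tends to zero because $\psi\otimes\tilde\phi\in L^1_\xi C_\zeta$, which is exactly what weak-star convergence in $\W$ tests against.
\end{itemize}

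The main obstacle is Step 1's $L^1$ bound. The difficulty is making rigorous that the functional $w$, when evaluated on $\psi\otimes\phi$, coincides with both of its disintegrations. The required identity is that the two iterated integrals agree on tensor products of continuous functions, which follows because such products lie in the intersection $L^1_\xi C_\zeta\cap L^1_\zeta C_\xi$ and $w$ is a single functional on the sum space. I would verify this identity first, and only then pass to general $\psi$.
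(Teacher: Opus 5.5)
Your proposal is correct and follows essentially the same scheme as the paper, which cites \cite{JPS2025} for this scalar statement and repeats the argument in its proof of Lemma~\ref{impoestv}: approximate $\phi$ by continuous functions, control the $L^1_\xi$ norm by transposing onto the $L^\infty_\zeta\mathcal{M}_\xi$ disintegration, and use the uniform $L^\infty$ bound to identify the limit. One small slip: it is membership in $L^1_\zeta C_\xi$ (not $L^1_\xi C_\zeta$) that requires $\psi$ to be continuous; since $\psi\otimes\phi_n\in L^1_\zeta C_\xi$ for every $\phi_n\in L^1$, with $\psi\otimes\phi_n\to\psi\otimes\phi$ strongly there, Step~3 can be done directly, as in the paper, by pairing the bounded weak-$*$ convergent $w_n$ with this strongly convergent sequence in the predual, without your auxiliary $\tilde\phi$.
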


Let us now extend the result from \cite[Lemma 4.7]{JPS2025} for vector-valued measures.

\begin{lemma}\label{impoestv}
Let $X$ be a separable Banach space. The bilinear,  bounded operator
\[
\begin{array}{ccl}
\W(X^*) \times C_\eta([0,1];X) & \longrightarrow & L^\infty_\xi((0,1)),\\
(f,\phi) & \longmapsto & \int_0^1\langle\phi(x,\eta),f(x,\cdot,d\eta)\rangle_{X\times X^*}
\end{array}
\]
 extends to a bounded operator from $\W(X^*) \times L^{\infty}_\eta((0,1);X)$ to $L^\infty_\xi((0,1))$.
Furthermore, for any $f\in \W(X^*)$ and $\phi\in L^\infty((0,1);X)$
\begin{align}\label{vecexta}
    \left\|\int_0^1\langle\phi(x,\eta),f(x,\cdot,d\eta)\rangle_{X\times X^*}\right\|_{L^1((0,1))}&\leq \|f\|_{L^\infty_\eta (\mathcal{M}_\xi(X^*))}\,\|\phi\|_{L^1((0,1);X)},\\ \nonumber
    \left\|\int_0^1\langle\phi(x,\eta),f(x,\cdot,d\eta)\rangle_{X\times X^*}\right\|_{L^\infty((0,1))}&\leq \|f\|_{L^\infty_\xi (\mathcal{M}_\eta(X^*))}\,\|\phi\|_{L^\infty((0,1);X)}.
\end{align}

In addition, for any uniformly bounded sequence $(\phi_n)_{n\in \mathbb{N}} \in L^{\infty}((0,1);X)$ such that $\phi_n\rightarrow \phi$ in $L^1((0,1);X)$ and a sequence   $f_n\overset{*}{\rightharpoonup} f$  in $\W(X^*)$ the following convergence holds in weak-star topology on $L^{\infty}((0,1))$
  \[
\int_0^1\langle\phi_n(x,\eta)\,f_n(x,\cdot,d\eta)\rangle_{X\times X^*}\overset{*}{\rightharpoonup} \int_0^1\langle\phi(x,\eta)\,f(x,\cdot,d\eta)\rangle_{X\times X^*}.
\]
\end{lemma}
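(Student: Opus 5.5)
The plan is to follow the proof of \cite[Lemma 4.7]{JPS2025}, replacing scalar test functions by $X$-valued ones and the scalar pairing by $\langle\cdot,\cdot\rangle_{X\times X^*}$. I would first prove both inequalities in (\ref{vecexta}) for continuous $\phi\in C([0,1];X)$. For the $L^\infty$ bound: for a.e.\ $\xi$ we have $\abs{\izj\langle\phi(\eta),f(\cdot,\xi,d\eta)\rangle}\le \norm{\phi}_{C([0,1];X)}\,\abs{f(\xi,\cdot)}([0,1])$ by the Singer representation (\ref{intmu}). This already gives $\norm{f}_{L^\infty_\xi(\mathcal{M}_\eta(X^*))}\norm{\phi}_{L^\infty}$, since for continuous $\phi$ the sup norm equals the $L^\infty$ norm.

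For the $L^1$ bound I would argue by duality. Take $\psi\in C([0,1])$ with $\norm{\psi}_\infty\le1$ and consider
\[
\izj\psi(\xi)\izj\langle\phi(\eta),f(\cdot,\xi,d\eta)\rangle\,d\xi=\langle f,\psi\otimes\phi\rangle .
\]
Here $\psi\otimes\phi\in L^1_\eta C_\xi(X)$, with $L^1_\eta C_\xi(X)$-norm at most $\norm{\phi}_{L^1(X)}$, so the pairing is bounded by $\norm{f}_{L^\infty_\eta(\mathcal{M}_\xi(X^*))}\norm{\phi}_{L^1((0,1);X)}$. To justify using the same duality pairing in both the $\xi$- and $\eta$-readings, I would use that both spaces sit inside the common ambient space $L^1((0,1)^2;X)$, as set up in Definition \ref{space}. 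Taking the supremum over $\psi$ then gives the $L^1$ estimate.

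For the extension, I would use that $C([0,1];X)$ is dense in $L^1((0,1);X)$. Given $\phi\in L^\infty((0,1);X)$, choose continuous $\phi_n$ with $\norm{\phi_n}_\infty\le \norm{\phi}_\infty$ and $\phi_n\to\phi$ in $L^1((0,1);X)$. One way is to mollify $\phi$ in $\eta$ after extending it by zero; Bochner mollification preserves the sup bound and converges in $L^1$. By the $L^1$ estimate the images form a Cauchy sequence in $L^1_\xi$, and by the $L^\infty$ estimate they are uniformly bounded. So they converge in $L^1$ to a limit that obeys both bounds, and the limit does not depend on the approximating sequence. Bilinearity passes to the limit. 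Separability of $X$ is what makes the measurability of $\xi\mapsto\norm{f(\xi)}$ and of the integrands available, as in the preceding remark.

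For the convergence statement, fix a test function $\psi\in L^1_\xi$; by the uniform $L^\infty$ bound it suffices to take $\psi\in C([0,1])$. I would split
\[
\langle f_n,\psi\otimes\phi_n\rangle-\langle f,\psi\otimes\phi\rangle=\langle f_n,\psi\otimes(\phi_n-\phi^\ve)\rangle+\langle f_n-f,\psi\otimes\phi^\ve\rangle+\langle f,\psi\otimes(\phi^\ve-\phi)\rangle,
\]
with $\phi^\ve$ continuous and $\norm{\phi^\ve-\phi}_{L^1(X)}<\ve$. The first and third terms are controlled by the $L^1$ estimate, using that $\sup_n\norm{f_n}_{\W(X^*)}<\infty$ by uniform boundedness. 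The middle term tends to zero by weak$^*$ convergence, because $\psi\otimes\phi^\ve\in L^1_\xi C_\eta(X)$.

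The main obstacle is the rigorous identification of the two disintegrations, $\xi\mapsto f(\cdot,\xi,d\eta)$ and $\eta\mapsto f(\cdot,d\xi,\eta)$, of the same functional on $L^1((0,1)^2;X)$. This is what lets the $L^1$ estimate use $L^\infty_\eta(\mathcal{M}_\xi(X^*))$ while the operator is defined through $L^\infty_\xi(\mathcal{M}_\eta(X^*))$. I would handle it by checking the identity on simple tensors $\psi(\xi)\chi(\eta)x$ with $x\in X$, which reduces to the scalar case of \cite{JPS2025} applied to the scalar measures $\langle x,f\rangle$, and then conclude by density together with separability of $X$.
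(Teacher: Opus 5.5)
Your proposal is correct, but it obtains the estimates by a different route than the paper. The paper reduces everything to the scalar Lemma~\ref{impoest}. For continuous $\phi_n$ it uses the pointwise domination $\abs{\izj\langle\phi_n(\eta),f(\cdot,\xi,d\eta)\rangle}\le\izj\norm{\phi_n(\eta)}_X\norm{f(\cdot,\xi,\cdot)}_{X^*}(d\eta)$. It then applies the scalar $L^1$ and $L^\infty$ bounds to the variation measures paired with $\norm{\phi_n}_X$. The extension is identified as a weak$^*$ limit in $\mathcal{M}([0,1])$, by testing with $\Psi\in C([0,1])$ and passing to the $\eta$-disintegration, and is then lifted to weak$^*$ in $L^\infty$. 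The convergence claim is proved by testing against $\Psi(\xi)\phi_n(\eta)\to\Psi(\xi)\phi(\eta)$ in $L^1_\eta C_\xi(X)$.

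You instead proceed in four direct steps:
\begin{itemize}
\item the $L^\infty$ bound comes from the Singer norm identity;
\item the $L^1$ bound comes from pairing $f$ with $\psi\otimes\phi\in L^1_\eta C_\xi(X)$;
\item the extension is a strong $L^1$-Cauchy limit, which gives well-definedness and both bounds at once;
\item the convergence follows from a three-term split with a continuous $\phi^\ve$, using weak$^*$ convergence only against continuous tensors.
\end{itemize}

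The paper's route is shorter because it recycles the scalar lemma. However, it tacitly needs that $\xi\mapsto\norm{f(\cdot,\xi,\cdot)}_{X^*}$ is an element of the scalar $\W$ with $L^\infty_\eta\mathcal{M}_\xi$ norm at most $\norm{f}_{L^\infty_\eta(\mathcal{M}_\xi(X^*))}$, i.e.\ that taking variations commutes with switching disintegrations. Your duality argument only uses that $f$ is a single functional on $(L^1_\xi C_\eta+L^1_\eta C_\xi)(X)$. So the agreement of the two readings on tensors is already built into Definition~\ref{space}, and your fallback via simple tensors $\psi(\xi)\chi(\eta)x$ is not needed.
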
  

The proof of lemma is based on the result in scalar case and we postpone it to the Appendix.

\begin{remark}
    In the paper we will mainly deal with two cases. In the first one $X^* = (L^{1}\cap L^{\infty})(\R^d)$ which is a dual space to separable $C_{0}(\R^d)+ L^{1}(\R^d)$. In the second case $X^*= (W^{1,1}\cap W^{1,\infty})(\R^d)$. To characterize the predual of  $(W^{1,1}\cap W^{1,\infty})(\R^d)$, we note that by \cite[Theorem 4.9]{Rudin}  if $Y$ is a Banach space, the dual of a quotient $Y / N$ is isometric to the annihilator $N^\perp \subset Y^*$. Setting $Y = (C_0(\mathbb{R}^d) + L^1(\mathbb{R}^d))^{d+1}$ and
    \[
    N = \left\{ (\phi_0, \phi_1, \dots, \phi_d) \in Y : \int_{\mathbb{R}^d} \left( \phi_0(x) \psi(x) + \sum_{j=1}^d \phi_j(x) \frac{\partial \psi}{\partial x_j}(x) \right) dx = 0 \quad \text{for all } \psi \in C_c^\infty(\mathbb{R}^d) \right\}
    \]
    the quotient space $X = Y / N$ is a separable Banach space satisfying $X^* \cong W^{1,1}(\mathbb{R}^d) \cap W^{1,\infty}(\mathbb{R}^d)$ via the gradient embedding $u \mapsto (u, \nabla u)$.

    Finally, we remark that we say that $g\in L^{\infty}((0,1);(L^1\cap L^{\infty})(\R^d))$ if for almost all $\eta \in (0,1)$ the mapping $\eta\mapsto g(\cdot,\eta) \in L^{1}(\R^d)$ is strongly measurable and essentially bounded, i.e.
\[
\norm{g}_{L^{\infty}( (0,1);(L^1\cap L^{\infty})(\R^d))}:= \max\{\esssup_{\eta \in  (0,1)}\norm{g(\cdot,\eta)}_{L^{1}(\R^d)}, \esssup_{(\eta,x) \in (0,1)\times \R^d}|g(x,\eta)| \} < \infty.
\]
\end{remark}

We finish this section establishing two technical lemmas, which proofs may be found in Appendix. The first one is utilized later to establish that the algebra of transforms $ \mathcal{F}$ is well defined (Definition \ref{defalgebra}). The second one is a useful result that we apply several times in subsequent proofs. 

\begin{lemma} \label{coro2}
For any $g \in L^{\infty}((0,t_{*})\times (0,1);(L^1\cap L^{\infty})(\R^d))$ and  $f \in L^{\infty}((0,t_{*});\W((L^1\cap L^{\infty})(\R^d)))$ the integral
\[
\vf(t,x,y,\xi):=\izj g(t,x,\eta)f(t,y,\xi,d\eta),
\]
for almost all $(t,x,y)$ understood as an extension operator from Lemma \ref{impoest}, is well defined element of $L^{\infty}((0,t_{*})\times (0,1);(L^1\cap L^{\infty})(\R^{2d}))$ with 
\eqq{
\norm{\vf(t,\cdot,\cdot,\cdot)}_{L^{\infty}( (0,1);L^{p}(\R^{2d}))}  \leq \norm{g(t,\cdot,\cdot)}_{L^{\infty}( (0,1);L^{p}(\R^{d}))} \norm{f(t,\cdot,\cdot,\cdot)}_{\W(L^{p}(\R^d)))}, \hd p \in (1,\infty]
}{coro2ei}
and
\eqq{
\norm{\vf(t,\cdot,\cdot,\cdot)}_{L^{\infty}((0,1);L^{1}(\R^{2d}))}  \leq \norm{g(t,\cdot,\cdot)}_{L^{\infty}((0,1);L^{1}(\R^{d}))} \norm{f(t,\cdot,\cdot,\cdot)}_{\W(\mathcal{M}(\R^d))} \m{ for } a.a.\hd t \in (0,t_{*}).
}{coro2ej}
\end{lemma}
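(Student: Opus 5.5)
The plan is to reduce Lemma \ref{coro2} to the vector-valued extension of Lemma \ref{impoestv}, applied pointwise in $(t,x)$, and then to pass to the $y$-integrals by duality. I will fix $t$ (a.e.) and $x$, and regard $\eta\mapsto g(t,x,\eta)$ as a bounded scalar function. For a.e.\ $x$, Fubini gives $g(t,x,\cdot)\in L^\infty((0,1))$, with $\abs{g(t,x,\eta)}\le \norm{g}_{L^\infty}$. I will view $f(t,\cdot,\xi,d\eta)$ as an $X^*$-valued measure with $X^*=(L^1\cap L^\infty)(\R^d)$ (the $y$ variable). Then $\vf(t,x,\cdot,\xi)=\izj g(t,x,\eta)f(t,\cdot,\xi,d\eta)$ is defined as an element of $X^*$ for a.e.\ $\xi$. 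Concretely, I test against $\psi\in X=C_0+L^1$ and set $\phi(\eta)=g(t,x,\eta)\psi$. Lemma \ref{impoestv} gives $\langle\psi,\vf(t,x,\cdot,\xi)\rangle$ as an $L^\infty_\xi$ function bounded by $\norm{f}_{L^\infty_\xi\mathcal{M}_\eta(X^*)}\,\norm{g(t,x,\cdot)}_{L^\infty}\norm{\psi}_X$. To make the definition independent of $\psi$ (so that one obtains an $X^*$-valued function of $\xi$), I will approximate $g(t,x,\cdot)$ in $L^1_\eta$ by continuous functions that are bounded uniformly. Linearity and the convergence statement of Lemma \ref{impoest}, together with separability of $X$, then give a weak$^*$-measurable $X^*$-valued map.

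For the quantitative bounds I will first treat $g$ that are simple, or continuous in $\eta$ with values in $L^1\cap L^\infty(\R^d_x)$. In that case $\vf(t,x,y,\xi)=\izj g(t,x,\eta)f(t,y,\xi,d\eta)$ and I can estimate directly. For $p\in(1,\infty]$, Minkowski's integral inequality in $y$, used together with the variation measure $\norm{f(t,\cdot,\xi,\cdot)}_{L^p}(d\eta)$, gives
\[
\norm{\vf(t,x,\cdot,\xi)}_{L^p_y}\le \izj \abs{g(t,x,\eta)}\,\norm{f(t,\cdot,\xi,\cdot)}_{L^p}(d\eta).
\]
Taking the $L^p_x$ norm and applying Minkowski again yields $\sup_\eta\norm{g(t,\cdot,\eta)}_{L^p}\cdot\izj\norm{f(t,\cdot,\xi,\cdot)}_{L^p}(d\eta)$. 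The essential supremum in $\xi$ of the latter factor is bounded by $\norm{f}_{\W(L^p)}$, and this gives \eqref{coro2ei}. The $L^1$ bound \eqref{coro2ej} follows in the same way, with the total-variation (measure) norm in $y$ replacing the $L^1$ norm, i.e.\ with $\W(\mathcal{M}(\R^d))$.

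General $g$ will then be handled by density. I take $g_n$ that are simple in $\eta$, uniformly bounded in $L^\infty_\eta(L^1\cap L^\infty)$, and converging to $g$ in $L^1_\eta$ with values in $L^1\cap L^\infty$ for a.e.\ $x$. The extension in Lemma \ref{impoestv} is continuous along such sequences (weak$^*$ in $L^\infty_\xi$), and norms are weak$^*$ lower semicontinuous. Hence the bounds pass to the limit, and the limit agrees with the extension operator.

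I expect the main obstacle to be measurability: one has to show that the resulting $\vf$ is jointly measurable in $(t,x,y,\xi)$ and belongs to $L^\infty((0,t_*)\times(0,1);(L^1\cap L^\infty)(\R^{2d}))$, since $g$ in $\eta$ is only bounded and not strongly measurable in the strong sense. I would address this by testing against products $\chi(t)\theta(x)\psi(y)\rho(\xi)$, using the measurability of $t\mapsto\langle f(t),\Phi\rangle$ from Definition \ref{weakspacedef} together with Fubini. Separability of the preduals then lets these scalar measurabilities determine the element.
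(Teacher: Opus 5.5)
Your proposal is correct in outline but is organized differently from the paper's proof.

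The paper never fixes $x$. For measurability it invokes Pettis' theorem in the separable space $L^1(\R^{2d})$. It tests $\vf(t,\cdot,\cdot,\xi)$ against products $\alpha(x)\beta(y)$ with $\alpha,\beta\in C_c(\R^d)$ and integrates $x$ out first, so the pairing becomes $\izj\langle G(t,\cdot,\eta),f(t,\cdot,\xi,d\eta)\rangle$ with $G(t,y,\eta)=\beta(y)\ird\alpha(x)g(t,x,\eta)\,dx\in L^\infty((0,t_*)\times(0,1);C_c(\R^d))$. This is measurable in $(t,\xi)$ by Lemma~\ref{impoestv} and Definition~\ref{weakspacedef}. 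A general $\Phi\in L^\infty(\R^{2d})$ is then reached by a.e.\ approximation with sums of such products and dominated convergence. This avoids the $x$-dependent exceptional $\xi$-sets you have to reconcile when you define $\vf(t,x,\cdot,\xi)$ separately for each $x$.

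For the bounds, the paper writes $\norm{\vf(t,\cdot,\cdot,\xi)}_{L^p(\R^{2d})}\le\izj\norm{g(t,\cdot,\eta)}_{L^p(\R^d)}\norm{f(t,\cdot,\xi,\cdot)}_{L^p(\R^d)}(d\eta)$ directly for general $g$. It then applies the scalar Lemma~\ref{impoest} to $\eta\mapsto\norm{g(t,\cdot,\eta)}_{L^p(\R^d)}$ against the variation measure, so no density step appears.

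Your route proves the bound for simple or continuous-in-$\eta$ data and passes to the limit through the continuity part of Lemma~\ref{impoestv} and weak$^*$ lower semicontinuity. It is longer, but it actually justifies the Minkowski-type inequality for data merely bounded in $\eta$, which the paper only asserts. It also gives a precise meaning to the integral, with $y$ as the Banach-space variable.

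Two small points on your version:
\begin{enumerate}
\item You can only ask the approximants $g_n$ to converge in $L^1_\eta(L^1_x)$, since $g$ need not be strongly measurable into $L^\infty(\R^d)$. Together with uniform $L^\infty_\eta(L^p)$ bounds (e.g.\ averages of $g$ over dyadic partitions of $(0,1)$), that is all you actually use.
\item For $1<p<\infty$ the space $L^\infty((0,1);L^p(\R^{2d}))$ is the dual of $L^1((0,1);L^{p'}(\R^{2d}))$. So a weak$^*$ limit of your approximants is automatically a strongly measurable element there, which can spare you much of the measurability bookkeeping you anticipate.
\end{enumerate}
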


\begin{lemma}\label{hmj}
Let $f \in \W((L^1\cap L^{\infty})(\R^d))$. Let us extend $f$ periodically in $\xi$, convolve with standard smoothing periodic kernel $\rho^\ve$ in $\xi$ variable and  denote $f^{\ve}(x,\xi,\eta)=\izj \rho^\ve(\xi-\xi')f(x,d\xi',\eta)$. Then $f^\ve \in L^{\infty}((0,1)^2\times \R^d)$,  $f^\ve \rightarrow f$ in $L^{1}_\xi(H^{-1}_\eta;L^{1}(\R^d))$ and in  $L^{1}_\eta(H^{-1}_\xi;L^{1}(\R^d))$.
Moreover, for $Y=L^{\infty}(\R^d)$ or $Y=\mathcal{M}(\R^d)$  there hold the estimates $\norm{f^\ve}_{\W(Y)} \leq \norm{f}_{\W(Y)}$. Finally, for any $g, g^{\ve} \in L^{1}((0,1)\times \R^{dn}) \cap L^{\infty}((0,1);L^{1}(\R^{dn})) $ such that $g^\ve \rightarrow g$ in $L^{1}((0,1)\times \R^{dn})$, we have
\[
\izj g^{\ve}(y_1,\dots,y_n,\eta) f^{\ve}(x,\cdot,d\eta) \rightarrow \izj g(y_1,\dots,y_n,\eta) f(x,\cdot,d\eta) \m{ in } L^{1}_\xi((0,1)\times \R^{d(n+1)}), 
\]
\[
 \izj g^{\ve}(y_1,\dots,y_n,\xi) f^{\ve}(x,d\xi,\cdot) \rightarrow \izj g(y_1,\dots,y_n,\xi) f(x,d\xi,\cdot) \m{ in } L^{1}_\eta((0,1)\times \R^{d(n+1)}).
\]
\end{lemma}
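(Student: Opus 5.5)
The plan is to prove the three assertions of Lemma \ref{hmj} in the order given, reducing each to the scalar theory of \cite{JPS2025} applied pointwise in $x$, or to Lemma \ref{impoest} and Lemma \ref{impoestv}.

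\textbf{Boundedness and norm estimates.} I will use that $\rho^\ve$ is smooth, nonnegative, periodic, with $\int_0^1\rho^\ve=1$. For fixed $\eta$, the $\xi$-marginal $f(\cdot,d\xi',\eta)$ is a $Y$-valued measure of total variation at most $\norm{f}_{L^\infty_\eta\mathcal{M}_\xi(Y)}$. Testing against the continuous function $\rho^\ve(\xi-\cdot)$ then gives
\[
\norm{f^\ve(\cdot,\xi,\eta)}_{Y}\leq \norm{\rho^\ve}_{L^\infty}\norm{f}_{L^\infty_\eta\mathcal{M}_\xi(Y)}.
\]
With $Y=L^\infty(\R^d)$ this shows $f^\ve\in L^\infty((0,1)^2\times\R^d)$. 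For the $\W(Y)$ bound I treat the two norms separately.
\begin{itemize}
\item In $L^\infty_\eta\mathcal{M}_\xi(Y)$: integrating the variation in $\xi$ and using Fubini together with $\int\rho^\ve=1$ gives $\int_0^1\norm{f^\ve(\cdot,\xi,\eta)}_Y\,d\xi\leq \int_0^1\norm{f(\cdot,\cdot,\eta)}_Y(d\xi')$.
\item In $L^\infty_\xi\mathcal{M}_\eta(Y)$: I pair with a test $\phi\in C([0,1];X)$ in $\eta$ and move the convolution onto the $\xi'$ variable. This exhibits the pairing as an average, with weights $\rho^\ve(\xi-\xi')\,d\xi'$, of the quantities $\langle \phi, f(\cdot,\xi',\cdot)\rangle$, each bounded by $\esssup_{\xi'}\norm{f(\cdot,\xi',\cdot)}_{\mathcal{M}_\eta(Y)}$.
\end{itemize}
Justifying this Fubini exchange is routine via the weak$^*$ measurability in Definition \ref{space} and the duality with $L^1_\xi C_\eta+L^1_\eta C_\xi$.

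\textbf{Convergence in $H^{-1}$.} I will mimic the scalar argument of \cite{JPS2025}, now with values in $L^1(\R^d)$. In one dimension, $H^{-1}_\eta$ can be controlled by testing against the primitive
\[
F(\xi,\eta,x)=\int_0^\eta f(x,\xi,d\eta').
\]
Then $\norm{f^\ve-f}_{L^1_\xi H^{-1}_\eta(L^1)}$ is controlled, up to periodic boundary adjustments, by $\norm{\rho^\ve*_\xi F-F}_{L^1_\xi L^2_\eta L^1_x}$. Since $F\in L^\infty_{\xi,\eta}(L^1_x)$, the measure being bounded uniformly in $\xi$, it lies in $L^1_\xi(L^2_\eta L^1_x)$. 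Mollification in $\xi$ converges in this Bochner $L^1$ space by the standard approximation-of-identity theorem for Bochner functions.

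The $L^1_\eta(H^{-1}_\xi)$ statement is where I expect the \textbf{main obstacle}, because mollification acts in the same variable as the $H^{-1}$ norm. Here I will write $F'(\xi,\eta,x)=\int_0^\xi f(x,d\xi',\eta)$ and use that $\rho^\ve$ commutes with the primitive up to a constant, namely the total mass $\int_0^1 f(x,d\xi',\eta)$. That constant is harmless after periodic extension, or can be absorbed by subtracting the mean. The required convergence then again follows from mollification of a bounded Bochner function.

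\textbf{Convergence of the integrals.} I will split
\[
\int_0^1 g^\ve f^\ve(d\eta)-\int_0^1 g f(d\eta)=\int_0^1(g^\ve-g)f^\ve(d\eta)+\Big(\int_0^1 g f^\ve(d\eta)-\int_0^1 g f(d\eta)\Big).
\]
\begin{itemize}
\item The first term is bounded in $L^1$ by $\norm{f^\ve}_{\W(\mathcal M)}\norm{g^\ve-g}_{L^1}$, using the $L^1$ estimate \eqref{vecexta} of Lemma \ref{impoestv} (or Lemma \ref{coro2}) with the uniform $\W(\mathcal{M})$ bound established above.
\item For the second term I approximate $g$ in $L^1((0,1)\times\R^{dn})$ by functions $\tilde g$ that are smooth, hence $H^1$, in $\eta$ with values in $L^1$. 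Pairing $\tilde g$ with $f^\ve-f$ through the $H^{-1}_\eta$ convergence just proved gives convergence in $L^1_\xi$. The approximation error is controlled uniformly in $\ve$, again by the $L^1$ estimate with uniform $\W$ bounds.
\end{itemize}
The $\xi$-integral version is handled symmetrically, using the $L^1_\eta(H^{-1}_\xi)$ convergence.
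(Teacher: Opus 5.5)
Your proposal is correct and follows essentially the paper's route. Both arguments get the $\W(Y)$ bounds from Fubini and $\int_0^1\rho^\ve=1$, get the $H^{-1}$ convergences from mollifier convergence in Bochner spaces, and split the final convergence into $\int_0^1(g^\ve-g)f^\ve(d\eta)$ plus $\int_0^1 g\,(f^\ve-f)(d\eta)$ with a smooth approximation of $g$. Where the paper only calls the $H^{-1}$ convergences standard, you make them explicit through primitives in $\eta$ and $\xi$, including the total-mass constant in the $\xi$ case.

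One detail to fix: take the smooth approximation of $g$ compactly supported in $\eta\in(0,1)$, as the paper does with $\Psi^N\in C_c((0,1)\times\R^d)$, so that pairing with $H^{-1}_\eta$ produces no boundary terms.
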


\subsection{Solvability of the limiting equation}

In this section we establish the solvability of the limiting equation. To facilitate the regularizing approximation in subsequent results, we introduce a dissipative term with $\nu \geq 0$. Hence, we discuss the following problem
\eqq{
\partial_t f(t,x,\xi,\eta) + \divv ( f(t,x,\xi,\eta) V_1[f](t,x,\eta)) =  f(t,x,\xi,\eta) (A -V_2[f](t,x,\eta))+\nu \Delta f(t,x,\xi,\eta),
}{meannu}
where we recall that for $j=1,2$
\[
V_j[f](t,x,\eta) = \izj \ird  K_{j}(x-y) f(t,y,\eta,d\zeta)dy.
\]
The expression above shall be understood as the integral in (\ref{intmu}).
We define a weak solution to (\ref{meannu}) as follows. Let $K_1,K_2 \in L^{1}(\R^d), \nu \geq 0$. We say that $f\in L^{\infty}((0,t_{*});\W((L^{1}\cap L^{\infty})(\R^d))$ is a weak solution to \eqref{meannu} if 
\begin{align*}\label{weakdef}
&\int_0^{t_{*}}\izj \izj \ird\partial_t \varphi(t,x,\xi,\eta) f(t,x,\xi,d\eta) dx d\xi dt \\ \nonumber
 +& \int_0^{t_{*}}\izj \izj\int_{\mathbb{R}^d}\nabla\varphi(t,x,\xi,\eta)\cdot  V_{1}[f](t,x,\eta)f(t,x,\xi,d\eta)dx d\xi dt\\ \nonumber
+\nu&\int_0^{t_{*}}\izj \izj \int_{\mathbb{R}^d}\Delta\varphi(t,x,\xi,\eta)\,  f(t,x,\xi,d\eta) dx\,d\xi dt\\ \nonumber
+&\int_0^{t_*}\izj\izj \int_{\mathbb{R}^d}\varphi(t,x,\xi, \eta) [A-V_{2}[f](t,x,\eta)]\,f(t,x,\xi,d\eta) dx\,d\xi dt=0
\end{align*}
for any $\varphi\in W^{1,\infty}((0,\ t_*);C_c^2(\mathbb{R}^d);C([0,1]^2)) \cap C_{c}((0,t_*);C_c^2(\mathbb{R}^d);C([0,1]^2))$.
Note that the terms involving $V_1$ and $V_2$ are well defined due to Lemma \ref{impoestv}.

Moreover, Young inequality for convolution leads to 
\eqq{
\norm{V_i[f](t,\cdot,\cdot)}_{L^{\infty}((0,1)\times \R^d)}  \leq \norm{K_i}_{L^1(\R^d)} \esssup_{\eta \in (0,1)}\izj \norm{f(t,\cdot,\eta,\cdot)}_{L^{\infty}(\R^d)}(d\zeta)\leq \norm{K_i}_{L^1(\R^d)} \norm{f(t,\cdot,\cdot,\cdot)}_{\W(L^{\infty}(\R^d))}.
}{v1}
Similarly we obtain
\begin{align}
  \norm{\nabla V_i[f](t,\cdot,\cdot)}_{L^{\infty}((0,1)\times \R^d)}  &\leq  \norm{K_i}_{L^1(\R^d)} \norm{\nabla f(t,\cdot,\cdot,\cdot)}_{\W(L^{\infty}(\R^d))},\label{v2}\\
  \norm{V_i[f](t,\cdot,\cdot)}_{L^{\infty}((0,1);L^{1}(\R^d))}   &\leq \norm{K_i}_{L^1(\R^d)} \norm{ f(t,\cdot,\cdot,\cdot)}_{\W(\mathcal{M}(\R^d))}, \label{v3}\\
  \norm{\nabla V_i[f](t,\cdot,\cdot)}_{L^{\infty}((0,1);L^{1}(\R^d))}   &\leq \norm{K_i}_{L^1(\R^d)} \norm{ \nabla f(t,\cdot,\cdot,\cdot)}_{\W(\mathcal{M}(\R^d))}, \label{v4}
\end{align}
and finally
\eqq{
\norm{\divv V_1[f](t,\cdot,\cdot)}_{L^{\infty}((0,1);W^{1,\infty}(\R^d))}  \leq  \norm{\divv K_1}_{L^1(\R^d)} \left(\norm{  f(t,\cdot,\cdot,\cdot)}_{\W(L^{\infty}  (\R^d))} + \norm{ \nabla f(t,\cdot,\cdot,\cdot)}_{\W(L^{\infty}  (\R^d))} \right).
}{v5}

The above estimates are essential in establishing the following solvability result.

\begin{prop}
Let  $K_i\in L^1(\R^d)$ for $i=1,2$, $\divv K_1\in L^1(\R^d)$,  $K_2 \geq 0$ and  $f^0\in  \W((W^{1,1}\cap W^{1,\infty})(\R^d))$ with $f^0 \geq 0$. Then for any $\nu\geq 0$, there exists nonnegative $f\in L^\infty((0,\ t_*);\W(( W^{1,1}\cap W^{1,\infty})(\R^d)))$ such that $f$ is weak$^*$ continuous in $\W((W^{1,1}\cap W^{1,\infty})(\R^d))$, $f(t,\cdot,\cdot,\cdot)\Big|_{t=0} = f^0$ and $f$ is a weak solution to
(\ref{meannu}).
\label{existenceprop}
\end{prop}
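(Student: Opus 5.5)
The plan is to construct $f$ as the limit of a linearized iteration (equivalently, a fixed point) after first regularizing the datum in the label variables. The linearized problems can then be solved pointwise in $(\xi,\eta)$, and every estimate is taken in the $\W(X^*)$ norms of Definition~\ref{space}.

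\textbf{Step 1 (regularized data).} Using Lemma~\ref{hmj} I replace $f^0$ by $f^{0,\ve}$, mollified in $\xi$ and then symmetrically in $\eta$. The bound $\norm{f^{0,\ve}}_{\W(Y)}\le\norm{f^0}_{\W(Y)}$ holds for $Y=L^\infty(\R^d)$ and for $Y=\mathcal{M}(\R^d)$. Applying it also to $\nabla f^0$, it holds for the $W^{1,1}\cap W^{1,\infty}$ norms. Nonnegativity is preserved, and $f^{0,\ve}$ is a bounded function of $(x,\xi,\eta)$.

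\textbf{Step 2 (linear problem for fixed $g$).} Given $g\ge0$ in a ball of $L^\infty((0,T);\W((W^{1,1}\cap W^{1,\infty})(\R^d)))$, I solve
\[
\partial_t f+\divv(fV_1[g])=f(A-V_2[g])+\nu\Delta f
\]
separately for each $(\xi,\eta)$. The coefficients depend only on $(t,x,\eta)$. I use characteristics when $\nu=0$ and a Duhamel/parabolic formulation when $\nu>0$. By (\ref{v1})–(\ref{v5}), applied with $g$ in place of $f$:
\begin{itemize}
\item $V_1[g]$ and $\nabla V_1[g]$ are bounded in $L^\infty$ by $\norm{K_1}_{L^1}$ times the $\W(W^{1,\infty})$ norm of $g$;
\item $\divv V_1[g]$ and $\nabla\divv V_1[g]$ are bounded by $\norm{\divv K_1}_{L^1}$ times the same norm;
\item $V_2[g]\ge0$, because $K_2\ge0$ and $g\ge0$.
\end{itemize}
These are exactly the quantities required to differentiate the equation once in $x$. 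The term $\partial_k\divv(fV)$ equals $\divv(\partial_k fV)+\nabla f\cdot\partial_kV+f\,\partial_k\divv V$. Hence only $K_1,\divv K_1\in L^1$ is used, and the stated hypotheses suffice.

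Testing with $\mathrm{sgn}$ (for $L^1$) and taking suprema (for $L^\infty$) gives, for a.e. $(\xi,\eta)$,
\[
\partial_t\norm{f}_{W^{1,p}}\le \big(A+C\norm{g}_{\W(W^{1,\infty})}\big)\norm{f}_{W^{1,p}},\qquad p=1,\infty.
\]
The solution map $f^{0}(\cdot,\xi,\eta)\mapsto f(t,\cdot,\xi,\eta)$ is linear, positive, and depends on $\eta$ only. Integrating in $\eta$ (respectively $\xi$) and taking the essential supremum in the other variable therefore transfers this pointwise bound to the $\W((W^{1,1}\cap W^{1,\infty})(\R^d))$ norm. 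Positivity of $f$ follows from the representation along characteristics (or the maximum principle).

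\textbf{Step 3 (nonlinear iteration).} I set $f_{n+1}$ to be the solution of the linear problem with $g=f_n$. The bound of Step~2 keeps the iterates in a fixed ball on a time interval $T$ that depends only on the norm of $f^0$, uniformly in $\ve$ and $\nu$. Stability for the differences is obtained in the weaker norm $\W(L^1(\R^d))$, where
\[
\norm{V_i[f_n]-V_i[f_{n-1}]}_{L^\infty}\le \norm{K_i}_{L^\infty}\,\cdots
\]
is not available. I therefore instead use $\norm{K_i}_{L^1}\norm{f_n-f_{n-1}}_{\W(L^\infty)}$ together with the uniform $W^{1,\infty}$ bounds. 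Alternatively, compactness suffices: I extract a weak$^*$ limit in $L^\infty((0,T);\W(W^{1,1}\cap W^{1,\infty}))$, using equicontinuity in time in a negative Sobolev norm. I then pass to the limit in the nonlinear terms via Lemma~\ref{impoestv}. The input is strong $L^1$ convergence in $x$ (Lemma~\ref{hmj}), coupled with the uniform $W^{1,1}$ bounds and the resulting local compactness in $x$. The same argument then removes $\ve\to0$. Weak$^*$ continuity in time comes from the weak formulation, and $f|_{t=0}=f^0$ from $f^{0,\ve}\to f^0$.

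\textbf{Step 4 (reaching $t_*$), the main obstacle.} The growth of the Sobolev norm is quadratic, since the velocity gradient is controlled by $\nabla f$ itself. The mass part is harmless: $V_2\ge0$ gives $\norm{f(t)}_{\W(\mathcal{M})}\le e^{At}\norm{f^0}_{\W(\mathcal{M})}$. My first attempt is to close the $L^\infty$ and $L^1$ bounds of $f$ and $\nabla f$ along characteristics by Gronwall with a coefficient $C(1+\norm{f}_{\W(W^{1,\infty})})$. I would then try to show that the Jacobian of the flow stays controlled, using $\divv V_1$, which is bounded in terms of $\norm{f}_{\W(L^\infty)}$ only, before controlling $\nabla f$. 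If a global a priori bound on the whole of $(0,t_*)$ cannot be closed this way, I would iterate the local step by continuation, with the restart time determined by the current norm. I would then look for a bound that prevents blow-up before $t_*$; this is the step I expect to require the most care.
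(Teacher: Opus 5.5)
Steps 1--2 agree with the paper's Lemma~\ref{linearlemma}. There are two genuine gaps, in Steps 4 and 3.

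\textbf{Step 4 cannot be closed under the stated hypotheses.} With only $K_1,\divv K_1\in L^1$, the available bounds are $\norm{\divv V_1[f]}_{L^\infty}\le\norm{\divv K_1}_{L^1}\norm{f}_{\W(L^\infty(\R^d))}$ and $\norm{\nabla V_1[f]}_{L^\infty}\le\norm{K_1}_{L^1}\norm{\nabla f}_{\W(L^\infty(\R^d))}$. So your $L^\infty$ and gradient estimates are of Riccati type, and the growth is real.

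For label-independent data $f^0=u_0(x)$, with $\nu=0$ and $K_2=0$, the problem reduces to $\partial_t u+\divv(u\,K_1*u)=Au$. In $d\ge 2$ take $K_1=-\nabla(|x|\chi)$, where $\chi$ is a radial cutoff equal to $1$ on $B_1$. Then $K_1\in L^1\cap L^\infty$, and $\divv K_1=-(d-1)/|x|$ near $0$ lies in $L^1$. Take $u_0\in C_c^\infty(B_{1/4})$ radial. Using $R-\hat x\cdot y\ge 0$, $|x-y|\le 2R$ and the vanishing first moment, the outermost characteristic satisfies $\frac{d}{dt}|X|\le-\frac12\int u_0\,dx$. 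So the support collapses in finite time while the mass does not decrease. By uniqueness, no solution bounded in $\W((W^{1,1}\cap W^{1,\infty})(\R^d))$ exists past that time.

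Step 4 does close once $\divv K_1\in L^\infty$ and $\nabla K_1\in L^1$. Both hold under (\ref{asK}) and in Proposition~\ref{mainstab}, the only places the proposition is used. Then $\norm{\divv V_1[f]}_{L^\infty}\le\norm{\divv K_1}_{L^\infty}e^{At}\norm{f^0}_{\W(\mathcal{M}(\R^d))}$ gives the global bound (\ref{finftyinfty}). Also $\norm{\nabla V_1[f]}_{L^\infty}\le\norm{\nabla K_1}_{L^1}\norm{f}_{\W(L^\infty(\R^d))}$, so the $\nabla f$ estimates become linear Gronwall inequalities and the local step is bounded below on $[0,t_*]$. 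Do not rely on the paper's closing remark for this step. There the admissible step is at most of order $1/\Upsilon$, and multiplying $\Upsilon$ by $e^{A}$ at each restart makes the step lengths summable.

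\textbf{Removing $\ve$ after solving the nonlinear problem does not work.} To pass to the limit in $\int\nabla\varphi\cdot V_1[f^\ve](t,x,\eta)\,f^\ve(t,x,\xi,d\eta)$ through Lemma~\ref{impoestv}, you need $V_1[f^\ve]\to V_1[f]$ strongly in $L^1$ in the label $\eta$. Weak$^*$ convergence in $\W$ together with $W^{1,1}$ bounds in $x$ gives compactness in $x$ only. This is the same lack of label compactness that Theorem~\ref{lg1} has to overcome with the rearrangements $\Phi_N$.

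Your difference estimate cannot replace this step either, because $f^{0,\ve}\to f^0$ only weakly$^*$, not in $\W(\mathcal{M}(\R^d))$. For $f^0(x,\xi,d\eta)=u_0(x)\,\delta_\xi(d\eta)$ the distance stays equal to $2\norm{u_0}_{L^1}$.

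The paper avoids both issues as follows:
\begin{enumerate}
\item It removes the regularization inside the linear problem, where $V_i[g]$ is frozen. There $\nabla\varphi\cdot V_1[g]$ and $\varphi V_2[g]$ lie in $L^1_\eta C_\xi$, so weak$^*$ convergence suffices.
\item It treats the nonlinearity only through a contraction for the unregularized $f^0$ in $L^\infty((0,t_1);\W(\mathcal{M}(\R^d)))$.
\end{enumerate}
In that contraction, for $h=g_1-g_2$, the velocity difference must be measured in $L^1_x$: $\norm{V_1[h]}_{L^1}+\norm{\divv V_1[h]}_{L^1}+\norm{V_2[h]}_{L^1}\le(\norm{K_1}_{L^1}+\norm{\divv K_1}_{L^1}+\norm{K_2}_{L^1})\norm{h}_{\W(\mathcal{M}(\R^d))}$. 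This is then paired with the $W^{1,\infty}$ bounds of the iterate. Your pairing of $\norm{K_i}_{L^1}\norm{h}_{\W(L^\infty(\R^d))}$ with a $\W(L^1(\R^d))$ estimate does not close.
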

\begin{proof}
We divide the proof into a few steps. At first we solve the linear equation.
\begin{lemma}\label{linearlemma}
Let  $K_i\in L^1(\R^d)$ for $i=1,2$, $\divv K_1\in L^1(\R^d)$,  $K_2 \geq 0$ and  $f^0\in  \W((W^{1,1}\cap W^{1,\infty})(\R^d))$ with $f^0 \geq 0$. Define 
\[
E:=\{f \in L^\infty((0, t_*);\, \W((W^{1,1}\cap W^{1,\infty})(\mathbb{R}^d)): f \geq 0\},
\]
\begin{align*}
&\norm{f}_{E}:=\\
&\max\left\{\norm{f}_{L^{\infty}((0,t_{*});\W(L^{\infty}(\R^d)))} + \norm{\nabla f}_{L^{\infty}((0,t_{*});\W(L^{\infty}(\R^d)))}
, \norm{f}_{L^{\infty}((0,t_{*});\W(\mathcal{M}(\R^d)))} + \norm{\nabla f}_{L^{\infty}((0,t_{*});\W(\mathcal{M}(\R^d)))} \right\}.
\end{align*}
For any $\Upsilon> \norm{f^0}_{\W((W^{1,1}\cap W^{1,\infty})(\R^d))}e^{A}$ we set $E_\Upsilon:=\left\{f\in E:\, \norm{f}_{E}\leq \Upsilon\right\}$.
For a given $g\in E$ discuss the linear problem
\begin{equation}\label{independ2-linear}
\left\{
\begin{array}{l}
\displaystyle \partial_t f(t,x,\xi,\eta) + \divv ( f(t,x,\xi,\eta) V_1[g](t,x,\eta)) =  f(t,x,\xi,\eta) (A -V_2[g](t,x,\eta)) + \nu \Delta_x f(t,x,\xi,\eta),\\
f(0,\cdot,\cdot,\cdot)=f^0.
\end{array}
\right.
\end{equation}
Then, there exists exactly one $f \in E$ which is a weak solution to (\ref{independ2-linear}) and is weak$^*$ continuous with values in $\W((W^{1,1}\cap W^{1,\infty})(\R^d))$, such that $f(t,\cdot,\cdot,\cdot)\Big|_{t=0} = f^0$. Furthermore, if $g \in E_{\Upsilon}$, then $f \in E_{\Upsilon}$ for sufficiently small $t_{*}$ which depends on $\Upsilon$, $A$ and norms of $K_{1}$ and $K_{2}$.
\end{lemma}
We postpone the proof of the lemma to Appendix. In order to prove Proposition \ref{existenceprop} using Lemma~\ref{linearlemma} we fix nonnegative $f^0 \in \W((W^{1,1}\cap W^{1,\infty})(\R^d))$ and $t_{1}$ sufficiently small to satisfy the claim of Lemma~\ref{linearlemma}. We introduce the operator $P:E_{\Upsilon} \rightarrow E_{\Upsilon}$ (where we replaced $t_*$ by $t_1$ in the definiton of $E$ in a natural way) by $Pg=f$, where $f$ is a weak solution to (\ref{independ2-linear}). We show that $P$ is a contraction on $(E_{\Upsilon},\norm{\cdot}_{L^{\infty}((0,t_{1});\W(\mathcal{M}(\R^d)))})$ and apply the Banach fixed point theorem. At first, note that $(E_{\Upsilon},\norm{\cdot}_{L^{\infty}((0,t_{1});\W(\mathcal{M}(\R^d)))})$ is a complete metric space. Indeed, any Cauchy sequence $f^n$ in \\$(E_{\Upsilon},\norm{\cdot}_{L^{\infty}((0,t_{1});\W(\mathcal{M}(\R^d)))})$ converges to some limiting $f$ in $L^{\infty}((0,t_{1});\W(\mathcal{M}(\R^d)))$. Since $f^n$ is uniformly bounded in $E$ by weak$^*$ compactness the limiting $f$ also belongs to $E_{\Upsilon}$.

 To show that $P$ is a contraction on $(E_{\Upsilon},L^{\infty}((0,t_{1});\W(\mathcal{M}(\R^d))))$ consider $g_1,g_2\in E_\Upsilon$ and note that $Pg_1 - Pg_2$ satisfies in a weak sense
\[
\partial_t(Pg_1-Pg_2)+\divv(V_1[g_2]\,(Pg_1-Pg_2))+\divv((V_1[g_1]-V_1[g_2])Pg_1)
\]
\eqq{
=\nu\Delta (Pg_1-Pg_2) + (A-V_2[g_1])(Pg_1-Pg_2) - Pg_2(V_2[g_1] - V_2[g_2])
}{contreq}
with zero initial condition. We would like to apply a priori estimate (\ref{propL1}) for fixed $\xi,\eta$, however recall that $Pg_1,Pg_2$ are vector-valued measures. Hence, we argue by approximation. Let us extend $\vf$ periodically in $\xi$ and denote by $f^\ve(t,x,\xi,\eta):=\int_0^1 \vf^\ve(\xi-\xi')f(t,x,d\xi',\eta)$, where $\vf^\ve$ is a standard periodic mollifying kernel. Then $f^\ve$ is bounded w.r.t. $\xi$ and $\eta$, convolving (\ref{contreq}) with $\vf^\ve$ we arrive at

\[
\partial_t(Pg_1-Pg_2)^\ve+\divv(V_1[g_2]\,(Pg_1-Pg_2)^\ve)+\divv((V_1[g_1]-V_1[g_2])(Pg_1)^\ve)
\]
\[
=\nu\Delta (Pg_1-Pg_2)^\ve + (A-V_2[g_1])(Pg_1-Pg_2)^\ve - (Pg_2)^\ve(V_2[g_1] - V_2[g_2]).
\]
Applying the classical $L^1$ estimate (\ref{propL1}) leads to
\begin{align}\label{sumjot}
   &\Vert (Pg_1-Pg_2)^\ve(t,\cdot,\xi,\eta)\Vert_{L^1(\R^d)}\leq \sum_{j=1}^3 J_j:=\int_0^t \norm{\divv (V_1[g_1-g_2](s,\cdot,\eta)(Pg_1)^\ve(s,\cdot,\xi,\eta)))}_{L^1(\R^d)}ds \nonumber +\\
 &   \int_0^t \norm{(A-V_2[g_1](s,\cdot,\eta))(Pg_1-Pg_2)^\ve(s,\cdot,\xi,\eta)}_{L^1(\R^d)}ds
+\int_0^t\norm{(Pg_2)^\ve(s,\cdot,\xi,\eta)V_2[g_1-g_2](s,\cdot,\eta)}_{L^1(\R^d)}ds.
\end{align}
Let us estimate term by term. In order to estimate $J_1$ note that 
\[
\norm{\divv V_1[g_1-g_2](t,\cdot,\xi,\eta)}_{L^{1}(\R^d)} \leq \norm{\divv K_1}_{L^{1}(\R^d)}\norm{(g_1-g_2)(t,\cdot,\xi,\eta)}_{L^{1}(\R^d)}.
\]
Applying this estimate  together with (\ref{v3}) leads to
\begin{align}\label{sumjota}
J_{1}&\leq \int_0^t \Vert \divv V_1[g_1-g_2](s,\cdot,\eta)\Vert_{L^1( \R^d)}\Vert (Pg_1)^\ve(s,\cdot,\xi,\eta)\Vert_{{L^{\infty}( \R^d)}}ds \\ \nonumber
& +\int_0^t \Vert V_1[g_1-g_2](s,\cdot,\eta)\Vert_{L^1(\R^d)}\Vert \nabla (Pg_1)^\ve(s,\cdot,\xi,\eta)\Vert_{{L^{\infty}( \R^d)}}ds  \\ \nonumber
 & \leq \norm{\divv K_{1}}_{L^{1}(\R^d)}\int_0^t\norm{(g_1-g_2)(s,\cdot,\cdot,\cdot)}_{\W(\mathcal{M}(\R^d))}\norm{(Pg_1)^\ve(s,\cdot,\xi,\eta)}_{L^{\infty}(\R^d)}ds\\ \nonumber
 & +\norm{ K_{1}}_{L^{1}(\R^d)}\int_0^t\norm{(g_1-g_2)(s,\cdot,\cdot,\cdot)}_{\W(\mathcal{M}(\R^d))}\norm{\nabla (Pg_1)^\ve(s,\cdot,\xi,\eta)}_{L^{\infty}(\R^d)}ds .   
\end{align}
Similarly, applying (\ref{v1}) we have
\eqq{
J_2 \leq \int_0^t\norm{(Pg_1-Pg_2)^\ve(s,\cdot,\xi,\eta)}_{L^1( \R^d)} ds\left(A + \norm{K_{2}}_{L^{1}(\R^d)}\norm{g_1}_{L^{\infty}((0,t_{1});\W(L^{\infty}(\R^d)))}\right).
}{sumjotb}
Finally, by (\ref{v3})
\eqq{
   J_3 \leq \norm{K_{2}}_{L^{1}(\R^d)}\int_0^t\norm{(g_1-g_2)(s,\cdot,\cdot,\cdot)}_{\W(\mathcal{M}(\R^d))} \Vert (Pg_2)^\ve(s,\cdot,\xi,\eta)\Vert_{L^{\infty}(\R^d)}ds.
}{sumjotc}
%Note that by standard properties of mollification for a Banach space $X$
%\begin{align}\label{normsmooth}
%\esssup_{\xi \in (0,1)}\izj \norm{f^\ve(\cdot,\xi,\cdot)}_{X}(d\eta) &\leq \esssup_{\xi \in (0,1)} \izj \izj \vf^\ve(\xi-\xi')\norm{f(\cdot,\cdot,\eta)}_{X}(d\xi')d\eta
%\\ \nonumber
%&= \esssup_{\xi \in (0,1)} \izj  \vf^\ve(\xi-\xi')\izj\norm{f(\cdot,\xi',\cdot)}_X(d\eta)  d\xi' \\  \nonumber
%&\leq \esssup_{\xi \in (0,1)} \izj \norm{f(\cdot,\xi,\cdot)}_X(d\eta)
%\end{align}
%and similarly
%\eqq{
%\esssup_{\eta \in (0,1)}\izj \norm{f^\ve(\cdot,\cdot,\eta)}_{X}(d\xi) \leq \esssup_{\eta \in (0,1)}\izj \norm{f(\cdot,\cdot,\eta)}_{X}(d\xi). 
%}{normsmoothb}
Inserting the estimates (\ref{sumjota}) - (\ref{sumjotc}) into (\ref{sumjot}), applying $\W$ - norm and the estimate from Lemma \ref{hmj}  we arrive at
\begin{align*}
    &\Vert (Pg_1-Pg_2)^\ve (t,\cdot,\cdot,\cdot)\Vert_{\W(\mathcal{M}(\R^d))}\leq  
    \left(A + \norm{K_{2}}_{L^{1}(\R^d)}\Upsilon\right)\int_0^t\norm{(Pg_1-Pg_2)(s,\cdot,\cdot,\cdot)}_{\W(\mathcal{M}( \R^d))}\\
    &+ \Upsilon \left(\norm{\divv K_1}_{L^1(\R^d)} + \norm{ K_1}_{L^1(\R^d)} + \norm{ K_2}_{L^1(\R^d)}\right)\int_0^t\norm{(g_1-g_2)(s,\cdot,\cdot,\cdot)}_{\W(\mathcal{M}(\R^d))}.
\end{align*}
By lower semicontinuity of the norm we may estimate 
\[
\Vert (Pg_1-Pg_2) (t,\cdot,\cdot,\cdot)\Vert_{\W(\mathcal{M}(\R^d))}\leq \liminf_{n\rightarrow \infty} \Vert (Pg_1-Pg_2)^\ve (t,\cdot,\cdot,\cdot)\Vert_{\W(\mathcal{M}(\R^d))}.
\]
Inserting it in the estimate above and applying Gr\"onwall inequality we get
\begin{align*}
  &\Vert (Pg_1-Pg_2)\Vert_{L^{\infty}((0,t_{1});\W(\mathcal{M}(\R^d)))} \leq \\
  &\exp\left(A + \norm{K_{2}}_{L^{1}(\R^d)}\Upsilon\right)\Upsilon \left(\norm{\divv K_1}_{L^1(\R^d)} + \norm{ K_1}_{L^1(\R^d)} + \norm{ K_2}_{L^1(\R^d)}\right)t_{1}\norm{g_1-g_2}_{L^\infty((0,t_{1});\W(\mathcal{M}(\R^d)))}. 
\end{align*}
Choosing $t_{1}$ dependent on $\Upsilon$, $A$ and norms of $K_{1}$ and $K_{2}$, sufficiently small we obtain that $P$ is contractive on $(E_{\Upsilon},\norm{\cdot}_{L^{\infty}((0,t_{1});\W(\mathcal{M}(\R^d)))})$. By the Banach fixed point theorem there exists a unique fixed point  of $P$, leading to a weak solution of \eqref{meannu} in $[0, t_1]$.
Since the norm of the solution at $t_1$ satisfies $e^A\Upsilon> \norm{f(t_1)}_{\W((W^{1,1}\cap W^{1,\infty})(\R^d))}e^{A}$, the initial bound $\Upsilon$ grows at most exponentially over time. Consequently, on any given finite horizon $[0, t_{*}]$, the solution remains uniformly bounded, ensuring that the step size remains bounded away from zero. Iterating this contraction argument step by step yields a unique weak solution on $[0, t_{*}]$ for any $t_{*} > 0$.

\end{proof}

We finish this section with the following simple but crucial observation which explains how we can create a solution to (\ref{meanmean}) from the solutions to (\ref{part2}).
\begin{prop}\label{fn}
Let  $\fji$ for $i,j\in\{1,\dots,N\}$ be the measures defined in (\ref{defft}). Set
\eqq{
f^N(t,x,\xi,\eta):= N\sum_{i=1}^N \sum_{j=1}^N \fji(t,x)\mI_j(\xi)\mI_i(\eta), \hd \hd \mI_i:=\mathbb{I}_{[\frac{i-1}{N},\frac{i}{N})}, \hd\mI_j:=\mathbb{I}_{[\frac{j-1}{N},\frac{j}{N})}.
}
{deffn}
Then $f^N$ is a weak solution to (\ref{meanmean}) which belongs to $L^{\infty}((0,t_{*});\W((W^{1,1}\cap W^{1,\infty})(\R^d)))$. Furthermore, 
\eqq{
\izj\izj f^N(t,x,\xi,\eta)d\xi d\eta = \frac{1}{N}\sum_{i,j=1}^N\fji(t,x).
}{fnint}
\end{prop}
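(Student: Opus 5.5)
Three things need to be checked: the identity (\ref{fnint}), membership of $f^N$ in $L^{\infty}((0,t_*);\W((W^{1,1}\cap W^{1,\infty})(\R^d)))$, and the weak formulation of (\ref{meanmean}) with $\nu=0$. We start with the identity. Since $\izj \mI_j(\xi)d\xi = \frac{1}{N}$, integrating $f^N$ in $\xi$ and $\eta$ gives $N\sum_{i,j}\fji\cdot\frac{1}{N^2}=\frac{1}{N}\sum_{i,j}\fji$.

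The key structural computation is the velocity field. For $\eta\in[\frac{i-1}{N},\frac{i}{N})$ we have
\[
\izj f^N(t,y,\eta,d\zeta)=N\sum_{k=1}^N f^i_k(t,y)\cdot\tfrac{1}{N}=\sum_{k=1}^N \fik(t,y),
\]
using $\mI_i(\eta)$ in the first label slot and integrating over the second label. Hence
\[
V_l[f^N](t,x,\eta)=\ird K_l(x-y)\sum_{k=1}^N\fik(t,dy)\quad\text{for }\eta\in[\tfrac{i-1}{N},\tfrac{i}{N}),
\]
which is exactly the field appearing in (\ref{semic}). The weak formulation then follows by testing. Given $\varphi(t,x,\xi,\eta)$, the quantity $\izj\izj\varphi\,\mI_j(\xi)\mI_i(\eta)\,d\xi\,d\eta$ is an admissible test function $\psi_{ij}(t,x)$ in $x$ and $t$. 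Applying the measure-solution identity for $\fji$ from Theorem \ref{mainprob} with $\psi_{ij}$ and summing over $i,j$ with the factor $N$ reproduces each of the four terms of the weak formulation, since $V_l$ is constant in $\eta$ on each cell. We need (\ref{semic}) in the integrated-in-time form with time-dependent test functions. This follows from the computation at the end of the proof of Theorem \ref{mainprob}, which holds for $\vf(t,x)$ after adding the $\partial_t\vf$ term.

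For the regularity, we write $\fji$ as a density. Since $\fji=\xb_i(t)_\#[\wjib(t)\,d\p]$ and $\xb_i(t)$ is the flow of a Lipschitz field $u_i=\sum_k K_1*\fik$, the measure $\fji$ solves a linear continuity equation with source. Its initial datum is $\fji(0)=g_{ij}\,dx$, because $X_i^0{}_\#[w^0_{ji}d\p]$ has density $f_{X_i^0}(x)\E[w^0_{ji}|X^0_i=x]$. Standard $W^{1,1}\cap W^{1,\infty}$ propagation along characteristics then applies. The field $u_i$ has $\nabla u_i=\nabla K_1*\sum_k\fik$, which is bounded since $K_1\in W^{1,\infty}$ and $\sum_k\fik$ has mass at most $Me^{At_*}$. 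Its second derivatives are controlled because $\nabla K_1\in L^1$ can be paired with $\nabla\sum_k\fik$; this closes with a Gr\"onwall argument.

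The main obstacle is getting these bounds uniform. We must show that $\sum_j\|\fji(t)\|_{W^{1,1}\cap W^{1,\infty}}$ and $\sum_i\|\fji(t)\|_{W^{1,1}\cap W^{1,\infty}}$ stay bounded by their initial values (\ref{asf}) times $e^{Ct}$. The two estimates are coupled: the $i$-row sum $\sum_k\fik$ drives the equation for every $\fji$, so bounds on it feed back into the bounds on the $\fji$. We would estimate $h_i:=\sum_j\fji$, which solves the closed equation with field $u_i$ and source $A-K_2*h_i$. Then $\max_i\|h_i\|_{W^{1,1}\cap W^{1,\infty}}$ satisfies a Gr\"onwall inequality on $[0,t_*]$. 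The nonlinearity is harmless because $\|K_2*h_i\|_{W^{1,\infty}}\le\|K_2\|_{W^{1,\infty}}Me^{At_*}$ by mass bounds alone. With $h_i$ controlled, the linear equation for each $\fji$ has coefficients bounded uniformly, which gives $\|\fji(t)\|\le C\|g_{ij}\|$ with $C$ independent of $i,j$. Summing over $j$ and over $i$ then yields the two $\W$-norm bounds: for fixed $\xi$ in cell $j$, the total variation in $\eta$ of $f^N$ equals $\sum_i\|\fji\|$, and symmetrically for fixed $\eta$.
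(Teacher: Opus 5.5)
Your identity (\ref{fnint}) and your weak-formulation argument are fine. Testing (\ref{semic}) with $\psi_{ij}=\izj\izj\varphi\,\mI_j(\xi)\mI_i(\eta)\,d\xi\,d\eta$, and using $V_l[f^N](t,x,\eta)=K_l*\sum_k\fik$ on the $i$-th cell, is a cleaner version of the paper's multiply-by-indicators computation.

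The gap is in the regularity step. You claim that $h_i:=\sum_j\fji$ solves a closed equation with field $u_i$ and source $A-K_2*h_i$; this is false. The measure $h_i=\xb_i(t)_\#[\sum_j\wjib(t)\,d\p]$ lives at the position of particle $i$. The measure driving it is $\mu_i:=\sum_k\fik=\sum_k\xb_k(t)_\#[\bar w_{ik}(t)\,d\p]$, a sum over the \emph{lower} index, carried by the positions of all the other particles. So $\partial_t h_i+\divv(h_iu_i)=h_i(A-K_2*\mu_i)$ with $u_i=K_1*\mu_i\neq K_1*h_i$. Nor is $\mu_i$ closed, because each summand $\fik$ is transported by its own field $u_k$.

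Hence bounds on $h_i$ say nothing about the quantity you yourself identified as the crux, $\nabla^2u_i=\nabla K_1*\nabla\mu_i$. It enters (\ref{propW11}) and (\ref{gradinfty}) through $\norm{\divv u_i}_{W^{1,\infty}}$ (equivalently, through the Lipschitz bound on the Jacobian of the flow). Since $K_1$ is only $W^{1,1}\cap W^{1,\infty}$, mass bounds alone cannot control it. The step ``with $h_i$ controlled, the linear equation for each $\fji$ has coefficients bounded uniformly'' is therefore unsupported, and the membership claim falls with it.

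The fix is to estimate sums of norms over the whole coupled family, not the norm of a sum. Set $\Gamma(t):=\max_i\sum_k\norm{\nabla\fik(t)}_{L^1(\R^d)}$. Apply (\ref{propW11}) to each $\fik$, with field $u_k$ and source $\fik(A-K_2*\mu_k)$. Use $\norm{\nabla\divv u_k}_{L^\infty}\le\norm{\divv K_1}_{L^\infty}\Gamma$ and $\norm{u_k}_{W^{1,\infty}}+\norm{K_2*\mu_k}_{W^{1,\infty}}\le C$. Use also the mass bounds $\sum_k\fik(t)(\R^d)\le Me^{At_*}$ and $\sum_i\fik(t)(\R^d)\le Me^{At_*}$ from (\ref{Mbound}). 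Summing over $k$ gives $\Gamma(t)\le C\Gamma(0)+C\int_0^t(1+\Gamma(s))\,ds$, with $\Gamma(0)$ bounded by (\ref{asf}). The $W^{1,\infty}$ bounds and the sums over the other index then follow as you describe, so your uniform target is in fact true.

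Note, though, that the statement only asks for membership at fixed $N$. The paper runs Gr\"onwall over the sum in all $i,j$ with $N$-dependent constants, and obtains uniform control later from Proposition~\ref{linftyestf}. You would also need to justify the formal estimates. The paper does this by mollifying kernels and data, applying Aubin--Lions, and identifying the limit by uniqueness of measure solutions; the explicit flow representation of $\fji$ is an alternative.
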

\begin{proof}
The identity (\ref{fnint}) is trivial. 
Let us now determine the density of $f^j_i(0)$. For any set $U \in \mathcal{B}(\R^d)$ we have
\[
f^j_i(0,\cdot)(U)= \ird I(\{x:X^0_{i}(x)\in U\})\wji^0(x)d\p(x)  = \int_{(X_i^0)^{-1}(U)} \wji^0 d\p 
= \int_{(X_i^0)^{-1}(U)} \mathbb{E}(\wji^0|X^0_i)d\p,
\]
where in the last equality we used the definition of conditional expectation. Denoting $h_{ji}(x) = \mathbb{E}(\wji^0|X_i^0~=~x)$, we may write 
\[
f^j_i(0,\cdot)(U)= \int_{(X_i^0)^{-1}(U)} h_{ji}(X^{0}_i)d\p = \int_{U} h_{ji}(x) f_{X^{0}_i}(x)dx = \int_U g_{ij}(x)dx,
\]
where $g_{ij}$ is defined in (\ref{asf}). Thus $g_{ij}$ is a density of $f^j_i(0)$ and by the assumption (\ref{asf}) $f^j_i(0) \in (W^{1,1}\cap W^{1,\infty})(\R^d)$.
In order to show that for each $i,j \in \{1,\dots,N\}$  $f^j_i \in L^{\infty}((0,t_{*});(W^{1,1}\cap W^{1,\infty})(\R^d)))$ we apply the analogous reasoning as in the proof of Proposition 8 from \cite{nasza}. For  the sake of completeness we provide it in the Appendix. With this established, it becomes clear that $f^N \in L^{\infty}((0,t_{*});\W((W^{1,1}\cap W^{1,\infty})(\R^d)))$, since
\[
\sup_{\xi\in(0,1)}\izj \norm{f^N(t,\cdot,\xi,\cdot)}_{(W^{1,1}\cap W^{1,\infty})(\R^d)}(d\eta) \leq \max_{1\leq j \leq N}\sum_{i=1}^N\norm{f^j_i(t)}_{(W^{1,1}\cap W^{1,\infty})(\R^d)}
\]
and
\[
\sup_{\eta\in(0,1)}\izj \norm{f^N(t,\cdot,\cdot,\eta)}_{(W^{1,1}\cap W^{1,\infty})(\R^d)}(d\xi) \leq \max_{1\leq i \leq N}\sum_{j=1}^N\norm{f^j_i(t)}_{(W^{1,1}\cap W^{1,\infty})(\R^d)}.
\]
In order to show that $f^N$ satisfies (\ref{meanmean}) we
 define for every $i\in\{1,\dots,N\}$
\[
f^N_i(t,x,\xi) := N\sum_{j=1}^N\fji(t,x)\mI_j(\xi), \hd f^i_N(t,x,\eta) := N\sum_{k=1}^N\fik(t,x)\mI_k(\eta).
\]
Then, clearly
\[
f^N(t,x,\xi,\eta) = \sum_{i=1}^N f^N_i(t,x,\xi) \mI_{i}(\eta) \hd \m{ and } \hd 
f^N(t,x,\xi,\eta) =\sum_{i=1}^N f_N^i(t,x,\eta) \mI_{i}(\xi).
\]
For each $i=1,\dots, N$, we multiply the equation (\ref{semic}) by $N \mI_j(\xi)$ and sum over $j=1,\dots, N$. Then, 
\[
\partial_t f_i^N(t,x,\xi) + \divv\left(f_i^N(t,x,\xi)\ird K_{1}(x-y)\sum_{k=1}^N\fik(t,dy)\right) = f_i^N(t,x,\xi)\left[A - \ird K_{2}(x-y)\sum_{k=1}^N\fik(t,dy)\right].
\]
Note that
\[
\izj f^i_N(t,y,\zeta)d\zeta = N \sum_{l=1}^N\int_{\mI_l}\sum_{k=1}^N \fik(t,y) \mI_k(\zeta)d\zeta = \sum_{k=1}^N \fik(t,y)
\]
and thus,
\eqq{
\partial_t f_i^N(t,x,\xi) + \divv\left(f_i^N(t,x,\xi)\ird K_{1}(x-y)\izj f^i_N(t,y,d \zeta)\right)dy = f_i^N(t,x,\xi)\left[A - \ird K_{2}(x-y)\izj f^i_N(t,y,d \zeta)dy\right].
}{nafin}
Let us now multiply (\ref{nafin}) by $\mI_i(\eta)$ and sum over $i=1,\dots, N$
\[
\partial_t f^N(t,x,\xi,\eta) + \divv\sum_{i=1}^N \left(f_i^N(t,x,\xi)\mI_i(\eta)\ird K_{1}(x-y)\izj \mI_i(\eta) f^i_N(t,y,d \zeta)dy\right)
\]
\[
=Af^N(t,x,\xi,\eta) - \sum_{i=1}^N \left( f_i^N(t,x,\xi)\mI_i(\eta)\ird K_{2}(x-y)\izj \mI_i(\eta)f^i_N(t,y,d \zeta)dy\right).
\]
Since $\mI_i(\eta) \mI_j(\eta) \equiv 0$ for $i\neq j$ we have in fact 
\[
\sum_{i=1}^N \left(f_i^N(t,x,\xi)\mI_i(\eta)\ird K_{1}(x-y)\izj \mI_i(\eta) f^i_N(t,y,d \zeta)dy\right)
\]
\[
= \sum_{i=1}^N\sum_{j=1}^N \left(f_i^N(t,x,\xi)\mI_i(\eta)\ird K_{1}(x-y)\izj \mI_j(\eta) f^j_N(t,y,d \zeta)dy\right)
\]
\[
=f^N(t,x,\xi,\eta)
\ird K_{1}(x-y)\izj f^N(t,y,\eta,d \zeta)dy
\]
and indeed we obtain that $f^N$ solves (\ref{meanmean}).

\end{proof}

\subsection{New observables: definition and properties}

In this section we adjust the ideas introduced in \cite{JPS2025} to our setting. We introduce and rigorously define the new observables. Then, we utilize them to establish the key convergence result in Theorem \ref{lg1}. We finish this section showing that if $f$ is a weak solution to (\ref{meannu}), then the family of observables built upon $f$ solves the linear Vlasov hierarchy.

At first, we define the family of rooted directed trees and then use this notation to introduce our observables.

\begin{defi}(see \cite[Definition 4.1]{JPS2025})
~
\begin{enumerate}
\item We call a   (simple) directed graph a pair $G=(V,E)$ where $V$ is a finite set (and represents vertices) and $E\subseteq \{(i,j)\in V\times V:\,i\neq j\}$, where $(i,j)$ is an ordered pair (and represent the edges). 
We denote by $|G|$ the number of vertices. 
\item A graph $T=(V,E)$ is called rooted directed tree if:
\begin{itemize}
\item $V=\{1,2,\dots,|T|\}$ and the vertex indexed by $1$ is called the root, 
\item for every $u\in V$, $(u,1)\notin E$,
\item for every $u\in V$, $u\neq 1$, there exists exactly one $v \in V$ such that $(v,u) \in E$, 
\item for every $u\in V$, $u\neq 1$, there exists a directed path from $\{1\}$ to $u$.
\end{itemize}
A vertex in tree which is not a root and has only one edge connecting it with another vertex is called a leaf.

\item We define the family $\Tree_n$ of  labeled rooted directed trees of order $n$ by the following recursive formula
\begin{align*}
\Tree_1&:=\{T_1\},\\
\Tree_{n+1}&:=\{T+i:\,T\in \Tree_n,\,i\in \{1,\ldots,n\}\},\quad n\in \mathbb{Z}_0^+,
\end{align*}
where $T_1$ is the  tree with only one vertex $\{1\}$ and for any tree $T$ with vertices $\{1,\ldots,n\}$ by $T+i$ we denote a tree created from $T$ by adding a leaf indexed by $n+1$ to an $i$-th vertex. The family of all labeled rooted directed trees of arbitrary order with at least two vertices is then defined by $\Tree:=\cup_{n=2}^\infty\Tree_n$. 
\end{enumerate}
\end{defi}

Now we are ready to introduce our observables indexed by trees. Similarly as in \cite{JPS2025} at first we define the observables for more regular functions. Then we extend the definition for vector-valued measures.

\begin{defi}\label{tau}
For any $T\in \Tree$ and $f\in L^\infty((0,t_{*});L^{\infty}_\xi L^1_\eta ( (L^1\cap L^{\infty})(\R^d)))\cap L^\infty((0,t_{*});L^{\infty}_\eta L^1_\xi ( (L^1\cap L^{\infty})(\R^d)))$, we set $n=|T|-1$ and define 
\eqq{
\tau(T,f)(t,x_1,\dots,x_n) = \int_{[0,1]^{|T|}} \prod_ {(k,m)\in E(T)} f(t,x_{m-1},\xi_k,\xi_m)d\xi_1\dots d\xi_{|T|}.
}{reptau}
\end{defi}

In order to extend the definition of $\tau$ for $f\in L^\infty((0,t_{*});\mathcal{W}((L^1\cap L^{\infty})(\R^d)))$ we introduce, similarly as in \cite{JPS2025} the following countable algebra  of
transforms over spaces of arbitrary large dimensions in the following way.
\begin{defi}\label{defalgebra}
Let $f \in L^{\infty}((0,t_{*});\W((L^1\cap L^{\infty})(\R^d)))$. We define the algebra $\mathcal{F}$ of transforms as follows. For each transform $F \in \mathcal{F}$,
there exists  $n \in \mathbb{N}$ (the rank of $F$) so that $F$ maps $f$ into a scalar function $F(f)$
on $[0,t_{*}]\times\R^{dn} \times [0, 1]$. The algebra is constructed inductively:
\begin{itemize}
    \item $F_0 \in \mathcal{F}$, where  $F_0(f)(t,x,\xi):=\izj f(t,x,\xi,d\eta).$
    \item If $F_1$ of rank $k$ and $F_2$ of rank $l$ belong to $\mathcal{F}$, then $F_1\cdot F_2 \in \mathcal{F}$, where
    \[
    (F_{1}\cdot F_2)(f)(t,x_1,\dots,x_{k+l},\xi):=F_1(f)(t,x_1,\dots,x_k,\xi)F_2(t,x_{k+1},\dots,x_{k+l},\xi) 
    \]
    \item If $F$ of rank $k$ belongs to $\mathcal{F}$, then $F^{*} \in \mathcal{F}$, where 
    \[
    F^{*}(f)(t,x_1,\dots,x_{k+1},\xi):=\izj F(t,x_1,\dots,x_k,\eta)f(t,x_{k+1},\xi,d \eta)
    \]
   and  the integral is understood in the sense of extension operator from Lemma \ref{impoest}. 
\end{itemize}
\end{defi}

Now we are ready to establish the following proposition.

\begin{prop}\label{fntau}
The transforms $F \in \mathcal{F}$ are well defined for $f\in L^\infty((0,t_{*});\W((L^1\cap L^{\infty})(\R^d)))$. If $F\in \mathcal{F}$ is of rank $n$, then $F(f) \in L^{\infty}((0,t_{*})\times (0,1);(L^{1}\cap L^{\infty})(\R^{dn}))$
and for almost all $t \in (0,t_{*})$
\begin{align}\label{fnest}
 &\norm{F(f)(t,\cdot,\cdot)}_{L^{\infty}((0,1);L^{p} (\R^{dn}))} \leq \norm{f(t,\cdot,\cdot,\cdot)}^n_{\W( L^p(\R^d))}, \hd p \in (1,\infty],  \\ 
 & \label{fnestj}\norm{F(f)(t,\cdot,\cdot)}_{L^{\infty}((0,1);L^{1} (\R^{dn}))} \leq \norm{f(t,\cdot,\cdot,\cdot)}^n_{\W( \mathcal{M}(\R^d))}.
\end{align}

Moreover, if $f \in L^{\infty}((0,t_{*});\W((W^{1,1}\cap W^{1,\infty}))(\R^d))$, then for every $F \in \mathcal{F}$ of rank $n$
\begin{align}\label{fnestw}
&\norm{\nabla F(f)(t,\cdot,\cdot)}_{L^{\infty}((0,1);L^{p} (\R^{dn}))} \leq \norm{\nabla f(t,\cdot,\cdot,\cdot)}^n_{\W( L^p(\R^d))}, \hd p \in (1,\infty], \\
& \label{fnestwj} \norm{\nabla F(f)(t,\cdot,\cdot)}_{L^{\infty}((0,1);L^{1} (\R^{dn}))} \leq \norm{\nabla f(t,\cdot,\cdot,\cdot)}^n_{\W( \mathcal{M}(\R^d))}.
\end{align}

The result applies to time independent functions with the obvious modifications.
\end{prop}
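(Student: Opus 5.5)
The proof goes by structural induction on the construction of $\mathcal{F}$ in Definition \ref{defalgebra}. The invariant to propagate is that a transform $F$ of rank $n$ satisfies $F(f)\in L^{\infty}((0,t_{*})\times(0,1);(L^1\cap L^\infty)(\R^{dn}))$ together with the bounds (\ref{fnest})–(\ref{fnestj}). The time variable $t$ is carried along as a parameter; measurability in $t$ follows from the weak$^*$ measurability of $f$ and the continuity of the extension operator. So it suffices to argue for a.e. fixed $t$. The time-independent case is literally the same argument.

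\textbf{Base case $F_0$.} Here $F_0(f)(t,x,\xi)=\izj f(t,x,\xi,d\eta)$, which is the pairing of $f(t,\cdot,\xi,\cdot)\in\mathcal{M}_\eta(X^*)$ with the constant function $1$. For a.e. $\xi$, take $X^*=L^p(\R^d)$ or $\mathcal{M}(\R^d)$. The norm of the resulting element of $X^*$ is bounded by the total variation $\izj\norm{f(t,\cdot,\xi,\cdot)}_{X^*}(d\eta)$. Taking the esssup in $\xi$ gives the bound by $\norm{f(t)}_{\W(X^*)}$ with exponent $n=1$. Alternatively, this is Lemma \ref{coro2} with $g\equiv 1$ handled directly, or Lemma \ref{impoestv} with constant $\phi$.

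\textbf{Product step.} Let $F_1$ have rank $k$ and $F_2$ rank $l$. For a.e. $\xi$, the function $F_1(f)(t,\cdot,\xi)\otimes F_2(f)(t,\cdot,\xi)$ lies in $L^p(\R^{d(k+l)})$. By Fubini, its $L^p$ norm is the product of the two norms, for every $p\in[1,\infty]$. Taking the esssup in $\xi$ gives $\norm{f}^{k}\norm{f}^{l}=\norm{f}^{k+l}$. The rank is additive, so the exponents match.

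\textbf{Star step.} Let $F$ have rank $k$. By induction, $g:=F(f)\in L^\infty((0,t_*)\times(0,1);(L^1\cap L^\infty)(\R^{dk}))$. Lemma \ref{coro2} is stated for $g$ with values in $\R^{d}$, but its proof does not depend on the dimension. So I apply it with $\R^{dk}$ in place of $\R^d$ in the first variable and $\R^d$ in the $y=x_{k+1}$ variable. This gives that $F^*(f)(t,x_1,\dots,x_{k+1},\xi)=\izj g(t,x_1,\dots,x_k,\eta)f(t,x_{k+1},\xi,d\eta)$ is well defined, with
\[
\norm{F^*(f)(t)}_{L^\infty_\xi L^p(\R^{d(k+1)})}\leq \norm{g(t)}_{L^\infty_\eta L^p(\R^{dk})}\norm{f(t)}_{\W(L^p(\R^d))},
\]
and the analogous $L^1$ and $\mathcal{M}$ version (\ref{coro2ej}). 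Combined with the induction hypothesis $\norm{g(t)}\le\norm{f(t)}^k$, this yields exponent $k+1$. If Lemma \ref{coro2} is judged to need the dimension explicitly, I would reprove it via Lemma \ref{impoestv} with $X=L^{p'}$ (or $C_0+L^1$). The point is that the pairing of $g(\cdot,\eta)\in L^p(\R^{dk})$ with $f(\cdot,\xi,d\eta)\in L^p(\R^d)$ is controlled through the tensor-product norm identity $\norm{a\otimes b}_{L^p}=\norm{a}_{L^p}\norm{b}_{L^p}$.

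\textbf{Gradient bounds.} For $f\in L^\infty(\W((W^{1,1}\cap W^{1,\infty})(\R^d)))$, I would argue with the same induction that the derivative falls on one factor. Under the product step, the Leibniz rule gives $\nabla F_1\otimes F_2 + F_1\otimes\nabla F_2$. Under the star step, $\nabla_{x_{k+1}}$ hits $f$ and $\nabla_{x_1,\dots,x_k}$ hits $g$. To justify differentiating under the extension integral, I use the mollification of Lemma \ref{hmj}. For $f^\ve$ everything is a genuine function and the identities hold. Then I pass to the limit using the $L^1$ convergences in Lemma \ref{hmj} and lower semicontinuity of the norms.

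\textbf{Main obstacle.} The bookkeeping of the gradient bound is where I expect trouble. Taken literally, with $\norm{\nabla f}^n$ alone on the right-hand side, the inequality is delicate: each term of the Leibniz expansion contains undifferentiated factors. I would interpret the gradient norm as the $W^{1,\cdot}$-type norm $\norm{f}+\norm{\nabla f}$, in line with the norm $\norm{\cdot}_E$ in Lemma \ref{linearlemma}. In that case, induction gives the bound $(\norm f+\norm{\nabla f})^n$, with the same structure as the proof of the scalar estimates. I would check this carefully, since a sharper statement with $\norm{\nabla f}^n$ alone would fail already for products.
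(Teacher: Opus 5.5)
Your proof of (\ref{fnest})--(\ref{fnestj}) follows the paper's route. It is an induction along the construction of $\mathcal{F}$: the total-variation bound for $F_0$, the identity $\norm{a\otimes b}_{L^p}=\norm{a}_{L^p}\norm{b}_{L^p}$ for products, and Lemma \ref{coro2} for the star operation. The paper, like you, applies Lemma \ref{coro2} in higher dimension without comment.

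The paper is more careful only about strong measurability of $(t,\xi)\mapsto F(f)(t,\cdot,\xi)\in L^1$, and you should spell this out the same way:
\begin{itemize}
\item For $F_0$ it tests against $\Phi\in C_0(\R^d)$ using the weak$^*$ measurability of $f$. It then passes to $\Phi\in L^\infty(\R^d)$ by a.e.\ approximation and dominated convergence, and applies Pettis' theorem.
\item For products it uses that $(F_1,F_2)\mapsto F_1\otimes F_2$ is bounded bilinear from $L^1\times L^1$ to $L^1$.
\end{itemize}

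On (\ref{fnestw})--(\ref{fnestwj}) your suspicion is correct, and here you are more careful than the paper. The paper only asserts that these bounds follow ``by applying the same reasoning to the spatial derivatives''. As stated they fail for rank $2$. Take $h\in C_c^\infty(\R^d)$ with $h\geq 0$, $h\not\equiv 0$, $L\geq 1$, and set $f(x,\xi,\eta):=h(x/L)$. Then $\norm{f}_{\W(L^p(\R^d))}=L^{d/p}\norm{h}_{L^p}$ and $\norm{\nabla f}_{\W(L^p(\R^d))}=L^{d/p-1}\norm{\nabla h}_{L^p}$. For $F=F_0\cdot F_0$,
\[
\norm{\nabla F(f)}_{L^{\infty}((0,1);L^{p}(\R^{2d}))}\geq \norm{\nabla_{x_1}\big(h(x_1/L)h(x_2/L)\big)}_{L^p(\R^{2d})}=L^{2d/p-1}\norm{\nabla h}_{L^p}\norm{h}_{L^p}.
\]
The claimed bound is $L^{2d/p-2}\norm{\nabla h}_{L^p}^2$, so the ratio $L\norm{h}_{L^p}/\norm{\nabla h}_{L^p}$ is unbounded. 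The same holds for $p=1$ with $\mathcal{M}$.

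Write $a:=\norm{f(t,\cdot,\cdot,\cdot)}_{\W(L^p(\R^d))}$ and $b:=\norm{\nabla f(t,\cdot,\cdot,\cdot)}_{\W(L^p(\R^d))}$. Your Leibniz-rule induction actually gives the sharper bound
\[
\norm{\nabla F(f)(t,\cdot,\cdot)}_{L^\infty((0,1);L^p(\R^{dn}))}\leq n a^{n-1}b.
\]
The product step contributes $ka^{k-1}b\,a^l+a^k\,la^{l-1}b$ and the star step $ka^{k-1}b\,a+a^kb$. This mirrors Proposition \ref{asym} and implies your $(a+b)^n$. Corollary \ref{tauest} needs the same correction.

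Nothing downstream breaks. The paper only uses the full-norm bound (\ref{unif}) in Theorem \ref{lg1}, and the resulting equicontinuity of $\tau(T,f^0_N)$ in the proof of Theorem \ref{maintheorem}. Your corrected estimate gives (\ref{unif}) up to a factor $2^{n_k}$, which is harmless because only a bound of the form $C^{n_k}$ is needed there.
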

\begin{proof}
Let us proceed by induction. We begin with $n=1$. At first we have to show that 
$(t,\xi)\mapsto F_0(f)(t,x,\xi) := \izj f(t,x,\xi,d\eta)$ 
is strongly measurable with values in $L^{1}(\R^d)$. We apply the similar approach as in the proof of Lemma \ref{coro2}. From the regularity of $f$ we obtain that for almost all $(t,\xi)$ $\izj f(t,\cdot,\xi,d\eta) \in L^{1}(\R^d)$. By Pettis theorem it is enough to show weak measurability. Let $\Phi \in C_{0}(\R^d)$, then 
\[
\ird \Phi(x)\izj f(t,x,\xi,d\eta)dx = \izj \langle \Phi,  f(t,\cdot,\xi,d\eta)\rangle_{C_0(\R^d)\times  \mathcal{M}(\R^d)},
\]
is measurable since $f \in L^{\infty}((0,t_{*});\W(\mathcal{M}(\R^d)))$ defined with weak$^*$ topology. To show that the expression above is measurable also for arbitrary $\Phi \in L^{\infty}(\R^d)$, we choose the approximating sequence of $\Phi_n \in C_{c}(\R^d)$, with $\norm{\Phi_n}_{L^{\infty}(\R^d)} \leq \norm{\Phi}_{L^{\infty}(\R^d)}$, such that $\Phi_n \rightarrow \Phi$ pointwisely almost everywhere. Then, for almost all $(t,\xi)$ 
\[
\abs{\Phi_n(x)\izj f(t,x,\xi,d\eta)} \leq \norm{\Phi}_{L^{\infty}(\R^d)}\abs{\izj f(t,x,\xi,d\eta)} \in L^{1}(\R^d),
\]
 hence by the Lebesgue dominated convergence theorem we obtain that for almost all $(t,\xi)$
 \[
 \ird \Phi(x)\izj f(t,x,\xi,d\eta)dx =\lim_{n\rightarrow \infty} \ird \Phi_n(x)\izj f(t,x,\xi,d\eta)dx
 \]
 and the mapping $(t,\xi)\mapsto  \ird \Phi(x)\izj f(t,x,\xi,d\eta)dx$ is measurable as almost everywhere pointwise limit of measurable functions. Hence, $(t,\xi)\mapsto F_0(f)(t,\cdot,\xi)$ is measurable with values in $L^{1}(\R^d)$. To show the estimate we note that for almost all $t \in (0,t_{*})$
\[
\norm{F_0(f)(t,\cdot,\cdot)}_{L^{\infty}( (0,1);L^{p}(\R^{d}))} \leq \esssup_{\xi \in (0,1)}\izj \norm{f(t,\cdot,\xi,\cdot)}_{L^{p}(\R^d)}(d\eta) = \norm{f(t,\cdot,\cdot,\cdot)}_{L^{\infty}_\xi(\mathcal{M}_\eta(L^{p}(\R^d)))}
\]
for $p > 1$ and 
\[
\norm{F_0(f)(t,\cdot,\cdot)}_{L^{\infty}( (0,1);L^{1}(\R^{d}))} \leq  \norm{f(t,\cdot,\cdot,\cdot)}_{L^{\infty}_\xi(\mathcal{M}_\eta(\mathcal{M}(\R^d)))}.
\]
Hence,
 (\ref{fnest}) and (\ref{fnestj}) follow for $n=1$.
Let us assume that for fixed $n$ every transform $F\in \mathcal{F}$ of order $k \leq n-1$ satisfies $F(f) \in L^{\infty}((0,t_{*})\times (0,1);(L^{1}\cap L^{\infty})(\R^{dk}))$ with the estimates (\ref{fnest}) and (\ref{fnestj}) with number $n$ replaced by $k$.
We show that under this assumption (\ref{fnest}) and (\ref{fnestj}) follow for any transform  of order $n$. Indeed, for fixed $F \in \mathcal{F}$ of rank $n$ one of the two scenarios is satisfied. Either there exist $F_1$ of rank $k < n$ and $F_2$ of rank $l<n$ with  $k+l = n$, such that
\[
F(t,x_1,\dots x_n,\xi) = F_{1}(t,x_1,\dots x_k,\xi)\cdot F_{2}(t,x_{k+1},\dots x_{k+l},\xi)
\]

or there exists $F_3$ of rank $n-1$ such that
\[
F(f)(t,x_1,\dots x_n,\xi) = \izj F_{3}(f)(t,x_1,\dots x_{n-1},\eta)f(t,x_n,\xi,d \eta).
\]
In the first case we note that $(t,\xi)\mapsto F_{1}(f)(t,\cdot,\xi)\cdot F_{2}(f)(t,\cdot,\xi)$ is measurable with values in $L^{1}(\R^{d(k+l)})$, since $(t,\xi)\mapsto (F_{1}(f)(t,\cdot,\xi), F_{2}(f)(t,\cdot,\xi))$ is strongly measurable on $L^{1}(\R^{dk})\times L^{1}(\R^{dl})$ and the operator $T:L^{1}(\R^{dk})\times L^{1}(\R^{dl})\rightarrow L^{1}(\R^{d(k+l)})$ given by $T(F_1,F_2) = F_1 \cdot F_2$ is bounded and bilinear.
Applying the induction hypothesis we immediately arrive at the estimate 
\[
\norm{F(f)(t,\cdot,\cdot)}_{L^{\infty}((0,1);L^{p}( \R^{d(k+l)}))} \leq \norm{f(t,\cdot,\cdot,\cdot)}^{k+l}_{\W( L^p(\R^d))},
\]
with the obvious modification in case $p=1$.
In the second case the measurability and estimates (\ref{fnest}), (\ref{fnestj}) follow from the induction hypothesis and Lemma \ref{coro2}.
Hence, from the principle of mathematical induction, every  transform $F\in \mathcal{F}$ is well defined and satisfies the estimates in (\ref{fnest}) and (\ref{fnestj}).  The higher Sobolev bounds (\ref{fnestw}), (\ref{fnestwj}) follows by applying the same reasoning to the spatial derivatives.
\end{proof}

Below we show that for any $T \in \Tree$  one may represent the observable $\tau(T,\cdot)$ by one of the transforms $F \in \mathcal{F}$. The lemma below is an equivalent of \cite[Lemma 5.4]{JPS2025} and also the proof is analogous. Hence, we leave the proof to the Appendix.

\begin{lemma}\label{ftau}
 For any $T\in \Tree$   there exists a transform $F\in \mathcal{F}$ of rank $n=|T|-1$ and the permutation $\sigma:\{1,\dots,n\}\rightarrow \{1,\dots,n\}$  such that    \[
 \tau(T,f)(t,x_1,\dots,x_n) = \izj F(f)(t,x_{\sigma(1)},\dots,x_{\sigma(n)},\eta)d\eta
 \]
 for any $f\in L^\infty((0,t_{*});L^{\infty}_\xi L^1_\eta ( (L^1\cap L^{\infty})(\R^d)))\cap L^\infty((0,t_{*});L^{\infty}_\eta L^1_\xi ( (L^1\cap L^{\infty})(\R^d)))$.
 \end{lemma}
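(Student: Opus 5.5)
We would prove the lemma by structural induction over subtrees. We build, for every vertex of $T$, a transform that encodes the integral over the subtree hanging below that vertex. Fix $T\in\Tree$ with $|T|=n+1$. For a vertex $k$ let $D(k)$ be its set of descendants, excluding $k$ itself. Note that each edge $(k,m)$ of $T$ carries the spatial variable $x_{m-1}$, so the edges of the subtree rooted at $k$ carry exactly the variables $\{x_{m-1}: m\in D(k)\}$. For $k$ with $D(k)\neq\emptyset$ we want a transform $G_k\in\mathcal{F}$ of rank $|D(k)|$ and an ordering $\sigma_k$ of these variables such that, for $f$ in the regular class $L^\infty_\xi L^1_\eta\cap L^\infty_\eta L^1_\xi$,
\[
G_k(f)(t,(x_{m-1})_{\sigma_k},\xi_k)=\int_{[0,1]^{|D(k)|}}\prod_{(a,b)\in E(T),\, b\in D(k)} f(t,x_{b-1},\xi_a,\xi_b)\prod_{b\in D(k)}d\xi_b .
\]

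The construction is bottom-up. First consider a vertex $k$ with children $m_1,\dots,m_r$. By Fubini the integrand factorizes over the children, because each child's subtree uses disjoint $\xi$ and $x$ variables. The factor belonging to child $m$ is $\int_0^1 f(t,x_{m-1},\xi_k,\xi_m)H_m(\xi_m)\,d\xi_m$, where $H_m$ is the integral over the subtree of $m$.
\begin{itemize}
\item If $m$ is a leaf, then $H_m\equiv1$ and the factor is exactly $F_0(f)(t,x_{m-1},\xi_k)$.
\item Otherwise $H_m=G_m(f)$, and the factor equals $G_m^*(f)(t,(x)_{\sigma_m},x_{m-1},\xi_k)$.
\end{itemize}
We then set $G_k$ to be the product $\cdot$ of these $r$ factors, taken in a fixed order. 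This is legitimate since the product in $\mathcal{F}$ is associative up to relabeling. The ordering $\sigma_k$ is the resulting concatenation: within each child block the variables of the subtree come first, followed by $x_{m-1}$. Since $|T|\ge2$, the root has at least one child. Hence $\tau(T,f)=\int_0^1 G_1(f)(t,\cdot,\xi_1)\,d\xi_1$, and we take $F=G_1$ and $\sigma=\sigma_1$. The rank of $F$ is $|D(1)|=n$, as each nonroot vertex contributes exactly one variable.

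The remaining point is consistency. In Definition \ref{defalgebra}, $F^*$ is defined through the extension operator of Lemma \ref{impoest} (applied for a.e. $(t,x)$ as in Lemma \ref{coro2}). We must check that for $f$ in the regular class this coincides with the ordinary Lebesgue integral $\int_0^1 G(\eta)f(\xi,\eta)\,d\eta$.
\begin{itemize}
\item For continuous $G$ in $\eta$ the two agree by definition of the pairing.
\item For general $G\in L^\infty$, approximate by uniformly bounded continuous $G_j\to G$ in $L^1$ and a.e. The convergence part of Lemma \ref{impoest} (with constant $w_n=f$) gives weak$^*$ convergence of the extension. For densities $f(\xi,\cdot)\in L^1_\eta$, dominated convergence gives convergence of the Lebesgue integrals. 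The two limits therefore coincide.
\end{itemize}
This step must be carried out with the $L^1(\R^d)$-valued measurability handled as in Proposition \ref{fntau}. We expect this identification, together with the careful bookkeeping of the permutation $\sigma$, to be the only delicate part; the rest is Fubini and induction on the depth of the subtree.
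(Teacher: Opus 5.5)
Your proposal is correct and follows essentially the same argument as the paper. The paper introduces $\tilde\tau(T,f)$, the tree integral with the root variable $\xi_1=\xi$ left free, splits $T$ at the root into the subtrees hanging from its children, applies the induction hypothesis to each, and builds $F$ from one ${}^*$-operation per child followed by products. Your explicit treatment of leaf children (subtree integral $\equiv 1$, factor equal to $F_0(f)$) and your check that the extension-operator definition of $F^*$ agrees with the Lebesgue integral for $f$ in the regular class are points the paper leaves implicit, so they are welcome additions rather than a different route.
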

 
 Now we are ready to extend the definition of $\tau$ for less regular $f$.
 \begin{defi}\label{deftf}(cf. \cite[Definition 5.5]{JPS2025})
   For any, $T\in \Tree$,  we set $n=|T|-1$ and take the transform $F\in \mathcal{F}$ and  the permutation $\sigma:\{1,\dots,n\}\rightarrow \{1,\dots,n\}$ from Lemma \ref{ftau}. Then we define 
\[
\tau(T,f)(t,x_1,\dots,x_n) = \izj F(f)(t,x_{\sigma(1)},\dots,x_{\sigma(n)},\eta)d\eta,
\]  
for every $f\in L^\infty((0,t_{*});\W((L^1\cap L^{\infty})(\R^d)))$.
 \end{defi}

The Proposition established below ensures that Definition \ref{deftf} is correct.  

 \begin{prop}\label{deftfprop}
     The Definition \ref{deftf} is independent of the choice of transform $F$ and permutation, i.e. if for $F_1,F_2 \in \mathcal{F}$ and two permutations $ \sigma_1,\sigma_2$ we have that $\izj F_1^{\sigma_1}(f)(\cdot,\cdot,\eta)d\eta = \izj F_2^{\sigma_2}(f)(\cdot,\cdot,\eta)d\eta$ almost everywhere for every $f\in L^\infty((0,t_{*});L^{\infty}_\xi L^1_\eta \cap L^{\infty}_\eta L^1_\xi((L^1\cap L^{\infty})(\R^d)))$, then the identity  holds almost everywhere also for every $f\in L^\infty((0,t_{*});\W((L^1\cap L^{\infty})(\R^d)))$. Here, by $F^{\sigma}$ we understand the composition of the transform $F$ with the permutation $\sigma$ of spatial variables. 
\end{prop}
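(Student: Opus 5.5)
We prove the statement by density and continuity. Fix $f\in L^\infty((0,t_*);\W((L^1\cap L^\infty)(\R^d)))$ and regularize it in the label variable $\xi$ as in Lemma \ref{hmj}: extend periodically, convolve with $\rho^\ve$ and set $f^\ve(t,x,\xi,\eta)=\izj \rho^\ve(\xi-\xi')f(t,x,d\xi',\eta)$, working for almost every $t$. By Lemma \ref{hmj}, $f^\ve$ is bounded in $(\xi,\eta)$ with $\norm{f^\ve}_{\W(Y)}\le \norm{f}_{\W(Y)}$ for $Y=L^\infty(\R^d)$ and $Y=\mathcal{M}(\R^d)$. In particular $f^\ve$ lies in $L^\infty_\xi L^1_\eta\cap L^\infty_\eta L^1_\xi((L^1\cap L^\infty)(\R^d))$. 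The hypothesis therefore gives
\[
\izj F_1^{\sigma_1}(f^\ve)(\cdot,\cdot,\eta)\,d\eta=\izj F_2^{\sigma_2}(f^\ve)(\cdot,\cdot,\eta)\,d\eta
\]
a.e. for every $\ve$. It then suffices to show that for every transform $F\in\mathcal{F}$ of rank $n$,
\[
\izj F(f^\ve)(t,\cdot,\eta)\,d\eta\to \izj F(f)(t,\cdot,\eta)\,d\eta \quad \m{in } L^1(\R^{dn})
\]
(or at least in distributions) for a.e. $t$. Permuting spatial variables commutes with this limit, so equality of the limits gives the claim.

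We prove by induction along the construction of $\mathcal{F}$ the stronger statement $F(f^\ve)\to F(f)$ in $L^1((0,1)\times\R^{dn})$, with $F(f^\ve)$ bounded in $L^\infty((0,1);(L^1\cap L^\infty)(\R^{dn}))$ uniformly in $\ve$; the latter follows from Proposition \ref{fntau} and the uniform bounds on $f^\ve$. Integrating in $\eta$ over $(0,1)$ then yields the required convergence.

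\textbf{Base case.} For $F_0$ we have $F_0(f^\ve)(t,x,\xi)=\izj f^\ve(t,x,\xi,d\eta)$, which is the $\xi$-mollification of $F_0(f)$. It converges in $L^1((0,1)\times\R^d)$ because $F_0(f)\in L^1((0,1);L^1(\R^d))$, by Proposition \ref{fntau}. Equivalently, this is the second convergence statement of Lemma \ref{hmj} with $g\equiv1$, although that requires $g\in L^1$, so the direct mollification argument is cleaner.

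\textbf{Product step.} Let $F_1,F_2$ have ranks $k,l$, with $F_1(f^\ve)\to F_1(f)$ in $L^1$ and uniformly bounded in $L^\infty((0,1);L^1(\R^{dk}))$, and similarly for $F_2$. Write
\[
F_1(f^\ve)F_2(f^\ve)-F_1(f)F_2(f)=(F_1(f^\ve)-F_1(f))F_2(f^\ve)+F_1(f)(F_2(f^\ve)-F_2(f)).
\]
Integrating over the separated spatial variables, for a.e. $\xi$ the first term is bounded by $\norm{F_1(f^\ve)-F_1(f)}_{L^1(\R^{dk})}(\xi)\,\norm{F_2(f^\ve)}_{L^1(\R^{dl})}(\xi)$. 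Integrating in $\xi$ and using the uniform $L^\infty_\xi L^1_x$ bound gives convergence to zero, and the second term is handled the same way.

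\textbf{Star step (main obstacle).} We must show that $\izj F(f^\ve)(t,x_1,\dots,x_k,\eta)\,f^\ve(t,x_{k+1},\xi,d\eta)$ converges to the extension-operator integral with $f$. This is exactly the first convergence in Lemma \ref{hmj}, applied with $g^\ve=F(f^\ve)$ and $g=F(f)$, $n=k$. Its hypotheses hold: $g^\ve\to g$ in $L^1((0,1)\times\R^{dk})$ by induction, and both lie in $L^\infty((0,1);L^1)$ by Proposition \ref{fntau}. The delicate point is that the limit in Lemma \ref{hmj} is the extension operator of Lemma \ref{impoest}/Lemma \ref{coro2} applied pointwise in the spatial variables. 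This is precisely how $F^*(f)$ is defined in Definition \ref{defalgebra}, so the identification holds by construction, but it should be checked that measurability in $t$ is preserved. We handle this by fixing a full-measure set of times, outside a countable union of null sets, on which all countably many transforms are defined. The induction then closes, and passing to the limit in the identity for $f^\ve$ completes the proof.
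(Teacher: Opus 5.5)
Your proposal is correct and follows the paper's argument essentially step for step: mollify $f$ in $\xi$, apply the hypothesis to $f^\ve$, and pass to the limit using the inductive convergence $F(f^\ve)\to F(f)$ in $L^1((0,1)\times\R^{dn})$, which the paper isolates as Lemma \ref{apro} and proves as you do (mollifier identity for $F_0$, the product splitting with uniform $L^\infty_\xi L^1$ bounds, and Lemma \ref{hmj} for the star step). One harmless slip: $F_0$ integrates in $\eta$, so your parenthetical alternative for the base case would use the first convergence in Lemma \ref{hmj}, not the second, and there $g\equiv 1$ on $(0,1)$ with $n=0$ is admissible.
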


\begin{proof}
Let us take $f\in L^\infty((0,t_{*});\W((L^1\cap L^{\infty})(\R^d)))$, extend it periodically in $\xi$  on the whole real line and convolve in $\xi$ with standard periodic mollifier  $\rho^\ve$. Let us assume that for every $\ve > 0$ the $f^\ve(t,x,\xi,\eta):= \izj \rho^{\ve}(\xi-\xi')f(t,x,\xi',\eta)d\xi'$ satisfies $\izj F_1^{\sigma_1}(f^\ve)(\cdot,\cdot,\eta)d\eta = \izj F_2^{\sigma_2}(f^\ve)(\cdot,\cdot,\eta)d\eta$ almost everywhere.
In order to show that the identity above holds also for $f$ it is enough to apply the following lemma, whose proof we postpone to Appendix.
\begin{lemma}\label{apro}
Let $f\in L^\infty((0,t_{*});\W((L^1\cap L^{\infty})(\R^d)))$ and $f^\ve$ denotes the approximation by convolution with periodic mollifying kernel in $\xi$- variable given in Lemma \ref{hmj}. For every $n-$rank transform  $F \in \mathcal{F}$ we have $ F(f^\ve)(t) \rightarrow  F(f)(t)$ in $L^{1}((0,1)\times\R^{dn})$ for almost all $t \in (0,t_{*})$ as $\ve \rightarrow 0$.
\end{lemma}

\end{proof}

With the Definition \ref{deftf}, the estimates form Proposition \ref{fntau} immediately imply the estimates for $\tau(T,f)$.

\begin{coro}\label{tauest}
Let $f\in L^\infty((0,t_{*});\W((L^1\cap L^{\infty})(\R^d)))$, then   $\tau(T,f) \in L^{\infty}((0,t_{*});(L^{1}\cap L^{\infty})(\R^{dn}))$ for each $T \in \Tree$, $n=|T|-1$ and the following estimates hold for almost all $t \in (0,t_{*})$ and $p\in(1,\infty]$
\[
\norm{\tau(T,f)(t,\cdot)}_{L^{p}(\R^{dn})} \leq \norm{f(t,\cdot,\cdot,\cdot)}^n_{\W( L^p(\R^d))}, \hd \norm{\tau(T,f)(t,\cdot)}_{L^{1}(\R^{dn})} \leq \norm{f(t,\cdot,\cdot,\cdot)}^n_{\W( \mathcal{M}(\R^d))}.
\]
Furthermore, if $f\in L^\infty((0,t_{*});\W((W^{1,1}\cap W^{1,\infty})(\R^d)))$, then
\[
\norm{\nabla \tau(T,f)(t,\cdot)}_{L^{p}(\R^{dn})} \leq \norm{\nabla f(t,\cdot,\cdot,\cdot)}^n_{\W( L^p(\R^d))}, \hd \norm{\nabla \tau(T,f)(t,\cdot)}_{L^{1}(\R^{dn})} \leq \norm{\nabla f(t,\cdot,\cdot,\cdot)}^n_{\W( \mathcal{M}(\R^d))}.
\]
\end{coro}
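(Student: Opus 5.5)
The plan is to read the corollary off Definition \ref{deftf} together with Proposition \ref{fntau}. Fix $T\in\Tree$ with $n=|T|-1$, and take the transform $F\in\mathcal{F}$ of rank $n$ and the permutation $\sigma$ provided by Lemma \ref{ftau}. Then
\[
\tau(T,f)(t,x_1,\dots,x_n)=\izj F(f)(t,x_{\sigma(1)},\dots,x_{\sigma(n)},\eta)\,d\eta .
\]
By Proposition \ref{deftfprop} this expression does not depend on the choice of $F$ and $\sigma$, so it suffices to estimate this particular representation.

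For measurability, Proposition \ref{fntau} shows that $(t,\eta)\mapsto F(f)(t,\cdot,\eta)$ is strongly measurable with values in $L^1(\R^{dn})$ and is essentially bounded in $(L^1\cap L^\infty)(\R^{dn})$. Composing with the permutation of spatial variables is an isometry on every $L^p(\R^{dn})$, so it preserves both measurability and the bounds. Integrating in $\eta$ over $(0,1)$ is a Bochner integral in $L^1(\R^{dn})$. Hence $t\mapsto\tau(T,f)(t)$ is measurable with values in $L^1(\R^{dn})$, and the $L^\infty$ bound together with measurability in $t$ follows in the same way.

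For the estimates, fix $t$ and use Minkowski's integral inequality:
\[
\norm{\tau(T,f)(t,\cdot)}_{L^p(\R^{dn})}\le \izj \norm{F(f)(t,\cdot,\eta)}_{L^p(\R^{dn})}\,d\eta\le \norm{F(f)(t,\cdot,\cdot)}_{L^\infty((0,1);L^p(\R^{dn}))},
\]
which is valid because $|(0,1)|=1$. Inserting (\ref{fnest}) for $p\in(1,\infty]$ and (\ref{fnestj}) for $p=1$ gives the first pair of inequalities.

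For the gradient bounds, assume $f\in L^\infty((0,t_*);\W((W^{1,1}\cap W^{1,\infty})(\R^d)))$. Differentiation in $x$ commutes with the $\eta$-integral, which can be checked by testing against $C_c^\infty$ functions and using Fubini. It also commutes with the permutation, up to a relabeling of components that does not change norms. So $\nabla\tau(T,f)=\izj(\nabla F(f))^\sigma\,d\eta$, and the same Minkowski argument combined with (\ref{fnestw}) and (\ref{fnestwj}) yields the remaining estimates.

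The only point needing some care is this commutation of $\nabla$ with the extended integral. It follows from the weak formulation and the fact that $\nabla F(f)$ lies in $L^\infty_\eta L^1$ by Proposition \ref{fntau}. I do not expect a real obstacle here.
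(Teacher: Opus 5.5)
Your handling of the $L^p$ and $L^1$ bounds is the paper's own argument. The paper just says they follow from Definition \ref{deftf} and Proposition \ref{fntau}, and your Minkowski step over $(0,1)$ is the right way to spell that out.

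The gradient half has a genuine gap: no argument can establish it as stated. You feed in (\ref{fnestw})--(\ref{fnestwj}), but for $n\ge 2$ those bounds are false, and so are the corollary's gradient bounds. The product rule gives $\nabla_{(x,y)}(F_1(x)F_2(y))=(\nabla F_1(x)\,F_2(y),\;F_1(x)\,\nabla F_2(y))$. So ``the same reasoning applied to spatial derivatives'' produces one factor of $\nabla f$ and $n-1$ factors of $f$, never $\norm{\nabla f}^n$.

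Here is a concrete counterexample. Let $T$ have root $1$ and leaves $2,3$, so $n=2$. Take $f(x,\xi,d\eta)=\phi(x)\,d\eta$, constant in $t$, with $0\le\phi\in C_c^\infty(\R^d)$ and $\phi\not\equiv 0$. Then $\norm{f}_{\W(L^p(\R^d))}=\norm{\phi}_{L^p}$, $\norm{\nabla f}_{\W(L^p(\R^d))}=\norm{\nabla\phi}_{L^p}$, and $\tau(T,f)(x_1,x_2)=\phi(x_1)\phi(x_2)$. Hence $\norm{\nabla\tau(T,f)}_{L^p(\R^{2d})}\ge\norm{\nabla\phi}_{L^p}\norm{\phi}_{L^p}$. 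Replace $\phi$ by $\phi(\cdot/\lambda)$. This lower bound is multiplied by $\lambda^{2d/p-1}$, while the claimed upper bound $\norm{\nabla\phi}_{L^p}^2$ is multiplied only by $\lambda^{2d/p-2}$. Here read $d/p=0$ for $p=\infty$; for $p=1$ the $\W(\mathcal{M}(\R^d))$-norms coincide with the $L^1$ norms. So the inequality fails for large $\lambda$, for every $p\in[1,\infty]$.

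What your route does prove, once the induction in Proposition \ref{fntau} is redone with the product rule, is
\[
\norm{\nabla\tau(T,f)(t,\cdot)}_{L^p(\R^{dn})}\le n\,\norm{f(t,\cdot,\cdot,\cdot)}^{n-1}_{\W(L^p(\R^d))}\,\norm{\nabla f(t,\cdot,\cdot,\cdot)}_{\W(L^p(\R^d))},
\]
together with the analogous $\W(\mathcal{M}(\R^d))$ version for $p=1$. Write $a,b$ for the $\W$-norms of $f$ and $\nabla f$. The induction step is then $\norm{\nabla(F_1F_2)}\le ka^{k-1}b\cdot a^{l}+a^{k}\cdot la^{l-1}b$ for products, and $\norm{\nabla F^{*}}\le ka^{k-1}b\cdot a+a^{k}b$ for the $^{*}$-operation.

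The same induction also gives a full-norm bound, $\norm{\tau(T,f)(t,\cdot)}_{W^{1,p}(\R^{dn})}\le\norm{f(t,\cdot,\cdot,\cdot)}^n_{\W(W^{1,p}(\R^d))}$. This uses the norm $\norm{u}_{L^p}+\norm{\nabla u}_{L^p}$, which is submultiplicative under tensor products. This is the form actually used in (\ref{unif}), where only an $N$-independent constant is needed.

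The paper's one-line proof has the same defect. You should state and prove the gradient part in one of these corrected forms rather than cite (\ref{fnestw})--(\ref{fnestwj}).
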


Now we aim at the main result of this section, namely Theorem \ref{lg1}. We consider it as an analogue of \cite[Theorem 5.1]{JPS2025}, hence we follow the similar reasoning in the proof, which is based on the key compactness lemma \cite[Lemma 5.7, Corollary 5.9]{JPS2025}.

\begin{lemma}\cite[Corollary 5.9]{JPS2025}\label{lg}
Consider any sequence $g_n$ in $L^\infty([0, 1])$. Then, there exists $\Phi: [0, 1]\to [0, 1]$, {\it a.e.} injective, measure-preserving, such that the following estimate is verified
      \[
\int_0^{1} |(g_n\circ\Phi)(\xi)-(g_n\circ\Phi)(\xi+h)|\,d\xi\leq 2^n\,\Vert g_n\Vert_{L^\infty(0,1)}\,2^{-C\,\sqrt{\log\frac{1}{\vert h\vert}}},
      \]
for any $0<\vert h\vert <1$, each $n\in \mathbb{N}$ and some universal constant $C$.
\end{lemma}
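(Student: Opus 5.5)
The plan is to build a single rearrangement $\Phi$ that orders $[0,1]$ lexicographically along a fixed interleaving of the binary digits of all the $g_n$. For each fixed $h$, we then truncate this ordering at a depth adapted to $|h|$. By homogeneity we may assume $\Vert g_n\Vert_{L^\infty}\le 1$ (for $g_n=0$ there is nothing to prove), so that $(g_n+1)/2\in[0,1]$. For each $n$ we write the binary digits $d_{n,1}(\xi),d_{n,2}(\xi),\dots\in\{0,1\}$ of $(g_n(\xi)+1)/2$, fixing a measurable convention for dyadic rationals. Truncating after $m$ digits gives a quantization $Q_{n,m}g_n$ with $|g_n-Q_{n,m}g_n|\le 2\cdot 2^{-m}$.

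Next we fix an enumeration $\ell\mapsto (n_\ell,m_\ell)$ of $\mathbb{N}\times\mathbb{N}$ along anti-diagonals. With this choice, among the first $L$ entries the function $g_n$ has at least $\mu_n(L)\ge c\sqrt{L}-n$ of its digits. To every $\xi$ we assign the infinite word $D(\xi)=(d_{n_\ell,m_\ell}(\xi))_{\ell\ge1}$. We order points by $\xi\prec\xi'$ if and only if $D(\xi)<D(\xi')$ lexicographically, or $D(\xi)=D(\xi')$ and $\xi<\xi'$. We then set $\Psi(\xi):=|\{y:\,y\prec\xi\}|$. This is a measurable, order-preserving map that pushes Lebesgue measure to Lebesgue measure; the argument is as for the one-dimensional increasing rearrangement, since the order is total and every set $\{y\prec\xi\}$ is measurable. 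Moreover $\Psi$ is a.e. injective, and $\Phi:=\Psi^{-1}$ is the required a.e. injective, measure-preserving map. The delicate points are measurability of $\Psi$ and the handling of level sets of $D$ of positive measure, which the tie-break by $\xi$ resolves. In particular, for every $L$ the map $\xi\mapsto (d_{n_\ell,m_\ell}(\Phi(\xi)))_{\ell\le L}$ is monotone in the lexicographic order of $\{0,1\}^L$. Hence it is piecewise constant on at most $2^L$ intervals.

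For the estimate, fix $0<|h|<1$ and $L\in\mathbb{N}$, to be chosen. The function $Q_{n,\mu_n(L)}g_n\circ\Phi$ depends only on the first $L$ digits of $D$, so it is a step function with at most $2^L$ jumps, each of size at most $2$. Its translation satisfies
\[
\int_0^1|Q g_n\circ\Phi(\xi)-Q g_n\circ\Phi(\xi+h)|\,d\xi\le 2\cdot 2^L|h|+2|h|,
\]
where the extra $2|h|$ accounts for the boundary of $[0,1]$: we extend by zero, or argue on the overlap. Together with the quantization error $2\cdot 2^{-\mu_n(L)}$, used twice, we obtain
\[
\int_0^{1} |(g_n\circ\Phi)(\xi)-(g_n\circ\Phi)(\xi+h)|\,d\xi\le C\big(2^L|h|+2^{\,n-c\sqrt L}\big).
\]
Choosing $L=\lfloor\tfrac12\log_2\frac1{|h|}\rfloor$ makes the first term at most $|h|^{1/2}$ and the second at most $2^n 2^{-C\sqrt{\log(1/|h|)}}$. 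Undoing the normalization gives the claim with the factor $\Vert g_n\Vert_{L^\infty}$.

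I expect the main obstacle to be the rigorous construction of $\Phi$. One must check that the lexicographic order on digit words, with ties broken by $\xi$, gives a distribution function $\Psi$ that is measurable and measure-preserving, and that it is a.e. invertible with measurable inverse. The natural route is to first treat the finite truncations, where only finitely many level sets are involved and $\Psi$ is an explicit increasing rearrangement within each of them. One then passes to the limit $L\to\infty$, using that the truncated orders refine one another, so the maps converge pointwise a.e. The quantitative estimate itself is routine once $\Phi$ is in hand.
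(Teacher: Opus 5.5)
The paper does not prove this lemma; it imports it from \cite[Lemma 5.7, Corollary 5.9]{JPS2025}. Your argument is a correct, self-contained proof, and in substance it is the dyadic-rearrangement mechanism behind the cited result. Each refinement stage adds one more binary digit of every function already in play. After about $k^2/2$ splittings, $g_n$ is resolved to precision $2^{n-k}$, and balancing $2^{k^2/2}\abs{h}$ against $2^{n-k}$ gives the $\sqrt{\log}$ rate. Your anti-diagonal lexicographic order with positional tie-break realizes all these nested refinements by a single map, which makes the independence of $\Phi$ from $h$ and $n$ transparent.

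The measure-theoretic points you flag are a Ryff-type fact and go through. One way is to compose $D$ with the Cantor embedding $\xi\mapsto G(\xi)=\sum_\ell 2D_\ell(\xi)3^{-\ell}$. This turns $\prec$ into the lexicographic order of $(G(\xi),\xi)$ in $\R^2$, where measurability, the push-forward identity and a.e. injectivity of $\Psi$ are standard.

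The one loose end is the final line. Your estimate carries a universal prefactor, and it gives nothing for $\abs{h}>1/4$, where $L=0$. To reach the displayed constant-free form:
\begin{enumerate}
\item read $\xi+h$ on the overlap $\{\xi,\xi+h\in[0,1]\}$;
\item shrink $C$ so that the main estimate covers $\abs{h}<h_0$;
\item for $\abs{h}\geq h_0$, use the trivial bound $2\norm{g_n}_{L^\infty}(1-\abs{h})$ together with $2^n\geq 2$.
\end{enumerate}
The convention genuinely matters. With periodic shifts, your $\Phi$ violates the constant-free inequality already for $n=1$, $g_1=\pm1$ on two sets of measure $1/2$, and $h=1/2$.
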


\begin{theo}\label{lg1}
    For any sequence $\{f^N\}$ bounded in $\W((W^{1,1}\cap W^{1,\infty})(\R^d))$ there exists a subsequence still denoted by $N$ and  $f \in \W((W^{1,1}\cap W^{1,\infty})(\R^d))$ such that for every $T \in \Tree$  $\tau(T,f^N) \rightarrow \tau(T,f)$ in $L^{p}_{loc}(\R^{dn})$ for every $p \in [1,\infty)$. Furthermore, if the sequence $f^N$ consists of nonnegative elements, then also the limiting $f$ is nonnegative.
\end{theo}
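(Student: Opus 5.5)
The plan follows the proof of \cite[Theorem 5.1]{JPS2025}: a relabeling that makes the sequence equicontinuous in the label variables, a weak$^*$ limit, and then convergence of each observable using the spatial Sobolev bounds.

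\textbf{Step 1: relabeling.} Observables are invariant under a common measure-preserving relabeling of $\xi$ and $\eta$. Indeed, replacing $f^N(x,\xi,\eta)$ by $f^N(x,\Phi(\xi),\Phi(\eta))$ with $\Phi$ a.e.\ injective and measure preserving leaves every $\tau(T,f^N)$ unchanged. This is clear for regular $f$ from (\ref{reptau}) by a change of variables, and it passes to $\W$ through the approximation of Lemma \ref{apro}.

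To choose $\Phi$, I fix a countable family of scalar test quantities. These are the functions $\xi\mapsto \int_0^1\langle \psi_k,f^N(\cdot,\xi,d\eta)\rangle$ and $\eta\mapsto\int_0^1\langle\psi_k,f^N(\cdot,d\xi,\eta)\rangle$, with $\psi_k$ running over a countable dense subset of the separable predual $X$ of $(W^{1,1}\cap W^{1,\infty})(\R^d)$. Better still, following \cite{JPS2025}, I would use the partial transforms $F(f^N)$ for $F\in\mathcal F$, tested against countably many spatial test functions. I enumerate all of these as a single sequence $g_n\in L^\infty(0,1)$; each is uniformly bounded by Proposition \ref{fntau}. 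Lemma \ref{lg} then gives one $\Phi$ such that all $g_n\circ\Phi$ have a uniform $L^1$ modulus of continuity in the label variable.

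\textbf{Step 2: extracting a limit.} Since $f^N$ is bounded in the dual space $\W((W^{1,1}\cap W^{1,\infty})(\R^d))$ with separable predual, Banach--Alaoglu gives a subsequence with $f^N\overset{*}{\rightharpoonup} f$. By a diagonal argument I may also assume $F(f^N)\overset{*}{\rightharpoonup}$ some limit for every $F\in\mathcal F$. Nonnegativity passes to the limit because nonnegative test functions give nonnegative pairings.

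\textbf{Step 3: identifying the limits, by induction on the construction of $\mathcal F$.} I show $F(f^N)\to F(f)$ strongly in $L^1_{loc}((0,1)\times\R^{dn})$ for every $F$.

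For $F_0$, the modulus of continuity in $\xi$ from Step 1 upgrades weak convergence to strong convergence in $\xi$. Compactness in $x$ comes from the uniform $W^{1,1}\cap W^{1,\infty}$ bounds of Proposition \ref{fntau} (Rellich locally in $x$). Combining the two via a Kolmogorov--Riesz-type argument gives strong $L^1_{loc}$ convergence.

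Products are handled by strong convergence together with uniform $L^\infty$ bounds.

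For $F^*$, I use the last part of Lemma \ref{impoestv} (or Lemma \ref{impoest} tested in $x$): strong convergence of $F(f^N)$ in $L^1$ with uniform $L^\infty$ bounds, combined with $f^N\overset{*}{\rightharpoonup} f$, gives $F^*(f^N)\overset{*}{\rightharpoonup}F^*(f)$. The equicontinuity in $\xi$ of $F^*(f^N)$ from Step 1 and the spatial Sobolev bounds then upgrade this to strong convergence. Integrating in $\eta$ and using Definition \ref{deftf} yields $\tau(T,f^N)\to\tau(T,f)$ in $L^1_{loc}$. The uniform $L^\infty$ bounds of Corollary \ref{tauest} and interpolation then give convergence in $L^p_{loc}$ for every $p<\infty$.

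\textbf{Main obstacle.} The key difficulty is the $F^*$ step. The pairing is only a weak$^*$ limit in a vector-valued measure space, so one cannot simply combine weak convergence in $\eta$ with a strong/weak product. The argument works only because Step 1 puts the full countable family of transforms into the relabeling sequence at once, making everything equicontinuous in $\xi$. Here the asymmetry between $\xi$ and $\eta$ needs care: the same $\Phi$ must act on both label variables. I would therefore include both the $\xi$- and $\eta$-marginal test quantities in the sequence $g_n$.
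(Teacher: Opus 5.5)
Your architecture is the paper's: Lemma \ref{lg} applied to the countable algebra $\mathcal F$, a weak$^*$ limit of the relabeled sequence, identification by induction over $\mathcal F$ with the $F^*$ step handled by the last part of Lemma \ref{impoest}, and integration in the root label. However, Step 1 as written does not work. Lemma \ref{lg} does not give a uniform modulus for the whole family, because its bound carries the factor $2^n$, which depends on the position of $g_n$ in the enumeration. What the compactness argument needs is uniformity \emph{in $N$} for each fixed test quantity, and this forces the relabeling to depend on $N$. The paper fixes $N$ and applies Lemma \ref{lg} to the family $g^N_{k,l}(\xi)=F_k(f^N)(x^l,\xi)$, enumerated by the $N$-independent index $(k,l)$. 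This yields a map $\Phi_N$, and the resulting bound $C_{k,l}2^{-C\sqrt{\log(1/|h|)}}$ is uniform in $N$ only because of (\ref{unif}). If $N$ enters your enumeration, the factor $2^n$ blows up along the sequence. Moreover, no single $\Phi$ can serve all $N$ in general. Take $f^N=\rho(x)(1+r_N(\xi))$ with $0\le\rho\in C_c^\infty(\R^d)$ and $r_N$ the Rademacher functions. Then the $r_N\circ\Phi$ remain independent symmetric signs, so they converge weakly to $0$ while $\|r_N\circ\Phi\|_{L^1}=1$. Hence no subsequence of $F_0(f^N)(\cdot,\Phi(\cdot))$ converges in $L^1_{loc}$. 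Consequently Step 2 must extract the weak$^*$ limit of $\Phi_N^*f^N$, and it is this limit $f$ that your induction identifies.

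With $\Phi=\Phi_N$ the rest goes through, modulo two justifications. First, invariance under relabeling does not follow from Lemma \ref{apro}, since mollification in $\xi$ does not commute with $\Phi^*$. What you actually use is $F(\Phi^*f)=F(f)\circ\Phi$ for all $F\in\mathcal F$. This follows by induction from the identity $\izj G(\eta)f(x,\Phi(\xi),d\eta)=\izj G(\Phi(\eta))(\Phi^*f)(x,\xi,d\eta)$ for bounded $G$, understood via the extension operator of Lemma \ref{impoest}. The paper needs exactly this identity in its $F^*$ step, and otherwise uses only $\tau(T,f^N)=\izj F(f^N)(\cdot,\Phi_N(\eta))\,d\eta$. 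Second, equicontinuity of countably many \emph{tested} quantities needs an extra argument to become equicontinuity of $\xi\mapsto F(f^N)(\cdot,\Phi_N(\xi))$ in $L^1_{loc}$. The paper instead evaluates at a countable dense set of spatial points and extends by the uniform Lipschitz bound from (\ref{unif}); you can imitate this, or test against mollifiers centred at dense points. Finally, the $\eta$-marginal test quantities are unnecessary, since every transform is a function of the root label only.
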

\begin{proof}
Since the family $\mathcal{F}$ is countable let us index its elements by $(F_{k})_{k=0}^{\infty}$. At first we will prove that for every $N\in \mathbb{N}$ there exist a measure-preserving map $\Phi_N:[0,\ 1]\rightarrow [0,\ 1]$, such that for every transform $F_k$ of rank $n_k$, there exists $\phi_k(x_1,\dots,x_{n_{k}},\xi)\in L^{\infty}((0,1);(W^{1,1}\cap W^{1,\infty})(\R^{dn_k}))$ such that we have
\eqq{
F_k(f^N)(x_1,\dots,x_{n_k},\Phi_N(\eta))\rightarrow \phi_k(x_1,\dots,x_{n_k},\eta)\quad \mbox{in}\quad L^p_{loc}([0, 1]\times \mathbb{R}^{dn_{k}}),
}{mapcon}
as $N\rightarrow \infty$ (up to a subsequence on $N$) for  any $1\leq p<\infty$.

 Proposition \ref{fntau} applied in time-independent case implies that for each $k \in \mathbb{N}$ 
\eqq{
\norm{F_k(f^N)}_{L^{\infty}((0,1);(W^{1,1}\cap W^{1,\infty})(\R^{dn_k}))} \leq \norm{f^N}^{n_k}_{\W((W^{1,1}\cap W^{1,\infty})(\R^d))}\leq C^{n_k}.
}{unif}

We would like to apply Lemma \ref{lg} pointwisely in $x$ variable. To do so, for every $k\in \mathbb{N}$ we introduce a countable dense subset of $\mathbb{R}^{dn_k}$
and denote it by $\mathcal{D}_n:=\{(x_1^l,\dots,x_{n_k}^l):\,l\in \mathbb{N}\}$. For every $k,\,l,\,N\in \mathbb{N}$ we introduce $g_{k,l}^N(\xi):=F_k(f^N)(x_1^l,\ldots,x_{n_k}^l,\xi)$ for $\xi\in [0,\ 1]$. Thus, for every fixed $N$ we may apply Lemma~\ref{lg} to the countable two-indexed sequence $\{g_{k,l}^N\}_{k,l\in \mathbb{N}}\subset L^\infty([0,\ 1])$. As a result for each $N\in \mathbb{N}$ there exists a measure-preserving map $\Phi_N:[0,\ 1]\longrightarrow [0,\ 1]$ satisfying for any $0<|h|<1$
\[
\sup_{N\in \mathbb{N}}\int_0^1 \vert  g_{k,l} ^N\circ \Phi_N(\xi+h) -g_{k,l} ^N\circ \Phi_N(\xi)\vert \,d\xi\leq C_{k,l}\,2^{-C\sqrt{\log \frac{1}{|h|}}}.
\]
Note that here $C,C_{k,l}$ are positive constants and  $C_{k,l}$ are indeed $N$-independent due to (\ref{unif}). Applying the Fr\'echet-Kolmogorov theorem together with the diagonal extraction technic, we may choose the subsequence  (still denoted by $N$) such that $ g_{k,l}^N\circ\Phi_N\rightarrow \tilde g_{k,l}$ in $L^1((0, 1))$ as $N\rightarrow \infty$ for every $k,l\in \mathbb{N}$ to some $\tilde g_{k,l}\in L^1((0, 1))$. Now we would like to show that the above convergence holds not only on $\mathcal{D}_n$ but on $\mathbb{R}^{dn_k}$. Hence, let us define the functions $\tilde g_k: \mathcal{D}_n \times [0,1]\longrightarrow \mathbb{R}$ by $\tilde g_k(x_1^l,\ldots,x_{n_k}^l,\xi):=\tilde g_{k,l} (\xi)$. Since $\tilde g_{k,l}$ is also {\em a.e.} limit of a  subsequence of $g_{k,l}^N\circ\Phi_N$, hence the bound in \eqref{unif}, which is uniform in $\xi$ and $W^{1,\infty}$ in $x$ leads to
\begin{align*}
&\vert \tilde g_k(x_1^l,\ldots,x_{n_k}^l,\xi)\vert \leq C^{n_k},\\
&\vert \tilde g_k(x_1^{l_1},\dots,x_{n_k}^{l_1},\xi)-\tilde g_k(x_1^{l_2},\ldots,x_{n_k}^{l_2},\xi)\vert \leq C^{n_k}\,\vert (x_1^{l_1},\ldots,x_{n_k}^{l_1})-(x_1^{l_2},\ldots,x_{n_k}^{l_2})\vert,
\end{align*}
for any $k,\,l,\,l_1,\,l_2\in \mathbb{N}$ and {\em a.e.} $\xi\in [0,\ 1]$. Thus, by continuity we may uniquely extend $\tilde g_k:[0,\ 1]\times \mathcal{D}_n\rightarrow \mathbb{R}$ to $\phi_k\in L^\infty((0,1); W^{1,\infty}(\R^{dn_k}))$ with $\Vert \phi_k\Vert_{L^\infty((0,1)); W^{1,\infty}(\R^{dn_k})}\leq C^{n_k}$. Finally, for given $\ve > 0$ and $(x_1,\dots,x_{n_k})\in \R^{dn_k}$ we may choose $(x_1^l,\dots,x_{n_k}^l) \in \mathcal{D}_n$ such that $2C^{n_k} \abs{(x_1,\dots,x_{n_k}) - (x_1^l,\dots,x_{n_k}^l)} < \ve/2$. Then, passing again to subsequence if necessary
\begin{align*}
&\abs{F_k(f^N)(x_1,\ldots,x_{n_k},\Phi_N(\xi))-\phi_k(x_1,\ldots,x_{n_k},\xi)} \leq \abs{F_k(f^N)(x^l_1,\ldots,x^l_{n_k},\Phi_N(\xi))-\phi_k(x^l_1,\ldots,x^l_{n_k},\xi)}\\
 +&\abs{F_k(f^N)(x_1,\ldots,x_{n_k},\Phi_N(\xi))-F_k(f^N)(x^l_1,\ldots,x^l_{n_k},\Phi_N(\xi))} + \abs{\phi_k(x_1,\ldots,x_{n_k},\xi) - \phi_k(x^l_1,\ldots,x^l_{n_k},\xi)}\\
\leq & \abs{F_k(f^N)(x^l_1,\ldots,x^l_{n_k},\Phi_N(\xi))-\phi_k(x^l_1,\ldots,x^l_{n_k},\xi)} + 2C^{n_k} \abs{(x_1,\dots,x_{n_k}) - (x_1^l,\dots,x_{n_k}^l)} < \ve
\end{align*}
for $N$ big enough. Thus, for every $k\in \mathbb{N}$ and every $x_1,\ldots,x_{n_k}\in \mathbb{R}^d$
\[F_k(f^N)(x_1,\ldots,x_{n_k},\Phi_N(\xi))\rightarrow \phi_k(x_1,\ldots,x_{n_k},\xi) \m{ as } N\rightarrow \infty \m{ for a.a. } \xi\in [0,\ 1].\]

It remains to improve the almost everywhere convergence to local $L^p$ convergence for any $1\leq p<~\infty$.   At first, note that by~(\ref{unif}) functions $F_k(f^N)(\cdot,\Phi_N(\cdot))$ are bounded uniformly w.r.t. $N$ in $L^\infty((0,1); W^{1,1}(\R^{dn_k}))\subseteq L^\infty((0,1); BV(\R^{dn_k})$. Hence, from  weak$^*$ compactness of closed ball in $L^\infty((0,1); BV(\R^{dn_k}))$ and the uniform estimate in $L^\infty((0,1); W^{1,\infty}(\R^{dn_k}))$ we obtain \\$\phi_k\in L^\infty((0,1); W^{1,1}(\R^{dn_k}))$.
Finally, applying the uniform bound (\ref{unif}) together with the Lebesgue dominated convergence theorem we obtain that $F_k(f^N)(\cdot,\Phi_N(\cdot))$ converges in $L^{p}_{loc}(\R^{dn_k}\times [0,1])$ to $\phi_k$ for every $p \in [1,\infty)$ and we arrive at (\ref{mapcon}).\\
In the second step of the proof we identify the limit. 
At first, for any a.e. injective measure preserving $\Phi:[0,1]\rightarrow [0,1]$, we define $(\Phi^{*}f)(x,\xi,d\eta):=(\Phi^{-1})_{\#}f(x,\Phi(\xi),d\eta)$. Applying Lemma \ref{impoest} we may ensure that the definition does not depend upon different choices of inverse $\Phi^{-1}$ on a set of zero measure. Indeed, denoting by $\Phi^{-1}_1$ and $\Phi^{-1}_2$ two choices of almost everywhere inverse of $\Phi$. Then, by Lemma \ref{impoest}, for any $\Psi \in L^{\infty}((0,1))$ and $f \in \W(X^{*})$ we have 
\begin{align*}
&\norm{\izj \Psi(\eta)(\Phi^{-1}_1)_{\#}f(\cdot,\Phi(\xi),d\eta) - \izj \Psi(\eta)(\Phi^{-1}_2)_{\#}f(\cdot,\Phi(\xi),d\eta)}_{X^{*}} \leq\\
& \izj \abs{\Psi(\Phi^{-1}_1(\eta)) - \Psi(\Phi^{-1}_2(\eta))}\norm{f(\cdot,\Phi(\xi),\cdot)}_{X^{*}}(d\eta) \leq 
\norm{\Psi(\Phi^{-1}_1) - \Psi(\Phi^{-1}_2)}_{L^{\infty}(0,1)}\norm{f(\cdot,\Phi(\cdot),\cdot)}_{L^{\infty}_\xi(\mathcal{M}_\eta(X^*))},
\end{align*}
which equals zero, because $\Psi(\Phi^{-1}_1) = \Psi(\Phi^{-1}_2)$ almost everywhere.
Furthermore, from the Definition \ref{space} we obtain that $\norm{f}_{\W(X^{*})} = \norm{\Phi^{*}f}_{\W(X^{*})}$. Hence, from the uniform boundedness there exists $f$ such that on the subsequence $\Phi_N^{*}f^N \overset{*}{\rightharpoonup} f$ in $\W((W^{1,1}\cap W^{1,\infty})(\R^d))$.
We will inductively prove that  $\phi_k = F_k(f)$ for every $k \in \mathbb{N}$. Let us begin with $k=0$ and $F_0(f) = \izj f(x,\xi,d\eta)$. Take $\vf \in C([0,1]),\psi \in L^{1}(\R^d)$, then by weak$^*$ convergence and since $\Phi_N$ is measure preserving we have
\begin{align*}
 &\izj\ird \vf(\xi) \psi(x) F_0(f^N)(x,\Phi_N(\xi)) dx d\xi  \\
 =&\izj \izj\ird \psi(x)  (\Phi_N^{*}f^N)(x,\xi,\cdot) \vf(\xi) dx (d\eta)  d\xi  \rightarrow \izj \izj \ird \psi(x)  f(x,\xi,\cdot) dx(d\eta) \vf(\xi) d\xi   
\end{align*}
and thus
$F_0(f^N)(\cdot,\Phi_N(\cdot)) \rightarrow \izj f(\cdot,\cdot,d\eta) = F_0(f)$ weakly$^*$ in $\mathcal{M}([0,1];L^{\infty}(\R^d))$.
Hence, by the uniqueness of the limit we observe that $\phi_0 = F_0(f)$. Next, let us fix $n \in \mathbb{N}$ and assume that $\phi_k = F_k(f)$ for every $k$ such that $F_k$ is of rank not greater then $n$. We will show that it implies that if $F_l$ is of rank $n+1$, then $\phi_l = F_l(f)$. Let us take arbitrary $F_l$ of rank $n+1$, then one of two scenarios is satisfied. Either there exists $F_a$ of rank $m < n+1$ and $F_b$ of rank $s<n+1$ with  $m+s = n+1$, such that
\[
F_l(x_1,\dots x_{n+1},\xi) = F_{a}(x_1,\dots x_m,\xi)\cdot F_{b}(x_{m+1},\dots x_{n+1},\xi)
\]
or  there exists $F_c$ of rank $n$ such that
\[
F_l(f)(x_1,\dots x_{n+1},\xi) = \izj F_{c}(f)(x_1,\dots x_{n},\eta)f(x_{n+1},\xi,d \eta).
\]
In the first case the induction hypothesis leads to
\begin{align*}
\phi_l(x_1,\dots,x_{n+1},\xi) &= \lim_{N\rightarrow \infty}F_l(f^N)(x_1,\dots,x_{n+1},\Phi_N(\xi))  \\
&= \lim_{N\rightarrow \infty}F_a(f^N)(x_1,\dots,x_{m},\Phi_N(\xi))\lim_{N\rightarrow \infty}F_b(f^N)(x_{m+1},\dots,x_{n+1},\Phi_N(\xi))\\
& = F_a(f)(x_1,\dots,x_{m},\xi)F_b(f)(x_{m+1},\dots,x_{n+1},\xi)=F_l(f)(x_1,\dots x_{n+1},\xi).
\end{align*}
In the second case we take $\vf \in C_c([0,1]\times \R^{d(n+1)})$ and note that
\begin{align*}
 & \izj \int_{\R^{d(n+1)}}  F_l(f^N)(x_1,\dots,x_{n+1},\Phi_N(\xi))\vf(\xi,x_1,\dots,x_{n+1})   dx_1 \dots dx_{n+1} d\xi \\
 =& \izj \izj \int_{\R^{dn}}F_c(f^N)(x_1,\dots,x_{n},\eta)\vf(\xi,x_1,\dots,x_{n+1})  dx_1\dots dx_{n} \ird f^N(x_{n+1},\Phi_N(\xi),d\eta)  dx_{n+1} d\xi \\
 =& \izj \izj  \int_{\R^{dn}}F_c(f^N)(x_1,\dots,x_{n},\Phi_N(\eta))\vf(\xi,x_1,\dots,x_{n+1})  dx_1\dots dx_{n} \left[\ird (\Phi_N^*f^N)(x_{n+1},\xi,\cdot)  dx_{n+1}\right](d\eta)d\xi.
\end{align*}
Since from the induction hypothesis
\[
\int_{\R^{dn}}F_c(f^N)(x_1,\dots,x_{n},\Phi_N(\eta))\vf(\xi,x_1,\dots,x_{n+1})  dx_1\dots dx_{n} \rightarrow  
\]
\[
\int_{\R^{dn}}F_c(f)(x_1,\dots,x_{n},\eta)\vf(\xi,x_1,\dots,x_{n+1})  dx_1\dots dx_{n} \m{ in } L^1_\eta C_\xi(L^{p}(\R^d)) 
\]
and $\Phi_N^*f^N \overset{*}{\rightharpoonup} f$ in $\W((W^{1,1}\cap W^{1,\infty})(\R^d))$ we obtain
\[
\izj \int_{\R^{d(n+1)}} F_l(f^N)(x_1,\dots,x_{n+1},\Phi_N(\xi))\vf(\xi,x_1,\dots,x_{n+1})   dx_1 \dots dx_{n+1} d\xi \rightarrow 
\]
\[
\izj \izj  \int_{\R^{dn}}F_c(f)(x_1,\dots,x_{n},\eta)\vf(\xi,x_1,\dots,x_{n+1})  dx_1\dots dx_{n}  \ird  f(x_{n+1},\xi,\cdot) dx_{n+1}(d\eta) d\xi
\]
and thus $F_l(f^N(\cdot,\Phi_N(\cdot)))\overset{*}{\rightharpoonup} F_l(f)$ in $\mathcal{M}([0,1]\times \R^{dn})$, hence from the uniqueness of the limit $\phi_l = F_l(f)$. By the mathematical induction principle $\phi_k = F_k(f)$ for every $k \in \mathbb{N}$. \\ 
From the convergence $\Phi_N^{*}f^N \overset{*}{\rightharpoonup} f$ in $\W((W^{1,1}\cap W^{1,\infty})(\R^d))$ we see that if $f^N \geq 0$, then also $f \geq 0$ by weak$^*$ closedness of nonnegative cone in $\W((W^{1,1}\cap W^{1,\infty})(\R^d))$. 
We finish the proof noticing that since we have shown that  every transform $F \in \mathcal{F}$ of rank $n$ on appropriately selected subsequence satisfies $F(f^N(\cdot,\Phi_N(\cdot))) \rightarrow F(f) $ in $L^{p}_{loc}([0,1]\times \R^{dn})$, then on the same subsequence we have for every $p \in [1,\infty)$
\[
\tau(T,f^N)(x_1,\dots,x_n) = \izj F(f^{N})(x_1,\dots,x_n,\eta)d\eta = \izj F(f^{N})(x_1,\dots,x_n,\Phi_N(\eta))d\eta \rightarrow \tau(T,f)(x_1,\dots,x_n) 
\]
 in  $L^{p}_{loc}(\R^{dn})$. In this way we proved the claim.  
\end{proof}

We finish this section with the crucial result, which states that if $f$ is a solution to (\ref{meannu}), then the family of observables $\tau(T,f)$ solves the Vlasov hierarchy. In order to rigorously prove Proposition \ref{hier} we argue by approximation. To this end, we  establish the following approximation lemma, which is of independent interest. The proof of the lemma may be found in the Appendix.

\begin{lemma}\label{taustar}
Let $f \in \W((L^{1}\cap L^{\infty})(\R^d))$, $T \in \Tree$ and $f^\ve$ be the approximation from Lemma \ref{hmj}. For every $m \in \{1,\dots,|T|\}$ we may represent $\tau(T,f^{\ve})$ as follows
\[
\tau(T,f^{\ve})(x_1,\dots,x_n) = \izj \tau_{*}^m(T,f^{\ve})(x_1,\dots,x_n,\xi_m)d\xi_m,
\]
\[
\tau_{*}^m(T,f^{\ve})(x_1,\dots,x_n,\xi_m):=\int_{[0,1]^{|T|-1}}\prod_{(k,l)\in E(T)}f^{\ve}(x_{l-1},\xi_k,\xi_l) \prod_{i\neq m} d\xi_i.
\]
Then as $\ve \rightarrow 0$
\[
\tau^m_{*}(T,f^{\ve}) \rightarrow \tau_{*}^m(T,f) \in L^{1}((0,1)\times \R^{dn}) \m{ and } \norm{\tau^m_{*}(T,f^\ve)}_{L^{\infty}((0,1)\times \R^{dn})} \leq \norm{f}_{\W(R^d)}^n. 
\]
\end{lemma}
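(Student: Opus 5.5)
The plan is to prove the lemma by reducing $\tau^m_*$ to the transforms of Definition~\ref{defalgebra}, rooted at the vertex $m$, and then to use Lemma~\ref{apro}, Lemma~\ref{hmj} and Proposition~\ref{fntau}.

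\emph{Step 1: the representation.} The representation of $\tau(T,f^\ve)$ as an integral of $\tau^m_*$ over $\xi_m$ is just Fubini's theorem. Since $f^\ve$ is bounded in $\xi$ and integrable in $\eta$ with values in $(L^1\cap L^\infty)(\R^d)$, the product in (\ref{reptau}) is absolutely integrable on $[0,1]^{|T|}\times\R^{dn}$. We simply integrate last in the variable $\xi_m$.

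\emph{Step 2: the uniform bound.} For the $L^\infty$ bound we re-root $T$ at the vertex $m$. The undirected tree underlying $T$, viewed from $m$, splits into subtrees hanging off $m$. Each edge $(k,l)$ either points away from $m$ or points toward $m$ (the latter happens only along the path from $m$ back to the original root $1$).

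We integrate the leaves successively, from the outside inward. For an edge pointing away from $m$, the leaf $\xi_l$ is integrated in the second slot of $f^\ve(x_{l-1},\xi_k,\cdot)$. This gives a factor bounded by $\sup_{\xi_k}\int_0^1\|f^\ve(\cdot,\xi_k,\eta)\|_{L^\infty}d\eta\le\|f\|_{\W(L^\infty)}$. For an edge pointing toward $m$, we integrate in the first slot, $\sup_{\xi_l}\int_0^1\|f^\ve(\cdot,\xi,\xi_l)\|_{L^\infty}d\xi\le\|f\|_{\W(L^\infty)}$, where the inequality uses Lemma~\ref{hmj}. The same scheme with $L^1(\R^d)$ in place of $L^\infty$ gives the $L^1$ bound via $\W(\mathcal{M})$. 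Iterating over the $n$ edges yields $\|\tau^m_*(T,f^\ve)\|_{L^\infty}\le\|f\|^n_{\W}$, uniformly in $\ve$. This is exactly the mechanism behind the estimates (\ref{fnest}) and (\ref{fnestj}), now with two kinds of transforms: the $F^*$ of Definition~\ref{defalgebra}, and its transpose $F\mapsto\int_0^1F(\xi)f(\cdot,d\xi,\eta)$, which integrates in the first label variable.

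\emph{Step 3: the limit object and convergence.} We define $\tau^m_*(T,f)$ for general $f\in\W$ by the same nested composition. Each nested integral is understood through the extension operator of Lemma~\ref{impoestv}, using both inequalities in (\ref{vecexta}); the transposed integration uses the $L^\infty_\eta(\mathcal{M}_\xi)$ part of the $\W$ norm. Convergence then follows by induction over this nesting, exactly as in Lemma~\ref{apro}.

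At each stage the inner function $G^\ve\to G$ in $L^1((0,1)\times\R^{dk})$ and is uniformly bounded by Step 2. Lemma~\ref{hmj} then gives
\[
\int_0^1G^\ve(\cdot,\eta)f^\ve(x,\cdot,d\eta)\to\int_0^1G(\cdot,\eta)f(x,\cdot,d\eta)
\]
in $L^1$, and the second statement of that lemma handles integration in $\xi$. Products of convergent, uniformly bounded factors converge in $L^1$ by a standard argument. Since all constituents are bounded in $L^\infty$ and in $L^\infty_\xi L^1_x$, convergence in $L^1((0,1)\times\R^{dn})$ follows.

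\emph{Main obstacle.} The delicate step is handling edges directed toward $m$. These correspond to the transposed transform, which is not among the generators of $\mathcal{F}$. We need both halves of Lemma~\ref{hmj} to apply with $f^\ve$ mollified only in $\xi$, including where the integration variable is the mollified one. We will try to verify this directly via the $L^1_\eta(H^{-1}_\xi)$ convergence in Lemma~\ref{hmj}, together with the uniform Lipschitz-free $L^\infty$ bounds on the inner functions.
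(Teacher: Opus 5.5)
Your proposal is correct and is essentially the paper's argument. The paper inducts on $|T|$: it treats $m=1$ via Lemma~\ref{apro}, and otherwise splits $T$ at the edge $(l_0,m)$ into $T\setminus T_m$ (marked at the parent $l_0$) and the subtree $T_m$ rooted at $m$. That is exactly your re-rooting at $m$ unrolled one edge at a time: the edge directed toward $m$ is handled by the second statement of Lemma~\ref{hmj}, and the remaining factor by a product of uniformly bounded, $L^1$-convergent terms.

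The ``main obstacle'' you flag needs no separate verification. The second display of Lemma~\ref{hmj} is precisely the convergence $\int_0^1 g^{\ve}(y,\xi)f^{\ve}(x,d\xi,\cdot)\to\int_0^1 g(y,\xi)f(x,d\xi,\cdot)$ in $L^1_\eta$, with integration in the mollified variable, and the paper simply cites it at that step.
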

Note that this result is more involved then Lemma \ref{apro}. From Lemma \ref{ftau} we know that there exists a transform $F$ such that one may represent the observable $\tau$ as $\izj F^\sigma(f)(\xi)d\xi$, but the proof of Lemma \ref{ftau} reveals that the variable $\xi$ is the one connected with the root of a tree $T$. In general it is not true that each $\tau_*^m$ may be represented by some transform $F(f)$, thus we cannot apply Lemma \ref{apro} directly.

\begin{prop}\label{hier}
Let us assume that $K_1, K_2\in L^1(\R^d)$ and $\nu\geq 0$. Let  $f\in L^{\infty}((0,t_{*});\W((L^{1}\cap L^{\infty})(\R^d)))$ be a weak solution to \eqref{meannu}. Then,  the family $\tau(T,f)$ solves the following  non-exchangeable Vlasov hierarchy in the sense of distributions
\[
\partial_t \tau(T,f)(t,x_1,\dots,x_n)+\sum_{i=1}^{n} \divv_{x_i}\left(\int_{\R^d} K_1(x_i-y)\,\tau(T+(i+1),f)(t,x_1,\ldots,x_{n},y) dy\right)
\]
\eqq{
=An\tau(T,f) - \sum_{i=1}^{n}\ird K_2(x_i-y)\tau(T+(i+1),f)(t,x_1,\ldots,x_{n},y)dy +\nu\sum_{i=1}^{n}\Delta_{x_i}\tau(T,f).
}{treeeq}
for every $T\in \Tree$ with $n=|T|-1$.
\end{prop}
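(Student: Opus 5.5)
The plan is to derive \eqref{treeeq} first for a regularized object, where Leibniz' rule can be applied edge by edge, and then pass to the limit using Lemma~\ref{taustar} together with Lemma~\ref{hmj}.

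\textbf{Regularizing in $\xi$.} The key observation is that the coefficients $V_k[f](t,x,\eta)$ do not depend on $\xi$. Therefore, if we extend $f$ periodically in $\xi$ and convolve with $\rho^\ve$ as in Lemma~\ref{hmj}, then $f^\ve$ is again a weak solution of the \emph{linear} equation
\[
\partial_t f^\ve+\divv(f^\ve V_1[f])=f^\ve(A-V_2[f])+\nu\Delta f^\ve .
\]
The coefficients here are the unmollified $V_k[f]$, which are bounded by \eqref{v1}--\eqref{v5}. This is checked by testing the weak formulation with $\izj\rho^\ve(\xi'-\xi)\varphi(t,x,\xi',\eta)\,d\xi'$, which is still an admissible test function. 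Now $f^\ve$ is bounded in $\xi$ but remains a measure in $\eta$.

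\textbf{Formal derivative of $\tau(T,f^\ve)$.} In the product $\prod_{(k,m)\in E(T)} f^\ve(x_{m-1},\xi_k,\xi_m)$ each non-root vertex $m$ appears exactly once as a second (measure) variable. Hence the product is a well-defined product-type measure in $(\xi_2,\dots,\xi_{|T|})$, and $\tau(T,f^\ve)$ coincides with the representation in Lemma~\ref{taustar}.

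We test \eqref{treeeq} against $\psi\in C_c^\infty((0,t_*)\times\R^{dn})$. Differentiating in time edge by edge, the edge $(k,m)$ with spatial variable $x_i$, $i=m-1$, produces three contributions:
\begin{itemize}
\item a transport term $-\divv_{x_i}\bigl(V_1[f](x_i,\xi_m)\cdots\bigr)$;
\item a reaction term $(A-V_2[f](x_i,\xi_m))\cdots$;
\item a diffusion term $\nu\Delta_{x_i}\cdots$.
\end{itemize}
Summing over the $n$ edges gives $An\,\tau(T,f^\ve)$ and $\nu\sum_i\Delta_{x_i}\tau(T,f^\ve)$ directly. To make this rigorous, I would proceed by induction over the tree, or by approximating the other factors with test functions that are continuous in $(\xi,\eta)$. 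In either case one applies the weak formulation for one factor at a time, with the remaining factors frozen as test functions. The measurability and boundedness needed here come from Lemma~\ref{impoestv} and Proposition~\ref{fntau}.

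\textbf{Identifying the interaction terms and passing to the limit.} For the edge ending at $m$, the interaction term has the form
\[
\int_0^1 G^\ve_m(x_1,\dots,x_n,\xi_m)\,V_j[f](x_{m-1},\xi_m)\,d\xi_m ,
\]
where $G^\ve_m=\tau^m_*(T,f^\ve)$ is the function from Lemma~\ref{taustar}, possibly with $\nabla_{x_{m-1}}$ moved onto $\psi$. By Lemma~\ref{taustar}, $G^\ve_m\to\tau^m_*(T,f)$ in $L^1((0,1)\times\R^{dn})$, and $G^\ve_m$ is uniformly bounded in $L^\infty$. Since $V_j[f]$ is bounded, dominated convergence lets us pass $\ve\to0$, for a.e.\ $t$ and then in $t$ by the uniform bounds. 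By the same Lemma, $\tau(T,f^\ve)\to\tau(T,f)$.

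It then remains to identify
\[
\int_0^1\tau^m_*(T,f)(\cdot,\xi_m)\,V_j[f](x_{m-1},\xi_m)\,d\xi_m
=\ird K_j(x_{m-1}-y)\,\tau(T+m,f)(\cdot,y)\,dy .
\]
This identity holds trivially for $f$ regular in the sense of Definition~\ref{tau}: adding the leaf $n+2$ to vertex $m=i+1$ inserts exactly the factor $\izj f(y,\xi_m,d\zeta)$. I would extend it to general $f$ by approximating $f$ with $f^\delta$ and using Lemma~\ref{taustar} on both sides, together with Proposition~\ref{deftfprop} and Lemma~\ref{apro} for $\tau(T+m,\cdot)$.

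\textbf{Main obstacle.} I expect the hardest part to be the rigorous Leibniz step for a product of vector measures that are merely weak$^*$ measurable in time. The point to watch is the asymmetry: $V[f]$ is not mollified while the factors are. This is exactly why the convergence must go through $\tau^m_*$ (Lemma~\ref{taustar}) and not through transforms $F\in\mathcal{F}$, as the remark after Lemma~\ref{taustar} indicates.
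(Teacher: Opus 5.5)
Your proposal is correct and follows the paper's own proof step by step. You mollify in $\xi$ only, so that $f^\ve$ solves the linear equation with the unmollified coefficients $V_k[f]$. You then differentiate $\tau(T,f^\ve)$ edge by edge to read off $An\,\tau$ and $\nu\sum_i\Delta_{x_i}\tau$. Finally you pass to the limit in the interaction terms through $\tau^m_*$ and Lemma~\ref{taustar}, using Lemma~\ref{apro} for the remaining terms, precisely because $V_k[f]$ is not mollified.

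Two small remarks. First, by Lemma~\ref{hmj} the mollified $f^\ve$ is in fact bounded in both label variables, not a measure in $\eta$; this is what lets the paper use the representation \eqref{reptau} directly. Second, your explicit identification of $\izj\tau^m_*(T,f)\,V_j[f]\,d\xi_m$ with $\ird K_j(x_{m-1}-y)\,\tau(T+m,f)(\cdot,y)\,dy$ via the approximation $f^\delta$ spells out a step the paper leaves implicit.
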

\begin{proof}
At first note that, by Corollary \ref{tauest}, if $f\in L^{\infty}((0,t_{*});\W((L^{1}\cap L^{\infty})(\R^d)))$ then $\tau(T,f) \in L^{\infty}((0,t_{*});(L^{1}\cap L^{\infty}(\R^{dn})))$ for every $n \in \mathbb{N}$ and thus the expressions in (\ref{treeeq}) are well defined in the sense of distributions. The formal argument using the representation of observables for more regular $f$ (Definition~\ref{tau}) is straightforward. The most involved part is to prove rigorously the result assuming $f$ belongs only to $\W$ with respect to variables $\xi$ and $\eta$. Hence, for clarity, below we calculate time and space derivatives pointwisely, however we understand them in distributional sense. At first, we extend $f$ periodically in $\xi$ and convolve (\ref{meannu}) with a smoothing periodic kernel in $\xi$ variable. Then the resulting $f^\ve(t,x,\xi,\eta):= \izj \rho(\xi-\xi')f(t,x,d\xi',\eta)$ satisfies
\[
\partial_t f^\ve(t,x,\xi,\eta) + \divv ( f^\ve(t,x,\xi,\eta) V_1[f](t,x,\eta)) =  f^\ve(t,x,\xi,\eta) (A -V_2[f](t,x,\eta))+\nu \Delta f^\ve(t,x,\xi,\eta).
\]

Since $f^\ve \in L^{\infty}((0,t_{*})\times (0,1)^2;(L^1\cap L^{\infty})(\R^d))$ we may use representation (\ref{reptau}) of the observable $\tau$. 
For any $T \in \Tree $ we differentiate in time $\tau(T,f)$, to the result
\begin{align*}
   &\partial_t \tau(T,f^\ve)(t,x_1,\dots,x_n) = \partial_t \int_{[0,1]^{|T|}} \prod_{(k,m)\in E(T)} f^\ve(t,x_{m-1},\xi_k,\xi_m)d\xi_1\dots d\xi_{|T|} \\
   & = \int_{[0,1]^{|T|}}\sum_{_{(k,m)\in E(T)}} \partial_t f^\ve(t,x_{m-1},\xi_k,\xi_{m})\prod _{(l,s)\neq (k,m), (l,s) \in E(T)} f^\ve(t,x_{s-1},\xi_l,\xi_{s}) d\xi_1,\dots d \xi_{|T|}\\
   & =  -\sum_{_{(k,m)\in E(T)}}\int_{[0,1]^{|T|}}\divv_{x_{m-1}} \left( f^\ve(t,x_{m-1},\xi_k,\xi_{m}) \ird K_{1}(x_{m-1}-y)\izj f(t,y,\xi_{m},\zeta) d\zeta dy\right)\\
   &\times\prod _{(l,s)\neq (k,m), (l,s) \in E(T)} f^\ve(t,x_{s-1},\xi_l,\xi_{s}) d\xi_1,\dots d \xi_{|T|} \\
   &+ \sum_{_{(k,m)\in E(T)}}\int_{[0,1]^{|T|}}\left(A f^\ve(t,x_{m-1},\xi_k,\xi_{m})+\nu \Delta_{x_{m-1}} f^\ve(t,x_{m-1},\xi_k,\xi_{m}) \right)\hspace{-0.5cm}\prod_{(l,s)\neq (k,m), (l,s) \in E(T)} \hspace{-1cm}f^\ve(t,x_{s-1},\xi_l,\xi_{s}) d\xi_1,\dots d \xi_{|T|} \\
   &-\sum_{_{(k,m)\in E(T)}}\int_{[0,1]^{|T|}} f^\ve(t,x_{m-1},\xi_k,\xi_{m}) \ird K_{2}(x_{m-1}-y)\izj f(t,y,\xi_{m},\zeta) d\zeta dy\\
   &\times \prod _{(l,s)\neq (k,m), (l,s) \in E(T)} f^\ve(t,x_{s-1},\xi_l,\xi_{s}) d\xi_1,\dots d \xi_{|T|}.
\end{align*}
At first we see that recalling Definition \ref{deftf} and applying Lemma \ref{apro}
\begin{align*}
&\sum_{_{(k,m)\in E(T)}}\int_{[0,1]^{|T|}}\left(A f^\ve(t,x_{m-1},\xi_k,\xi_{m})+\nu \Delta_{x_{m-1}} f^\ve(t,x_{m-1},\xi_k,\xi_{m}) \right)\hspace{-0.5cm}\prod_{(l,s)\neq (k,m), (l,s) \in E(T)} \hspace{-1cm}f^\ve(t,x_{s-1},\xi_l,\xi_{s}) d\xi_1,\dots d \xi_{|T|}\\
&= An\tau(T,f^\ve) + \nu \sum_{i=1}^n\Delta_{x_i} \tau(T,f^\ve)\overset{\ve\rightarrow 0}{\longrightarrow} An\tau(T,f) + \nu \sum_{i=1}^n\Delta_{x_i} \tau(T,f)
\end{align*}
in the sense of distributions.
In order to pass to the limit with remaining two terms, we note that
\begin{align*}
 &\sum_{_{(k,m)\in E(T)}}\int_{[0,1]^{|T|}}\divv_{x_{m-1}} \left( f^\ve(t,x_{m-1},\xi_k,\xi_{m}) \ird K_{1}(x_{m-1}-y)\izj f(t,y,\xi_{m},\zeta) d\zeta dy\right) \\
&\prod _{(l,s)\neq (k,m), (l,s) \in E(T)} f^\ve(t,x_{s-1},\xi_l,\xi_{s}) d\xi_1,\dots d \xi_{|T|}\\
&=\sum_{_{(k,m)\in E(T)}}\divv_{x_{m-1}}\ird K_{1}(x_{m-1}-y)\int_{[0,1]^{|T|+1}}\prod _{ (l,s)\in E(T)}f^\ve(t,x_{s-1},\xi_l,\xi_{s})f(t,y,\xi_{m},\zeta)d\xi_1,\dots d \xi_{|T|} d\zeta dy=:(*)^\ve
\end{align*}
and 
\[
\int_{[0,1]^{|T|}}\hspace{-0.2cm}\prod _{ (l,s)\in E(T)}f^\ve(t,x_{s-1},\xi_l,\xi_{s})\izj f(t,y,\xi_{m},\zeta)d\zeta d\xi_1,\dots d \xi_{|T|} 
=\izj \tau_{*}^m(T,f^{\ve})(x_1,\dots,x_n,\xi_m) \izj f(t,y,\xi_{m},\zeta)d\zeta d\xi_m.
\]
Since by Lemma \ref{taustar} $\tau_{*}^m(T,f^{\ve}) \rightarrow \tau_{*}^m(T,f)$ in $L^{1}((0,1)\times \R^{dn})$ we may pass to the limit with $\ve$ resulting in 
\[
(*)^\ve \overset{\ve\rightarrow 0}{\longrightarrow }\sum_{_{(k,m)\in E(T)}}\divv_{x_{m-1}}\ird K_{1}(x_{m-1}-y) \tau(T+m,f)(t,x_1,\dots,x_n,y)dy
\]
in the sense of distributions. Analogously we pass to the limit in the last term. Applying Lemma \ref{apro} we note that also $\partial_t\tau(T,f^\ve)\rightarrow \partial_t\tau(T,f)$ in a sense of distributions, hence we arrive at
\begin{align*}
    \partial_t \tau(T,f)(t,x_1,\dots,x_n)=&-\sum_{i=1}^n \divv_{x_i}\ird K_{1}(x_{i}-y) \tau(T+(i+1),f)(t,x_1,\dots,x_n,y)dy\\
    &+ An\tau(T,f)(t,x_1,\dots,x_n) + \nu \sum_{i=1}^n\Delta_{x_i} \tau(T,f)(t,x_1,\dots,x_n)\\
    &-\sum_{i=1}^n \ird K_{2}(x_{i}-y) \tau(T+(i+1),f)(t,x_1,\dots,x_n,y)dy,
\end{align*}
which finishes the proof.
\end{proof}

\subsection{Stability estimates}

In this section we prove quantitative stability estimates for weak solutions  to (\ref{meannu}) with $\nu = 0$, with respect to the initial condition. The approach adopted here, follows the path introduced in \cite{JPS2025} for conservative setting and then applied to non-conservative setting in \cite{nasza}. At first, we formulate the stability result in terms of generic distribution solutions  $h=(h_T)_{T\in \Tree}$  to the  hierarchy of  non-exchangeable Vlasov equations with $\nu>0$
\begin{align}\label{hiergeneq}
 &\partial_t h_T+\sum_{i=1}^{n} \mbox{div}_{x_i}\left(\int_{\R^d} K_1(x_i-y)\,h_{T+(i+1)}(t,x_1,\ldots,x_{n},y)\,dy\right)\\ \nonumber 
 =&Anh_T -\sum_{i=1}^{n} \int_{\R^d} K_2(x_i-y)\,h_{T+(i+1)}(t,x_1,\ldots,x_{n},y)\,dy +  \nu\,\sum_{i=1}^{n} \Delta_{x_i} h_T,
\end{align}
where $n=|T|-1$. Then, by linearity of (\ref{hiergeneq}) we apply this result to the difference $\tau(T,f^\nu)-\tau(T,\tilde{f}^\nu)$, $T \in \mathcal{T}$, where $f^\nu,\tilde{f}^\nu$ are two solutions to (\ref{meannu}) with $\nu > 0 $ and different initial conditions.  Finally, we compare weak solution $f$ to (\ref{meanmean}) with $f^\nu$ and in this way, after appropriate optimization in $\nu$, we arrive at our main stability result Proposition~\ref{mainstab}. 

Let us introduce the norm of the hierarchy in which our stability result holds. 

\begin{defi}\label{defnorml}[see \cite[Definition 4.18]{JPS2025}]
Let $h_T\in L^2(\R^{d n})$ for every $T\in \Tree$ with $n=|T|-1$ and  set $h=(h_T)_{T\in \Tree}$. Then,  we define the family of norms:
\[
 \|h\|_{\lambda}=\sup_{T\in \Tree} \lambda^{n/2}\,\|h_T\|_{L^2(\R^{dn})}, \hd \lambda>0.
\]
\end{defi}
Note that in our application $h_T=\tau(T,f)$ and if $f\in \W(L^2(\R^d))$ applying Corollary~\ref{tauest} we obtain
\[
\Vert h\Vert_\lambda\leq \sup_{T\in \Tree} \lambda^{n/2}\Vert f\Vert_{\W(L^2(\R^d))}^{n},
\]
for every $\lambda>0$. Thus,  choosing $\lambda  < \Vert f\Vert_{\W(L^2(\R^d))}^{-2}$, we get $\norm{h}_{\lambda} < \infty$. \\
%
%%%%
In comparison to \cite{JPS2025} and \cite{nasza} our hierarchy of Vlasov  equations differs in the way that each $h_T$ has $n=|T|-1$ spatial variables and in terms involving $K_1,K_2$, we never add the leaf to the root (vertex indexed by $1$). However, since it does not influence the proof of \cite[Lemma 8]{nasza} (see also \cite[Theorem 4.19]{JPS2025}) we may state what follows.
\begin{prop} \label{hlambdabound}\cite[Lemma 8]{nasza}
Let $h=(h_T)_{T\in \Tree}$ be a distributional solution to \eqref{hiergeneq} with  $\nu>0$, $K_1, K_2\in L^2(\R^d)$ and $K_2 \geq 0$. Additionally, assume that $h_T\in L^\infty((0,t_{*});(L^1\cap L^2)(\R^{d |T|-1}))$ for any $T\in \Tree$ and that there exists $\lambda>0$ such that $C_\lambda:=\sup_{t\in (0, t_*)}\|h(t,\cdot)\|_{\lambda}<\infty$. Then, for every $p>1$ and every $\theta\in (0,2^{-p'}e^{-p'(2A+1)t_*})$ with $p'=\frac{p}{p-1}$, there exists a constant $C > 0$ dependent only on $p,A,t_*,\theta$ such that 
\[
\Vert h(t,\cdot)\Vert_{\theta\lambda}\leq \max\{C_{\lambda},1\}C_\lambda C\exp\left(p^{-\frac{\left((2\nu)^{-1}\norm{K_1}_{L^{2}(\R^d)}^2  + \norm{K_2}_{L^{2}(\R^d)}^2\right)}{\theta\lambda}t}\,\log\frac{\Vert h(0,\cdot)\Vert_{\theta \lambda}}{C_\lambda}\right),
\]
for every $t\in [0, t_*]$.
\end{prop}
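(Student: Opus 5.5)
We follow the proof of \cite[Lemma 8]{nasza} (itself modelled on \cite[Theorem 4.19]{JPS2025}), checking that the structural changes do not affect any estimate. These changes are: $n=|T|-1$ spatial variables, leaves never attached to the root, and the additional terms $Anh_T$ and $-\int K_2 h_{T+(i+1)}$.

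\textbf{Step 1: energy identity.} For each $T\in\Tree$ with $n=|T|-1$, test \eqref{hiergeneq} against $h_T$ and integrate over $\R^{dn}$. This is justified by mollifying in $x$, which is allowed since $h_T\in L^\infty_t(L^1\cap L^2)$. We obtain
\[
\frac12\frac{d}{dt}\|h_T\|_{L^2}^2+\nu\sum_{i=1}^n\|\nabla_{x_i}h_T\|_{L^2}^2
= An\|h_T\|_{L^2}^2+\sum_{i=1}^n\int \nabla_{x_i}h_T\cdot (K_1\star_{y} h_{T+(i+1)})-\sum_{i=1}^n\int h_T\,(K_2\star_y h_{T+(i+1)}).
\]
For the convolution terms use Cauchy--Schwarz in $y$:
\[
\|K\star_y h_{T+(i+1)}\|_{L^2(\R^{dn})}\le \|K\|_{L^2}\|h_{T+(i+1)}\|_{L^2(\R^{d(n+1)})}.
\]
The $K_1$ term is absorbed by the dissipation via Young's inequality, leaving $(2\nu)^{-1}\|K_1\|_{L^2}^2\|h_{T+(i+1)}\|^2$. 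The $K_2$ term gives $\|K_2\|_{L^2}^2\|h_{T+(i+1)}\|^2+\tfrac14\|h_T\|^2$ (sign of $K_2$ is not even needed here). Altogether we get the hierarchy of differential inequalities
\[
\frac{d}{dt}\|h_T\|^2\le (2A+1)n\|h_T\|^2+ 2D\sum_{i=1}^n\|h_{T+(i+1)}\|^2,\qquad D:=(2\nu)^{-1}\|K_1\|_{L^2}^2+\|K_2\|_{L^2}^2 .
\]
The fact that leaves are added only to non-root vertices $i+1$ is irrelevant: each $T$ still has exactly $n$ children in the sum.

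\textbf{Step 2: iterate the hierarchy.} Integrate in time with the factor $e^{(2A+1)nt}$ and iterate the Duhamel formula $k$ times along chains $T\subset T_1\subset\dots\subset T_k$. The number of such chains is $n(n+1)\cdots(n+k-1)$, and the time simplex contributes $t^k/k!$. Terms from the initial data are bounded by $\|h(0)\|_{\theta\lambda}(\theta\lambda)^{-(n+j)/2}$. The remainder is bounded by $C_\lambda\lambda^{-(n+k)/2}$ times $\binom{n+k-1}{k}(2Dt)^k e^{(2A+1)(n+k)t_*}$.

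Multiply by $(\theta\lambda)^{n}$ and use $\binom{n+k-1}{k}\le 2^{n+k}$. The constraint $\theta<2^{-p'}e^{-p'(2A+1)t_*}$ is exactly what makes the geometric factors $(2e^{(2A+1)t_*})^{n}\theta^{n/2}$ summable after applying Hölder with exponents $p,p'$ to the binomial sum. The remainder is of order $\big(C D t/(\theta\lambda)\big)^k C_\lambda$ relative to the norm. Optimizing the number of iterations $k$ as a function of $\log(\|h(0)\|_{\theta\lambda}/C_\lambda)$ yields the interpolation bound
\[
\|h(t)\|_{\theta\lambda}\lesssim C_\lambda\,(\|h(0)\|_{\theta\lambda}/C_\lambda)^{p^{-Dt/(\theta\lambda)}}.
\]
This is the stated formula, with the prefactor $\max\{C_\lambda,1\}$ arising when the two regimes $\|h(0)\|\lessgtr C_\lambda$ are combined.

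\textbf{Main obstacle.} The delicate step is the combinatorial bookkeeping in Step 2 with the modified tree indexing. We must check that the exponential factor is now $e^{(2A+1)nt}$ with $n=|T|-1$, rather than $|T|$, and that the count of chains is unchanged. Since \cite[Lemma 8]{nasza} already tracks a growth factor of this exact form, we expect these checks to go through verbatim, and we would simply verify them line by line against that proof.
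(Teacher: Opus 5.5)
Your Step 1 and the structural checks are fine. The inequality $\frac{d}{dt}\|h_T\|_{L^2}^2\le(2A+1)n\|h_T\|_{L^2}^2+2D\sum_{i=1}^n\|h_{T+(i+1)}\|_{L^2}^2$ is correct, the chain count $n(n+1)\cdots(n+k-1)$ is unchanged, and attaching leaves only to non-root vertices is indeed irrelevant.

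The gap is at the end of Step 2. A single Duhamel expansion on $[0,t]$ cannot produce the factor $p^{-Dt/(\theta\lambda)}$, whatever $k$ you choose. At intermediate times the only information you have is $C_\lambda$. So, after multiplying by $(\theta\lambda)^n$, the remainder is bounded by $\binom{n+k-1}{k}\theta^n e^{(2A+1)nt}\bigl(2Dt\,e^{(2A+1)t}/\lambda\bigr)^kC_\lambda^2$. This tends to zero in $k$ only for $t\lesssim\lambda/D$. For larger $t$, every $k$ gives a bound no better than the trivial one coming from $C_\lambda$ alone, and $\|h(0)\|_{\theta\lambda}$ drops out. In Proposition \ref{mainstab} one has $D\ge\|K_1\|_{L^2}^2/(2\nu)$ with $\nu\to0$, so this window shrinks to zero. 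The regime where your argument breaks down is therefore exactly the one that is used. Optimizing $k$ against $\log(\|h(0)\|_{\theta\lambda}/C_\lambda)$ would at best give a H\"older exponent on a short time window, not a double exponential $p^{-ct}$ valid on all of $[0,t_*]$.

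What is missing is an iteration in time.

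\begin{enumerate}
\item Interpolate $\|h_{T'}(s)\|^2\le\bigl((\theta\lambda)^{-m}\|h(s)\|_{\theta\lambda}^2\bigr)^{1/p}\bigl(\lambda^{-m}C_\lambda^2\bigr)^{1/p'}$ with $m=|T'|-1$. After multiplying by $(\theta\lambda)^n$, this produces $\theta^{n/p'}$ (not $\theta^{n/2}$) against $2^ne^{(2A+1)nt_*}$. The condition $2\theta^{1/p'}e^{(2A+1)t_*}<1$ is exactly the stated range of $\theta$.
\item On an interval of length $\tau\le\delta$, with $\delta$ of order $\theta^{1/p}\lambda\,e^{-(2A+1)t_*}/D$, the series in $j$ converges and you may let $k\to\infty$. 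This gives the one-step bound $\|h(s+\tau)\|_{\theta\lambda}\le C\,C_\lambda\bigl(\|h(s)\|_{\theta\lambda}/C_\lambda\bigr)^{1/p}$.
\item Apply this $\lceil t/\delta\rceil$ times, restarting from $h(s)$ and re-using the uniform-in-time bound $C_\lambda$ at each restart. This is why $C_\lambda$ must be a supremum over $(0,t_*)$. The exponents compose to $p^{-\lceil t/\delta\rceil}$ and the constants to $C^{1+1/p+1/p^2+\cdots}\le C^{p'}$.
\end{enumerate}

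This is where the form $\exp\bigl(p^{-ct}\log(\|h(0)\|_{\theta\lambda}/C_\lambda)\bigr)$ comes from. Without this restart step, your argument covers only $t\lesssim\lambda/D$.
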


Next, we provide the analogue to \cite[Lemma 9]{nasza} and \cite[Lemma 4.22]{JPS2025} in our setting.

\begin{prop}\label{linftyestf}
Assume that $f$ is a nonnegative weak solution to \eqref{meannu} with $\nu \geq 0$ with $K_1\in (W^{1,1}\cap  W^{1,\infty})(\R^d)$, $K_2 \in L^{1}(\R^d)$, $K_2 \geq 0$,   and  $f^0, \in \W( (L^1\cap L^\infty\cap H^1)(\R^d))$, $f \in L^{\infty}((0,t_{*});\W( (L^1\cap L^\infty\cap H^1)(\R^d)))$.  Then,  it satisfies
  \[
 \|f(t,\cdot,\cdot,\cdot)\|_{\W((L^1\cap L^\infty\cap H^1)(\R^d))}\leq C \exp(\exp(\tilde{C}e^{At}t)),
  \]
for some $C,\tilde{C}\in \mathbb{R}_+$ depending only on $\|f^0\|_{\W((L^1\cap L^\infty\cap H^1)(\R^d))}$, $\|K_1\|_{W^{1,1}(\R^d)}$, $\|\divv K_1\|_{L^\infty}$, $\norm{K_2}_{L^{1}(\R^d)}$ and $A$.
\end{prop}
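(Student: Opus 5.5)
We prove Proposition \ref{linftyestf} by a chain of a priori estimates, first at the level of the mollified solution $f^\ve$ (Lemma \ref{hmj}), with fixed labels $(\xi,\eta)$. We then take the $\W$-norm, using $\norm{f^\ve}_{\W(Y)}\le\norm{f}_{\W(Y)}$, and finally pass $\ve\to0$ by lower semicontinuity of the norm. This mirrors the contraction step in the proof of Proposition \ref{existenceprop}.

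As in the proof of Proposition \ref{hier}, $f^\ve$ satisfies, for fixed $(\xi,\eta)$,
\[
\partial_t f^\ve+\divv(f^\ve V_1[f](t,x,\eta))=f^\ve(A-V_2[f](t,x,\eta))+\nu\Delta f^\ve .
\]
The velocity field $V_1[f]$ depends only on $\eta$, so the $\W$-norm in either order of integration is controlled by fixed-$(\xi,\eta)$ bounds, integrated in $\eta$ (or $\xi$). The estimates proceed in four steps.

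\emph{$L^1$ bound.} By nonnegativity, $f\ge0$ and $K_2\ge0$ imply $V_2\ge0$. Integrating in $x$ then gives $\norm{f^\ve(t,\cdot,\xi,\eta)}_{L^1}\le e^{At}\norm{f^{0,\ve}(\cdot,\xi,\eta)}_{L^1}$. Hence $\norm{f(t)}_{\W(L^1)}\le e^{At}\norm{f^0}_{\W(L^1)}$, and consequently, by (\ref{v1}), (\ref{v2}) with $K_1\in W^{1,\infty}$ (placing the derivative on $K_1$), $\norm{V_1[f]}_{W^{1,\infty}}\le \norm{K_1}_{W^{1,\infty}}e^{At}\norm{f^0}_{\W(L^1)}$.

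\emph{$L^\infty$ bound.} We write the equation as
\[
\partial_t f^\ve+V_1\cdot\nabla f^\ve=f^\ve(A-V_2-\divv V_1)+\nu\Delta f^\ve .
\]
The maximum principle along characteristics gives $\norm{f^\ve(t)}_{L^\infty}\le \norm{f^{0,\ve}}_{L^\infty}\exp(\int_0^t (A+\norm{\divv V_1}_{L^\infty}))$. Using $\norm{\divv V_1[f]}_{L^\infty}\le \norm{\divv K_1}_{L^\infty}\norm{f}_{\W(L^1)}\le Ce^{As}$, this yields a bound of the form $\exp(\tilde C e^{At}t)$.

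\emph{$H^1$ bound.} We differentiate the equation in $x$ and test with $\nabla f^\ve$. The transport term produces $\int |\nabla V_1||\nabla f^\ve|^2$, bounded by $\norm{\nabla V_1}_{L^\infty}\norm{\nabla f^\ve}_{L^2}^2\le Ce^{At}\norm{\nabla f^\ve}_{L^2}^2$. The terms from $f^\ve\nabla\divv V_1$ and $f^\ve\nabla V_2$ are the delicate ones. For the first we use $\norm{\nabla\divv V_1}_{L^2}\le \norm{\divv K_1}_{L^1}\norm{\nabla f}_{\W(L^2)}$, which involves the unknown in the $\W$-norm, so the argument must close at the level of $\W(H^1)$ rather than pointwise in $(\xi,\eta)$. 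For the second we use $\norm{\nabla V_2}_{L^2}\le\norm{K_2}_{L^1}\norm{\nabla f}_{\W(L^2)}$, multiplied by $\norm{f^\ve}_{L^\infty}$, which is already bounded by the previous step. For the diffusion term, $\nu\ge0$ gives a favorable sign.

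\emph{Closing by Gronwall.} We integrate in $\eta$ the square root of the energy inequality, i.e.\ we estimate $\partial_t\norm{\nabla f^\ve}_{L^2}\le C e^{At}\norm{\nabla f^\ve}_{L^2}+\norm{f}_{L^\infty}C\norm{\nabla f}_{\W(L^2)}$, and then take the supremum. A linear Gronwall inequality with coefficients $\lesssim\exp(\tilde C e^{At}t)$ produces the double exponential $\exp(\exp(\tilde Ce^{At}t))$.

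The main obstacle is the $H^1$ step, because it requires working with the $L^2$-valued norm rather than squared energies, which do not commute with $\int d\eta$. Working with $\partial_t\norm{\cdot}_{L^2}$ directly is the approach we would try first. Justifying the differentiation for weak solutions requires a further mollification in $x$, with commutator control supplied by $K_1\in W^{1,1}\cap W^{1,\infty}$.
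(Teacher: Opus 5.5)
Your proposal is correct and follows essentially the paper's route: mollify in $\xi$, get the $L^1$ bound from $V_2\ge 0$ and the $L^\infty$ bound via $\divv V_1$ (the paper tests with $p(f^\ve)^{p-1}$ and lets $p\to\infty$ instead of using a maximum principle), then test the differentiated equation against $\nabla f^\ve$, divide by $\|\nabla f^\ve\|_{L^2}$, take $\W$-norms, pass $\ve\to 0$ by lower semicontinuity, and close with Gr\"onwall. The only small discrepancy is that you bound $\|\nabla V_1[f]\|_{L^\infty}$ by $\|\nabla K_1\|_{L^\infty}\|f\|_{\W(L^1)}$, which brings $\|\nabla K_1\|_{L^\infty}$ into the constants; to match the stated dependence, use $\|\nabla K_1\|_{L^1}\|f\|_{\W(L^\infty)}$ together with your $L^\infty$ step, as the paper does, which still gives the double exponential.
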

\begin{proof}

In order to carry the estimates for almost all $\xi,\eta$ we firstly convolve (\ref{meannu}) with periodic mollifying kernel in $\xi$ variable. Then we denote $f^\ve(t,x,\xi,\eta):=\izj \vf^{\ve}(\xi-\xi')f(t,x,d\xi',\eta)$, where we extended $f$ periodically in $\xi$.
 Let us test the equation  by $p(f^\ve)^{p-1}$ for fixed $p>1$. We note that this operation is allowed since the boundedness of $f^\ve$ in space ensures that $\nabla (f^\ve)^p \in L^2(\R^d)$.
\begin{align*}
    &\frac{d}{dt} \|f^\ve(t,\cdot,\xi,\eta)\|_{L^p(\R^d)}^p +\nu p(p-1) \ird \abs{\nabla f^\ve(t,x,\xi,\eta)}^{2}(f^\ve)^{p-2}(t,x,\xi,\eta)dx\\
    & +p(p-1)\ird (f^\ve)^{p-1}(t,x,\xi,\eta)\nabla f^\ve(t,x,\xi,\eta)\cdot\izj \ird K_{1}(x-y)f(t,y,\eta,d\zeta)dy dx \\
    & = p\ird (f^\ve)^{p}(t,x,\xi,\eta)\left[A - \izj \ird  K_{2}(x-y)f(t,y,\eta,d\zeta)dy  \right]dx.
\end{align*}
Integrating by parts gives
\begin{align*}
    & p(p-1)\ird (f^\ve)^{p-1}(t,x,\xi,\eta)\nabla f^\ve(t,x,\xi,\eta)\cdot\izj \ird K_{1}(x-y)f(t,y,\eta,d\zeta)dy dx\\
    & =-(p-1)\ird (f^\ve)^{p}(t,x,\xi,\eta)\izj \ird \divv K_{1}(x-y)f(t,y,\eta,d\zeta)dy dx\\
    & \leq (p-1)\norm{\divv K_1}_{L^{\infty}(\R^d)}\norm{f(t,\cdot,\cdot,\cdot)}_{\W(\mathcal{M}(\R^d))}\ird (f^\ve)^{p}(t,x,\xi,\eta)dx.
\end{align*}

Hence, recalling that $K_2, f \geq 0$ we arrive at
\[
\frac{d}{dt} \|f^\ve(t,\cdot,\xi,\eta)\|_{L^p(\R^d)}^p 
\leq \left((p-1)\,\|\divv K_1\|_{L^\infty(\R^d)}\,\| f(t,\cdot,\cdot,\cdot)\|_{\W(\mathcal{M}(\R^d))}\, + Ap\right)\|f^\ve(t,\cdot,\xi,\eta)\|_{L^p(\R^d)}^p,
\]
for a.a. $t\in (0,\ t_*)$ and $\xi\in (0,\ 1)$.
On the other hand, since, $f$ is a nonnegative weak solution and $K_2 \geq 0$, integrating (\ref{meannu}) in space,  we may easily estimate
\eqq{
\norm{f(t,\cdot,\cdot,\cdot)}_{\W(\mathcal{M}(\R^d))} \leq e^{At} \norm{f^0}_{\W(\mathcal{M}(\R^d))}, \hd \norm{f}_{L^{\infty}((0,t_*);\W(\mathcal{M}(\R^d))} \leq e^{At_*} \norm{f^0}_{\W(\mathcal{M}(\R^d))}.
}{flinfty}
Hence, by Gr\"onwall inequality
\eqq{
\|f^\ve(t,\cdot,\xi,\eta)\|_{L^p(\R^d)}^p \leq  \|f^\ve(0,\cdot,\xi,\eta)\|_{L^p(\R^d)}^p e^{[(p-1)\|\divv K_1\|_{L^\infty(\R^d)}e^{At}\norm{f^0}_{\W(\mathcal{M}(\R^d))} +Ap]t}.
}{flinftytwo}
Rising the inequality to the power $1/p$ and passing to the limit with $p$ we arrive at
\[
\norm{f^\ve(t,\cdot,\xi,\eta)}_{L^\infty(\R^d))} \leq \norm{f^\ve(0,\cdot,\xi,\eta)}_{L^\infty(\R^d)} e^{Ce^{At}t}.
\]
Applying the norm in $\W$, using the classical properties of convolution with mollifying kernel and the lower semicontinuity of the norm we obtain
\eqq{
 \norm{f(t,\cdot,\cdot,\cdot)}_{\W(L^\infty(\R^d))} \leq e^{Ce^{At}t}\norm{f^0}_{\W(L^\infty(\R^d))}
}{finftyinfty}
with a positive constant $C$ dependent only on $A, \|\divv K_1\|_{L^\infty(\R^d)},\norm{f^0}_{\W(\mathcal{M}(\R^d))}$. 
Analogously, (\ref{flinftytwo}) leads to
\eqq{
\|f(t,\cdot,\cdot,\cdot)\|_{\W(L^p(\R^d))} \leq  \|f^0\|_{\W(L^p(\R^d))}e^{[\frac{(p-1)}{p}\|\divv K_1\|_{L^\infty(\R^d)}e^{At}\norm{f^0}_{\W(\mathcal{M}(\R^d))} +A]t} \hd \m{ for } \hd p \in [1,\infty).
}{flinftytwof}
Now we test the equation by $\Delta f^\ve$. To justify it we mollify the equation in space by standard smoothing convolution and apply Friedrichs commutator lemma (Lemma II.1 \cite{diPerna}), to $\nabla f^\ve$ which is possible since $f^\ve \in L^{\infty}((0,t_{*})\times (0,1)^2;H^{1}(\R^d))$ and $K_1 \in W^{1,\infty}(\R^d;\R^d)$ . To keep the exposition concise, we proceed directly with the calculations
\begin{align}\label{gradba}
    &\frac{d}{dt} \frac{1}{2}\|\nabla f^\ve(t,\cdot,\xi,\eta)\|_{L^2(\R^d)}^2=\frac{1}{2}\ird \divv V_1[f] \abs{\nabla f^\ve}^{2} dx-\ird \nabla^T f^\ve\cdot \nabla V_1[f] \cdot \nabla f^\ve dx - \ird \nabla f^\ve \cdot \nabla \divv V_1[f]  f^\ve dx\\ \nonumber
    &+\ird \abs{\nabla{f^\ve}}^2[A-V_2[f]] dx - \ird f^\ve \nabla V_2[f] \cdot \nabla f^\ve dx -\nu \ird \abs{\Delta  f^\ve}^2dx =:
\sum_{j=1}^6 I_j(t,\xi,\eta).
\end{align}
Clearly, $I_6 \leq 0$ and since $V_2[f] \geq 0$
\[
I_4(t,\xi,\eta) \leq A\norm{\nabla f^\ve(t,\cdot,\xi,\eta)}^2_{L^{2}(\R^d)}.
\]
With the rest of the terms we proceed as follows. 
\begin{align*}
I_1(t,\xi,\eta)&=\frac{1}{2}\ird |\nabla f^\ve(t,x,\xi,\eta)|^2\int_0^1\int_{\R^{d}}\divv K_1(x-y)\,\,f(t,y,\eta,d\zeta)dy\,dx \\
I_2(t,\xi,\eta)&=- \ird\nabla f^\ve(t,x,\xi,\eta)^\top\cdot  \int_0^1 \ird \nabla K_1(x-y)\, f(t,y,\eta,d\zeta) \,dy\,  \cdot \nabla f^\ve(t,x,\xi,\eta) dx.\\
I_3(t,\xi,\eta)&=- \int_{\R^{d}} f^\ve(t,x,\xi,\eta)\nabla f^\ve(t,x,\xi,\eta) \cdot\izj \ird \divv K_1(x-y)\,\nabla_y f(t,y,\eta,d\zeta)\,\, dy\,dx,\\
I_5(t,\xi,\eta)&=- \int_{\R^{d}} f^\ve(t,x,\xi,\eta)\nabla f^\ve(t,x,\xi,\eta) \cdot  \izj \ird K_2(x-y)\, \nabla_y f(t,y,\eta,d \zeta)\,\, dy dx,
\end{align*}
where in $I_3$ and $I_5$ we made use of the properties of convolution.
We may easily estimate the first to components
\begin{align*}
 &\abs{I_1(t,\xi,\eta)} \leq 
\frac{1}{2}\Vert \divv K_1\Vert_{L^\infty(\R^d)}\norm{\nabla f^\ve(t,\cdot,\xi,\eta)}_{L^{2}(\R^d)}^2 \norm{f(t,\cdot,\eta,\cdot)}_{\mathcal{M}_{\zeta}(\mathcal{M}(\R^d))},\\
&\abs{I_2(t,\xi,\eta)} \leq \Vert \nabla K_1\Vert_{L^1(\R^d)}\norm{\nabla f^\ve(t,\cdot,\xi,\eta)}_{L^{2}(\R^d)}^2 \norm{f(t,\cdot,\eta,\cdot)}_{\mathcal{M}_{\zeta}(L^{\infty}(\R^d))}.
\end{align*}

%\[
%\izj \izj \abs{I_1(t,\xi,\eta)} d\eta d\xi \leq \frac{1}{2}\Vert \divv K_1\Vert_{L^\infty(\R^d)}\norm{f(t,\cdot,\cdot,\cdot)}_{\W(\mathcal{M}(\R^d))} \izj \izj \norm{\nabla f^\ve(t,\cdot,\xi,\cdot)}_{L^{2}(\R^d)}^2(d\eta)d\xi,
%\]

%\[
%\izj \izj \abs{I_2(t,\xi,\eta)} d\eta d\xi \leq \Vert \nabla K_1\Vert_{L^\infty(\R^d)}\norm{f(t,\cdot,\cdot,\cdot)}_{\W(\mathcal{M}(\R^d))} \izj \izj \norm{\nabla f^\ve(t,\cdot,\xi,\cdot)}_{L^{2}(\R^d)}^2(d\eta)d\xi.
%\]
While for $I_3$ and $I_5$ we apply H\"older inequality together with Young inequality for convolution
\begin{align*}
\abs{I_3(t,\xi,\eta)}& \leq
\norm{f^\ve(t,\cdot,\xi,\eta)}_{L^{\infty}(\R^d)}\norm{\nabla f^\ve(t,\cdot,\xi,\eta)}_{L^{2}(\R^d)}\norm{\divv K_1}_{L^{1}(\R^d)}\izj \norm{\nabla f(t,\cdot,\eta,\cdot)}_{L^{2}(\R^d)}(d\zeta), \\
\abs{I_5(t,\xi,\eta)}& \leq
\norm{f^\ve(t,\cdot,\xi,\eta)}_{L^{\infty}(\R^d)}\norm{\nabla f^\ve(t,\cdot,\xi,\eta)}_{L^{2}(\R^d)}\norm{ K_2}_{L^{1}(\R^d)}\izj \norm{\nabla f(t,\cdot,\eta,\cdot)}_{L^{2}(\R^d)}(d\zeta).
\end{align*}
Inserting the estimates above in (\ref{gradba}) and dividing by one norm we obtain
\begin{align*}
  & \frac{d}{dt} \|\nabla f^\ve(t,\cdot,\xi,\eta)\|_{L^2(\R^d)} \leq \frac{1}{2} \Vert \divv K_1\Vert_{L^\infty(\R^d)}\norm{\nabla f^\ve(t,\cdot,\xi,\eta)}_{L^{2}(\R^d)}\norm{f(t,\cdot,\cdot,\cdot)}_{L^{\infty}_\xi(\mathcal{M}_{\eta}(\mathcal{M}(\R^d)))} \\
  &+\Vert \nabla K_1\Vert_{L^1(\R^d)} \norm{\nabla f^\ve(t,\cdot,\xi,\eta)}_{L^{2}(\R^d)}\norm{f(t,\cdot,\cdot,\cdot)}_{L^{\infty}_{\xi}(\mathcal{M}_{\eta}(L^{\infty}(\R^d)))}\\
  &+A\|\nabla f^\ve(t,\cdot,\xi,\eta)\|_{L^2(\R^d)} +\norm{f^\ve(t,\cdot,\xi,\eta)}_{L^{\infty}(\R^d)}\norm{\nabla f(t,\cdot,\cdot,\cdot)}_{L^{\infty}_\xi(\mathcal{M}_{\eta}(L^2(\R^d)))}\left(\norm{\divv K_1}_{L^{1}(\R^d)} + \norm{ K_2}_{L^{1}(\R^d)}\right).
\end{align*}
Integrating with respect to time and applying the $\W$ norm we arrive at
%%%%%%%%%%%%%%%%%%%%%%%%%%
\nic{
\[
\frac{d}{dt} \|\nabla f^\ve(t,\cdot,\xi,\eta)\|_{L^2(\R^d)} \leq \frac{1}{2} \Vert \divv K_1\Vert_{L^\infty(\R^d)}\norm{\nabla f^\ve(t,\cdot,\xi,\eta)}_{L^{2}(\R^d)}\norm{f(t,\cdot,\eta,\cdot)}_{\mathcal{M}_{\zeta}(\mathcal{M}(\R^d))}
\]
\[
+\Vert \nabla K_1\Vert_{L^1(\R^d)} \norm{\nabla f^\ve(t,\cdot,\xi,\eta)}_{L^{2}(\R^d)}\norm{f(t,\cdot,\eta,\cdot)}_{\mathcal{M}_{\zeta}(L^{\infty}(\R^d))}
\]
\eqq{
+A\|\nabla f^\ve(t,\cdot,\xi,\eta)\|_{L^2(\R^d)} +\norm{f^\ve(t,\cdot,\xi,\eta)}_{L^{\infty}(\R^d)}\norm{\nabla f(t,\cdot,\eta,\cdot)}_{\mathcal{M}_{\zeta}(\mathcal{M}(\R^d))}\left(\norm{\divv K_1}_{L^{1}(\R^d)} + \norm{ K_2}_{L^{1}(\R^d)}\right)
}{gradbounda}
Note that due to Lemma \ref{impoest}
\[
\sup_{\xi \in (0,1)}\int_0^1 \norm{f(t,\cdot,\eta,\cdot)}_{\mathcal{M}_{\zeta}(\mathcal{M}(\R^d))}\norm{ \nabla f^\ve(t,\cdot,\xi,\cdot)}_{L^{2}(\R^d)}(d\eta)
\]
\[
\leq \sup_{\eta \in (0,1)}\norm{f(t,\cdot,\eta,\cdot)}_{\mathcal{M}_{\zeta}(\mathcal{M}(\R^d))} \sup_{\xi \in (0,1)}\norm{ \nabla f^\ve(t,\cdot,\xi,\cdot)}_{\mathcal{M}_{\eta}(L^{2}(\R^d))}=\norm{f(t,\cdot,\cdot,\cdot)}_{L^{\infty}_{\xi}(\mathcal{M}_{\eta}(\mathcal{M}(\R^d)))}\norm{ \nabla f^\ve(t,\cdot,\cdot,\cdot)}_{L^{\infty}_\xi(\mathcal{M}_{\eta}(L^{2}(\R^d)))}.
\]
Integrating (\ref{gradbounda}) with respect to time and then in $\eta$ and taking supremum over $\xi$ yields
\[
 \|\nabla f^\ve(t,\cdot,\cdot,\cdot)\|_{L^{\infty}_\xi(\mathcal{M}_\eta (L^{2}(\R^d)))} \leq \int_0^t \norm{\nabla f^\ve(s,\cdot,\cdot,\cdot)}_{L^{\infty}_\xi(\mathcal{M}_\eta (L^{2}(\R^d)))}ds
 \]
 \[
 \times 
\left(\frac{1}{2}\Vert \divv K_1\Vert_{L^\infty(\R^d)}\norm{f}_{L^{\infty}((0,t);L^{\infty}_\xi(\mathcal{M}_\eta(\mathcal{M}(\R^d))))} +\Vert \nabla K_1\Vert_{L^1(\R^d)}\norm{f}_{L^{\infty}((0,t);L^{\infty}_\xi(\mathcal{M}_\eta(L^{\infty}(\R^d))))}  +A\right)
\]
\eqq{
+\int_0^t\norm{\nabla f(s,\cdot,\cdot,\cdot)}_{L^{\infty}_{\xi}(\mathcal{M}_{\eta}(\mathcal{M}(\R^d)))}ds
\norm{f^\ve}_{L^{\infty}((0,t);L^{\infty}_{\xi}(\mathcal{M}_{\eta}(L^{\infty}(\R^d))))}\left(\norm{\divv K_1}_{L^{1}(\R^d)} + \norm{ K_2}_{L^{1}(\R^d)}\right).
}{gradboundb}
Now, integrate (\ref{gradbounda}) with respect to time and then in $\xi$ and take supremum over $\eta$. Then
\[
 \|\nabla f^\ve(t,\cdot,\cdot,\cdot)\|_{L^{\infty}_\eta(\mathcal{M}_\xi (L^{2}(\R^d)))} \leq \int_0^t \norm{\nabla f^\ve(s,\cdot,\cdot,\cdot)}_{L^{\infty}_\eta(\mathcal{M}_\xi (L^{2}(\R^d)))}ds
 \]
 \[
 \times 
\left(\frac{1}{2}\Vert \divv K_1\Vert_{L^\infty(\R^d)}\norm{f}_{L^{\infty}((0,t);L^{\infty}_\xi(\mathcal{M}_\eta(\mathcal{M}(\R^d))))} +\Vert \nabla K_1\Vert_{L^1(\R^d)}\norm{f}_{L^{\infty}((0,t);L^{\infty}_\xi(\mathcal{M}_\eta(L^{\infty}(\R^d))))}  +A\right)
\]
\eqq{
+\int_0^t\norm{\nabla f(s,\cdot,\cdot,\cdot)}_{L^{\infty}_{\xi}(\mathcal{M}_{\eta}(\mathcal{M}(\R^d)))}ds
\norm{f^\ve}_{L^{\infty}((0,t);L^{\infty}_{\eta}(\mathcal{M}_{\xi}(L^{\infty}(\R^d))))}\left(\norm{\divv K_1}_{L^{1}(\R^d)} + \norm{ K_2}_{L^{1}(\R^d)}\right).
}{gradboundc}
Combing (\ref{gradboundb}) and (\ref{gradboundc}) we arrive at
}
%%%%%%%%%%%%%%%%%%%%%%
\begin{align*}
  &\|\nabla f^\ve(t,\cdot,\cdot,\cdot)\|_{\W (L^{2}(\R^d))} \leq \|\nabla (f^0)^\ve(\cdot,\cdot,\cdot)\|_{\W (L^{2}(\R^d))}+ \int_0^t \norm{\nabla f^\ve(s,\cdot,\cdot,\cdot)}_{\W(L^{2}(\R^d))}ds  \\
  &  \times \left(\frac{1}{2}\Vert \divv K_1\Vert_{L^\infty(\R^d)}\norm{f}_{L^{\infty}((0,t);\W(\mathcal{M}(\R^d)))}
 +\Vert \nabla K_1\Vert_{L^1(\R^d)}\norm{f}_{L^{\infty}((0,t);\W(L^{\infty}(\R^d)))}  +A\right)\\
 & +\int_0^t\norm{\nabla f(s,\cdot,\cdot,\cdot)}_{\W(L^2(\R^d))}ds
\norm{f^\ve}_{L^{\infty}((0,t);\W(L^{\infty}(\R^d)))}\left(\norm{\divv K_1}_{L^{1}(\R^d)} + \norm{ K_2}_{L^{1}(\R^d)}\right).
\end{align*}
Using the properties of mollifier, weak-lower semicontinuity of the norm and estimates (\ref{flinfty}), (\ref{finftyinfty}) we obtain
\[
 \|\nabla f(t,\cdot,\cdot,\cdot)\|_{\W (L^{2}(\R^d))} \leq \|\nabla f^0(\cdot,\cdot,\cdot)\|_{\W (L^{2}(\R^d))}+ \int_0^t \norm{\nabla f(s,\cdot,\cdot,\cdot)}_{\W(L^{2}(\R^d))}ds  
 \]
\[ \cdot\left(A+\frac{\Vert \divv K_1\Vert_{L^\infty(\R^d)}}{2}e^{At}\norm{f^0}_{\W(\mathcal{M}(\R^d))} + \left[\Vert \nabla K_1\Vert_{L^1(\R^d)}+ \norm{\divv K_1}_{L^{1}(\R^d)} + \norm{ K_2}_{L^{1}(\R^d)}\right]e^{Ce^{At}t}\norm{f^0}_{\W(L^{\infty}(\R^d))}\right).
\]
Applying Gr\"onwall lemma we obtain the desired estimate. 

\end{proof}

Finally, we are in a position to establish the main stability result of this section, which serves as an analogue of \cite[Theorem 4.23]{JPS2025} (see also \cite[Lemma 10]{nasza}).

\begin{prop}\label{mainstab}
Let  $f,\tilde{f} \in L^\infty((0,t_{*});\W(  (L^1\cap L^\infty\cap H^1)(\R^d)))$, be two weak solutions to \eqref{meanmean} with nonnegative initial conditions $f^0$ and $\tilde{f}^0$, respectively. Furthermore, let $K_1\in L^\infty(\R^d)\cap W^{1,1}(\R^d)$, $\divv K_1\in L^\infty(\R^d)$, $K_2 \in (L^1\cap L^{2})(\R^d), K_2 \geq 0$. Then, there exists $C,\lambda >0 $ such that for almost all  $t \in (0,t_{*})$ 
  \[
\left\|  \int_0^1 \int_0^1 ( f -\tilde f) (t,\cdot, \xi,d\eta)\,d\xi\,\right\|_{L^2(\R^d)}\leq {\frac{C}{\sqrt{(\log |\log \|\tau(\cdot, f^0)-\tau(\cdot,\tilde f^0)\|_{\lambda}|)_+}}.}
  \]
Here $C,\,\lambda>0$  depend only on $t_*,A, \norm{K_1}_{L^{2}(\R^d)}, \norm{K_1}_{W^{1,1}(\R^d)}$, $\norm{\divv K_1}_{L^{\infty}(\R^d)}, \norm{K_2}_{L^{1}(\R^d)}, \norm{K_2}_{L^{2}(\R^d)}$ and the norm of the initial data $f^0,\,\tilde f^0$ in $\W((L^1\cap L^\infty\cap H^1)(\R^d))$.
 \end{prop}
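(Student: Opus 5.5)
The plan is to follow the strategy of \cite[Theorem 4.23]{JPS2025} and \cite[Lemma 10]{nasza}: regularize by viscosity, compare the two viscous solutions through the hierarchy, and then optimize in $\nu$.

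\textbf{Step 1: viscous approximations.} For $\nu>0$ let $f^\nu$ and $\tilde f^\nu$ be the solutions of (\ref{meannu}) with data $f^0$ and $\tilde f^0$, given by Proposition \ref{existenceprop}. By Proposition \ref{linftyestf}, applied to both $f,\tilde f$ and $f^\nu,\tilde f^\nu$, all four are bounded in $L^\infty((0,t_*);\W((L^1\cap L^\infty\cap H^1)(\R^d)))$ uniformly in $\nu$.

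\textbf{Step 2: comparing $f$ with $f^\nu$.} Set $F:=\izj\izj f(t,\cdot,\xi,d\eta)d\xi$. Here I would estimate $\norm{(f-f^\nu)(t)}_{\W(L^2(\R^d))}$ rather than the averaged quantity. Mollify both equations in $\xi$ as in the contraction argument of Proposition \ref{existenceprop}, test the difference equation by $(f-f^\nu)^\ve$ for fixed $\xi,\eta$, and take the $\W$-norm via Lemma \ref{hmj}. The viscous term is handled by moving $\nu\Delta$ onto $f^\nu$; it contributes at most $\nu\norm{\nabla f^\nu}_{L^2}\norm{\nabla(f-f^\nu)}_{L^2}\lesssim\nu$ by the uniform $H^1$ bound. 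The transport and reaction differences are estimated through (\ref{v1})--(\ref{v4}), using $\norm{V_k[f-f^\nu]}_{L^\infty_\eta L^2}\le\norm{K_k}_{L^1}\norm{f-f^\nu}_{\W(L^2)}$ and $\nabla f\in L^\infty_t\W(L^2)$. Gr\"onwall then gives
\[
\norm{(f-f^\nu)(t)}_{\W(L^2)}\le C\sqrt{\nu},
\]
and the same bound holds for $\tilde f$. This is an $O(\sqrt\nu)$ rate, or at worst $O(\nu^\alpha)$ for some $\alpha>0$.

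\textbf{Step 3: hierarchy estimate for the viscous solutions.} By Proposition \ref{hier} and linearity, $h_T:=\tau(T,f^\nu)-\tau(T,\tilde f^\nu)$ solves (\ref{hiergeneq}). Corollary \ref{tauest} gives $h_T\in L^\infty(L^1\cap L^2)$, and choosing $\lambda<(2\sup\norm{f^\nu}_{\W(L^2)})^{-2}$ yields $C_\lambda\le 2$ uniformly in $\nu$. Apply Proposition \ref{hlambdabound} with a fixed $p$, say $p=2$, and a fixed $\theta$. Writing $\delta:=\norm{h(0)}_{\theta\lambda}\le\norm{\tau(\cdot,f^0)-\tau(\cdot,\tilde f^0)}_{\lambda}$ and using the fact that $T_1+\{2\}$ has $n=1$ with $\tau=F_0$ integrated, we obtain
\[
\norm{F^\nu-\tilde F^\nu}_{L^2}\le C(\theta\lambda)^{-1/2}\exp\Bigl(2^{-\frac{C'}{\nu}t}\log\tfrac{\delta}{2}\Bigr)\le C\exp\bigl(-e^{-C/\nu}|\log\delta|\bigr)
\]
for $\delta$ small.

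\textbf{Step 4: optimization in $\nu$.} The triangle inequality together with Step 2 gives
\[
\norm{F-\tilde F}_{L^2}\le C\sqrt\nu+C\exp\bigl(-e^{-C/\nu}|\log\delta|\bigr).
\]
Choose $\nu=2C/\log|\log\delta|$. Then $e^{-C/\nu}|\log\delta|=|\log\delta|^{1/2}$, so the second term is $\exp(-|\log\delta|^{1/2})$, which is negligible. The first term is $C/\sqrt{\log|\log\delta|}$, which is exactly the claim; for $\delta$ not small the bound is trivial from the uniform estimates.

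The main obstacle is Step 2, namely obtaining an $L^2$ stability estimate between the inviscid and viscous vector-valued measure solutions with a constant uniform in $\nu$. This requires a rigorous justification of testing after $\xi$-mollification, with the nonlocal coefficients $V_k[f]$ depending on the unmollified $f$, and closing Gr\"onwall in the $\W(L^2)$ norm; here the asymmetric roles of $\xi$ and $\eta$ must be tracked carefully.
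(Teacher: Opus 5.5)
Your proposal follows the paper's proof essentially step for step: viscous approximations $f^\nu,\tilde f^\nu$, an $O(\sqrt\nu)$ bound on $\|(f-f^\nu)(t)\|_{\W(L^2(\R^d))}$ via Proposition~\ref{linftyestf} and Gr\"onwall, Proposition~\ref{hlambdabound} applied to $h_T=\tau(T,f^\nu)-\tau(T,\tilde f^\nu)$ and evaluated at $T=T_2$, and the choice $\nu\sim(\log|\log\delta|)^{-1}$. Placing $\nu\Delta$ on $f^\nu$ rather than on $f$, or taking $p=2$ instead of $p=e$, makes no difference. The one estimate you need to fix is in Step 2, concerning the cross term $\ird g\,\nabla f\cdot V_1[g]\,dx$ with $g=f-f^\nu$. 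This term does not close with the bound $\|V_1[g]\|_{L^2}\le\|K_1\|_{L^1}\|g\|_{\W(L^2)}$ you quote, because $\nabla f$ is only controlled in $L^2$ and you would be left with three $L^2$ factors; using $\|g\|_{L^\infty}$ instead would make the inequality linear in $g$ and destroy the $\sqrt\nu$ rate. You need $\|V_1[g](t,\cdot,\eta)\|_{L^\infty}\le\|K_1\|_{L^2}\|g(t,\cdot,\eta,\cdot)\|_{\mathcal{M}_\zeta(L^2)}$ (Young $L^2*L^2\to L^\infty$), which is exactly how the paper bounds $I_2$ and why $\|K_1\|_{L^2(\R^d)}$ appears among the constants.
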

\begin{proof}
Let us  denote by $f^\nu,\tilde{f}^\nu$ the solutions  to the system \eqref{meannu} with $
\nu  > 0$ and initial data $f^0, \tilde{f}^0$ respectively,  given by Proposition~\ref{existenceprop}.
At first, using Proposition \ref{linftyestf}, we estimate the difference of $f$ and $f^\nu$ in terms of time and viscosity parameter $\nu$.  Then, invoking Proposition~\ref{hlambdabound} we compare $f^\nu$ and $\tilde f^\nu$. In order to obtain the claim, we argue by triangle inequality and choose appropriate $\nu$.

Note that the difference $f^\nu-f$ satisfies the following equation in a weak sense
\begin{align*}
 & \partial_t ( f^\nu- f)+\divv\left( ( f^\nu- f)V_1[f^\nu]\right)
     +\divv(  f V_1[f^{\nu}-f]) -\nu\,\Delta ( f^\nu- f)  \\
 & =\nu\,\Delta  f  + A( f^\nu- f) - ( f^\nu- f)V_2[f^\nu]
   - fV_2[f^\nu-f].    
\end{align*}
In order to justify pointwise almost everywhere estimates w.r.t. $\xi$ and $\eta$ one may mollify the equation in $\xi$ as in the proof of previous lemma. This time, we omit the argument with mollification for the sake of clarity. An integration against $(f^{\nu}-f)$, together with the definitions of $V_1,V_2$, leads to 
\begin{align*}
    &  \frac{d}{dt} \frac{1}{2}\,\| (f^\nu-f)(t, \cdot, \xi,\eta)\|_{L^2(\R^d)}^2 +{\nu}\,\int_{\R^d} |\nabla  (f^\nu-f) (t, x, \xi,\eta)|^2\,dx \\
 &   =\frac{1}{2}\ird\nabla(f^\nu-f)^2(t,x,\xi,\eta)\,\cdot\int_0^1\int_{\R^{d}}  K_1(x-y)\, f^\nu(t,y,\eta, d\zeta) \,dy \,dx\\
&+ \ird\nabla(f^\nu-f)(t,x,\xi,\eta) f(t,x,\xi,\eta)\cdot \int_0^1\int_{\R^{d}}  K_1(x-y)\, (f^\nu-f)(t,y, \eta,d\zeta) \,dy \,dx\\
&  - \nu \ird \nabla f(t,x,\xi,\eta)\cdot\nabla(f^\nu - f)(t,x,\xi,\eta)dx + A \ird (f^\nu - f)^2(t,x,\xi,\eta)dx  \\
&-\ird (f^\nu-f)^2(t,x,\xi,\eta)\,\int_0^1\int_{\R^{d}}  K_2(x-y)\, f^\nu(t,y,\eta, d\zeta) \,dy \,dx\\
&-\ird f(t,x,\xi,\eta)(f^\nu - f)(t,x,\xi,\eta)\,\int_0^1\int_{\R^{d}}  K_2(x-y)\, (f^\nu-f)(t,y,\eta, d\zeta) \,dy \,dx =: \sum_{j=1}^6 I_j.
\end{align*}
We integrate by parts $I_1$ and $I_2$, to remove the gradient from $(f^\nu -f)^2$
\begin{align*}
    I_1 &= -\frac{1}{2}\ird(f^\nu-f)^2(t,x,\xi,\eta)\,\int_0^1\int_{\R^{d}} \divv K_1(x-y)\, f^\nu(t,y,\eta, d\zeta) \,dy \,dx,\\
    I_2 &=  -\ird(f^\nu-f)(t,x,\xi,\eta) \nabla f(t,x,\xi,\eta)\cdot \int_0^1\int_{\R^{d}}  K_1(x-y)\, (f^\nu-f)(t,y,\eta, d\zeta) \,dy \,dx \\
    &-\ird(f^\nu-f)(t,x,\xi,\eta) f(t,x,\xi,\eta)\int_0^1\int_{\R^{d}} \divv K_1(x-y)\, (f^\nu-f)(t,y,\eta, d\zeta) \,dy \,dx.
\end{align*}
Now we estimate term by term, applying  H\"older  inequality and Young inequality for convolution to the result
\begin{align*}
\vert I_1(t,\xi,\eta)\vert&\leq \frac{1}{2}\Vert \divv K_1\Vert_{L^\infty(\R^d)}\,\Vert f^\nu(t,\cdot,\eta,\cdot)\Vert_{\mathcal{M}_{\zeta}(\mathcal{M}(\R^d))}\,\Vert  (f^\nu - f)(t,\cdot,\xi,\eta)\Vert_{L^2(\R^d)}^2,\\
\vert I_2(t,\xi,\eta)\vert &\leq  \Vert  K_1\Vert_{L^2(\R^d)}\,\Vert \nabla f(t,\cdot,\xi,\eta)\Vert_{L^2(\R^d))}\Vert (f^\nu-f)(t,\cdot,\xi,\eta)\Vert_{L^2(\R^d)} \Vert (f^\nu-f)(t,\cdot,\eta,\cdot)\Vert_{\mathcal{M}_{\zeta}(L^2(\R^d))}\\ 
+& \Vert  \divv K_1\Vert_{L^1(\R^d)}\,\Vert f(t,\cdot,\xi,\eta)\Vert_{L^{\infty}(\R^d)}\Vert (f^\nu-f)(t,\cdot,\xi,\eta)\Vert_{L^2(\R^d)}\Vert (f^\nu-f)(t,\cdot,\eta,\cdot)\Vert_{\mathcal{M}_{\zeta}(L^2(\R^d))},\\
\vert I_3(t,\xi,\eta)\vert &\leq \frac{\nu}{2}\norm{\nabla(f^\nu - f)(t,\cdot,\xi,\eta)}^2_{L^{2}(\R^d)} + \frac{\nu}{2}\norm{\nabla f(t,\cdot,\xi,\eta)}_{L^{2}(\R^d)}^2,\\
\vert I_4(t,\xi,\eta)\vert &\leq A \norm{(f^\nu - f)(t,\cdot,\xi,\eta)}^2_{L^{2}(\R^d)},\\
\vert I_5(t,\xi,\eta)\vert &\leq \,\Vert  K_2\Vert_{L^1(\R^d)}\,\Vert f^\nu(t,\cdot,\eta,\cdot)\Vert_{\mathcal{M}_{\zeta}(L^{\infty}(\R^d))}\Vert (f^\nu- f)(t,\cdot,\xi,\eta)\Vert_{L^2(\R^d)}^2,\\
\vert I_6(t,\xi,\eta)\vert &\leq \Vert  K_2\Vert_{L^1(\R^d)}\,\Vert f(t,\cdot,\xi,\eta)\Vert_{L^\infty(\R^d)}\,\Vert (f^\nu- f)(t,\cdot,\xi,\eta)\Vert_{L^2(\R^d)}\norm{(f^\nu - f)(t,\cdot,\eta,\cdot)}_{\mathcal{M}_{\zeta}(L^{2}(\R^d))}.
\end{align*}
Hence, applying Young inequality for products in $I_2$ and $I_6$ we arrive at
\begin{align*}
   & \frac{d}{dt} \,\| (f^\nu-f)(t, \cdot, \xi,\eta)\|_{L^2(\R^d)}^2 \leq \nu \norm{\nabla f(t,\cdot,\xi,\eta)}_{L^{2}(\R^d)}^2\\
    & +\| (f^\nu-f)(t, \cdot, \xi,\eta)\|_{L^2(\R^d)}^2 \left[3+2A+\norm{\divv K_1}_{L^{\infty}(\R^d)}\norm{f^\nu(t,\cdot,\cdot,\cdot)}_{L^{\infty}_\xi(\mathcal{M}_{\eta}(\mathcal{M}(\R^d)))}\right] \\
    &
    +\| (f^\nu-f)(t, \cdot, \xi,\eta)\|_{L^2(\R^d)}^2 \norm{K_{2}}_{L^{1}(\R^d)}\norm{f^\nu(t,\cdot,\cdot,\cdot)}_{L^{\infty}_\xi(\mathcal{M}_{\eta}(L^{\infty}(\R^d)))}\\
    &+\| (f^\nu-f)(t, \cdot, \cdot,\cdot)\|_{L^{\infty}_\xi (\mathcal{M}_{\eta}(L^2(\R^d)))}^2\\
    & \times \left[\norm{\nabla f(t,\cdot,\xi,\eta)}_{L^{2}(\R^d)}\norm{K_1}_{L^{2}(\R^d)} + \norm{f(t,\cdot,\xi,\eta)}_{L^{\infty}(\R^d)}(\norm{\divv K_1}_{L^{1}(\R^d)} + \norm{ K_2}_{L^{1}(\R^d)})\right]^2.
\end{align*}
Integrating with respect to time and applying the norm in $\W$, in view of  Proposition \ref{linftyestf} we obtain the estimate of the form
\[
\,\| (f^\nu-f)(t, \cdot, \cdot,\cdot)\|_{\W(L^2(\R^d))}^2 \leq C_1(t)\nu + C_2(t) \int_0^t \Vert (f^\nu- f)(s,\cdot,\cdot,\cdot)\Vert_{\W(L^2(\R^d))}^2, 
\]
where $C_1(t)$ comes from Proposition \ref{linftyestf}, i.e. using the notation from Proposition \ref{linftyestf} we have $C_1(t)=C \exp(\exp(\tilde{C}e^{At}t))$ and 
\begin{align*}
  C_2(t) = &(\norm{\divv K_1}_{L^{\infty}(\R^d)} + \norm{K_{2}}_{L^{1}(\R^d)})C_1(t)  \\
  &+ 2A+3 +C_1^2(t)\left[\norm{K_1}_{L^{2}(\R^d)} + \norm{\divv K_1}_{L^{1}(\R^d)} + \norm{ K_2}_{L^{1}(\R^d)}\right]^2.
\end{align*}
Applying Gr\"onwall lemma we arrive at
  \begin{equation}\label{aproxst}
\| (f^\nu - f)(t,\cdot,\cdot)\|_{\W(L^2(\R^d))} \leq C(t)\,\sqrt{\nu},
  \end{equation}
for some continuous and non-decreasing function $C=C(t)\in \R_+$ that only depends on the same norms and as $C_1,C_2$ above. Clearly, we obtain  the same estimate for $\tilde f^\nu -\tilde f$. 

In the second step of the proof, We compare  $f^\nu$ with $\tilde{f}^\nu$ using estimates on generalized Vlasov hierarchy. Observe, that due to  Proposition \ref{hier},  $\tau(T, f^\nu)$ and $\tau(T, \tilde f^\nu)$ with $T\in \Tree$ both solve linear problem \eqref{treeeq}. Hence,  $h=(h_T)_{T\in \Tree}$ defined by $h_T=\tau(T,  f^\nu) - \tau(T,\tilde f^\nu)$ solves \eqref{hiergeneq}. Denote $n=|T|-1$, Corollary~\ref{tauest} yields the estimate
\begin{align*}
\Vert h_T(t,\cdot)\Vert_{L^2(\R^{d n})}&\leq \Vert \tau(T,f^\nu(t,\cdot))\Vert_{L^2(\R^{d n})}+\Vert \tau(T,\tilde{f}^\nu(t,\cdot))\Vert_{L^2(\R^{d n})}\\
&\leq \Vert f^\nu(t,\cdot,\cdot)\Vert_{\W(L^2(\R^d))}^{n}+\Vert \tilde f^\nu(t,\cdot,\cdot)\Vert_{\W(L^2(\R^d))}^{n}.
\end{align*}

Applying the estimate (\ref{flinftytwof}) with $p=2$ and introducing the notation
\[
f^0_{\text{max},2}:=\max\left\{\norm{f^0}_{\W(L^2(\R^d))},\norm{\tilde{f}^0}_{W(L^2(\R^d))} \right\}, \hd \hd f^0_{\text{max},1}:=\max\left\{\norm{f^0}_{\W(\mathcal{M}(\R^d))},\norm{\tilde{f}^0}_{\W(\mathcal{M}(\R^d))}\right \}
\]
we arrive at
\[
\norm{h_T(t,\cdot)}_{L^2(\R^{d n})}\leq 2(f^0_{\max,2})^{n} \exp\left(\left(\frac{\Vert \divv K_1\Vert_{L^\infty(\R^d)}e^{At_*}\,f^0_{\max,1}}{2}+A \right)nt\right).
\]

Recalling Definition \ref{defnorml} we note that $\sup_{t\in [0, t_*]}\|h(t,\cdot)\|_\lambda<1$, for any $\lambda>0$ such that
\[
\sqrt{\lambda}<\exp\left(-\left(\frac{\Vert \divv K_1\Vert_{L^\infty(\R^d)}e^{At_*}\,f^0_{\max,1}}{2}+A \right)t_{*}\right)\frac{1}{2f^0_{\max,2}}.
\]
Take arbitrary $p>1$, $\theta\in (0,2^{-p'}e^{-p'(2A+1)t_*})$ and  $\lambda$ satisfying the estimate above.  Then, Proposition~\ref{hlambdabound} yields 
  \[
\sup_{t\in [0, t_*]}\|h(t,\cdot)\|_{\theta\lambda}\leq C\,\exp\left(p^{-\frac{\left((2\nu)^{-1}\norm{K_1}_{L^{2}(\R^d)}^2  + \norm{K_2}_{L^{2}(\R^d)}^2\right)}{\theta\lambda}t}\log \Vert h(0,\cdot)\Vert_{\theta\lambda}\right).
  \]
Choosing particular $T=T_2$ we arrive at 
  \begin{align*}
   \|h(t,\cdot)\|_{\theta\lambda}&\geq (\theta\lambda)^{\frac{1}{2}} \|h_{T_2}(t,\cdot)\|_{L^2(\R^d)}=\sqrt{\lambda}\sqrt{\theta} \left\|\tau(T_2, f^\nu)(t,\cdot)-\tau(T_2,\tilde f^\nu)(t,\cdot)\right\|_{L^2(\R^d)}   \\
   & =\sqrt{\lambda}\sqrt{\theta}\,\left\| \int_0^1\int_0^1 ( f^\nu -\tilde f^\nu) (t,\cdot, \xi,d\eta)\,d\xi\,\right\|_{L^2(\R^d)}
  \end{align*}
  and consequently for every $t \in (0,t_{*})$
  \[
\left\| \int_0^1 \int_0^1 ( f^\nu -\tilde f^\nu) (t,\cdot, \xi,d\eta)\,d\xi\,\right\|_{L^2(\R^d)} \leq \frac{C}{\sqrt{\lambda}\sqrt{\theta}}\,\exp\left(p^{-\frac{\left((2\nu)^{-1}\norm{K_1}_{L^{2}(\R^d)}^2  + \norm{K_2}_{L^{2}(\R^d)}^2\right)}{\theta\lambda}t}\log \Vert h(0,\cdot)\Vert_{\theta\lambda}\right).\]
Combining the estimate above with (\ref{aproxst}), we obtain by the triangle inequality

\[
\left\| \int_0^1 \int_0^1 ( f -\tilde f) (t,\cdot, \xi,d\eta)\,d\xi\,\right\|_{L^2(\R^d)}  \leq \frac{C}{\sqrt{\lambda}\sqrt{\theta}}\,\exp\left(p^{-\frac{\left((2\nu)^{-1}\norm{K_1}_{L^{2}(\R^d)}^2  + \norm{K_2}_{L^{2}(\R^d)}^2\right)}{\theta\lambda}t}\log \Vert h(0,\cdot)\Vert_{\theta\lambda}\right)+2C(t)\,\sqrt{\nu}
\]
and we arrive at the same bound as in the proof of \cite[Lemma 10]{nasza}. Choosing for simplicity
 $p=e$,
\[
\nu=\frac{\Vert K_1\Vert_{L^2(\R^d)}^2}{\theta\lambda}\,t_*\,(\log |\log \Vert h(0,\cdot)\Vert_{\theta\lambda}|)_+^{-1}
\]
and estimating as in the proof of \cite[Lemma 10]{nasza}, we arrive at the claim. Note that in case $K_1 = 0$, the term $\norm{K_1}^2_{L^2(\R^d)}/2\nu$ disappears in Proposition \ref{hlambdabound} and the estimate still holds with $\nu=0$, since $u^{\kappa} \leq \frac{C(\kappa)}{\sqrt{(\log\abs{\log u})_{+}}}$ for small $u$ and any $\kappa \in (0,1)$. 
%%%%%%%%%%%
\nic{
Then,
\begin{align}\label{fest7}
   \left\| \int_0^1  \int_0^1 ( f -\tilde f) (t,\cdot, \xi,d\eta)\,d\xi\,\right\|_{L^2(\R^d)}&\leq \frac{C_{p,\theta}}{\sqrt{\lambda}}\,\exp(e^{-\frac{t}{2t_*}(\log|\log\norm{h(0,\cdot)}_{\theta\lambda}|)_{+}}\log \norm{h(0,\cdot)}_{\theta\lambda})  \\ \nonumber
   &+ 2 C(t)\frac{\Vert K_1\Vert_{L^2(\R^d)}e^{At_*}}{\sqrt{\theta\lambda}}\,\sqrt{t_*}\,\frac{1}{\sqrt{(\log |\log \Vert (0,\cdot)\Vert_{\theta\lambda}|)_+}}.
\end{align}
Note that $\lambda$ was chosen in such a way that $\norm{h(0,\cdot)}_{\lambda} < 1$. Since $\theta < 1$ also $\norm{h(0,\cdot)}_{\theta\lambda} < 1$ and we may estimate as follows
\begin{align*}
    &\exp(e^{-\frac{t}{2t_*}(\log|\log\norm{h(0,\cdot)}_{\theta\lambda}|)_{+}}\log \norm{h(0,\cdot)}_{\theta\lambda}) \leq \exp(e^{-(\log|\log\norm{h(0,\cdot)}_{\theta\lambda}|^\frac{1}{2})_{+}}\log \norm{h(0,\cdot)}_{\theta\lambda})\\
    &= \exp\left(\frac{\log\norm{h(0,\cdot)}_{\theta\lambda}}{\abs{\log\norm{h(0,\cdot)}_{\theta\lambda}}^{\frac{1}{2}}}\right) = \exp\left(-\abs{\log\norm{h(0,\cdot)}_{\theta\lambda}}^{\frac{1}{2}}\right) \leq \frac{1}{(\log |\log \Vert h(0,\cdot)\Vert_{\theta\lambda}|)_+^\frac{1}{2}}. 
\end{align*}
Inserting the above estimate in (\ref{fest7}) and noting that $C(t)$ is an increasing function of time we arrive at the claim of lemma.
}
\end{proof}

\subsection{Proof of the main result}
Finally we may prove  Theorem~\ref{maintheorem}. Here again we adjust the reasoning introduced in \cite{JPS2025} to our setting. 
\begin{proof}[Proof of Theorem~\ref{maintheorem}]
Define
\[
f_N^0(x,\xi,\eta):=N\sum_{i,j=1}^N \fji(0,x)\mI_j(\xi) \mI_i(\eta), \hd  x\in \R^d,\hd \xi,\eta \in [0,\ 1], \hd \fji(0,x) = X_i^0(x)_{\#}[\wji^0(x)d\mathbb{P}(x)],
\]
where $\mI_i,\mI_j$ are defined as in (\ref{deffn}). Since due the proof of Proposition \ref{fn} functions $g_{ij}$ are densities of $\fji(0,x)$ we have
\begin{align*}
 &\sup_{N\in \mathbb{N}}\norm{f^0_N}_{L^{\infty}_\eta(\mathcal{M}_\xi (W^{1,1}\cap W^{1,\infty})(\R^d))} \leq \sup_{N\in \mathbb{N}}\max_{1\leq i \leq N}\sum_{j=1}^N\norm{g_{ij}}_{(W^{1,1}\cap W^{1,\infty})(\R^d)} < \infty,\\ 
 &\sup_{N\in \mathbb{N}}\norm{f^0_N}_{L^{\infty}_\xi(\mathcal{M}_\eta (W^{1,1}\cap W^{1,\infty})(\R^d))} \leq \sup_{N\in \mathbb{N}}\max_{1\leq j \leq N}\sum_{i=1}^N\norm{g_{ij}}_{(W^{1,1}\cap W^{1,\infty})(\R^d)} < \infty
\end{align*}
and due to the assumption (\ref{asf}) 
\eqq{
\sup_{N \in \mathbb{N}}\norm{f^0_N}_{\W((W^{1,1}\cap W^{1,\infty})(\R^d))}=:C_0 < \infty.
}{ufnb}
Hence $f^0_N$ satisfies the assumption of Theorem \ref{lg1}.
Applying Theorem~\ref{lg1}, we infer that there exist nonnegative $f^0\in \W( (W^{1,1}\cap W^{1,\infty})(\R^d))$ and a subsequence (still denoted by $N$) such that for all $T \in \Tree$
\[
\tau(T,f^0)=\lim_{N \to \infty} \tau(T,f_N^0) \hd \m{ in } \hd L^p_{loc}(\R^{dn}), \hd p \in [1,\infty),
\]
where $n=|T|-1$. Using the assumption (\ref{secmo}) we will improve this convergence to hold globally in $\R^{dn}$.
At first, note that,  by Corollary \ref{tauest}, for every $T \in \Tree$, $\tau(T,f^0) \in (L^{1}\cap L^{\infty})(\R^{dn})$ and
\[
\norm{\tau(T,f^0)}_{L^\infty(\R^{dn})} \leq \norm{f^0}_{\W(L^\infty(\R^d))}^{n} , \hd 
\norm{\tau(T,f^0)}_{L^1(\R^{dn})} \leq \norm{f^0}_{\W(\mathcal{M}(\R^d))}^{n}.
\]

Clearly, 
\[
\int_{\R^d} |x|^2\,f_N^0(dx,\xi,\eta)
= N\sum_{i,j=1}^N \ird |X^0_i(x)|^2 \wji^0(x)d\p(x) \,\mI_i(\eta)\mI_j(\xi),
\]
hence, by the assumptions (\ref{M}) and (\ref{secmo}) there exists a positive constant $C$ independent of $N$ such that
\eqq{
\sup_{\eta \in (0,1)}\int_0^1\int_{\R^d} |x|^2\,f_N^0(dx,\xi,\eta)d\xi\leq \max_{1\leq i \leq N}\ird |X^0_i(x)|^2 \sum_{j=1}^N\wji^0(x)d\p(x) \leq  M \max_{1\leq i \leq N } \mathbb{E} |X^0_i|^2 \leq C.
}{boundetaxi}
However, due to asymmetrical role of $\xi,\eta$ in the definition of $f^0_N$, we cannot bound in the same way  $\sup_{\xi \in (0,1)}\int_0^1\int_{\R^d} |x|^2\,f_N^0(dx,\xi,\eta)d\eta$.
Hence, in order to show equitightness of $\tau(T,f^0_N)$ we cannot rely on  a straightforward extension of Corollary~\ref{tauest}, which gives for any fixed $R>0$
\[
\|\tau(T,f^0_N)\|_{L^1(\R^{dn}\setminus B_R^{n})}\leq n \|f^0_N\|_{\W( \mathcal{M}(\R^{d}))}^{n-1}\,\|f^0_N\|_{\W( \mathcal{M}(\R^{d}\setminus B_R))},
\]
where by $B_R$ we denote the ball centered at zero with radius $R$.
Instead, we investigate again the construction of algebra $\mathcal{F}$ and obtain the following asymmetrical estimate, whose inductive proof we provide in the Appendix.
\begin{prop}\label{asym}
Fix $R>0$. If $F \in \mathcal{F}$ is a transform of rank $n$, then it satisfies the estimate
\[
\izj \norm{F(f)(\cdot,\xi)}_{L^{1}(\R^{dn}\setminus B_R^n)}d\xi \leq n \norm{f}^{n-1}_{\W((\mathcal{M}(\R^d)))} \norm{f}_{L^{\infty}_{\eta}(\mathcal{M}_\xi(\mathcal{M}(\R^d \setminus B_R)))}.
\]
\end{prop}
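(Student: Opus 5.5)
We argue by induction over the inductive construction of the algebra $\mathcal{F}$ in Definition \ref{defalgebra}, in the same way as in Proposition \ref{fntau}. We work first with the regularized $f^\ve$ from Lemma \ref{hmj}, so that all integrals in $\xi,\eta$ are genuine integrals. We then pass to the limit using Lemma \ref{apro}, which gives $F(f^\ve)\to F(f)$ in $L^1((0,1)\times\R^{dn})$; the $L^1$ norm on the fixed set $\R^{dn}\setminus B_R^n$ is lower semicontinuous under this convergence. For the right-hand side we use the bounds $\norm{f^\ve}_{\W(\mathcal{M})}\le\norm{f}_{\W(\mathcal{M})}$ and the analogous mollification bound for $\norm{f^\ve}_{L^\infty_\eta(\mathcal{M}_\xi(\mathcal{M}(\R^d\setminus B_R)))}$. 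Here $\R^{dn}\setminus B_R^n$ is the set where at least one $x_k$ satisfies $|x_k|\ge R$. We therefore use the union bound
\[
\mathbb{I}_{\R^{dn}\setminus B_R^n}(x_1,\dots,x_n)\le\sum_{k=1}^n \mathbb{I}_{\{|x_k|\ge R\}}.
\]
The factor $n$ in the claim comes from this sum, and each term carries exactly one factor measured outside $B_R$.

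For the base case $F_0$ (rank $1$) we need $\izj\norm{\izj f(\cdot,\xi,d\eta)}_{L^1(\R^d\setminus B_R)}d\xi\le \izj\izj\norm{f(\cdot,\xi,\eta)}_{\mathcal{M}(\R^d\setminus B_R)}d\eta\, d\xi$. The right-hand side is bounded by $\sup_\eta\izj\norm{f(\cdot,\xi,\eta)}_{\mathcal{M}(\R^d\setminus B_R)}d\xi$ after exchanging the order of integration. The exchange of order is the reason the $L^1_\xi$ average, rather than the supremum, appears on the left, and it is what lets the $L^\infty_\eta\mathcal{M}_\xi$ norm appear on the right.

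For the inductive step we prove the slightly stronger statement
\[
\izj\norm{F(f)(\cdot,\xi)\,\mathbb{I}_{\{|x_k|\ge R\}}}_{L^1(\R^{dn})}d\xi\le \norm{f}^{n-1}_{\W(\mathcal{M})}\norm{f}_{L^\infty_\eta(\mathcal{M}_\xi(\mathcal{M}(\R^d\setminus B_R)))}
\]
for each fixed $k$, and then sum over $k$.

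\emph{Product case} $F=F_1\cdot F_2$. Suppose the variable $x_k$ belongs to $F_1$. We bound $\norm{F_2(f)(\cdot,\xi)}_{L^1}$ pointwise in $\xi$ by $\norm{f}^{l}_{\W(\mathcal{M})}$, which is estimate (\ref{fnestj}). This leaves the $\xi$-integral of the $F_1$ term, which is handled by the induction hypothesis.

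\emph{Star case} $F=F_3^*$. First suppose $x_k$ is the new variable $x_{n}$. Then
\[
\izj\izj \norm{F_3(f)(\cdot,\eta)}_{L^1}\norm{f(\cdot,\xi,\eta)}_{\mathcal{M}(\R^d\setminus B_R)}\,d\eta\,d\xi\le \norm{F_3(f)}_{L^\infty_\eta L^1}\sup_\eta\izj\norm{f(\cdot,\xi,\eta)}_{\mathcal{M}(\R^d\setminus B_R)}d\xi,
\]
and the first factor is controlled by (\ref{fnestj}). Now suppose instead that $x_k$ lies among the old variables of $F_3$. Then we get
\[
\izj\izj\norm{F_3(f)(\cdot,\eta)\mathbb{I}_{\{|x_k|\ge R\}}}_{L^1}\,\norm{f(\cdot,\xi,\eta)}_{\mathcal{M}(\R^d)}\,d\eta\, d\xi.
\]
We integrate in $\xi$ first, which gives at most $\norm{f}_{L^\infty_\eta\mathcal{M}_\xi(\mathcal{M})}$, and then apply the induction hypothesis in $\eta$.

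The main obstacle is exactly this last configuration. The estimate has to be arranged so that the outside-$B_R$ weight always sits on a factor integrated in its first label, because only that norm is uniformly controlled, by (\ref{boundetaxi}). The exchange of $\xi$ and $\eta$ integration above is what makes this work. That exchange is rigorous only for the mollified $f^\ve$, or via Lemma \ref{impoest} with $\phi=\norm{F_3(f)(\cdot,\eta)\mathbb{I}}_{L^1}\in L^\infty_\eta$ together with the $L^1$ bound (\ref{vecexta}). I would use the latter to avoid extra approximation.
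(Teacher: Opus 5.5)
Your proof is correct and essentially follows the paper's argument: induction over the construction of $\mathcal{F}$, with the base case obtained by exchanging the $\xi$ and $\eta$ integrations, the product case by bounding the untouched factor with (\ref{fnestj}), and the star case via the $L^1$ bound of Lemma \ref{impoest}. The only difference is bookkeeping. You apply the union bound $\mathbb{I}_{\R^{dn}\setminus B_R^n}\le\sum_k\mathbb{I}_{\{|x_k|\ge R\}}$ once and prove a per-coordinate estimate, whereas the paper splits the complement into two pieces at each inductive step; both routes yield the factor $n$.
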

Recalling Definition \ref{deftf}, Proposition \ref{asym} together with estimate (\ref{boundetaxi}) leads to
\[
\|\tau(T,f^0_N)\|_{L^1(\R^{dn}\setminus B(0,R)^{n})}\leq \frac{Cn}{R^2}\|f^0_N\|_{\W( \mathcal{M}(\R^{d}))}^{n-1} \leq \frac{Cn}{R^2}C_0^{n-1}.
\]
Combining it with Corollary \ref{tauest} and (\ref{ufnb}), we infer that the sequence $\tau(T,f^0_N)$ is equicontinuous and equitight in $L^{1}(\R^{dn})$.
This together with the local convergence in $L^1$, it implies that $\tau(T,f_N^0)$ converges strongly to $\tau(T,f^0)$ in  $L^1(\R^{dn})$. Since $\tau(T,f_N^0)$ is also bounded in $L^{\infty}(\R^{dn})$, applying the interpolation inequality we obtain that $\tau(T,f_N^0)$ converges strongly to $\tau(T,f^0)$ in $L^p(\R^{dn})$ for any $p\in [1,\infty)$ and in particular in $L^2(\R^{dn})$.
Besides, applying again Corollary~\ref{tauest}, we may estimate as follows
\[
\|\tau(T,f^0)\|_{L^2(\R^{dn})}\leq \|f^0\|_{\W( L^2(\R^d))}^{n}.
\]
Thus, there exists some $\lambda>0$ small enough such that
\begin{equation}
\|\tau(\cdot,f^0)-\tau(\cdot,f^0_N)\|_{\lambda}\to 0,\quad\mbox{as}\ N\to\infty.\label{finalconv}
  \end{equation}
By Proposition~\ref{existenceprop}, there exists a nonnegative weak solution $f\in L^\infty((0,t_*);\W( W^{1,1}\cap W^{1,\infty})(\R^d))$ to \eqref{meanmean}  and the initial condition $f^0$. On the other hand, Proposition \ref{fn} states that $f^N(t,x,\xi,\eta)=N\sum_{i,j=1}^N \fji(t,x)\mathbb{I}_{j}(\eta) \mathbb{I}_{i}(\xi)$ is also a weak solution to \eqref{meanmean}  and belongs to $L^\infty((0,t_*);\W( W^{1,1}\cap W^{1,\infty})(\R^d))$. Hence, both $f^N$ and $f$ satisfy the assumptions of Proposition~\ref{mainstab}. Combining Proposition~\ref{mainstab} with \eqref{finalconv}, we obtain
\[
\esssup_{t \in (0,t_{*})}\left\|\int_0^1\int_0^1 (f-f_N)(t,\cdot,\xi,d\eta)\,d\xi\right\|_{L^2(\R^d)}\leq \frac{C}{{(\log |\log \|\tau(\cdot,f^0_N)-\tau(\cdot,f^0)\|_{\lambda}|)^{1/2}_+}}\to 0,\quad \mbox{as}\ N\to \infty.
\]
Let us prove that the above convergence is true also in $L^1(\R^d)$. Take  any $R>0$, then
\[
\left\|\int_0^1\int_0^1 (f-f_N)(t,\cdot,\xi,d\eta)\,d\xi\right\|_{L^1(\R^d)}\hspace{-0.2cm}
\]
\eqq{
\leq \left\|\int_0^1 \int_0^1 (f-f_N)(t,\cdot,\xi,d\eta)\,d\xi\right\|_{L^2(B_{R})} \hspace{-0.2cm}|B_{R}|^{\frac{1}{2}} + \frac{1}{R^2}\int_{\R^d \setminus B_R}|x|^2 \abs{\int_0^1 \int_0^1 (f+f_N)(t,\cdot,\xi,d\eta)\,d\xi}dx.  
}{l1conv}
Using (\ref{Mbound}) and  (\ref{secmobound}) we obtain that there exists $C>0$ which does not depend on $N$ such that
\[
\ird |x|^2 \izj \izj f_N(t,x,\xi,d\eta)d\xi dx \leq \max_{1\leq i \leq N}\ird |\xb_i(t)|^2 \sum_{j=1}^N\wjib(t)d\p(x) \leq C.
\]
Passing to the limit with $N$ in (\ref{l1conv}), using $L^2$- convergence, the bound above and Fatou Lemma 
 we obtain
\[
\lim_{N\rightarrow \infty}\esssup_{t \in (0,t_{*})}\left\|\int_0^1 \int_0^1 (f-f_N)(t,\cdot,\xi,d\eta)\,d\xi\right\|_{L^1(\R^d)} \leq \frac{C}{R^2}
\]
for some constant $C>0$. Due to the fact that $R$ is arbitrary we obtain the $L^1$ convergence and hence also convergence in flat norm: 
\[
\esssup_{t \in (0,t_{*})}d_F\left(\int_0^1 \int_0^1 f_N(t,\cdot,\xi,d\eta)\,d\xi,\int_0^1 \int_0^1 f(t,\cdot,\xi,d\eta)\,d\xi\right)\to 0,\quad \mbox{as}\ N\to \infty.
\]
Since, by Proposition \ref{fn},  $\int_0^1 \int_0^1 f_N(t,x,\xi,d\eta)\,d\xi=\frac{1}{N}\,\sum_{i,j=1}^N  f_i^j(t,x)$ we conclude the proof of Theorem~\ref{maintheorem}, recalling \eqref{finalp} from Theorem \ref{mainprob}. 
\end{proof}

\section{Appendix}

\subsection{Auxiliary results and technical proofs from Chapter 2}

We begin this section with the proof of Lemma \ref{exi}.

\begin{proof}[Proof of Lemma \ref{exi}]
The proof of the existence is rather standard and may be obtained following the fixed point argument, analogous to the one in the proof of \cite[Lemma 3]{nasza}. Let us provide the proof of independence. Since $\sum_{k=1}^N\fik$ is a deterministic measure for every $i \in 1,\dots, N$, we may rewrite (\ref{part2})$_{(i)}$ as $\partial_t \xb_i(t)=a_i(t,\xb_i(t))$, where, due to (\ref{asK}) function $a_i(t,x)$ is Lipschitz continuous in $x$, continuous in $t$ and bounded. Thus, there exists a flow $h_i:([0,t_{*});\R^d)$ continuous in time and Lipschitz in space such that $\xb_i(t) = h_i(t,X^0_i)$. Similarly, $\wjib(t)$ satisfies $\wjib(t)=\wjib^0\exp(\int_0^t\tilde{a}_i(s,\xb_i(s))ds)$, with $\tilde{a}_i(t,x)$ also Lipschitz continuous in $x$, continuous in $t$ and bounded. Thus, $\wjib(t)=\wjib^0\exp(\int_0^t\tilde{g}_i(s,h_i(s,X^0_i))ds)$. All in all, we may write 
\[
(\xb_i(t),\wjib(t)) = \Phi^i_t(X^0_i,\wji^0), \hd \hd \Phi^i_t(x,m):= \left(h_i(t,x),m\exp(\int_0^t\tilde{a}_i(s,h_i(s,x))ds)\right)
\]
and $\Phi^i_t:\R^d \times \R_{+}$ is a deterministic measurable map. Thus, the family  $(\xb_{i}(t),\bar{w}_{1i}(t),\dots,\bar{w}_{Ni}(t))_{i=1}^N$  is jointly independent for any $t \in (0,t_{*})$, which finishes the proof.
\end{proof}

Now let us recall the extension of the classical Glivenko-Cantelli lemma from \cite{Dudley}.

\begin{lemma}[Glivenko-Cantelli]\label{GC}\cite[Lemma 5]{nasza}
Discuss the probabilistic space $(\R^d,\mathcal{B}(\R^d),\p)$ with  $d\geq 1$. Let $X_i:\mathbb{R}^d\rightarrow \mathbb{R}^d$ and $M_i:\mathbb{R}^d \rightarrow \mathbb{R}_{+}$ for $i=1,\dots,N$, $N \in \mathbb{N}$ be two sequences of random variables such that $(X_i,M_i)$ and $(X_j,M_j)$ are independent for every $i\neq j$. Furthermore, assume that $(X_i)_{i=1}^N$ has uniformly bounded second moments, i.e.
\eqq{\sup_{N \in \mathbb{N}}\max_{1\leq i \leq N} \mathbb{E}|X_i|^2 \leq C}{xas}
and $(M_i)_{i=1}^N$  satisfies 
\eqq{\max_{1\leq i \leq N} \sup_{x \in \R^d} |M_i(x)| \leq M, \hd \hd \inf_{N\in \mathbb{N}}\min_{1\leq i \leq N}\mathbb{E}M_i \geq \bar{m},}{mas}
for some positive constants $\bar{m},M > 0$. Let us define
\[
\mu_N := \frac{1}{N}\sum_{i=1}^N M_i\delta_{X_i}, \hd \hd \nu_N:= \frac{1}{N}\sum_{i=1}^N d\tilde{f}_i, \hd \tilde{f}_i = X_{i\#}[M_i d\mathbb{P}].
\]
Then, there exists a positive $\bar{C} = \bar{C}(M,C,d)$ such that for every $N > \max\{1,\bar{m}^{-\frac{1}{2+3d/2}}\} $
\[
\mathbb{E} d_{F}\left(\mu_N,\nu_N\right) \leq \bar{C}(M,C,d) N^{-\frac{1}{2+3d/2}}.
\]
\end{lemma}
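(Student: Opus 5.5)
The plan is to reduce Lemma~\ref{GC} to a bound on a finite-dimensional family of test functions, exploiting independence and the second moment bound. First I will truncate in space. Fix $R>0$ and a cut-off $\chi_R$ equal to $1$ on $B_R$, supported in $B_{2R}$, with $|\nabla\chi_R|\leq 1/R$. For an admissible test function $\phi$ with $\norm{\phi}_{L^\infty}\leq1$ and $\norm{\nabla\phi}_{L^\infty}\leq1$, I will split $\phi=\phi\chi_R+\phi(1-\chi_R)$. Since $M_i\leq M$, Chebyshev's inequality and (\ref{xas}) give
\[
\E\frac1N\sum_i M_i\,\mathbb{I}_{|X_i|>R}+\frac1N\sum_i\int_{|x|>R}\tilde f_i(dx)\leq \frac{2MC}{R^2}.
\]
This controls the tail part of both measures uniformly in $\phi$.

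Second, I will discretize the bounded part. On $B_{2R}$, the class of functions $\phi\chi_R$ is uniformly bounded and Lipschitz with constant at most $2$. Covering $B_{2R}$ by a grid of cubes of side $\delta$, each such function is within $O(\delta)$ in sup norm of a function $\sum_Q c_Q\mathbb{I}_Q$ with $|c_Q|\leq1$. Both $\mu_N$ and $\nu_N$ have total mass at most $M$, so this replacement costs at most $O(M\delta)$. It then suffices to bound
\[
\E\sum_Q\abs{\mu_N(Q)-\nu_N(Q)}.
\]
For each cube, $\mu_N(Q)-\nu_N(Q)=\frac1N\sum_i\big(M_i\mathbb{I}_Q(X_i)-\E[M_i\mathbb{I}_Q(X_i)]\big)$. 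The summands are independent in $i$ and centered, because $\tilde f_i(Q)=\E[M_i\mathbb{I}_Q(X_i)]$. Hence
\[
\E|\mu_N(Q)-\nu_N(Q)|\leq \frac1N\Big(\sum_i\E M_i^2\mathbb{I}_Q(X_i)\Big)^{1/2}\leq \frac{\sqrt M}{\sqrt N}\,\nu_N(Q)^{1/2}.
\]
Summing over the roughly $(R/\delta)^d$ cubes and applying Cauchy--Schwarz gives $\E\sum_Q|\cdot|\lesssim M(R/\delta)^{d/2}N^{-1/2}$.

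Third, I will optimize the parameters. The total bound is
\[
C\Big(R^{-2}+\delta+(R/\delta)^{d/2}N^{-1/2}\Big).
\]
Taking $\delta=R^{-2}$ turns the last term into $R^{3d/2}N^{-1/2}$, and balancing this against $R^{-2}$ gives $R=N^{1/(4+3d)}$. The result is the rate $N^{-1/(2+3d/2)}$, which matches the exponent in the statement.

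The main obstacle is bookkeeping rather than ideas. The sup over $\phi$ must be taken inside the expectation, and the grid approximation handles this uniformly. The lower bound $\bar m$ on $\E M_i$ appears only through the threshold on $N$; I would check whether it is needed at all or only ensures $R\geq1$, for instance so that $\delta\leq1$. Both points should follow from the choice of $R$ once $N$ exceeds the stated threshold.
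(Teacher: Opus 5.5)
Your proof is correct, and it takes a genuinely different route from the paper. The paper gives no argument of its own: it imports the lemma from \cite[Lemma 5]{nasza} as an extension of Dudley's estimate \cite{Dudley}. You prove it directly in three steps.
\begin{enumerate}
\item Truncate at radius $R$, using Chebyshev's inequality with (\ref{xas}).
\item Replace the truncated test functions $\phi\chi_R$ by step functions on a $\delta$-grid, uniformly in $\phi$. This puts the supremum inside the expectation at cost $O(M\delta)$.
\item Bound the centred sums $\frac1N\sum_i\big(M_i\mathbb{I}_Q(X_i)-\E[M_i\mathbb{I}_Q(X_i)]\big)$ cube by cube through their variance, then apply Cauchy--Schwarz over the cubes.
\end{enumerate}
The choice $\delta=R^{-2}$, $R=N^{1/(4+3d)}$ reproduces exactly the exponent $\frac{1}{2+3d/2}$.

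Two details are worth recording when you write it up.
\begin{itemize}
\item The variance identity needs only pairwise uncorrelatedness, so the pairwise independence of $(X_i,M_i)$ and $(X_j,M_j)$ in the hypothesis is exactly what is used.
\item $R=N^{1/(4+3d)}\geq 1$ for every $N\geq 1$. This gives the Lipschitz bound $1+1/R\leq 2$ for $\phi\chi_R$, gives $\delta\leq 1$, and bounds the number of cubes by $\lceil 4R^3\rceil^d\leq (5R^3)^d$.
\end{itemize}

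This settles your open question. The lower bound $\E M_i\geq\bar m$ is never used, and the estimate holds for all $N\geq 1$ with $\bar C=\bar C(M,C,d)$.

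The citation buys brevity. Your route buys a self-contained proof under weaker hypotheses. In the proof of Theorem \ref{mainprob}, assumption (\ref{Mb}) and the threshold $N_0$ enter only through (\ref{Mesti}), that is, to verify the hypothesis $\E M_i\geq\bar m$ of this lemma. With your argument they could be dropped there.
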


Below we collect the technical proofs of selected results from Chapter 3. We begin with the proof of Lemma \ref{impoestv}.

\subsection{Technical proofs from Section 3.1 }

We begin this section with a proof of Lemma \ref{impoestv}.

\begin{proof}[Proof of Lemma \ref{impoestv}]
The proof is based on the result in scalar-valued case (Lemma \ref{impoest}) and repeats the ideas of its proof. Take $\phi_n \in C([0,1];X)$, $\phi_n\rightarrow \phi$ in $L^1((0,1);X)$ and satisfying the bounds $\norm{\phi_n}_{L^{\infty}((0,1);X)} \leq \norm{\phi}_{L^{\infty}((0,1);X)}$ and $\norm{\phi_n}_{L^{1}((0,1);X)} \leq \norm{\phi}_{L^{1}((0,1);X)}$. We apply Cauchy-Schwarz inequality  to arrive at
\[
\abs{\izj \left\langle \phi_n(\cdot,\eta),f(\cdot,\xi,d\eta)\right\rangle_{X\times X^*}} \leq \izj \norm{\phi_n(\cdot,\eta)}_{X}\norm{f(\cdot,\xi,\cdot)}_{X^*}(d\eta).
\]
Then, using the extension from Lemma \ref{impoest}
\begin{align} \label{vecext} \nonumber
    &\norm{\izj \left\langle \phi_n(\cdot,\eta),f(\cdot,\cdot,d\eta)\right\rangle_{X\times X^*}}_{L^{1}(0,1)} \leq  \norm{\phi}_{L^{1}((0,1);X)}\norm{f}_{L^{\infty}_\eta(\mathcal{M}_{\xi}(X^*))},\\ 
    &\norm{\izj \left\langle \phi_n(\cdot,\eta),f(\cdot,\cdot,d\eta)\right\rangle_{X\times X^*}}_{L^{\infty}(0,1)} \leq  \norm{\phi}_{L^{\infty}((0,1);X)}\norm{f}_{L^{\infty}_\xi(\mathcal{M}_{\eta}(X^*))}.
\end{align}
Furthermore, taking $\Psi \in C([0,1])$ and using the continuity of duality pairing we obtain
\begin{align*}
  \izj \Psi(\xi) \izj \left\langle \phi_n(\cdot,\eta),f(\cdot,\xi,d\eta)\right\rangle_{X\times X^*} d\xi& = \izj \left\langle \phi_n(\cdot,\eta),\izj \Psi(\xi)f(\cdot,d\xi,\eta)\right\rangle_{X\times X^*}d\eta   \\
  & \rightarrow \izj \Psi(\xi) \izj \left\langle \phi(\cdot,\eta),f(\cdot,\xi,d\eta)\right\rangle_{X\times X^*} d\xi
\end{align*}
and hence 
\[
\izj \left\langle \phi_n(\cdot,\eta),f(\cdot,\cdot,d\eta)\right\rangle_{X\times X^*} \overset{*}{\rightharpoonup} \izj \left\langle \phi(\cdot,\eta),f(\cdot,\cdot,d\eta)\right\rangle_{X\times X^*} \m{ in } \mathcal{M}([0,1]).
\]
But from the uniqueness of the limit due to (\ref{vecext}) the convergence holds also weakly$^*$ in $L^{\infty}$. Thus, indeed we have $\izj \left\langle \phi(\cdot,\eta),f(\cdot,\cdot,d\eta)\right\rangle_{X\times X^*} \in L^{\infty}((0,1))$
together with the estimates (\ref{vecexta}). In order to show the convergence, we take $\phi_n$ and $f_n$ as in the claim, $\Psi \in C([0,1])$ and again argue by linearity of duality pairing
\[
\izj \izj \left\langle \Psi(\xi)\phi_n(\cdot,\eta),f_n(\cdot,\xi,d\eta)\right\rangle_{X\times X^*} d\xi \rightarrow \izj \Psi(\xi)\izj \left\langle \phi(\cdot,\eta),f(\cdot,\xi,d\eta)\right\rangle_{X\times X^*} d\xi, 
\]
where we applied $\Psi(\xi)\phi_n(\cdot,\eta) \rightarrow \Psi(\xi)\phi(\cdot,\eta)$ in $L^{1}_\eta(C_\xi(X))$. Thus, we obtain weak$^*$ convergence in $\mathcal{M}([0,1])$ but since we have the bound (\ref{vecext}) we may lift it to weak$^*$ in $L^{\infty}((0,1))$, which finishes the proof.
\end{proof}

Below we provide a proof of Lemma \ref{coro2}.

\begin{proof}[Proof of Lemma \ref{coro2}]
The most involved part of the proof is to show that $(t,\xi)\mapsto \vf(t,x,y,\xi)=\\ \izj g(t,x,\eta)f(t,y,\xi,d\eta)$ is measurable with values in $L^{1}(\R^{2d})$. At first note that by Lemma \ref{impoest} for almost all $t,\xi$
\eqq{
\int_{\R^{2d}}\abs{\izj g(t,x,\eta)f(t,y,\xi,d\eta)}dx dy \leq \izj \ird |g(t,x,\eta)|dx \ird\abs{ f(t,y,\xi,\cdot)}dy(d\eta) < \infty
}{calfi}
and hence $\vf(t,\cdot,\cdot,\xi) \in L^{1}(\R^{2d})$ for almost all $t,\xi$. Since $L^1(\R^{2d})$ is separable by Pettis theorem it is enough to show that the mapping is weakly measurable. Let us take arbitrary $\Phi \in L^{\infty}(\R^{2d})$. Then applying standard  mollifying technic together with the Stone-Weierstrass theorem we obtain that there exists $\Phi^n(x,y):=\sum_{k=1}^{m_n} \alpha_k^n(x) \beta_k^n(y)$, where $\alpha_k^n, \beta_k^n \in C_{c}(\R^d)$ for every $k=1,\dots m_n$, $n\in \mathbb{N}$, such that $\Phi^n \rightarrow \Phi$ pointwisely almost everywhere and $\norm{\Phi^n}_{L^{\infty}(\R^{2d})} \leq \norm{\Phi}_{L^{\infty}(\R^{2d})} + 1$. In order to show that the mapping $
(t,\xi)\mapsto \int_{\R^{2d}}\Phi^{n}(x,y) \vf(t,x,y,\xi)dxdy$ is measurable, by linearity it is enough to show that for fixed $\al,\beta \in C_{c}(\R^d)$ the mapping  $
(t,\xi)\mapsto \int_{\R^{2d}}\al(x)\beta(y) \vf(t,x,y,\xi)dxdy$ is measurable.
Note that for almost all $(t,\xi)$
\[
\int_{\R^{2d}}\al(x)\beta(y) \vf(t,x,y,\xi)dxdy = \izj \ird \al(x)g(t,x,\eta)dx \ird \beta(y)f(t,y,\xi,\cdot)(d\eta).
\]
Denoting $G(t,y,\eta) = \ird \al(x)g(t,x,\eta)dx \beta (y)$ we observe that $G \in L^{\infty}((0,t_{*})
\times (0,1);C_{c}(\R^d))$ and 
\[
\int_{\R^{2d}}\al(x)\beta(y) \vf(t,x,y,\xi)dxdy = \izj \langle G(t,\cdot,\eta),f(t,\cdot,\xi,d\eta) \rangle_{C_0(\R^d) \times \mathcal{M}(\R^d)}
\]
which is measurable due to Lemma \ref{impoestv} and the definition of $L^{\infty}((0,t_{*});\W(X^{*}))$. Hence, we obtain that for every $n$ the mapping $
(t,\xi)\mapsto \int_{\R^{2d}}\Phi^{n}(x,y) \vf(t,x,y,\xi)dxdy$ is measurable. Furthermore, for almost all $t,\xi$
\[
\abs{\Phi^{n}(x,y) \vf(t,x,y,\xi)} \leq (\norm{\Phi}_{L^{\infty}(\R^{2d})} +1)\abs{\vf(t,x,y,\xi)},
\]
which is integrable in $x$ and $y$ due to (\ref{calfi}). Hence, we may apply the Lebesgue dominated convergence theorem to obtain that for almost all $(t,\xi)$
\[
\int_{\R^{2d}} \Phi(x,y)\vf(t,x,y,\xi)dxdy = \lim_{n\rightarrow \infty} \int_{\R^{2d}} \Phi^n(x,y)\vf(t,x,y,\xi)dxdy 
\]
and the mapping is weakly measurable as a pointwise almost everywhere limit of measurable mappings.
To show estimate (\ref{coro2ej}) we take supremum with respect to $t,\xi$ in (\ref{calfi}) and apply Lemma \ref{impoest}, then
\[
\esssup_{t \in (0,t_{*})}\esssup_{\xi \in (0,1)} \int_{\R^{2d}}\abs{\izj g(t,x,\eta)f(t,y,\xi,d\eta)}dx dy \leq \esssup_{t \in (0,t_{*})}\norm{g(t,\cdot,\cdot)}_{L^{\infty}((0,1);L^{1}(\R^d))}\norm{f(t,\cdot,\cdot,\cdot)}_{L^{\infty}_\xi(\mathcal{M}_\eta(\mathcal{M}(\R^d)))}
\]
and we arrive at (\ref{coro2ej}). Similarly, for $p\in (1,\infty]$
\[
\norm{\vf(t,\cdot,\cdot,\cdot)}_{L^{\infty}( (0,1);L^{p}(\R^{2d}))}   \leq 
\esssup_{\xi \in (0,1)}\izj \norm{g(t,\cdot,\eta)}_{L^{p}(\R^d)}\norm{f(t,\cdot,\xi,\cdot)}_{L^{p}(\R^d)}(d\eta)
\]
and applying again Lemma \ref{impoest} we arrive at (\ref{coro2ei}).
\end{proof}

Let us now prove Lemma \ref{hmj}.

\begin{proof}[Proof of Lemma \ref{hmj}]
Clearly, $f^\ve \in L^{\infty}((0,1)^2\times \R^d)$. Since $\mathcal{M}([0,1])\hookrightarrow H^{-1}(0,1)$ the convergence in $L^1_\xi(H^{-1}_\eta(L^{1}(\R^d)))$ follows from standard properties of convergence with mollifier in Banach spaces. On the other hand, since $f \in L^{\infty}_\eta(H^{-1}_\xi(L^{1}(\R^d)))$, the convergence in $L^1_\eta(H^{-1}_\xi(L^{1}(\R^d)))$ follows from the convergence of convolution with mollifier in $H^{-1}_\xi(L^{1}(\R^d))$, together with Lebesgue dominated convergence. Furthermore, for $Y=L^{\infty}(\R^d)$ and $Y = \mathcal{M}(\R^d)$ we have
\begin{align*}
\norm{f^\ve}_{L^{\infty}_\xi(\mathcal{M}_\eta(Y))} &= \esssup_{\xi \in (0,1)} \izj \norm{f^\ve(\cdot,\xi,\cdot)}_{Y}(d\eta)\\
&\leq \esssup_{\xi \in (0,1)}\left( \rho^{\ve} * \izj  \norm{f(\cdot,\cdot,\cdot)}_{Y}(d\eta)\right)(\xi) \leq \norm{f}_{L^{\infty}_\xi(\mathcal{M}_\eta(Y))} 
\end{align*}
and 
\begin{align}\label{etxi}
\norm{f^\ve}_{L^{\infty}_\eta(\mathcal{M}_\xi(Y))}& = \esssup_{\eta \in (0,1)} \izj \norm{f^\ve(\cdot,\cdot,\eta)}_{Y}(d\xi)=\esssup_{\eta \in (0,1)}\izj \left( \rho^{\ve} *  \norm{f(\cdot,\cdot,\eta)}_{Y}\right)(\xi)d\xi\\ \nonumber
& = \esssup_{\eta \in (0,1)}
\izj \izj \rho^{\ve}(\xi-\xi')\norm{f(\cdot,\cdot,\eta)}_{Y}(\xi')d\xi
= \norm{f}_{L^{\infty}_\eta(\mathcal{M}_\xi(Y))}, 
\end{align}
where in the last identity we applied Fubini theorem, together with the fact that the periodic mollifier $\rho^\ve$ integrates to one on any interval of length one. Hence, we have established that $\norm{f^\ve}_{\W(L^{\infty}(\R^d))} \leq \norm{f}_{\W(L^{\infty}(\R^d))}$. It remains to prove the convergence. Fix $g \in L^{1}((0,1)\times \R^d)$ and any sequence $g^{\ve}\rightarrow g$ in $L^{1}((0,1)\times \R^d)$. For $g \in L^{1}((0,1)\times \R^{dn})$ with $n>1$ the proof is analogous. Let us estimate the difference
\[
\izj \int_{\R^{2d}}\abs{\izj g^\ve(y,\eta)f^\ve(x,\xi,d\eta) - \izj g(y,\eta)f(x,\xi,d\eta)}dx dy d\xi
\]
\[
\leq \izj \int_{\R^{2d}}\abs{\izj(g^\ve - g)(y,\eta)f^\ve(t,x,\xi,d\eta)} dx dy d\xi 
+ \izj \int_{\R^{2d}}\abs{\izj g(y,\eta)(f^\ve-f)(t,x,\xi,d\eta)} dx dy d\xi =: I^\ve_1+I_2^\ve.
\]
Applying Fubini theorem, Lemma \ref{impoest} and the   estimate (\ref{etxi})  we have as $\ve \rightarrow 0$
\[
I^\ve_1 \leq \norm{g^\ve - g}_{L^{1}((0,1);L^{1}(\R^{2d}))} \norm{f^\ve}_{L^{\infty}_\eta(\mathcal{M}_\xi(\mathcal{M}(\R^d)))} \longrightarrow 0.
\]
In order to estimate $I_2^\ve$ we introduce a smooth approximation of $g$. Let $\Psi^N \in C_c((0,1)\times \R^{d})$, $\Psi^N\rightarrow g$ in $L^{1}((0,1)\times \R^{d})$. Then by a triangle inequality, together with Fubini theorem, Cauchy-Schwarz estimate and Lemma \ref{impoest}
\begin{align*}
    I^\ve_2 &\leq \norm{f^\ve-f}_{L^{1}_\xi(H^{-1}_\eta(L^{1}(\R^d)))}\norm{\Psi^N}_{H^{1}_\eta(L^{1}(\R^{d}))}\\
    & + \norm{\Psi^N-g}_{L^{1}((0,1)\times \R^{d})} \left(\norm{f^\ve}_{L^{\infty}_\eta(\mathcal{M}_\xi(\mathcal{M}(\R^d)))} + \norm{f}_{L^{\infty}_\eta(\mathcal{M}_\xi(\mathcal{M}(\R^d)))} \right).
\end{align*}
From what we have already established the first term converges to zero as $\ve \rightarrow 0$. Thus, we pass firstly with $\ve$ to the limit, make use of the bound (\ref{etxi}) and then pass with $N$ to the limit to obtain that $I^\ve_2$ converges to zero. Hence, $\izj g^\ve(y,\eta)f^\ve(x,\xi,d\eta) \rightarrow \izj g(y,\eta)f(x,\xi,d\eta)$ in $L^{1}_\xi(L^{1}(\R^{2d}))$. To show the convergence $\izj g^\ve(y,\xi)f^\ve(x,d\xi,\eta) \rightarrow \izj g(y,\xi)f(x,d\xi,\eta)$ in $L^{1}_\eta(L^{1}(\R^{2d}))$, we proceed symmetrically with respect to $\xi$ and $\eta$, which is possible since we established symmetric bound  and strong convergence results in $L^1_\xi(H^{-1}_\eta(L^{1}(\R^d)))$ and $L^1_\eta(H^{-1}_\xi(L^{1}(\R^d)))$ above.

\end{proof}

\subsection{Technical proofs from Section 3.2 }

Below we provide the proof of the existence of solutions to the linear problem (\ref{independ2-linear}).

\begin{proof}[Proof of Lemma \ref{linearlemma}]
We would like to firstly apply the theory of parabolic/transport equations for almost all $\xi,\eta$. In order to do so let us firstly assume that $f^0 \in L^{\infty}((0,1)^{2};(W^{1,1}\cap W^{1,\infty})(\R^d))$.

Problem \eqref{independ2-linear} is a linear equation parametrized by $\xi,\eta$, hence, for $\nu=0$ by Di Perna-Lions theory (\cite[Theorem 6.4]{Perthame}) there exists a unique solution to (\ref{independ2-linear}) in $L^{\infty}((0,t_{*})\times (0,1)^2\times \R^d)$, which 
may be obtained by the method of characteristics, namely
\eqq{
f(t,\cdot,\xi,\eta)=X_g(t,\cdot\,\eta)_{\#}\left[f^0(\cdot,\xi,\eta)\exp\left(At - \int_0^tV_2[g](s,X_g(s,x,\eta),\eta))ds\right)\right],
}{integro}
where $X_g$ denotes the flow solving the equation of  characteristics 
\[
\left\{
\begin{array}{l} 
\displaystyle \frac{dX_g}{dt}(t,x,\eta)=V_1(t,X_g(t,x,\eta),\eta),\\
X_g(0,x,\eta)=x.
\end{array}
\right.
\]
Since the initial data satisfies $f^0 \geq 0$, it follows from  (\ref{integro})  that $f \geq 0$ almost everywhere.
In case $\nu>0$, the heat kernel $G_\nu(t,x)$ can be employed to obtain a  solution through the standard mild formulation
\begin{equation}\label{mildformulation}
  \begin{split}
    &f(t,x,\xi,\eta)=G_\nu(t,.)\star_x f^0 (x,\xi,\eta)\\
    &\qquad+\int_0^t\int_{\R^d} G_\nu(t-s,x-y)\,\left[-\divv_y\left(f(s,y,\xi,\eta)\,V_1[g](s,y,\eta)\right) + f(s,y,\xi,\eta)(A-V_2[g](s,y,\eta))\right]\,dy\,ds.
\end{split}
  \end{equation}
  To show that $f$ remains non-negative also in the case $\nu > 0$ we test the equation with the negative part of $f$, denoted by $f_{-}$. Then utilizing nonnegativity of $g,K_2$ and  integration by parts yields
\[
\frac{1}{2}\partial_{t}\int_{\R^d} |f_{-}|^2(t,x,\xi,\eta)dx \leq \frac{1}{2}\norm{\divv V_1[g]}_{L^{\infty}((0,t_{*})\times (0,1)\times \R^d)}\int_{\R^d} |f_{-}|^2(t,x,\xi,\eta)dx +  A \int_{\R^d} |f_{-}|^2(t,x,\xi,\eta)dx.
\] 
Applying firstly (\ref{v2}) and subsequently Gr\"onwall inequality we obtain for almost all $\xi,\eta \in (0,1), t \in (0,t_{*})$ 
\[
\int_{\R^d} |f_{-}|^2(t,x,\xi,\eta)dx \leq \exp(2A+\norm{ K_1}_{L^{1}(\R^d)}\norm{g}_{E})\int_{\R^d} |f^0_{-}|^2(x,\xi,\eta)dx
\]
and since $f^0 \geq 0$ we conclude that $f \geq 0$ almost everywhere also in a case of $\nu > 0$.
Now we will show that assuming $g \in E_{\Upsilon}$ also the solution $f \in E_{\Upsilon}$ for appropriately chosen $\Upsilon$ and $t_{*}$ small enough.

In order to show that the solution belongs to $E_\Upsilon$, we apply similar reasoning as in \cite{JPS2025} and \cite{nasza}. Consider any $v(t,x)\in L^\infty((0,\ t_*);\;W^{1,\infty}(\R^d))$, any weak solution $u$ in $L^1_{loc}$ to
\begin{equation}
\left\{\begin{array}{l}
\partial_t u+\divv(u\,v)=\nu\,\Delta u+R(t,x),\\
u(0,x)=u^0(x).
\end{array}\right.
\label{advectiondiffusion}\end{equation}
Then  $u\in L^\infty((0,t_*); L^1(\R^d))$ if only $R\in L^1((0,t_*)\times \R^d)$, and we have 
\begin{equation}
\|u(t,\cdot)\|_{L^1(\R^d)}\leq \|u^0\|_{L^1(\R^d)}+\int_0^t \|R(s,\cdot)\|_{L^1(\R^d)}\,ds.\label{propL1}
\end{equation}
Similarly,  $R\in L^1((0,t_*);L^{\infty}(\R^d))$, yields $u\in L^\infty((0,t_*)\times \R^d)$ and
\begin{equation}
\begin{split}
  \|u(t,\cdot)\|_{L^\infty(\R^d)}\leq &\|u^0\|_{L^\infty(\R^d)}\,\exp\left(t\, \|\divv v\|_{L^\infty((0,t_*)\times \R^d)}\right)\\
  &+\int_0^t \|R(s,\cdot)\|_{L^\infty(\R^d)}\,\,\exp\left((t-s)\, \|\divv v\|_{L^\infty((0,t_*)\times \R^d)}\right)\,ds.\label{propLinfty}
\end{split}
\end{equation}
Furthermore, better regularity of $u^0$, $v$ and $R$, implies better regularity of $u$. Indeed, if $u^0\in W^{1,1}\cap W^{1,\infty}(\R^d)$, $v$ belongs to $(W^{2,\infty}\cap W^{1,1})(\R^d)$ and $R \in (W^{1,1}\cap W^{1,\infty})(\R^d)$ then
\begin{equation}
  \begin{split}
    &\|\nabla u(t,\cdot)\|_{L^1(\R^d)}\leq \|\nabla u^0\|_{L^1(\R^d)}\,\exp\left(t\,\| v\|_{L^\infty((0,t_{*});W^{1,\infty} (\R^d))}\right)\\
        &+\int_0^t \exp\left((t-s)\,\| v\|_{L^\infty((0,t_*); W^{1,\infty}(\R^d))}\right)\left(\|\divv v(s,\cdot)\|_{W^{1,\infty}(\R^d)}\| u(s,\cdot)\|_{L^1(\R^d)}+ \norm{\nabla R(s,\cdot)}_{L^{1}(\R^d)} \right)\,ds.
\end{split}\label{propW11}
  \end{equation}
  and
\begin{align}\label{gradinfty}
 \|\nabla u(t,\cdot)\|_{L^\infty(\R^d)}&\leq \|\nabla u^0\|_{L^\infty(\R^d)}\,\exp\left(t \left( \|\divv v\|_{L^\infty((0,t_{*})\times \R^d)} +\norm{v}_{L^{\infty}((0,t_*);W^{1,\infty}(\R^d))} \right)\right)\\ \nonumber
 &+\int_0^t \exp\left((t-s)\,\left(\| v\|_{L^\infty((0,t_*); W^{1,\infty}(\R^d))}+\|\divv v\|_{L^\infty((0,t_{*})\times \R^d)}\right)\right)\alpha(s) ds, 
\end{align}
where
\[
\alpha(s) := \|\divv v(s,\cdot)\|_{W^{1,\infty}(\R^d)}\|u(s,\cdot)\|_{L^{\infty}(\R^d)} + \norm{\nabla R(s,\cdot)}_{L^{\infty} (\R^d)}\,.
\]
For a detailed derivation of these classical estimates we refer to  \cite[Appendix]{nasza}.
Note that for a given $g\in E$ equation (\ref{independ2-linear}) may be written in the form \eqref{advectiondiffusion} where $\xi,\eta$ are only parameters and $R$ is linear w.r.t. $f$, i.e. 
\[
v_{\eta}(t,x)=V_1[g](t,x,\eta), \hd R_{\xi,\eta}(t,x) = f(t,x,\xi,\eta) (A -V_2[g](t,x,\eta) ). 
\]
Taking advantage of the specific structure of our equation and the previously established non-negativity of $f$, we can obtain simpler estimates for both the $L^1$ and $L^
 \infty$ - norms of the solution. Indeed, since we assume $g,K_2 \geq 0$ we obtain that $R_{\xi,\eta} \leq Af$ and integrating (\ref{independ2-linear}) we arrive at 
 \eqq{
 \norm{f(t,\cdot,\xi,\eta)}_{L^{1}(\R^d)} \leq e^{At_{*}}\norm{f^0(\cdot,\xi,\eta)}_{L^{1}(\R^d)}.
}{l1bound0}
Applying to both sides $\W$-norm we obtain 
\eqq{
\norm{f}_{L^{\infty}((0,t_*); \W(\mathcal{M}(\R^d)))} \leq e^{At_{*}}\norm{f^0}_{\W(\mathcal{M}(\R^d))}.
}{l1bound}
Testing (\ref{independ2-linear}) with $f^{p-1}$  we obtain
\[
\partial_t\ird f^{p}(t,x,\xi,\eta)dx \leq (p-1)\ird f^{p}(t,x,\xi,\eta)dx \norm{\divv V_1[g]}_{L^{\infty}((0,t_{*})\times (0,1)\times \R^d)} + A \ird f^{p}(t,x,\xi,\eta)dx. 
\]
Applying (\ref{v2}), Gr\"onwall inequality and subsequently  passing to the limit with $p\rightarrow \infty$ we infer
\eqq{
\norm{f(t,\cdot,\xi,\eta)}_{L^{\infty}(\R^d)}
\leq \exp\left(t_{*}\left(\norm{K_{1}}_{L^{1}(\R^d)}\norm{ g}_{E} +A\right)\right)\norm{f^0(\cdot,\xi,\eta)}_{L^{\infty}( \R^d)}
}{finfty0}
and hence
\eqq{
\norm{f}_{L^{\infty}((0,t_{*});\W(L^{\infty}(\R^d))}
\leq \exp\left(t_{*}\left(\norm{K_{1}}_{L^{1}(\R^d)}\norm{ g}_{E} +A\right)\right)\norm{f^0}_{\W(L^{\infty}( \R^d))}.
}{finfty}
It remains to estimate gradient terms. 
Let us estimate the gradient terms of $R$. By (\ref{v1}) and (\ref{v4}) 
\begin{align}\label{Rgrad1}
  \norm{\nabla R_{\xi,\eta}(s,\cdot)}_{L^{1}(\R^d))} & \leq \norm{\nabla f(s,\cdot,\xi,\eta)}_{L^{1}(\R^d))}\left(A+\norm{K_{2}}_{L^{1}(\R^d)}\norm{g}_{E}\right)\\ \nonumber  
  &+\norm{f(s,\cdot,\xi,\eta)}_{L^{\infty}(\R^d)}   \norm{K_{2}}_{L^{1}(\R^d)}\norm{g}_{E}
\end{align}
and similarly 

\begin{align}\label{Rgradinfty}
\norm{\nabla R_{\xi,\eta}(s,\cdot)}_{L^{\infty}(\R^d))} &\leq \norm{\nabla f(s,\cdot,\xi,\eta)}_{L^{\infty}(\R^d))}\left(A+\norm{K_{2}}_{L^{1}(\R^d)}\norm{g}_{E}\right) \\ \nonumber
& +\norm{f(s,\cdot,\xi,\eta)}_{L^{\infty}(\R^d)}\norm{K_{2}}_{L^{1}(\R^d)}\norm{g}_{E}.
\end{align}
Thus, using
(\ref{propW11}) together with the estimates (\ref{v1}), (\ref{v2}), (\ref{v5}) and  (\ref{Rgrad1}) gives
\[
  \begin{split}
    &\|\nabla f(t,\cdot,\xi,\eta)\|_{ L^1(\R^d)}\leq \|\nabla f^0(\cdot,\xi,\eta)\|_{L^1(\R^d))}\,\exp\left(t \norm{K_1}_{L^{1}(\R^d)}\norm{g}_{E}\right)\\
        &+\int_0^t \exp\left((t-s)\,\norm{K_1}_{L^{1}(\R^d)}\norm{g}_{E}\right)\left( \norm{\nabla f(s,\cdot,\xi,\eta)}_{L^{1}(\R^d))}\left(A+\norm{K_{2}}_{L^{1}(\R^d)}\norm{g}_{E} \right)+ \beta(s,\xi,\eta) \right)\,ds,
\end{split}
\]
where we denote
\[
\beta(s,\xi,\eta) := \norm{\divv K_1}_{L^{1}(\R^d)}\norm{g}_{E}\| f(s,\cdot,\xi,\eta)\|_{L^1(\R^d)}+\norm{f(s,\cdot,\xi,\eta)}_{L^{\infty}( \R^d)}\norm{g}_{E}   \norm{K_{2}}_{L^{1}(\R^d)}.
\]
Thanks to (\ref{l1bound0}) and (\ref{finfty0}) we may estimate 
\[
\beta(s,\xi,\eta) \leq \left(C_1e^{At_{*}}\norm{f^0(\cdot,\xi,\eta)}_{L^{1}(\R^d)} + C_2e^{C_{3}\norm{g}_{E}t_{*}}\norm{f^0(\cdot,\xi,\eta)}_{L^{\infty}(\R^d)} \right)\norm{g}_{E},
\]
where $C_{1},C_{2},C_{3}$ are positive constants dependent only on norms of $K_{1}$, $K_2$ and $A$.
Furthermore, by Gr\"onwall lemma we get
\begin{align*}
&\|\nabla f(t,\cdot,\xi,\eta)\|_{L^1(\R^d)} \leq \exp\left(\left(A+(\norm{K_{1}}_{L^{1}}+\norm{K_{2}}_{L^{1}(\R^d)})\norm{g}_{E}\right)t\right)\\ \nonumber
&\times \left[ \|\nabla f^0(\cdot,\xi,\eta)\|_{L^1(\R^d)} + t_{*}\norm{g}_{E}\left(C_1e^{At_{*}}\norm{f^0(\cdot,\xi,\eta)}_{L^{1}(\R^d)} + C_2e^{C_{3}\norm{g}_{E}t_{*}}\norm{f^0(\cdot,\xi,\eta)}_{L^{\infty}(\R^d)} \right)\right], 
\end{align*}
which leads to
\begin{align}\label{gradl1}
&\|\nabla f\|_{L^{\infty}((0,t_{*});\W(\mathcal{M}(\R^d))} \leq \exp\left(\left(A+(\norm{K_{1}}_{L^{1}}+\norm{K_{2}}_{L^{1}(\R^d)})\norm{g}_{E}\right)t\right)\\ \nonumber
&\times \left[ \|\nabla f^0\|_{\W(\mathcal{M}(\R^d))} + t_{*}\norm{g}_{E}\left(C_1e^{At_{*}}\norm{f^0}_{\W(\mathcal{M}(\R^d))} + C_2e^{C_{3}\norm{g}_{E}t_{*}}\norm{f^0}_{\W(L^{\infty}(\R^d))} \right)\right] .
\end{align}

With the $L^{\infty}$ - norm of the gradient we deal as follows. Using (\ref{gradinfty}) together with (\ref{Rgradinfty}) and (\ref{v2}), (\ref{v5}) we arrive at
\[
  \begin{split}
    &\|\nabla f(t,\cdot,\xi,\eta)\|_{L^{\infty}( \R^d)}\leq \|\nabla f^0(\cdot,\xi,\eta)\|_{L^{\infty}(\R^d )}\,\exp\left(2t \norm{K_1}_{L^{1}(\R^d)}\norm{g}_{E}\right)\\
        &+\int_0^t \exp\left(2(t-s)\,\norm{K_1}_{L^{1}(\R^d)}\norm{g}_{E}\right)\left( \norm{\nabla f(s,\cdot,\xi,\eta)}_{L^{\infty}(\R^d)}\left(A+\norm{K_{2}}_{L^{1}(\R^d)}\norm{g}_{E} \right)+ \gamma(s) \right)\,ds,
\end{split}
\]
where
\[
\gamma(s) := \norm{f(s,\cdot,\xi,\eta)}_{L^{\infty}(\R^d)}\norm{g}_{E}  (\norm{K_{2}}_{L^{1}(\R^d)}+\norm{\divv K_1}_{L^{1}(\R^d)}).
\]

From (\ref{finfty0}) we infer 
\[
\gamma(s) \leq C_4e^{ C_5\norm{g}_{E}t_{*}}\norm{g}_{E}\norm{f^0(\cdot,\xi,\eta)}_{L^{\infty}(\R^d)},
\]
where $C_{4},C_{5}$ are positive constants dependent only on norms of $K_{1}, K_2$ and $A$.
We utilize again Gr\"onwall estimate to the result
\begin{align*}
    \|\nabla f(t,\cdot,\xi,\eta)\|_{L^{\infty}(\R^d)} &\leq \exp\left(\left(A+(2\norm{K_{1}}_{L^{1}(\R^d)}+\norm{K_{2}}_{L^{1}(\R^d)})\norm{g}_{E}\right)t\right)\\ \nonumber
&\times \left( \|\nabla f^0(\cdot,\xi,\eta)\|_{L^{\infty}(\R^d)} + t_{*}C_4e^{ C_5\norm{g}_{E}t_{*}}\norm{g}_{E}\norm{f^0(\cdot,\xi,\eta)}_{L^{\infty}(\R^d)}\right).  
\end{align*}
Thus,
\begin{align}\label{gradlinfty}
       \|\nabla f\|_{L^{\infty}((0,t_*);\W(L^{\infty}(\R^d)))} &\leq \exp\left(\left(A+(2\norm{K_{1}}_{L^{1}(\R^d)}+\norm{K_{2}}_{L^{1}(\R^d)})\norm{g}_{E}\right)t\right)\\ \nonumber
&\times \left( \|\nabla f^0\|_{\W(L^{\infty}(\R^d))} + t_{*}C_4e^{ C_5\norm{g}_{E}t_{*}}\norm{g}_{E}\norm{f^0}_{\W(L^{\infty}(\R^d))}\right).
\end{align}

Let us now fix $f^0 \in \W((W^{1,1}\cap W^{1,\infty})(\R^d))$. We extend $f^0$ periodically in $\xi$ and convolve it with periodic mollifying kernel in $\xi$ variable. In this way we obtain nonnegative  sequence $f_{\ve}^0  \in L^{\infty}((0,1)^2;(W^{1,1}\cap W^{1,\infty})(\R^d))$ which converges to $f^0$ weakly$^*$ in $\W((W^{1,1}\cap W^{1,\infty})(\R^d))$ with the bounds given by Lemma~\ref{hmj}. Note that due to higher regularity of $f$ in our case, we may apply the bound in Lemma \ref{hmj} also to $\nabla f^0$. Thus, we obtain
\begin{align*}
&\norm{f^0_{\ve}}_{\W(\mathcal{M}(\R^d))} \leq \norm{f^0}_{\W(\mathcal{M}(\R^d))}, \hd \norm{f^0_{\ve}}_{\W(L^{\infty}(\R^d))} \leq \norm{f^0}_{\W(L^{\infty}(\R^d))}\\
&\norm{\nabla f^0_{\ve}}_{\W(\mathcal{M}(\R^d))} \leq \norm{\nabla f^0}_{\W(\mathcal{M}(\R^d))}, \hd \norm{\nabla f^0_{\ve}}_{\W(L^{\infty}(\R^d))} \leq \norm{\nabla f^0}_{\W(L^{\infty}(\R^d))}.
\end{align*}
For a fixed $g \in E_{\Upsilon}$ denote by $f^\ve$ the solution to linear problem (\ref{independ2-linear}) with initial condition $f^0_\ve$. Then, using the estimates (\ref{l1bound}), (\ref{finfty}), (\ref{gradl1}), (\ref{gradlinfty}) and the bound above, we conclude that $f^\ve$ is uniformly bounded in $E$. Thus, it has a subsequence (still denoted by $\ve$) which converges weakly$^*$ in $E$ to some $f$. Clearly, the limit $f$ satisfies (\ref{independ2-linear}) in a weak sense, since for any test function $\vf$ from the definition of weak solutions we have
\[
\vf V_{i}[g] \in L^{1}((0,t_{*});L^{1}_{\eta}C_{\xi}((W^{1,1}\cap W^{1,\infty})(\R^d))), \hd \hd i = 1,2.
\]
Recalling the Definition \ref{weakspacedef}  we may pass to the limit in all terms obtaining that $f$ is actually a weak solution to (\ref{independ2-linear}). Moreover, passing to the weak$^*$ limit in (\ref{integro}) (respectively (\ref{mildformulation})) for $f^\ve$, we obtain that actually $f(t)$ is weak$^*$ continuous at $t=0$ and $f|_{t=0} = f^0$.
In order to finish the proof of Lemma \ref{linearlemma}, we note that the estimates (\ref{l1bound}), (\ref{finfty}), (\ref{gradl1}) and (\ref{gradlinfty}) show that if $\Upsilon> \norm{f^0}_{\W((W^{1,1}\cap W^{1,\infty})(\R^d))}e^{A}$, we obtain that for $g \in E_{\Upsilon}$ the solution $f \in E_{\Upsilon}$ for sufficiently small $t_{*}$ which depends on $\Upsilon$, $A$ and norms of $K_{1}$ and $K_{2}$. 
\end{proof}

\begin{proof}[Proof of the propagation of regularity of $f^j_i(0)$ from Proposition \ref{fn}]
 Let us show that $W^{1,1}\cap W^{1,\infty}$- regularity of $f^j_i(0)$ propagates in time.  We observe that the equation (\ref{semic}),   may be written as $\partial_t f_i^j + \divv (f_i^j v^1[f^i]) = f_i^j v^2[f^i]$, $i,j=1,\dots N$, where
\[
v^1[f^i](t,x) :=  \ird K_{1}(x-y)\sum_{k=1}^Nf_k^i(t,dy), \hd \hd  v^2[f^i](t,x):= A - \ird K_{2}(x-y)\sum_{k=1}^Nf_k^i(t,dy).
\]
Then, using the fact that $v^1$ is Lipschitz and $f_i^j, K_2 \geq 0$ we have
\[
\norm{f^j_i(t)}_{L^{1}(\R^d)}\leq e^{At}\norm{f^j_i(0)}_{L^{1}(\R^d)}.
\]
Furthermore, 
\[
\norm{v^1[f^i]}_{L^1(\R^d)} \leq e^{At}\norm{K_1}_{L^{1}(\R^d)}\sum_{k=1}^N\norm{f^i_k(0)}_{L^{1}(\R^d)},  \norm{v^1[f^i]}_{L^\infty(\R^d)} \leq e^{At}\norm{K_1}_{L^{\infty}(\R^d)}\sum_{k=1}^N\norm{f^i_k(0)}_{L^{1}(\R^d)}
\]
and
\[
\norm{\nabla v^1[f^i]}_{L^1(\R^d)} \leq e^{At}\norm{\nabla K_1}_{L^{1}(\R^d)}\sum_{k=1}^N\norm{f^i_k(0)}_{L^{1}(\R^d)},   \norm{\nabla v^1[f^i]}_{L^\infty(\R^d)} \leq e^{At}\norm{\nabla K_1}_{L^{\infty}(\R^d)}\sum_{k=1}^N\norm{f^i_k(0)}_{L^{1}(\R^d)}.
\]
Clearly, we obtain the analogous estimates for $v^2$. Thus, using classical $L^{\infty}$ bound for transport problems together with nonnegativity of $K_2$ and $f^j_i$ we obtain 
\[
\norm{f_i^j(t)}_{L^{\infty}(\R^d)}\leq \exp\left(t\left( e^{At}\norm{\divv K_1}_{L^{\infty}(\R^d)}\sum_{k=1}^N\norm{f^i_k(0)}_{L^{1}(\R^d)} + A\right)\right)\norm{f^j_i(0)}_{L^{\infty}(\R^d)}.
\]
Note that here we allow nonuniform in $N$ estimates. 
We may obtain apriori bounds in $W^{1,1}$ and $W^{1,\infty}$   using the estimates (\ref{propW11}) and (\ref{gradinfty}), with $v=v^1[f^i]$ and $R =f^j_i v^2[f^i]$.
Since the terms involving $\nabla R$ involve also $\nabla f^j_i$ and $\norm{\divv v^1[f^i]}_{W^{1,\infty}(\R^d)}$ involves also $\nabla f^i$, we get the estimates of the form
\begin{align*}
&\norm{\nabla f^j_i(t)}_{L^{1}(\R^d)}\leq C\left(\norm{\nabla f^j_i(0)}_{L^{1}(R^d)} + \int_{0}^t \norm{\divv v^1[f^i](s,\cdot)}_{W^{1,\infty}(\R^d)} + \norm{\nabla f^j_i(s)}_{L^{1}(\R^d)}ds\right),\\ 
&\norm{\nabla f^j_i(t)}_{L^{\infty}(\R^d)}\leq C\left(\norm{\nabla f^j_i(0)}_{L^{\infty}(R^d)} + \int_{0}^t \norm{\divv v^1[f^i](s,\cdot)}_{W^{1,\infty}(\R^d)} + \norm{\nabla f^j_i(s)}_{L^{\infty}(\R^d)}ds\right),
\end{align*}
where we included in the constant $C$ the constants dependent on $t_{*}$ as well as $L^{\infty}((0,t_{*});(W^{1,1}\cap W^{1,\infty})(\R^d))$ norms of $v^1[f^i],v^2[f^i]$ and $(L^1\cap L^{\infty})(\R^d)$ norms of $f^j_i(0)$. Applying the estimate 
\[
\norm{\divv v^1[f^i](s,\cdot)}_{W^{1,\infty}(\R^d)} \leq  \norm{\divv K_1}_{L^{\infty}(\R^d)} \sum_{k=1}^N \norm{\nabla f^i_k(s,\cdot)}_{L^{1}(\R^d)}
\]
together with the Gr\"onwall inequality applied to the function  $u(t)=\sum_{i,j=1}^N\norm{f^j_i(t)}_{L^{1}(\R^d)}$, we obtain the desired bound in $W^{1,1}(\R^d)$ (note that here we do not search for $N$ - independent bound). Then similarly we obtain the result in $W^{1,\infty}(\R^d)$.  To justify the formal  calculations above we may discuss the approximate system $\partial_t (f^j_i)^\ve + \divv ((f_i^j)^\ve v^{1,\ve}[(f^i)^\ve]= (f_i^j)^\ve v^{2,\ve}[(f^i)^\ve]$, where $v^{1,\ve}$ and $v^{2,\ve}$ are created from $v^1$ and $v^2$ by replacing kernels $K_1$ and $K_2$ by their convolution with mollifying kernel. Furthermore, we smooth out the initial condition by defining $(f^j_i)^{\ve}(0)$ as a convolution of $f^j_i(0)$ with mollifying kernel. Then, since $(f^j_i)^{\ve}(0), v^{1,\ve}[(f^i)^\ve]$ and $v^{2,\ve}[(f^i)^\ve]$ are $C^{\infty}(\R^d)$ so is $(f^j_i)^\ve$ and the above apriori estimates are rigorous for $(f^j_i)^\ve$ and, due to the properties of convolution with mollifier, does not depend upon $\ve$. Testing the equation against $\vf \in H^{1}(\R^d)$ and controlling the $\ve$ - independent bound of $(f^j_i)^{\ve}v^{1,\ve}[(f^i)^\ve]$ and $(f^j_i)^{\ve}v^{2,\ve}[(f^i)^\ve]$ in $L^{2}(\R^d)$ we obtain the $\ve$ - independent bound for $\partial_t (f^j_i)^{\ve}$ in $L^{\infty}((0,t_{*});H^{-1}(\R^d))$. Thus, by Aubin-Lions lemma $(f^j_i)^{\ve}$ is relatively compact in $C([0,t_{*}];L^{2}(B_s))$ for every $s>0$. This, together with weak-compactness,  allows as to pass to the limit  in the weak formulation of the equation. By weak compactness the limiting $(\chi_i^j)$ also satisfies the uniform bounds in $(W^{1,1}\cap W^{1,\infty})(\R^d)$. Finally, we conclude that the limiting $(\chi^j_i)$ coincides with  $(f^j_i)$ applying the $L^{\infty}((0,t_{*});(\mathcal{M}_{+}(\R^d))^N)$ uniqueness of measure solutions  to system of transport equations with non-local flow  \cite[Theorem 1.1]{Crippa}.

\end{proof}
\begin{remark}
One may provide an alternative proof of the propagation of regularity of $f^j_i$ in time without referring to the equation (\ref{semic}). The proof is  based only on the flow representation $\xb_i(t) = h_i(t,X_i^0)$ and $\wjib(t)=\wjib^0\exp(\int_0^t\tilde{a}_i(s,h_i(s,X^0_i))ds)$ from the proof of Lemma \ref{exi}. See Remark 4 in the Appendix of \cite{nasza}.
\end{remark}

\subsection{Technical proofs from Section 3.3}

We begin with presenting the proof of Lemma \ref{ftau}.

\begin{proof}[Proof of Lemma \ref{ftau}]
 For any $T\in \Tree$  we introduce the modified operator 
 \[
 \tilde{\tau}(T,f)(t,x_1,\dots,x_n,\xi) = \int_{[0,1]^{n}} \prod_ {(k,m)\in E(T)} f(t,x_{m-1},\xi_k,\xi_m)d\xi_2\dots d\xi_{|T|}\Big|_{\xi_1=\xi},
 \]
where $n=|T|-1$. Clearly, $\tau(T,f)(t,x_1,\dots,x_n) = \izj  \tilde{\tau}(T,f)(t,x_1,\dots,x_n,\xi) d\xi$. We will inductively prove that for every $T\in \Tree$ there exists $F \in \mathcal{F}$  and permutation $\sigma$ such that $\tilde{\tau}(T,f)(t,x_1,\dots,x_n,\xi) = F(f)(t,x_{\sigma(1)},\dots,x_{\sigma(n)},\xi)$ for every $f$ as in the claim of lemma. 
 Let us begin with the only $T \in \Tree_2$. Then
 \[
 \tilde{\tau}(T,f)(t,x,\xi) = \izj f(t,x,\xi,\eta)d\eta = F_0(f)(t,x,\xi).
 \]
 Let us assume that for every $k \leq m$ for every $T \in \Tree_k$  the claim holds true. Let us take $T \in \Tree_{m+1}$ and assume that the root has a degree $l\leq m$. We index the vertices connected with a root by $i=2,\dots,l+1$. Then, we denote by $T_i$ the subtree rooted at vertex $i$ for $i=2,\dots,l+1$. Applying the induction hypothesis we have that, for $i=2,\dots,l+1$, there exist $F_i \in \mathcal{F}$  such that $\tilde{\tau}(T_i,f) = F_i^\sigma(f)$, where by $F_i^\sigma$ we denote the composition of a transform $F_i$ with the permutation given in the induction hypothesis. Hence,  we arrive at
 \[
 \tilde{\tau}(T,f)(t,x_1,\dots,x_n,\xi) = \int_{[0,1]^l} \prod_{i=2}^{l+1} \tilde{\tau}(T_i,f)(\xi_i)f(t,x_{i-1},\xi,\xi_i)\prod_{i=2}^{l+1}d\xi_i
 =\prod_{i=2}^{l+1}\izj F_i^\sigma(f)(\xi_i)f(t,x_{i-1},\xi,\xi_i)d\xi_i,
 \]
 where for each subtree $T_i$ the spatial indexes of $\tilde{\tau}(T_i,f)$ (dependent on the labeling of vertices in a tree) are distinct  and different from $\{1,\dots,l\}$.  From the construction of the algebra $\mathcal{F}$ we note that for every $i=2,\dots,l+1$ there exists $F_i^{*}\in \mathcal{F}$ such that $\izj F_i^\sigma(f)(\xi_i)f(t,x_{i-1},\xi,\xi_i)d\xi_i = F_i^{*}(f)(\xi)$ and finally there exists  $F\in \mathcal{F}$ such that $\prod_{i=2}^{l+1}F_i^{*}(f) = F(f)$. Thus $\tilde{\tau}(T,f) =F(f)$ up to a possible permutation of the order of spatial variables. Applying the principle of mathematical induction we finish the proof.
 \end{proof}

Below we provide the proof of Lemma \ref{apro}. 

\begin{proof}[Proof of Lemma \ref{apro}]
 We proceed by induction. Let us begin with $F_0$. At first, by Fubini theorem
\[
F_0(f^\ve)(t,x,\xi) = \izj (\rho^\ve * f)(t,x,\xi,\eta)d\eta = \izj  \rho^\ve(\xi-\xi')\izj f(t,x,\xi',d\eta) d\xi' = (\rho^\ve * F_0(f))(t,x,\xi).
\]
Hence, by the properties of mollifier $F_0(f^\ve) \rightarrow F_0(f)$ in $L^{1}((0,1);L^{1}(\R^d))$ for almost all $t \in (0,t_{*})$. Furthermore, by Lemma \ref{hmj} (see the estimate (\ref{etxi}))
\eqq{
\norm{f^\ve(t,\cdot,\cdot,\cdot)}_{L^{\infty}_\eta\mathcal{M}_{\xi}(\mathcal{M}(\R^d))} \leq \norm{f(t,\cdot,\cdot,\cdot)}_{L^{\infty}_\eta\mathcal{M}_{\xi}(\mathcal{M}(\R^d))}.
}{apboundb}
Hence, applying the estimate (\ref{fnestj}) for more regular $f^\ve$ together with (\ref{apboundb}) we obtain for any $n$- rank transform $F \in \mathcal{F}$
\eqq{
\norm{F(f^\ve)(t,\cdot,\cdot)}_{L^{\infty}((0,1);L^{1}(\R^{dn}))}\leq \norm{f^\ve(t,\cdot,\cdot,\cdot)}_{L^{\infty}_\eta(L^{1}_\xi(L^{1}(\R^{d}))}^n \leq  \norm{f(t,\cdot,\cdot,\cdot)}_{L^{\infty}_\eta(\mathcal{M}_{\xi};\mathcal{M}(\R^d))}^n.
}{apbounda}

Let us assume that for any $k < n$ all $k$-rank transforms satisfy the convergence $F(f^\ve) \rightarrow F(f)$ in $L^{1}((0,1);L^{1}(\R^{dk}))$. We will prove that the convergence holds also for  $n$-rank transform $F$. If $F = F^l\cdot F^k$, for some $k+l = n$, then using (\ref{apbounda}) and the induction hypothesis
\begin{align*}
 &\int_0^1 \int_{\R^{dn}}\abs{F(f^\ve)-F(f)}dx_1\dots dx_n d\eta \leq \int_0^1 \int_{\R^{dk}}\abs{F^k(f^\ve)-F^k(f)}dx_1\dots dx_k  \int_{\R^{dl}}\abs{F^l(f^\ve)}dx_{1}\dots dx_{l}d\eta\\
 & +\int_0^1 \int_{\R^{dk}}\abs{F^l(f^\ve)-F^l(f)}dx_1\dots dx_l  \int_{\R^{dl}}\abs{F^k(f)}dx_{1}\dots dx_{k}d\eta\\
 &\leq \norm{F^l(f^\ve)(t,\cdot,\cdot)}_{L^{\infty}((0,1);L^{1}(\R^{dl}))} \norm{F^{k}(f^\ve)-F^k(f)}_{L^{1}((0,1);L^{1}(\R^{dk}))} \\
 &+ \norm{F^k(f)(t,\cdot,\cdot)}_{L^{\infty}((0,1);L^{1}(\R^{dk}))} \norm{F^{l}(f^\ve)-F^l(f)}_{L^{1}((0,1);L^{1}(\R^{dl}))} \longrightarrow 0 \hd \hd \m{ as } \ve \rightarrow 0.
\end{align*}

In case $F(f)(t,x_1,\dots,x_n,\xi) = \izj \tilde{F}(f)(t,x_1,\dots,x_{n-1},\eta)f(t,x_{n},\xi,d\eta)$ for a $n-1$- rank transform $\tilde{F}$ we obtain desired convergence applying Lemma \ref{hmj}.
 Hence, in any case $F(f^\ve) \rightarrow F(f)$ in $L^{1}((0,1);L^{1}(\R^{dn}))$.  We finish the proof applying the mathematical induction principle.
\end{proof}

We finish this section with a proof of Lemma \ref{taustar}.

\begin{proof}[Proof of Lemma \ref{taustar}]
We will proceed by induction on the number of vertices in a tree, but to give more insights in the first step we will discuss also three vertex trees. At first we take the only possible two vertex tree $T_2$. Here, for simplicity we denote $\xi_1 = \xi, \xi_2 = \eta, x_1 = x$. Then in case $m = 1$, $\tau^1_{*}$ actually coincides with $\tilde{\tau}$ from the proof of Lemma \ref{ftau} and the convergence follows from Proposition \ref{apro}. Anyway, let us repeat the short reasoning
\[
\tau_{*}^1(T_2,f^{\ve})(x,\xi) = F_{0}(f^{\ve})(x,\xi) = \izj  \rho^\ve(\xi-\xi')\izj f(x,\xi',d\eta) d\xi' = \rho^\ve * F_0(f)(x,\xi)
\]
and hence $\tau_{*}^1(T_2,f^{\ve}) \rightarrow F_0(f) = \tau^1_{*}(T_2,f)$ in $L^{1}((0,1)\times \R^d)$  with 
\[
 \norm{\tau^1_{*}(T_2,f^{\ve})}_{L^{\infty}((0,1)\times \R^d)} \leq \norm{\tau^1_{*}(T_2,f)}_{L^{\infty}((0,1)\times \R^d)} \leq \norm{f}_{\W(L^{\infty}(\R^d))}.
\]
In the case $m=2$ we have
\[
\tau_{*}^1(T_2,f^{\ve})(x,\eta) =\izj f^{\ve}(x,\xi,\eta)d\xi= \izj  \izj \rho^\ve(\xi-\xi') f(x,\xi',\eta) d\xi'd\xi = \izj f(x,\xi',\eta)d\xi'
\]
and since we have not only convergence but equality the claim follows in this case as well. 
Let us now discuss two three vertex tree: $T_3^1, T_3^2$. Let us begin with the tree with the root and two leaves  $T_3^1$. If $m=1$ (root), then again our $\tau_*$ coincides with $\tilde{\tau}$ from the proof of Lemma \ref{ftau}. Anyway, we may write
\begin{align*}
\tau_{*}^1(T_3^1,f^{\ve})(x_1,x_2,\xi_1) &= \izj f^\ve(x_1,\xi_1,\xi_2)d\xi_2 \izj f^{\ve}(x_2,\xi_1,\xi_3)d\xi_3
\\
&= \left(\rho^{\ve}* \izj f(x_1,\cdot,d\eta)\right)(\xi_1)\left(\rho^{\ve}* \izj f(x_2,\cdot,d\xi_3)\right)(\xi_1).
\end{align*}
Both terms on the RHS converge in $L^{1}((0,1)\times (\R^d))$ in $\xi_1$ and are uniformly bounded  by $\norm{f}_{\W(L^{\infty}(\R^d))}$, hence $\tau^1_{*}(T_3^1,f^{\ve}) \rightarrow \tau^1_{*}(T_3^1,f)$ in $L^{1}((0,1)\times \R^{2d})$ with the estimate
\[
 \norm{\tau^1_{*}(T^1_3,f^{\ve})}_{L^{\infty}((0,1)\times \R^{2d})} \leq \norm{\tau^1_{*}(f)}_{L^{\infty}((0,1)\times \R^{2d})} \leq \norm{f}^2_{\W(L^{\infty}(\R^d))}.
\]
If $m=2$, (a leaf) then we have
\begin{align*}
\tau_{*}^2(T_3^1,f^{\ve})(x_1,x_2,\xi_2) &= \izj f^\ve(x_1,\xi_1,\xi_2) \izj f^{\ve}(x_2,\xi_1,\xi_3)d\xi_3 d\xi_1\\
&= \izj f^\ve(x_1,\xi_1,\xi_2)\left(\rho^{\ve}* \izj f(x_2,\cdot,d\xi_3)\right)(\xi_1)d
\xi_1
\end{align*}
and we obtain the desired convergence using by Lemma \ref{hmj} with $g^{\ve}=\rho^{\ve}* \izj f(x_2,\cdot,d\xi_3)$. Furthermore, by Lemma \ref{impoest} and properties of convolution with mollifier 
\[
\norm{\tau^2_{*}(T^1_3,f^{\ve})}_{L^{\infty}((0,1)\times \R^{2d})} \leq \norm{f}^2_{\W(L^{\infty}(R^2d))}.
\]
If $m=3$ the argument is analogous.
Let us now discuss the tree $T^2_3$ with only one leaf  and three cases $m=1,2,3$. In case $m=1$ (a root) we have
\begin{align*}
\tau^1_{*}(T_3^2,f^{\ve})(x_1,x_2,\xi_1) &=  \izj f^\ve(x_1,\xi_1,\xi_2) \izj f^{\ve}(x_2,\xi_2,\xi_3)d\xi_3 d\xi_2
\\
&= \izj f^\ve(x_1,\xi_1,\xi_2)\left(\rho^{\ve}* \izj f(x_2,\cdot,d\xi_3)\right)(\xi_2)d
\xi_2
\end{align*}
and we obtain the convergence in this case again by Lemma \ref{hmj}.
In case $m=2$ (neither a root, nor a leaf) we have
\begin{align*}
\tau^2_{*}(T_3^2,f^{\ve})(x_1,x_2,\xi_2) &=  \izj f^\ve(x_1,\xi_1,\xi_2) d\xi_1 \izj f^{\ve}(x_2,\xi_2,\xi_3)d\xi_3 \\
&=\izj f(x_1,\xi_1,\xi_2) d\xi_1 \left(\rho^{\ve} *\izj f(x_2,\cdot,\xi_3)d\xi_3\right)(\xi_2).
\end{align*}
Since the convolution converges in $L^1$ and the integral $\izj f(x_1,\xi_1,\cdot) d\xi_1$ is bounded we obtain desired $L^1$ - convergence.
Finally, in case $m=3$ (a leaf) we get
\[
\tau^3_{*}(T_3^2,f^{\ve})(x_1,x_2,\xi_3) =  \izj\izj f^\ve(x_1,\xi_1,\xi_2) d\xi_1  f^{\ve}(x_2,\xi_2,\xi_3)d\xi_2 =\izj \izj f(x_1,\xi_1,\xi_2) d\xi_1 f^{\ve}(x_2,\xi_2,\xi_3)d\xi_2 
\]
and arrive at the desired $L^1$ - convergence applying Lemma \ref{hmj} to a constant sequence $g^\ve = \izj f(x_1,\xi_1,\cdot) d\xi_1$.
In all three cases $m=1,2,3$, we obtain the bound 
\[
\norm{\tau^m_{*}(T^2_3,f^{\ve})}_{L^{\infty}((0,1)\times R^{2d})} \leq \norm{f}^2_{\W(L^{\infty}(R^2d))}
\]
applying the properties of convolution with mollifier and in the case $m=3$ also Lemma \ref{impoest}.

Let us now assume that the claim holds for every $T \in \Tree_{\tilde{n}}$ for every $\tilde{n} \leq n$ and we take arbitrary $T \in \Tree_{n+1}$. We choose arbitrary $m \in \{1,\dots,|T|\}$. We denote by $T_m$ a subtree of $T$ rooted at vertex $m$.  If $m=1$ (a root) the claim follows from Lemma \ref{apro}. Otherwise we denote by $l_0$ the vertex that connects subtree $T_m$ with the rest of a tree, denoted by $T\setminus T_m$. Then
\[
\tau^m_{*}(T,f^{\ve})(x_1,\dots,x_{n},\xi_m) = \izj \tau^{l_0}(T \setminus T_m,f^\ve)(\xi_{l_0})f^{\ve}(x_{m-1},\xi_{l_0},\xi_m)dl_0 \tau_{*}^m(T_m,f^\ve)(\xi_m),
\]
where we omit some spatial indexes for clarity and we set $\tau_{*}^m(T_m,f^\ve)(\xi_m) \equiv 1$ in case when $m$ is a leaf. From the induction hypothesis $\tau_{*}^m(T_m,f^\ve) \rightarrow \tau_{*}^m(T_m,f)$ in $L^{1}$ and is uniformly bounded in $\ve$. Analogously $\tau^{l_0}(T \setminus T_m,f^\ve)\rightarrow \tau^{l_0}(T \setminus T_m,f)$ in $L^{1}$. Applying Lemma \ref{hmj} we arrive at $\tau^m_{*}(T,f^{\ve}) \rightarrow \tau^m_{*}(T,f)$ in $L^1((0,1)\times \R^{dn})$ and the estimate $\norm{\tau^m_{*}(T,f)}_{L^{\infty}((0,1)\times \R^{dn})} \leq \norm{f}^n_{\W(L^{\infty}(\R^d))}$ follows from the induction hypothesis and Lemma \ref{impoest}. Thus, we obtain the claim in case $T \in \Tree_{n+1}$ and we finish the proof applying mathematical induction principle. 
\end{proof}

\subsection{ Proof of Proposition \ref{asym} from Section 3.5}
\begin{proof}
We will prove the claim by induction. At first we note that for the only one 1-rank transform $F_0$ we have
\begin{align*}
\izj \norm{F_0(f)(\cdot,\xi)}_{L^{1}(\R^d\setminus B_R)}d\xi &\leq \izj \izj \norm{f(\cdot,\xi,\cdot)}_{\mathcal{M}(\R^d\setminus B_R)}(d\eta) d\xi =
  \izj \izj \norm{f(\cdot,\cdot,\eta)}_{\mathcal{M}(\R^d\setminus B_R)}(d\xi) d\eta \\
&  \leq \sup_{\eta \in (0,1)}\izj \norm{f(\cdot,\cdot,\eta)}_{\mathcal{M}(\R^d\setminus B_R)}(d\xi) =\norm{f}_{L^{\infty}_\eta(\mathcal{M}_\xi(\mathcal{M}(\R^d\setminus B_R)))}.
\end{align*}
Now let us fix $n \in \mathbb{N}$ and assume that for any transform of rank smaller than or equal to $n$ the thesis holds. We will prove that the claim is true also for any transform of rank $n+1$. Let us take arbitrary $F$ of rank $n+1$. Then the two scenarios are possible. Either there exists  $F^l,F^k$ of order $l$ and $k$
respectively, with $l+k = n+1$, such that $F = F^l\cdot F^k$ or there exists a transform $F^n$ of order $n$
such that $F(f)(x_1,\dots,x_{n+1},\xi) = \izj F^{n}(x_1,\dots,x_n,\eta)f(x_{n+1},\xi,d\eta)$. In the first case we apply the induction hypothesis to estimate as follows
\begin{align*}
    &\izj \norm{F(\cdot,\xi)}_{L^{1}(\R^{d(n+1)}\setminus B_R^{(n+1)})}d\xi \\& \leq 
\izj \norm{F^l(\cdot,\xi)}_{L^{1}(\R^{dl}\setminus B_R^{l})}\norm{F^k(\cdot,\xi)}_{L^{1}(\R^{dk})}d\xi + \izj \norm{F^l(\cdot,\xi)}_{L^{1}(\R^{dl})}\norm{F^k(\cdot,\xi)}_{L^{1}(\R^{dk}\setminus B_R^k)}d\xi \\
&\leq\sup_{\xi \in (0,1)}\norm{F^k(\cdot,\xi)}_{L^{1}(\R^{dk})} l \norm{f}_{\W(\mathcal{M}(\R^d))}^{l-1}\norm{f}_{L^{\infty}_{\eta}(\mathcal{M}_\xi(\mathcal{M}(\R^d\setminus B_R)))} 
\\
&+
\sup_{\xi \in (0,1)}\norm{F^l(\cdot,\xi)}_{L^{1}(\R^{dl})} k \norm{f}_{\W(\mathcal{M}(\R^d))}^{k-1}\norm{f}_{L^{\infty}_{\eta}(\mathcal{M}_\xi(\mathcal{M}(\R^d\setminus B_R)))} 
\\
&\leq \left(l \norm{f}^{k+l-1}_{\W(\mathcal{M}(\R^d))}
+ k \norm{f}^{k+l-1}_{\W(\mathcal{M}(\R^d))}\right)\norm{f}_{L^{\infty}_{\eta}(\mathcal{M}_\xi(\mathcal{M}(\R^d\setminus B_R)))}  = (n+1)\norm{f}^{n}_{\W(\mathcal{M}(\R^d))}\norm{f}_{L^{\infty}_{\eta}(\mathcal{M}_\xi(\mathcal{M}(\R^d\setminus B_R)))}, 
\end{align*}
where in the last estimate we applied Proposition \ref{fntau}.
In the second case, applying Lemma \ref{impoest} and induction hypothesis we have
\begin{align*}
    &\izj \norm{F(\cdot,\xi)}_{L^{1}(\R^{d(n+1)}\setminus B_R^{(n+1)})}d\xi \\
&\leq \izj \izj \norm{F^n(\cdot,\eta)}_{L^{1}(\R^{dn}\setminus B_R^{n})}\norm{f(\cdot,\xi,\cdot)}_{\mathcal{M}(\R^d)}(d\eta)d\xi + \izj \izj \norm{F^n(\cdot,\eta)}_{L^{1}(\R^{dn})}\norm{f(\cdot,\xi,\cdot)}_{\mathcal{M}(\R^d\setminus B_R)}(d\eta)d\xi \\
&\izj \norm{F^n(\cdot,\eta)}_{L^{1}(\R^{dn}\setminus B_R^{n})}d\eta \norm{f}_{L^{\infty}_\eta(\mathcal{M}_\xi(\mathcal{M}(\R^d))) } +
\norm{F^n}_{L^{\infty}((0,1);L^{1}(\R^{dn}))}\izj \izj \norm{f(\cdot,\xi,\cdot)}_{\mathcal{M}(\R^d\setminus B_R)}(d\eta)d\xi \\
&(n+1) \norm{f}_{\W(\mathcal{M}(\R^d))}^{n+1} \norm{f}_{L^{\infty}_\eta(\mathcal{M}_\xi(\R^d\setminus B_R))}.  
\end{align*}
Hence we obtain desired estimate for arbitrary transform of rank $n+1$. We finish the proof applying mathematical induction principle.  
\end{proof}


\begin{thebibliography}{99}
%\addcontentsline{toc}{Chapter}{{\bf Bibliography}}
%**********************************************************************************
{\footnotesize

\bibitem{Aover}
Ayi N., \textit{Graph and mean-field limits for interacting particle systems}, Festum Pi 2024.

\bibitem{A2026}
Ayi N., \textit{Mean-field limits for interacting particles on general adaptive dynamical networks}, arXiv preprint: arXiv:2601.03742, (2026).

\bibitem{ADover}
Ayi N., Duteil N. P. \textit{Large-population limits of non-exchangeable particle systems}, Active Particles, Volume 4: Theory, Models, Applications, 79-133, (2024).

\bibitem{AD2021}
Ayi N.,  Duteil N. P., \textit{Mean-field and graph limits for collective dynamics models with time-varying weights}, Journal of Differential Equations, 299, 65-110, (2021).

\nic{

\bibitem{BCN} Bet G., Coppini F.,  Nardi F. R.  \textit{Weakly interacting oscillators on dense random graphs. Journal of Applied Probability}, 61(1), 255-278, (2024).

\bibitem{BCCZ}
Borgs C., Chayes J., Cohn H., Zhao Y.,  \textit{An $L^p$ theory of sparse graph convergence I: Limits, sparse random graph models, and power law distributions}, Transactions of the American Mathematical Society, 372(5), 3019-3062, (2019).

\bibitem{BCCZ2}
Borgs C., Chayes J. T., Cohn H.,  Zhao Y., \textit{An $L^p$ theory of sparse graph convergence II: LD convergence, quotients and right convergence},  Ann. Probab. 46 (2018), no. 1, 337–396.


\bibitem{CM2019} Chiba H., Medvedev G.S., \textit{ The mean field analysis for the Kuramoto model on graphs I. The mean field
equation and transition point formulas}, Discrete Contin. Dyn. Syst. Ser. A 39 (2019), no. 1, 131–155.
21.

\bibitem{CM20192}
Chiba H., Medvedev, G.S., \textit{The mean field analysis of the Kuramoto model on graphs II. Asymptotic stability
of the incoherent state, center manifold reduction, and bifurcations}, Discrete Contin. Dyn. Syst. Ser. A 39 (2019),
no. 7, 3897–3921.
22.
\bibitem{CMM2018} Chiba, H., Medvedev G.S., Muzuhara M.S., \textit{Bifurcations in the Kuramoto model on graphs}, Chaos 28 (2018),073109.

}

\bibitem{BCG2024} Ben-Porat I., Carrillo J. A.,  Galtung, S. T. \textit{Mean field limit for one dimensional opinion dynamics with Coulomb interaction and time dependent weights}. Nonlinear Analysis, 240, 113462, (2024).


\bibitem{fizyka} Berner R., Gross T.,  Kuehn C., Kurths J., Yanchuk S., \textit{ Adaptive dynamical networks}, Physics Reports, 1031:1–59, 2023. 

\bibitem{Poyato2026}Cabrera-Nyst J.,  Poyato D. \textit{ Mean field limit of non-exchangeable interacting diffusions on co-evolutionary networks} arXiv preprint: arXiv:2606.21556, (2026).

\bibitem{Crippa}
Crippa, G.,  Lécureux-Mercier, M., \textit{  Existence and uniqueness of measure solutions for a system of continuity equations with non-local flow.} Nonlinear Differential Equations and Applications NoDEA, 20(3), 523-537, (2013).


\bibitem{DU1977}Diestel J.,  Uhl J. J., \textit{ Vector Measures}, Mathematical Surveys and Monographs, Vol. 15, American Mathematical Society, Providence (1977).

\bibitem{diPerna}  DiPerna R. J.,  Lions P.-L., \textit{ Ordinary differential equations, transport theory and Sobolev
spaces}, Inventiones Mathematicae 98 (1989), no. 3, 511–547.

\bibitem{Dudley} Dudley, R. M., \textit{ The speed of mean Glivenko-Cantelli convergence}, The Annals of Mathematical Statistics, 40(1), 40-50, (1969).

%\bibitem{GC2022} Gkogkas M. A., Kuehn C., \textit{ Graphop mean-field limits for Kuramoto-type models}, SIAM
%Journal on Applied Dynamical Systems, 21(1):248–283, 2022.

\bibitem{D2022}Duteil N.P., \textit{Mean-field limit of collective dynamics with time-varying weights}, Netw. Heterog. Media 17 (2) 129–161, (2022).


\bibitem{GKX2023} Gkogkas M. A., Kuehn C., Xu. C., \textit{ Continuum limits for adaptive network dynamics}, 
Communication in Mathematical Sciences, 21:83–106, (2023).

\bibitem{GKX2025}Gkogkas M. A., Kuehn C.,  Xu C., \textit{  Mean field limits of co-evolutionary signed heterogeneous networks}, European Journal of Applied Mathematics, 37(3), 643-686, (2026).

\bibitem{nasza} Gwiazda P., Ryszewska K., \textit{ A note on application of mean-field limit to non-exchangeable
non-conservative systems}, arXiv preprint:  	arXiv:2607.20014, (2026).

\bibitem{JPS2025} Jabin P. E., Poyato D.,  Soler J., \textit{ Mean‐field limit of non‐exchangeable systems}, Communications on Pure and Applied Mathematics, 78(4), 651-741, (2025).

%\bibitem{JZ2025} Jabin P. E.,  Zhou D. \textit{ The mean-field limit of sparse networks of integrate-and-fire neurons}, Annales de l'Institut Henri Poincaré C, 43(2), 273-343, (2025).

%\bibitem{KVM2018} Kaliuzhnyi-Verbovetskyi D.,  Medvedev G. S., \textit{ The mean field equation for the Kuramoto model on graph sequences with non-Lipschitz limit},  SIAM Journal on Mathematical Analysis, 50(3), 2441-2465, (2018).

%\bibitem{K1970} Kružkov, S. N., \textit{First order quasilinear equations in several independent variables},  Mathematics of the USSR-Sbornik, 10(2), 217, (1970).

\bibitem{Kuehn}  Kuehn C., Xu C., \textit{ Vlasov equations on digraph measures}, J. Differ. Equ. 339 (2022), 261–349.

%\bibitem{Lovasz} Lovász L., \textit{Large networks and graph limits}, (Vol. 60). American Mathematical Soc, (2012).

%\bibitem{LovaszSzegedy} Lovász L., Szegedy B.,  \textit{ Limits of dense graph sequences}, Journal of Combinatorial Theory, Series B, 96(6), 933-957, (2006).

\bibitem{MPD2019} McQuade S., Piccoli B., Pouradier Duteil N., \textit{ Social dynamics models with time-varying influence}, Mathematical Models and Methods in Applied Sciences, 29(04), 681-716, (2019).

\bibitem{Perthame} Perthame B. \textit{ Transport equations in biology}, Basel: Birkhäuser Basel, (2007).

\bibitem{Rudin}
Rudin, W. \textit{Functional analysis }, 2nd ed. International Series in Pure and Applied Mathematics. McGraw-Hill, Inc., New York, 202, (1991). 

\bibitem{Singer} Singer, I. (1957). Linear functionals on the space of continuous mappings of a compact Hausdorff space into a Banach spaces. Rev. Math. Pures Appl., 2, 301-315.

\bibitem{Throm} Throm S., \textit{ Continuum limit for interacting systems on adaptive networks}, European
Journal of Applied Mathematics, pages 1–15, 2024.

\bibitem{Throm2} Throm S., \textit{ Mean field limit for interacting systems on co-evolving networks}, arXiv preprint arXiv:2507.21312, (2025).

\bibitem{Zhou} Zhou D., \textit{ Non-exchangeable mean-field theory for adaptive weights: propagation of dissociatedness and graphon sampling lemma},  arXiv preprint: arXiv:2506.13587, (2025).











}
\end{thebibliography}
\end{document}